\documentclass[12pt, oneside]{article}   	
\usepackage{geometry}
\usepackage{graphicx}				% Use pdf, png, jpg, or eps?¡ì with pdflatex; use eps in DVI mode

\usepackage{indentfirst}
\usepackage{needspace}
\usepackage{hyperref}
\usepackage{tikz-cd}
\usepackage{amsthm}
\usepackage{amsfonts}
\usepackage{amsmath}
\usepackage{amscd}
\usepackage{enumerate}
\usepackage{amssymb}	
\usepackage{verbatim}
\usepackage{cancel}
\usepackage{cite}
\usepackage{textcomp}								
\usepackage{color}
\usepackage{mathrsfs}
\usepackage{appendix}
\usepackage[nottoc]{tocbibind}

\makeatletter
\newcommand{\subjclass}[2][2020]{%
  \def\@subjclass{#2}%
  \def\@subjclassyear{#1}%
}
\newcommand{\keywords}[1]{\def\@keywords{#1}}

\newcommand{\printinfo}{%
  \let\oldthefootnote\thefootnote
  \renewcommand{\thefootnote}{} % 隐藏编号
  \footnotetext{%
    \hspace{-1.8em}\begin{minipage}[t]{\textwidth}
      \small
      \textbf{\@subjclassyear\ Mathematics Subject Classification.} \@subjclass. \\
      \textbf{Keywords.} \@keywords.
    \end{minipage}
  }
  \let\thefootnote\oldthefootnote
  \addtocounter{footnote}{-1}
}
\makeatother

\newtheorem{example}{Example}[section]							
\newtheorem{theorem}{Theorem}[section]
\newtheorem{definition}{Definition}[section]
\newtheorem{proposition}{Proposition}[section]
\newtheorem{lemma}{Lemma}[section]
\newtheorem{remark}{Remark}[section]

\newtheorem{corollary}{Corollary}[section]

\newtheorem{conjecture}{Conjecture}

\def\R{\mathbb R}

\def\cH{\mathcal H}

\def\rd{\mathrm d}

\def\SL{\operatorname{SL}}

\def\inter{\operatorname{int}}
\def\top{\operatorname{top}}
\def\MME{\operatorname{MME}}
\def\NES{\operatorname{NES}}
\def\pt{\operatorname{Pt}}
\def\rv{\operatorname{rv}}
\def\rot{\operatorname{Rot}}
\def\vol{\operatorname{vol}}

\def\be{\begin{equation}}
\def\ee{\end{equation}}

\def\ES{\operatorname{ES}}
\title{Finiteness of ergodic measures of maximal entropy for certain skew product systems}
\author{Changguang Dong, Qiujie Qiao}
\subjclass[2020]{Primary 37C40; Secondary 37A35, 37C85}
\keywords{Measure rigidity, measure of maximal entropy, nonlinear thermodynamic formalism}
\date{\today}

\begin{document}
\maketitle
\printinfo 

\begin{abstract}
Consider a $C^{1+\beta}$ skew product $F$ over a 
topologically mixing Anosov diffeomorphism $f$, 
with fiber dynamics driven by a $C^{1+\beta}$ $\mathbb{R}^k$-action $\{\rho_{\bf t}\}$ 
on a closed manifold $Y$ through a $C^{1+\beta}$ cocycle $\tau$. 
Suppose that the fiber action preserves an 
ergodic smooth measure of maximal entropy $m_{\MME}$.
Under a nondegeneracy condition called joint nontriviality, 
and assuming that each element $\rho_{\bf t}$ is 
either of class $C^\infty$  or hyperbolic, 
we prove that $F$ admits only finitely many ergodic measures of maximal entropy. 
 Moreover, every such measure is of the form $\mu\times m_{\MME}$, 
where $\mu$ is an ergodic equilibrium state for a H\"older potential on the base. 
If, in addition, the fiber action is exponentially mixing of all orders, 
then every ergodic MME is Bernoulli, exponentially mixing of all orders, 
and satisfies the Central Limit Theorem. 

Our main novelties are entropy formulas for skew products, obtained through
the cone property and $r$-neutralized entropy, and a relative
Ledrappier--Young theorem that converts the fiber entropy contribution
into absolute continuity of fiberwise conditional measures. Combined
with nonlinear thermodynamic formalism, rotation theory and
Yomdin theory, these yield the finiteness of the base projections
and the product rigidity of ergodic MMEs.
\end{abstract}

\tableofcontents

\section{Introduction and main results}\label{sect:Main results}

For a topologically mixing Anosov diffeomorphism, it is classical that the measure of maximal entropy is
unique, and H\"older potentials have unique equilibrium states \cite{Bow75}. An extension
of an Anosov system need not retain this uniqueness: the fiber dynamics may
both alter the maximizing measure on the base and allow several invariant
lifts of the same projection. In this paper we study these two sources of
nonuniqueness for skew products driven by smooth $\R^k$-actions. We prove
that, under explicit entropy and accessibility assumptions, there are only
finitely many ergodic measures of maximal entropy and that every one of them
is a product of a base equilibrium state and a fixed smooth measure on the fiber.

Let $X$ and $Y$ be closed manifolds, let $f\colon X\to X$ be a diffeomorphism,
and let $\rho=\{\rho_{\mathbf t}\}_{\mathbf t\in\R^k}$ be a locally free action on $Y$.
For a cocycle $\tau\colon X\to\R^k$, consider
\begin{equation}\label{intro:eq:skew}
F(x,y)=\bigl(f(x),\rho_{\tau(x)}y\bigr).
\end{equation}
Write $\pi(x,y)=x$. We denote the invariant Borel probability measures of a map
$T\colon Z\to Z$ by $\mathcal M(T,Z)$ and its ergodic invariant probability measures by $\mathcal M_e(T,Z)$.
We write $H(\mathbf t)=h_{\top}(\rho_{\mathbf t})$ for the entropy of an action element.
A measure $\zeta\in\mathcal M(F,X\times Y)$ is a \emph{measure of maximal entropy} (MME) if
$h_\zeta(F)=h_{\top}(F)$.

The product conclusion is the main rigidity feature. It determines the
conditional measure on almost every fiber, not merely the number of
maximizing measures. Our proof first identifies the possible projections
through an entropy variational problem on the base, and then uses maximal
fiber entropy to classify their lifts.

\subsection{Main theorem}\label{intro:main-theorem}

Suppose that $\rho$ preserves a smooth probability measure $m_{\MME}$ which is
ergodic for the entire $\R^k$-action and is a measure of maximal entropy for
every element $\rho_{\mathbf t}$.
If $\chi_1,\ldots,\chi_L$ are the transverse Lyapunov functionals of the action
with respect to $m_{\MME}$, with multiplicities $m_1,\ldots,m_L$, the Pesin entropy
formula gives
\begin{equation}\label{intro:eq:action-entropy}
H(\mathbf t)=h_{m_{\MME}}(\rho_{\mathbf t})
=\sum_{i=1}^L m_i\max\{\chi_i(\mathbf t),0\}.
\end{equation}
Thus $H$ is convex and piecewise linear. Since $H(-\mathbf t)=H(\mathbf t)$,
its zero set is the linear subspace
\[
Z_\rho=\{\mathbf t:H(\mathbf t)=0\}
=\bigcap_{i=1}^L\ker\chi_i.
\]

Recall that two cocycles $\tau$ and $\tau'$ are cohomologous if
$\tau-\tau'=u-u\circ f$ for a continuous map $u\colon X\to\R^k$.

\begin{definition}\label{def:jointly nontrivial}
The pair $(\tau,\rho)$ is {\bf jointly nontrivial} if
\begin{enumerate}[(C1)]
\item $\tau$ is not cohomologous to a cocycle taking values in a proper affine subspace of $\mathbb{R}^k$. 
Equivalently, for every nonzero $D\in\mathbb R^k$,
the scalar-valued function $D\cdot\tau$ is not cohomologous to a constant; 
\item $\dim Z_\rho<k$ and every $\rho_{\mathbf s}$ with $H(\mathbf s)>0$ is accessible. 
\end{enumerate}
Here accessibility means that any two points of $Y$ can be joined by a finite
concatenation of paths lying in stable or unstable leaves of the indicated map.
\end{definition}

\begin{theorem}[Finiteness and product rigidity]\label{main theorem}
Let $f$ be a topologically mixing $C^{1+\beta}$ Anosov diffeomorphism of
a closed manifold $X$, where $\beta>0$. Let $\rho$ be a locally free
$C^{1+\beta}$ $\R^k$-action on a closed manifold $Y$, preserving a smooth
probability measure $m_{\MME}$ which is ergodic for the action and maximizes entropy
for every $\rho_{\mathbf t}$. Assume that each $\rho_{\mathbf t}$ is
$C^\infty$ or hyperbolic.

If $\tau\colon X\to\R^k$ is $C^{1+\beta}$ and $(\tau,\rho)$ is jointly
nontrivial, then the skew product \eqref{intro:eq:skew} has a nonempty finite set of
ergodic MMEs. Every such measure is
\[
\zeta=\mu\times m_{\MME},
\]
where $\mu$ is an equilibrium state of $f$ for a H\"older continuous potential.
\end{theorem}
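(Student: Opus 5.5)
The plan is to split the entropy of an ergodic $F$-invariant measure $\zeta$ into a base part and a fiber part, maximize each separately, and then show that both maxima force rigid structure. Write $\mu=\pi_*\zeta$. Relative to the base factor I expect the Abramov--Rokhlin/Ledrappier--Walters inequality
\[
h_\zeta(F)\le h_\mu(f)+\int_X H(\tau(x))\,d\mu(x),
\]
with equality for $\zeta=\mu\times m_{\MME}$. To derive it I would compute the fiber entropy over $\mu$ by pushing a cone of directions along the orbit and using $r$-neutralized entropy. The upper bound should come from $h_\top(\rho_{\mathbf t})=H(\mathbf t)$, convexity of $H$, and subadditivity along Birkhoff sums $S_n\tau$. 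The equality for $\mu\times m_{\MME}$ should follow from \eqref{intro:eq:action-entropy}, since the $\rho_{\mathbf t}$ commute and share Oseledets splittings for $m_{\MME}$. Consequently
\[
h_{\top}(F)=\sup_{\mu\in\mathcal M(f,X)}\Bigl(h_\mu(f)+\int H\circ\tau\,d\mu\Bigr)=P_f(H\circ\tau),
\]
and the sup is attained, so MMEs exist. Because $H$ is only convex and piecewise linear, and Birkhoff averages can cancel, I would redo this step with $H$ evaluated on $\frac1n S_n\tau$. The correct functional is then nonlinear: $\Phi(\mu)=h_\mu(f)+H\bigl(\int\tau\,d\mu\bigr)$. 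This is the ``nonlinear thermodynamic formalism'' mentioned in the abstract.

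The next step is finiteness of the base projections. Set $\rot(\mu)=\int\tau\,d\mu$, which ranges over the rotation set, a compact convex set with nonempty interior by (C1). On the base, $\Phi(\mu)=h_\mu(f)+H(\rot(\mu))$. For fixed rotation vector $w$, the maximum of $h_\mu$ is attained by a unique equilibrium state of a Hölder potential $D\cdot\tau$, where $D=D(w)$ comes from Legendre duality. The entropy profile $w\mapsto \max\{h_\mu:\rot(\mu)=w\}$ is strictly concave and real analytic in the interior, by analyticity of pressure and (C1). Maximizers of $\Phi$ therefore correspond to maximizers of $E(w)+H(w)$ over the rotation set.

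On each of the finitely many linear cones where $H$ is linear, the function $E+H$ is strictly concave, so it has at most one maximizer per cone. I also have to exclude maximizers on the boundary of the rotation set; I would use that $E$ has infinite derivative there. This yields finitely many $\mu$, each an equilibrium state of a Hölder potential $\langle D,\tau\rangle$. Measures that are not ergodic or have cancellations only lower $\Phi$ and are handled by convexity.

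For product rigidity, fix an ergodic MME $\zeta$ over one such $\mu$. Since $\zeta$ maximizes, its fiber entropy equals $\int H\circ\tau\,d\mu>0$; positivity follows from $\dim Z_\rho<k$ and (C1). Here I would use a relative Ledrappier--Young theorem: equality with the Lyapunov sum forces the fiber conditionals $\zeta_x$ to be absolutely continuous along the unstable foliations of the fiber maps $\rho_{S_n\tau(x)}$. Applying the same argument to $F^{-1}$ gives absolute continuity along the stable foliations.

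This is where the $C^\infty$-or-hyperbolic hypothesis enters. Yomdin theory in the $C^\infty$ case (or hyperbolicity) makes the entropy of $\rho_{\mathbf t}$ upper semicontinuous, and it controls the tail entropy so that the Ledrappier--Young equality is genuinely available.

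Accessibility of $\rho_{\mathbf s}$ with $H(\mathbf s)>0$ then lets an invariance principle (Avila--Viana style) show that $x\mapsto \zeta_x$ is continuous, invariant under stable and unstable holonomies, and hence $\zeta_x$ is $su$-absolutely continuous. Combining this with ergodicity of $m_{\MME}$ and accessibility, I expect $\zeta_x=m_{\MME}$ for $\mu$-a.e. $x$, so $\zeta=\mu\times m_{\MME}$. This lift is unique over each base projection, which gives finiteness.

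I expect the main obstacle to be this relative Ledrappier--Young step. Converting maximal fiber entropy into absolute continuity requires a random-dynamics Ledrappier--Young theory along $\mu$-typical orbits. The difficulty is that the fiber maps form a non-uniformly time-varying composition of action elements, including neutral directions coming from $Z_\rho$ and from the orbit directions of the action; the $r$-neutralized entropy is what is needed to quotient these out.
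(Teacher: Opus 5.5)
Your base-finiteness argument is essentially the paper's proof of Theorem \ref{goal theorem}: (C1) gives the rotation set nonempty interior, the entropy profile plus $H$ is strictly concave on each linearity cone, and the profile has infinite slope at the boundary. Working with $\tau$ directly instead of the reduced family $\Phi$ is fine, since (C1) is exactly the independence condition for $\tau$. The rigidity half, however, has two genuine gaps.

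First, the step from leafwise to fiberwise absolute continuity cannot be done with an Avila--Viana invariance principle. That principle needs degenerate fiber exponents: $\lambda_-\ge 0$ for $s$-invariance and $\lambda_+\le 0$ for $u$-invariance. Here the fiber entropy is $H(\mu(\tau))>0$, so by the relative Ruelle inequality for $F$ and $F^{-1}$ the fiber exponents of $\zeta$ take both signs. Moreover, its conclusion is invariance of $x\mapsto\zeta_x$ under the base holonomies $\rho_{\beta^s(x,x')}$ and $\rho_{\beta^u(x,x')}$. That says nothing about whether an individual $\zeta_x$ is absolutely continuous on $Y$.

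The paper instead argues inside a single fiber (Proposition \ref{fiberwise rigidity proposition}). It starts from absolute continuity along the fiberwise $\widehat W^u$ and $\widehat W^s$ Pesin leaves, given by Theorem \ref{relative SRB} for $F$ and $F^{-1}$. It then builds local boxes by successive stable/unstable saturations and holonomy transports, and keeps $\zeta_{P_j}\ll m_{P_j}$ via absolutely continuous holonomies and Fubini. Accessibility guarantees that a dimension-increasing direction is always reached; this is needed because the orbit and kernel directions keep $E^s\oplus E^u$ from spanning. Finally it propagates along $su$-paths to get $\zeta_x\ll m_Y$.

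Second, even with $\zeta_x\ll m_{\MME}$, ergodicity of $m_{\MME}$ under the action does not give $\zeta_x=m_{\MME}$. The density $\rd\zeta/\rd(\mu\times m_{\MME})$ is an $F$-invariant function, so you need ergodicity of $\mu\times m_{\MME}$ under $F$. That fails, for example, when $\tau$ is cohomologous into a proper subspace; for $\tau\equiv0$ one gets $f\times\mathrm{id}$. The paper obtains ergodicity from (C1), via the mixing local limit theorem for $\tau$ (Guivarc'h--Hardy) and the ergodicity criterion for generalized $T,T^{-1}$ transformations (Proposition \ref{unique measure}). This use of (C1) is missing from your plan.

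The fiber-entropy upper bound also lacks a working mechanism, and you attach the smoothness hypothesis to the wrong step. Convexity and subadditivity along $S_n\tau$ only give $\int H\circ\tau\,\rd\mu$. By Jensen this is $\ge H(\mu(\tau))$ and usually strictly larger; your positivity claim still carries this quantity.

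To prove $h_{\top}(F,\pi^{-1}(x))\le H(\mu(\tau))$, one must compare the Bowen metric along the intermediate parameters $S_j\tau(x)$, $j<n$, with Bowen balls of $\rho_{\mu(\tau)}$. Since $S_j\tau(x)-j\mu(\tau)$ is $o(n)$ but unbounded, this forces radii of order $\Upsilon^{-\delta n}$. One therefore needs $\lim_{r\to0}h^r_{\top}(\rho_{\mathbf a})=h_{\top}(\rho_{\mathbf a})$ for the $r$-neutralized topological entropy.

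That weak variational principle is exactly where Yomdin's estimate (Corollary \ref{estimate corollary}) or hyperbolicity (Example \ref{weak conjecture-hyperbolic}) enters. It yields the cone property (Proposition \ref{homogeneous proposition}) and Theorem \ref{higher le:fiber-entropy}. The relative Ledrappier--Young theorem needs only $C^{1+\beta}$. Placing the $C^\infty$/hyperbolic hypothesis there leaves the upper bound unproved, and with it the reduction to the functional $h_\mu(f)+H(\mu(\tau))$.
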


\begin{remark}\label{main theorem-remark 1}
Writing $N(F)$ for the number of ergodic MMEs, one has
\begin{equation}\label{intro:eq:bound}
1\leq N(F)\leq 2\sum_{j=0}^{k-1}\binom{\dim Y-k-1}{j}.
\end{equation}
\end{remark}

Condition (C1) rules out cohomological reduction of the cocycle
to a proper affine subspace of $\mathbb R^k$. 
Condition (C2) is used to pass from rigidity along stable and unstable leaves to rigidity on the whole fiber.
The two conditions therefore enter at different stages of the proof.
The examples in Section \ref{sect:homogeneous-bounds} show that, without joint nontriviality, uncountably
many ergodic MMEs can occur. The bound \eqref{intro:eq:bound} comes from the
Lyapunov hyperplane arrangement, rather than from an estimate on the number
of invariant lifts.

%\subsection{Homogeneous fibers}\label{subsect: Detailed statement of main results}

We now consider examples where the fibers are homogeneous spaces. 

\begin{example}\label{DDkN22-2-Example 8}
Identify $\R^k$ with the positive diagonal subgroup of
$\SL(k+1,\R)$, and let $\rho$ act by left translation on
$Y=\SL(k+1,\R)/\Gamma$, where $\Gamma$ is a cocompact lattice.
For $k=1$ this is the homogeneous model of the geodesic flow; for $k\geq2$
it is the Weyl chamber flow; see \cite[Example 8]{DDKN22-2}.
Haar probability measure is an MME for every
element, and every nonzero parameter has positive entropy. The action is exponentially
mixing of all orders \cite{BEG20}. The stable and
unstable root directions give the accessibility required below.

\end{example}

\begin{theorem}\label{main theorem-SL-corollary-1}
Let $f$ be a topologically mixing $C^{1+\beta}$ Anosov diffeomorphism,
and let $\rho$ be the diagonal action on
$\SL(k+1,\R)/\Gamma$ described above. For a $C^{1+\beta}$ cocycle
$\tau\colon X\to\R^k$ which is not cohomologous 
 to a cocycle taking values in a proper affine subspace of $\mathbb{R}^k$, 
the skew product \eqref{intro:eq:skew} has finitely many ergodic MMEs. 
Each is of the form $\mu\times m_{\operatorname{Haar}}$, 
where $\mu$ is an equilibrium state of $f$ for a H\"older continuous potential.

 Moreover, each of these MMEs is Bernoulli, exponentially
mixing of all orders, and satisfies the central limit theorem.
\end{theorem}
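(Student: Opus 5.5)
The plan is to deduce the statement from Theorem \ref{main theorem}, with the extra statistical properties added by a separate argument using exponential mixing of all orders. First I will check the hypotheses of Theorem \ref{main theorem} for the diagonal action $\rho$ on $Y=\SL(k+1,\R)/\Gamma$. The action is by left translation, hence real-analytic, so in particular every $\rho_{\mathbf t}$ is $C^\infty$. It is locally free, since $\Gamma$ is cocompact and hence contains no nontrivial unipotents fixing a point under a diagonal flow. Haar measure $m_{\operatorname{Haar}}$ is smooth, and it is ergodic for the action by Moore's ergodicity theorem. It is also an MME for each $\rho_{\mathbf t}$, since for homogeneous translations Haar measure maximizes entropy and the entropy equals the sum of the positive root values.

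The transverse Lyapunov functionals are the roots $\chi_{ij}(\mathbf t)=t_i-t_j$, for $i\neq j$, in coordinates on the diagonal Lie algebra. Their common kernel inside the traceless diagonal is $\{0\}$, so $Z_\rho=\{0\}$ and $\dim Z_\rho=0<k$. For accessibility of $\rho_{\mathbf s}$ with $\mathbf s\neq 0$, the stable and unstable foliations are orbits of the horospherical subgroups $U^\pm_{\mathbf s}$, generated by the root groups with $\chi(\mathbf s)<0$ and $\chi(\mathbf s)>0$ respectively. Since $\mathbf s\neq0$, these two groups are nontrivial. Together they generate a normal subgroup of the simple group $\SL(k+1,\R)$: the subgroup they generate is normalized by the centralizer and contains root groups for a generating set. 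Hence they generate all of $\SL(k+1,\R)$, and any two points of $Y$ are joined by $su$-paths. This step is where I expect some care: if $\mathbf s$ lies on a wall, some roots vanish, so I must verify that the nonvanishing roots still generate. The standard argument is that the Lie algebra generated by $\mathfrak g_\alpha$ with $\alpha(\mathbf s)\neq0$ is an ideal, because brackets with $\mathfrak g_\beta$, $\beta(\mathbf s)=0$, stay in it. It is therefore all of $\mathfrak{sl}(k+1)$ by simplicity. Condition (C1) is assumed, so $(\tau,\rho)$ is jointly nontrivial, and Theorem \ref{main theorem} gives finiteness together with the form $\zeta=\mu\times m_{\operatorname{Haar}}$, with $\mu$ an equilibrium state of a Hölder potential.

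For the statistical properties I will use that $\mu$ is a Gibbs equilibrium state of a transitive Anosov diffeomorphism, so $(f,\mu)$ is Bernoulli and exponentially mixing of all orders for Hölder observables, via a Markov partition. I will also use that $\rho$ is exponentially mixing of all orders for smooth observables on $Y$ \cite{BEG20}.

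For multiple exponential mixing of $F$, I will take Hölder observables $\Phi_j(x,y)$, Fourier/Sobolev-expand in $y$, and write
\[
\int\prod_j\Phi_j\circ F^{n_j}\,d\zeta
=\int\!\!\int\prod_j\Phi_j\bigl(f^{n_j}x,\rho_{\tau_{n_j}(x)}y\bigr)\,dm\,d\mu(x),
\]
where $\tau_n$ is the Birkhoff sum of $\tau$. The inner integral is controlled by mixing of $\rho$ in terms of the gaps $|\tau_{n_{j+1}}(x)-\tau_{n_j}(x)|$.

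This is the main obstacle, since these gaps can be small on exceptional sets. I will handle it with a large deviation estimate for $\tau$ under $\mu$. By (C1), no direction $D\cdot\tau$ is cohomologous to a constant, hence the asymptotic covariance is nondegenerate. If the mean $\int\tau\,d\mu$ is nonzero, the gaps grow linearly off sets of exponentially small measure. If the mean is zero, I will use the local limit theorem to get $\mu(|\tau_n|<\sqrt{n}^{\,\epsilon})$ polynomially small. That polynomial bound is not exponential, so in this case I will instead combine it with mixing of the base in time and iterate, in the style of Dolgopyat's treatment of $T,T^{-1}$-type extensions; this is the step I expect to be delicate. Bernoullicity then follows, as in Kalinin--Katok--Rodriguez Hertz or Dolgopyat--Kanigowski--Rodriguez Hertz, from multiple exponential mixing together with the product structure and $K$-property: the Pinsker factor is trivial by the maximal-entropy product structure, and the Ornstein very-weak-Bernoulli criterion holds. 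The CLT for Hölder observables follows from exponential mixing of all orders via the cumulant method.
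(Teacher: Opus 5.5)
Your verification of the hypotheses of Theorem \ref{main theorem} follows the paper, which simply records $Z_\rho=\{0\}$ and accessibility from the root directions. Your ideal argument for parameters on walls is a correct way to fill this in. Local freeness needs no unipotent or cocompactness argument: the stabilizer $A\cap g\Gamma g^{-1}$ is discrete. For the statistical conclusions the paper does not argue from scratch. It applies Corollary \ref{main theorem-mixing}, i.e.\ \cite[Theorems 1.1 and 1.2]{DQ26} with the multiple mixing of \cite{BEG20}. The essential inputs there are the product form, the Gibbs property of $\mu$, and the fact that the drift $\mu(\tau)$ is nonzero.

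That last input is missing from your proposal, and the branch you build in its place cannot be completed. You plan to get exponential multiple mixing also when $\int\tau\,\rd\mu=0$ by iterating base mixing, but exponential mixing is false there. Take $\Phi(x,y)=\psi(y)$ with $\int\psi\,\rd m=0$. Then $\int\Phi\circ F^n\cdot\Phi\,\rd\zeta=\int C_\psi(\tau_n(x))\,\rd\mu(x)$, where $C_\psi(\mathbf t)=\int\psi\circ\rho_{\mathbf t}\cdot\psi\,\rd m$. The local limit theorem you invoke makes this of order $n^{-k/2}$ for generic $\psi$, not exponentially small. Zero-drift products of this type can even be $K$ without being Bernoulli \cite{DDKN22-2}.

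The point is that this case never arises. Corollary \ref{projection not MME}, which uses (C1) and $\dim Z_\rho<k$, shows that every projection $\mu=\pi_*\zeta$ of an ergodic MME has $H(\mu(\tau))>0$, hence $\mu(\tau)\neq0$. With that, only your nonzero-drift branch is needed, and it works:
\begin{enumerate}
\item For fixed $x$, exponential multiple mixing of $\rho$ replaces the $y$-integral of $\prod_j\Phi_j(f^{n_j}x,\rho_{\tau_{n_j}(x)}y)$ by $\prod_j\bar\Phi_j(f^{n_j}x)$, where $\bar\Phi_j=\int\Phi_j(\cdot,y)\,\rd m(y)$.
\item The error is controlled by $\exp\bigl(-c\min_{i<j}\|\tau_{n_j-n_i}(f^{n_i}x)\|\bigr)$. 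Because $\mu(\tau)\neq0$, the large deviation bound for $\mu$ makes this exponentially small in $\min_{i<j}|n_j-n_i|$.
\item Base multiple mixing of $(f,\mu)$ then finishes, and the CLT follows by cumulants.
\end{enumerate}
Bernoullicity should likewise be derived from this exponential mixing through a very-weak-Bernoulli argument, as in \cite{DQ26}. Your remark that the maximal-entropy product structure makes the Pinsker factor trivial is not a justification, and the $K$-property alone would not suffice anyway.
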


\iffalse
Here the cohomological hypothesis is sharp. If $\tau=u-u\circ f+{\bf c}$, the map
$\Psi(x,y)=(x,\rho_{u(x)}y)$ satisfies
$\Psi\circ F\circ\Psi^{-1}=f\times\rho_{\bf{c}}$.
If $\mu_0$ is the unique MME of $f$, the measures
$\mu_0\times\delta_y$, $y\in Y$, give uncountably many ergodic MMEs of the
conjugate system. We also remark that the proof of Bernoulli properties is given separately in \cite{DQ26}.

We also have the following dichotomy.

{\color{blue}
\begin{corollary}\label{Weyl dichotomy corollary}
In the setting of Example \ref{DDkN22-2-Example 8}, 
let $f$ be a topologically mixing $C^{1+\beta}$ Anosov diffeomorphism and let $\tau$ be any
$C^{1+\beta}$ cocycle. Precisely one of the following alternatives holds:

\begin{enumerate}[(i)]
\item $\tau$ is not cohomologous to a constant, 
the conclusions of Theorem \ref{main theorem-SL-corollary-1} hold, 
$h_{\top}(F)>h_{\top}(f)$, 
and for every ergodic MME $\mu\times m_{\operatorname{Haar}}$ of $F$, 
the base component $\mu$ is not the MME of $f$.

\item $\tau$ is cohomologous to a nonzero constant, 
$h_{\top}(F)>h_{\top}(f)$, and $F$ has a unique 
MME $\mu\times m_{\operatorname{Haar}}$, where 
$\mu$ is the MME of $f$. 

\item $\tau$ is cohomologous to zero, $h_{\top}(F)=h_{\top}(f)$, 
and $F$ has uncountably many ergodic MMEs. 
\end{enumerate}
\end{corollary} 
}
\fi

In this example $Z_\rho=\{0\}$, so $\dim Z_\rho=0<k$ and (C2) is exactly satisfied. 
%Thus the general nondegeneracy condition has a simple cohomological interpretation.
We also remark that the proof of Bernoulli properties is given separately in \cite{DQ26}.

\subsection{Entropy formulas and measure rigidity}\label{subsect:entropy-projections-intro}

For $\mu\in\mathcal M_e(f,X)$, define its drift by
\[
\mu(\tau)=\int_X\tau\,\rd\mu.
\]
The iterates of $F$ sample the fiber action at the cocycle sums
$\tau_n(x)=\sum_{j=0}^{n-1}\tau(f^jx)$, and
$\tau_n(x)/n\to\mu(\tau)$ for $\mu$-almost every $x$.
The first step is to show that this asymptotic direction determines the
fiber entropy. The distinction between a product measure and an arbitrary
invariant lift is essential.

\begin{theorem}\label{higher le:product}
Let $X$ and $Y$ be closed Riemannian manifolds. Suppose that $f\colon X\to X$,
$\tau\colon X\to\R^k$, and the locally free action $\rho$ on $Y$ are
$C^{1+\beta}$. If $\mu\in\mathcal M_e(f,X)$, $\nu$ is
$\rho_{\mu(\tau)}$-invariant, and $\mu\times\nu$ is $F$-invariant, then
\begin{equation}\label{intro:eq:product}
h_{\mu\times\nu}(F)=h_\mu(f)+h_\nu(\rho_{\mu(\tau)}).
\end{equation}
\end{theorem}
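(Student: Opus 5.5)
By the Abramov--Rokhlin formula applied to the factor $\pi\colon(X\times Y,F,\mu\times\nu)\to(X,f,\mu)$, I will write
\[
h_{\mu\times\nu}(F)=h_\mu(f)+h_{\mu\times\nu}(F\mid\pi).
\]
The task then reduces to showing that the fiber entropy equals $h_\nu(\rho_{\mu(\tau)})$. Set $\mathbf c=\mu(\tau)$. The fiber maps are $F_x^n=\rho_{\tau_n(x)}$, and $\tau_n(x)=n\mathbf c+o(n)$ for $\mu$-a.e.\ $x$ by Birkhoff. So the fiber cocycle is the linear orbit $\rho_{n\mathbf c}$ up to a sublinear displacement $\rho_{\mathbf e_n(x)}$ along the orbit foliation, where $\mathbf e_n(x)=\tau_n(x)-n\mathbf c$. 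Since the action is locally free, this displacement only moves points along $k$-dimensional orbits, and these carry zero exponents. The plan is to make precise the heuristic that ``sublinear time changes inside a commutative action do not change entropy.''

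For the lower and upper bounds I will use Pesin theory for the random (fibered) system. First, the Lyapunov spectrum of the fiber cocycle $D\rho_{\tau_n(x)}$ at $(x,y)$ should coincide with that of $\rho_{\mathbf c}$ at $y$ with respect to $\nu$. The differences between the two come from $D\rho_{\mathbf e_n(x)}$. These have norms bounded by $e^{C|\mathbf e_n(x)|}=e^{o(n)}$, using that the action is $C^1$ with $\|D\rho_{\mathbf t}\|\le e^{C|\mathbf t|}$, so they do not affect exponents. Moreover, the fiber unstable manifolds of $F$ at $(x,y)$ should be the unstable manifolds $W^u_{\rho_{\mathbf c}}(y)$. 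The reason is that $\rho$ commutes and $\rho_{\mathbf e}$ maps $W^u_{\rho_{\mathbf c}}$ leaves to $W^u_{\rho_{\mathbf c}}$ leaves. The fiber-unstable leaves of the skew product can then be identified with those of $\rho_{\mathbf c}$ at the same point.

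Next I will apply the relative Ledrappier--Young formula, following Bahnmüller--Liu for random diffeomorphisms, to get $h_{\mu\times\nu}(F\mid\pi)=\int\dim\text{-weighted}$ sums $\sum\gamma_i\lambda_i$. Here the $\gamma_i$ are transverse dimensions of the fiber conditionals of $\mu\times\nu$ along the unstable subfoliations. These conditionals are the conditionals of $\nu$ on the same leaves, because the measure is a product. Applying the ordinary Ledrappier--Young formula for $(\rho_{\mathbf c},\nu)$ gives $h_\nu(\rho_{\mathbf c})=\sum\gamma_i\lambda_i$ with the same data, and this yields the equality.

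Alternatively, avoiding Ledrappier--Young, I will compare Bowen balls directly via Brin--Katok. On the fiber $\{x\}\times Y$, the fiber Bowen ball $B^F_n((x,y),\epsilon)$ is sandwiched between the $\rho_{\mathbf c}$-Bowen balls of radius $\epsilon e^{\pm o(n)}$ after reparametrizing by the sublinear drift. Local freeness lets me absorb an orbit displacement of size $|\mathbf e_j(x)|=o(n)$, which costs only subexponentially many $\epsilon$-balls. This is the ``$r$-neutralized'' idea mentioned in the abstract: neutral-direction (orbit) spreading contributes zero exponential rate. The main obstacle is uniformity. The error $\mathbf e_j(x)$ is $o(j)$ only asymptotically and nonuniformly in $x$. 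I will handle this with a Lusin/Egorov set of $x$ of large measure on which $|\mathbf e_j(x)|\le\delta j$ for all $j\ge N$, apply the comparison there, and let $\delta\to0$, using that the entropy of $\rho_{\mathbf c}$ is insensitive to $\delta n$-neighborhoods along the orbit directions. Fubini over the product measure and the relative Brin--Katok theorem then give the fiber entropy $h_\nu(\rho_{\mathbf c})$.
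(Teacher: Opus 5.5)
Your first route is correct and genuinely different from the paper's. Your second route misdescribes its key step and, as written, has a gap.

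\textbf{First route.} You split off $h_\mu(f)$ by Abramov--Rokhlin and identify the fibre entropy through an entropy--dimension formula. Since $\rho_{\tau_n(x)}=\rho_{\mathbf e_n(x)}\circ\rho_{n\mathbf c}$, and $\rho_{\mathbf e_n(x)}$ and its inverse are $\Upsilon^{|\mathbf e_n(x)|+1}=e^{o(n)}$-Lipschitz, the following coincide with those of $(\rho_{\mathbf c},\nu)$: the fibre exponents, the fast-unstable laminations $W^1\subset\cdots\subset W^u$, and hence the leafwise dimensions of the fibre measure $\nu$. Two corrections are needed.
\begin{itemize}
\item The coincidence of leaves does not come from commutativity. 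It comes from this distortion bound applied to the backward sums $\sum_{i=1}^n\tau(f^{-i}x)=n\mathbf c+o(n)$, and it must be checked for every $W^i$, not only $W^u$.
\item What you need is the random analogue of Ledrappier--Young Part~II ($h=\sum\lambda_i\gamma_i$), not the Bahnm\"uller--Liu characterisation of equality in Pesin's formula. You would also have to check that it is available for $C^{1+\beta}$ fibre maps, since the random Pesin-theory literature is usually written in $C^2$.
\end{itemize}

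\textbf{The paper's route.} It needs none of this machinery and never passes to fibre entropy. It proves directly on $X\times Y$ the inclusions
\[
B\bigl(F,(x,y),n,\Upsilon^{-3n\delta}\bigr)\subset B(f,x,n,\epsilon)\times B(\rho_{\mathbf c},y,n,\epsilon),\qquad
B(f,x,n,\Upsilon^{-3n\delta})\times B(\rho_{\mathbf c},y,n,\Upsilon^{-3n\delta})\subset B\bigl(F,(x,y),n,\epsilon\bigr).
\]
These use that $\tau$ is Lipschitz (to control the base displacement) and the same $e^{o(n)}$ distortion (to control the drift). The paper then evaluates $\mu\times\nu$ on product sets and concludes with Brin--Katok and Theorem~\ref{ORH23-Theorem 3.4}. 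Both are applied only to the deterministic $C^{1+\beta}$ maps $F$, $f$ and $\rho_{\mathbf c}$. Your route gives a conceptual explanation; the paper's is much lighter.

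\textbf{Second route.} This one is close to the paper's, but it misplaces the difficulty.
\begin{itemize}
\item The non-uniformity of $\mathbf e_j(x)$ is harmless. Local entropies are pointwise, so constants depending on $x$ cost nothing and no Egorov set is needed.
\item The real issue is that any sandwich puts a radius $\epsilon e^{-\delta n}$ on one side. Removing it has nothing to do with orbit directions or local freeness. $\rho_{\mathbf e_j(x)}$ is an action element whose derivative can expand transverse directions at rate $e^{O(|\mathbf e_j|)}$, as in Weyl chamber flows; only $|\mathbf e_j|=o(j)$ helps.
\item It is also not a matter of ``subexponentially many balls''. At fixed $\delta$ the covering cost is genuinely exponential. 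No uniform bound of order $\delta$ is known for general $C^{1+\beta}$ maps, which is why the topological cone property needs the $C^\infty$, hyperbolic or homogeneity hypotheses.
\end{itemize}
The fact you actually need is measure-theoretic: for $\nu$-a.e.\ $y$, $\lim_{r\to0}\limsup_n-\frac1n\log\nu\bigl(B(\rho_{\mathbf c},y,n,e^{-nr})\bigr)$ equals the Brin--Katok local entropy. This is the Ben Ovadia--Rodriguez Hertz theorem. In your fibrewise set-up, the lower bound puts the shrunken radius on the fibred system, so a relative Brin--Katok theorem is not enough; you would need a fibred version of that theorem. The paper avoids this by applying the theorem to $F$ itself.
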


\begin{theorem}[Maximal relative entropy]\label{higher le:fiber-entropy}
Suppose that $f$ is continuous on a compact metric space, $\tau$ is
continuous, and $\rho$ is a locally free $C^{1+\beta}$ action on a closed
manifold. If every nonzero element of the action has the cone property
of Definition \ref{def:cone property}, then, for every $\mu\in\mathcal M_e(f,X)$,
\begin{equation}\label{intro:eq:relative}
\sup_{\substack{\zeta\in\mathcal M(F,X\times Y)\\\pi_*\zeta=\mu}}h_\zeta(F)
=h_\mu(f)+H(\mu(\tau)).
\end{equation}
Consequently,
\begin{equation}\label{intro:eq:top}
h_{\top}(F)=\sup_{\mu\in\mathcal M_e(f,X)}\bigl\{h_\mu(f)+H(\mu(\tau))\bigr\}.
\end{equation}
\end{theorem}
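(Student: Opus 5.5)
The plan is to prove the two inequalities in \eqref{intro:eq:relative} separately and then deduce \eqref{intro:eq:top} from the variational principle.

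\emph{Upper bound.} We use the Abramov--Rokhlin formula, $h_\zeta(F)=h_\mu(f)+h_\zeta(F\mid\pi)$. It therefore suffices to show that the fiber entropy satisfies $h_\zeta(F\mid\pi)\le H(\mu(\tau))$ for every invariant lift $\zeta$ of $\mu$.

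By the Ledrappier--Walters relative variational principle, $h_\zeta(F\mid\pi)$ is at most $\int h_{\top}(F,\pi^{-1}x)\,\rd\mu$, where $h_{\top}(F,\pi^{-1}x)$ is the fiberwise topological entropy. The fiber dynamics along the orbit of $x$ is
\[
F^n|_{\pi^{-1}x}=\rho_{\tau_n(x)}.
\]
By Birkhoff, $\tau_n(x)=n\mu(\tau)+o(n)$ for $\mu$-a.e.\ $x$. The key step is a quasi-uniform estimate: for each $\epsilon>0$, the number of $(n,\delta)$-separated sets for the time-varying sequence $\rho_{\tau_1(x)},\dots,\rho_{\tau_n(x)}$ is bounded by $e^{n(H(\mu(\tau))+\epsilon)}$ for large $n$.

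We plan to get this from the cone property of Definition \ref{def:cone property}, applied to the element $\mathbf t=\mu(\tau)$ (assuming $\mu(\tau)\neq0$; the zero case is handled separately below). The cone property should say that the action elements $\rho_{\mathbf s}$ with $\mathbf s$ in a cone around $\mathbf t$ have uniformly comparable expansion. The partial sums stay in a thin neighborhood of the ray, at distance $o(n)$, and local freeness lets us absorb the defect $\rho_{o(n)}$ at subexponential cost in the number of Bowen balls. Together these bound the separated sets by those of $\rho_{\mathbf t}^n$ up to $e^{o(n)}$, i.e.\ by a $r$-neutralized entropy type count. Since the count in \eqref{intro:eq:action-entropy} gives $H(\mathbf t)=h_{\top}(\rho_{\mathbf t})$, the bound follows.

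When $\mu(\tau)=0$, the partial sums are $o(n)$. We expect Lipschitz-in-time growth of $\rho$ along a sublinear path to give zero exponential growth of separated sets, though this needs the $C^{1}$ bound $\|D\rho_{\mathbf s}\|\le e^{C|\mathbf s|}$ combined with a sublinearity argument. That bound alone only gives $e^{Co(n)}$ growth of derivatives, which is enough.

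The main obstacle is this uniformity. Birkhoff convergence is not uniform in $x$, and the separated-set count must be controlled along a random path. We would handle it with an Egorov/Katok-type argument: restrict to a set of large $\mu$-measure where $|\tau_n(x)-n\mu(\tau)|<\epsilon n$ for all $n\ge N$, and estimate fiber entropy via a Katok formula or Brin--Katok for the conditional measures on that set.

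\emph{Lower bound.} Take $\nu$ to be an MME for $\rho_{\mu(\tau)}$. One exists because the cone property gives entropy expansiveness, hence upper semicontinuity of entropy. We cannot directly take $\mu\times\nu$, since it need not be $F$-invariant. Instead, take $\zeta$ to be any weak-$*$ limit point of
\[
\frac1n\sum_{j<n}F^j_*(\mu\times\nu).
\]
This $\zeta$ projects to $\mu$. Upper semicontinuity of the fiber entropy, again from the cone property's entropy expansiveness, together with the same drift comparison, yields $h_\zeta(F\mid\pi)\ge h_\nu(\rho_{\mu(\tau)})=H(\mu(\tau))$.

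Alternatively, when $\nu$ is $\rho$-invariant for the whole action (as for $m_{\MME}$), the product $\mu\times\nu$ is itself invariant and Theorem \ref{higher le:product} applies directly. For the general setting we would use the averaging construction with a Misiurewicz-style counting argument.

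Finally, \eqref{intro:eq:top} follows by the variational principle, taking the supremum over ergodic $\mu$ after an ergodic decomposition of $\zeta$.
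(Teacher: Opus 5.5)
\textbf{The gap is in your lower bound.}

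\emph{The cone property does not give you $\nu$.} The cone property only concerns $\epsilon\to0$ limits of global spanning numbers along sequences and cones. It gives no control of entropy at a fixed small scale: for $k=1$ it holds for every flow, because a sequence with steps bounded by $K_0$ samples each time interval $K_0$-densely. So it yields neither entropy expansiveness, nor upper semicontinuity of (fiber) entropy, nor an MME for $\rho_{\mu(\tau)}$, and a $C^{1+\beta}$ diffeomorphism need not have one.

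\emph{Even granting $\nu$, the key inequality is unproved.} The inequality $h_\zeta(F\mid\pi)\ge h_\nu(\rho_{\mu(\tau)})$ for a Ces\`aro limit $\zeta$ of $F^j_*(\mu\times\nu)$ is the whole difficulty. It needs a lower bound for the entropy of $\nu$ along the sequences $(S_j\tau(x))_j$, which you do not supply. The paper's analogous estimate, Theorem \ref{SNE is SE}, is proved only for measures invariant under the entire action.

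\emph{The fallback is not available.} Theorem \ref{higher le:product} requires $f$ and $\tau$ to be $C^{1+\beta}$ on a manifold, and a maximizing $\nu$ invariant under every $\rho_{\tau(x)}$. None of this is among the hypotheses.

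\emph{The missing observation.} Theorem \ref{LW77-Theorem 2.1} is an \emph{equality},
$\sup_{\pi_*\zeta=\mu}h_\zeta(F)=h_\mu(f)+\int_X h_{\top}(F,\pi^{-1}x)\,\rd\mu(x)$, and the cone property gives an \emph{equality} for the fiber term. For Birkhoff-generic $x$, the Bowen metric of $F$ on $\{x\}\times Y$ is exactly the sequence metric $d^{A_x}_n$ with $A_x=(S_j\tau(x))_j$. Its increments are bounded by $\|\tau\|_\infty$ and its asymptotic direction is $\mu(\tau)$. Hence, when $\mu(\tau)\neq0$, Definition \ref{def:cone property} gives
$h_{\top}(F,\pi^{-1}x)=h^{A_x}_{\top}(\{\rho_{\mathbf t}\})=h_{\top}(\rho_{\mu(\tau)})$.
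When $\mu(\tau)=0$, your sublinear estimate (which matches the paper's) gives $0$. Both inequalities in \eqref{intro:eq:relative} then follow at once, with no lift to construct. This is the paper's proof.

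\textbf{Your upper bound.} It has the same architecture as the paper's, but it rests on a misreading of Definition \ref{def:cone property}. The definition is not a statement about uniformly comparable expansion in a cone; it directly asserts $h^A_{\top}=h_{\top}(\rho_{\mathbf a})$.

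The ``main obstacle'' you identify is therefore illusory. Ledrappier--Walters integrates the pointwise quantity $h_{\top}(F,\pi^{-1}x)$, so no Egorov or Katok uniformity in $x$ is needed.

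Your own counting sketch would also not close without this hypothesis. Absorbing the $o(n)$ drift costs a factor $\Upsilon^{\delta n+C_\delta}$ for every $\delta>0$, so you are forced to use $\rho_{\mu(\tau)}$-Bowen balls of radius $e^{-rn}$. That only bounds the fiber entropy by $\lim_{r\to0}h^r_{\top}(\rho_{\mu(\tau)})$. Identifying this limit with $h_{\top}(\rho_{\mu(\tau)})$ is exactly the weak variational principle (Conjecture \ref{weak conjecture}), which is open for general $C^{1+\beta}$ maps. This is precisely what the cone property hypothesis supplies; local freeness plays no role.
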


The cone property controls the entropy of action trajectories whose parameters
have a prescribed asymptotic direction. We establish it through
$r$-neutralized entropy, which uses Bowen balls with exponentially shrinking
radii; see Section \ref{sect:neutralized-entropy} and \cite{DQ25}. The smooth and hyperbolic hypotheses in Theorem \ref{main theorem} enter
at this point. The proofs of both entropy formulas are given in
Section \ref{subsect: Entropy formulas for skew products}.
The relative variational principle then yields
\eqref{intro:eq:relative}, including an upper bound for lifts whose fiber
conditional measures may depend on the base point.

Formula \eqref{intro:eq:top} reduces the possible projections of ergodic MMEs to
the maximizers of
\begin{equation}\label{intro:eq:functional}
\mathcal E(\mu)=h_\mu(f)+H\!\left(\int_X\tau\,\rd\mu\right).
\end{equation}
The second term depends on a vector of integrals and need not be the integral
of one fixed potential. This is a nonlinear thermodynamic problem of the type
studied in \cite{BKL23,Ku24}; see also \cite{Jen01,KW14,KW15} for rotation theory. Its geometry is governed by the rotation set
\[
\rot(\tau)=\left\{\int_X\tau\,\rd\mu:\mu\in\mathcal M(f,X)\right\}
\]
and the finitely many linearity regions of $H$.

On each such region, the fiber entropy is a linear combination of the
Lyapunov functionals. The corresponding potential on the base is therefore
a H\"older linear combination of the components of $\tau$. Combining this
observation with thermodynamic and rotation arguments gives finitely many
maximizing projections, each a classical equilibrium state
(Theorem \ref{goal theorem}). Counting the
regions determined by the Lyapunov hyperplanes gives \eqref{intro:eq:bound}.
The argument also shows that every maximizing projection has
$H(\mu(\tau))>0$, and hence $\mu(\tau)\ne0$.

This establishes finiteness on the base. It does not yet establish finiteness
for $F$: several, or even infinitely many, ergodic invariant lifts could
in principle project to the same $\mu$.

The second step identifies the lifts by using the equality case of the
fiber entropy bound. Let $\widehat W^u$ denote the unstable foliation within
the fibers, tangent to the positive fiber Lyapunov subspaces of $F$ and
constructed in Section \ref{subsect: Invariant foliations for the skew product}.
\begin{definition}\label{def: subordinate}
An $F$-invariant measure $\zeta$ has \emph{absolutely continuous conditional measures}
on $\widehat W^u$ if, for every measurable partition $\xi$ subordinate to
$\widehat W^u$, the conditional measure $\zeta_z^\xi$ is absolutely continuous
with respect to the induced Riemannian volume on $\xi(z)$ for $\zeta$-almost every $z$.
\end{definition}

\begin{theorem}[Relative Ledrappier--Young theorem]\label{relative SRB}
For a $C^{1+\beta}$ skew product \eqref{intro:eq:skew}, let $\zeta\in\mathcal M_e(F,X\times Y)$
and $\mu=\pi_*\zeta$. The conditional measures of $\zeta$ on
$\widehat W^u$ are absolutely continuous if and only if
\begin{equation}\label{relative SRB-(1)}
h_\zeta(F)-h_\mu(f)
=\sum_{i\in\operatorname{LY}_{\operatorname{fiber}}^+}\chi_i\dim E_i,
\end{equation}
where the sum is taken over the positive fiber Lyapunov exponents $\chi_i$,
and $E_i$ are the corresponding Oseledets subspaces with respect to $\zeta$.
\end{theorem}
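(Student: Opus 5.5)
We will adapt the Ledrappier--Young characterization of SRB measures to a relative (fiberwise) setting, following the scheme of Ledrappier--Young for the ``only if'' direction and the relative entropy approach of Ledrappier (and Brown--Rodriguez Hertz type fiber arguments) for the ``if'' direction. The starting point is that $F$ preserves the fibration $\pi$, and its derivative is block upper triangular: the fiber block is $D\rho_{\tau(x)}$, whose Lyapunov exponents for $\zeta$ are the fiber exponents $\chi_i$, with Oseledets spaces $E_i$ tangent to $\{x\}\times Y$. The foliation $\widehat W^u$ from Section \ref{subsect: Invariant foliations for the skew product} has leaves inside fibers, is $F$-invariant and is tangent to $\bigoplus_{\chi_i>0}E_i$. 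We first construct an increasing measurable partition $\xi$ subordinate to $\widehat W^u$ with $F^{-1}\xi\ge\xi$ and $\bigvee_n F^{-n}\xi$ equal to the partition into points modulo the pullback of the Borel sets of $X$. We do this by the standard Ledrappier--Strelcyn construction applied within fibers, using Pesin charts for the fiber cocycle. We then identify the relative entropy $h_\zeta(F\mid\pi^{-1}\mathcal B_X)=h_\zeta(F)-h_\mu(f)$ (Abramov--Rokhlin) with the conditional entropy $H_\zeta(\xi\mid F\xi\vee\pi^{-1}\mathcal B_X)$. This identification is a relative version of the statement that unstable partitions generate the Pinsker-complementary entropy. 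Since the base $\sigma$-algebra is $F$-invariant, the argument is the usual one carried out with all partitions refined by $\pi^{-1}\mathcal B_X$.

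For the ``if'' direction we assume \eqref{relative SRB-(1)} and define, on each atom $\xi(z)$, the density $\Delta(z,w)=\prod_{n\ge1}J^u(F^{-n}z)/J^u(F^{-n}w)$. Here $J^u$ is the Jacobian of $DF$ restricted to $T\widehat W^u$. The product converges because the leaves are $C^{1+\beta}$, backward contraction along $\widehat W^u$ is exponential, and H\"older distortion estimates hold. Normalizing $\Delta$ gives a candidate measure $\nu$ with the property that $\nu_{F\xi}$ is the pushforward-compatible family. The key computation is
\[
H_\zeta(\xi\mid F\xi\vee\pi^{-1}\mathcal B_X)\le \int\log J^u\,\rd\zeta=\sum_{\chi_i>0}\chi_i\dim E_i,
\]
where the inequality is Jensen's inequality applied to $-\log$ of the ratio of $\zeta^\xi$ to $\nu$ over atoms of $F\xi$. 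This gives the relative Margulis--Ruelle inequality, and the equality case forces $\zeta^\xi_z=\nu_z$ almost everywhere, exactly as in Ledrappier--Young I, Theorem 6.1.

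For the ``only if'' direction we assume absolute continuity of the conditionals. Invariance then forces the density to satisfy the cocycle identity, so the density is $\Delta$ up to normalization. Computing $H_\zeta(\xi\mid F\xi\vee\pi^{-1}\mathcal B_X)=-\int\log\zeta^{F\xi}_z(\xi(z))\,\rd\zeta$ via the change of variables $\zeta^{F\xi}(\xi(z))=\nu(\xi(z))/\nu(F\xi(z))$ and telescoping gives equality with $\int\log J^u\,\rd\zeta$.

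We expect the main obstacle to be the relative generator step: showing that the relative entropy $h_\zeta(F)-h_\mu(f)$ is fully captured by $\xi$, rather than partly carried by neutral fiber directions. The neutral directions include the $k$ orbit directions of the locally free action and the zero exponents. We plan to handle this as in Ledrappier--Young II: use a relative version of the statement that the entropy along the central directions vanishes, via a fiberwise Bowen-ball/volume argument. Since $\rho$ is locally free, orbit directions contribute subexponential growth. Combining this with the relative Pinsker-type decomposition conditional on $\pi^{-1}\mathcal B_X$ should close the argument.
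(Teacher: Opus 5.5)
Your proof of the ``if'' direction (entropy formula $\Rightarrow$ absolute continuity) is the same as the paper's. Both use an increasing partition subordinate to $\widehat W^u$, the Jacobian-product density, the auxiliary measure $\nu$, the telescoping identity for $\int\log\nu_z((F^{-1}\xi)(z))\,\rd\zeta$, and Jensen's inequality, with equality forcing $\nu=\zeta$. The paper calls this the ``only if'' part; its labels are swapped relative to the statement, while yours match it.

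Conditioning on $\pi^{-1}\mathcal B_X$ is redundant. Almost every atom of $\xi$ and of $F\xi$ lies in a single fibre, so $F\xi\vee\pi^{-1}\mathcal B_X=F\xi$ mod $0$, and your quantity is exactly the paper's $H_\zeta(F^{-1}\eta\mid\eta)$.

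The real difference is where the identity $H_\zeta(\xi\mid F\xi)=h_\zeta(F)-h_\mu(f)$ comes from, and how the converse direction is proved.
\begin{itemize}
\item The paper takes both from the Ledrappier--Young entropy--dimension formula for $F$ \cite{LY85-2}, splitting off the base contribution with Lemmas \ref{base LY} and \ref{base same dimension}.
\item You go through Abramov--Rokhlin and a relative version of the identification $h=H(\xi\mid F\xi)$ from \cite{LY85-1}, and you prove the converse by a direct density computation.
\end{itemize}
The paper's route is shorter because it reuses the global theory of $F$. However, it presupposes that $\widehat W^u$ fits into the unstable hierarchy of $F$, whose levels are ordered by exponent size and interleave base and fibre exponents. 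Your route works directly with the fibre lamination and the relative entropy appearing in the statement.

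What you still owe is the hard half of that identity.
\begin{itemize}
\item The inequality $H_\zeta(\xi\mid F\xi)\le h_\zeta(F\mid\pi^{-1}\mathcal B_X)$ is routine: approximate $\xi$ by finite partitions $\xi_n\uparrow\xi$ and use $\xi\ge\bigvee_{j\ge0}F^j\xi_n\vee\pi^{-1}\mathcal B_X$.
\item The reverse inequality is indispensable for the ``if'' direction, since it is what forces equality in Jensen. Your sketch of it is too thin.
\end{itemize}
Neutrality of the orbit directions comes from commutativity, since $D\rho_{\mathbf t}$ carries the generating vector fields to themselves. This does not settle the matter, for two reasons. First, zero fibre exponents transverse to the orbits can occur. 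Second, subexponential growth in neutral directions does not by itself show that the base-conditioned past carries no information beyond $\xi$.

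That is the content of the Ledrappier--Young argument of \cite{LY85-1}, and you need it carried out relative to the base. Alternatively, you can import it from the relative Ledrappier--Young theory of random dynamical systems (Liu--Qian, Bahnm\"uller--Liu), since the fibre maps $\rho_{\tau(x)}$ over $(X,f,\mu)$ form such a system. For the ``only if'' direction you can bypass the hard half by combining the routine half with a relative Ruelle inequality.

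Finally, ``invariance forces the density to be $\Delta$'' does not follow from invariance alone. Iterating the cocycle identity leaves a factor $p(F^{-n}z)/p(F^{-n}w)$, whose convergence to $1$ needs regularity that absolute continuity does not provide. You do not need that claim. Write $\zeta^\xi_z=p_z\,m_{\xi(z)}$ and $g(z)=\log p_z(z)$. Invariance gives $-\log\zeta^\xi_z((F^{-1}\xi)(z))=\log J^u(z)+g(Fz)-g(z)$. The coboundary $g\circ F-g$ is therefore bounded below by $-\log J^u$, so its integral vanishes, and you get $H_\zeta(\xi\mid F\xi)=\int\log J^u\,\rd\zeta$ directly.
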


The theorem is a relative version of the entropy characterization of
absolutely continuous unstable conditional measures in \cite{LY85-1,LY85-2}.
The $C^{1+\beta}$ version uses the holonomy regularity established in \cite{Brown22}.
Applied to a maximizing lift, and also to the inverse dynamics, the relative
formula supplies absolute continuity along the unstable and stable
fiber foliations. The fiberwise rigidity argument uses accessibility to
propagate this information across the fiber. Together with invariance and
the ergodicity of the reference action, it identifies the disintegration as
\[
\zeta=\int_X\zeta_x\,\rd\mu(x),\qquad
\zeta_x=m_{\MME}\quad\text{for $\mu$-almost every }x.
\]
This is the product rigidity asserted in Theorem \ref{main theorem}; see
Proposition \ref{unique measure} for the identification of the maximizing lift.

\subsection{Relation to previous work}\label{subsect: Comparison with prior works}
The entropy decomposition is rooted in the Abramov--Rokhlin formula
\cite{AbR62} and the relative variational principle of Ledrappier and Walters
\cite{LW77}. For skew products driven by abelian actions, Park studied
directional entropy formulas and exhibited a counterexample in the
continuous category \cite{Park96,Park98}. These results explain why replacing
the cocycle sums by their average direction requires an entropy argument,
in addition to the ergodic theorem.

Recent finiteness results for smooth dynamics use the symbolic theory of
strong positive recurrence (SPR) developed by Buzzi, Crovisier, and Sarig
\cite{BCS25}. Marin, Poletti, and Veiga \cite{MPV25} obtain finiteness and
statistical properties for skew products with surface fibers under an entropy
gap. Castro and Froyland \cite{CF25} study the cardinality of hyperbolic
measures of maximal relative entropy. We also refer the reader to \cite{MN79} for
the study of MMEs in the nonsmooth setting. Our fibers are not restricted to
surfaces, but their maps belong to a common abelian action. This structure
allows us to compute the fiber entropy and identify the maximizing measures
explicitly. We do not require an SPR coding of the full skew product.

The rigidity argument is also related to the use of entropy and leafwise
conditional measures in higher-rank dynamics \cite{EK03} and in random
surface dynamics \cite{BR17}. In the present setting, maximal relative
entropy supplies the equality in \eqref{relative SRB-(1)}; accessibility then enters
the identification of the fiber disintegration. This is the specific link
between thermodynamic selection and measure rigidity used here.

We also mention that,
the focus on MMEs is fundamentally intertwined with the 
requirements of measure rigidity. 
Without this ``high entropy'' threshold, the invariant measure risks 
collapsing onto the orbit of a smaller proper subgroup via Ratner-type mechanisms, 
posing a major obstruction to global rigidity.

\subsection{Statistical properties and extensions}\label{subsect:applications-intro}

As an application of our main result, we have the following corollary on the statistical properties.
\begin{corollary}\label{main theorem-mixing}
Assume the hypotheses of Theorem \ref{main theorem} and suppose that $f$ is
topologically mixing. If the action $\rho$ is exponentially mixing of all
orders with respect to $m_{\MME}$, then every ergodic MME of $F$ is Bernoulli,
exponentially mixing of all orders, and satisfies the central limit theorem.
If the action is assumed only to be exponentially mixing, every ergodic MME
is Bernoulli and exponentially mixing.
\end{corollary}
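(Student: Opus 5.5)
By Theorem \ref{main theorem}, every ergodic MME of $F$ has the form $\zeta=\mu\times m_{\MME}$, where $\mu$ is an equilibrium state of $f$ for a H\"older potential $\varphi$. Moreover, the discussion after \eqref{intro:eq:functional} identifies $\varphi$ as a H\"older linear combination $\varphi=D\cdot\tau$ of the components of $\tau$, with $D$ determined by the linearity region of $H$ containing $\mu(\tau)$. The plan is to transfer statistical properties from the two factors to the product. The base is a mixing Anosov diffeomorphism with a H\"older equilibrium state. The fiber is the action $\rho$ with measure $m_{\MME}$, assumed exponentially mixing (of all orders). The difficulty is that $F$ is a skew product, not a direct product.

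\textbf{Step 1: base properties.} Since $f$ is topologically mixing, $\mu$ is the pushforward of a Gibbs measure on a mixing subshift of finite type via a Markov partition (Bowen \cite{Bow75}). Hence $(f,\mu)$ is Bernoulli and exponentially mixing of all orders for H\"older observables. It also has a spectral gap for the transfer operator on a suitable Banach space, and this gap remains for the twisted operators $\mathcal L_{\varphi+ i\langle \xi,\tau\rangle}$ with $\xi\in\R^k$.

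\textbf{Step 2: correlations of $\zeta$.} Take H\"older observables $A,B$ on $X\times Y$. Write
\[
\int A\circ F^n\cdot B\,\rd\zeta=\int\!\!\int A(f^nx,\rho_{\tau_n(x)}y)\,B(x,y)\,\rd m_{\MME}(y)\,\rd\mu(x).
\]
The inner integral is a fiber correlation evaluated at time $\tau_n(x)$, so it is bounded by $C e^{-c|\tau_n(x)|}$ once the fiber means are removed. The remaining term is governed by the base mixing. By (C1), $\tau$ is not cohomologous into a proper affine subspace. Also, Theorem \ref{main theorem} shows $H(\mu(\tau))>0$, so $\mu(\tau)\neq 0$. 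A large deviation estimate for $\mu$ then gives $\mu\{x:|\tau_n(x)|<\epsilon n\}\le Ce^{-cn}$. Together these yield exponential mixing of $\zeta$. For mixing of all orders, I would repeat the argument using the multiple-correlation bound for $\rho$. This requires the pairwise gaps $|\tau_{n_j}-\tau_{n_i}|$ to be large, which holds off an exponentially small set by the same large deviation bound applied to the increments.

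\textbf{Step 3: Bernoulli and CLT.} For the Bernoulli property I would use the criterion of Kalinin--Katok--Rodriguez Hertz type (as in \cite{DQ26}). The measure $\zeta$ has absolutely continuous conditionals on the fiber unstable and stable foliations, by Theorem \ref{relative SRB} applied to $F$ and $F^{-1}$, and Gibbs conditionals along the base foliations. Combined with K-mixing from Step 2, this gives a very weak Bernoulli property via Ornstein theory. Alternatively, one can cite \cite{DQ26}. The CLT follows from exponential mixing of all orders by the cumulant method of Björklund--Einsiedler--Gorodnik \cite{BEG20}: all cumulants of order $\ge3$ of the normalized Birkhoff sums tend to $0$.

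The main obstacle is Step 2. A direct product of mixing systems is easy to handle, but here the fiber time $\tau_n(x)$ is random. One must show that the set of $x$ with small or degenerate $\tau_n$ is exponentially rare, uniformly over the many index tuples needed for all orders. My first approach would be Cramér-type large deviations via the twisted transfer operators from Step 1, using (C1) to exclude degenerate directions.
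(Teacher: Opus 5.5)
\paragraph{What works.} Your Steps 1 and 2 are sound in outline, and they re-derive the mixing half of what the paper imports. You condition on $x$, apply (multiple) mixing of $\rho$ at the times $\tau_{n_j}(x)$, and control the exceptional set by a large deviation bound. That bound uses $\tau_{n_j}-\tau_{n_i}=\tau_{n_j-n_i}\circ f^{n_i}$, the invariance of $\mu$, and a union bound over the finitely many pairs. Two small corrections:
\begin{itemize}
\item The large deviation estimate comes from the real twists $\mathcal L_{\varphi+\langle\xi,\tau\rangle}$ and analyticity of the pressure, not from the imaginary ones.
\item Condition (C1) plays no role there. What you need is $\mu(\tau)\neq0$, which is Corollary \ref{projection not MME}.
\end{itemize}
Given Step 2, the CLT via the cumulant method is also fine.

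\paragraph{The gap: the Bernoulli property.} This is needed in both halves of the statement, and your self-contained argument for it fails for two reasons.

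First, Step 2 does not give the K property. It gives exponential mixing of all orders, and mixing of all orders alone says nothing about the Pinsker algebra: horocycle flows are mixing of all orders with zero entropy. Upgrading exponential mixing to K or Bernoulli is a deep theorem of Dolgopyat--Kanigowski--Rodriguez Hertz, established for smooth invariant measures. Your base measure $\mu$ is a Gibbs state, which is typically singular.

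Second, even granting K, the criterion ``K $+$ absolutely continuous fiber conditionals $+$ Gibbs base conditionals $\Rightarrow$ very weak Bernoulli'' is false in this setting.
\begin{itemize}
\item The measure $\zeta$ always has at least $k$ zero exponents along the $\rho$-orbits, and more if $\mu(\tau)$ lies on a Lyapunov hyperplane.
\item In these central directions the fiber is displaced by the random vectors $\tau_n(x)$. This is exactly the $T,T^{-1}$ mechanism.
\item The zero-drift skew products of \cite{DDKN22-2}, recalled in the paper, are K and have Gibbs base conditionals. Their fiber conditionals are absolutely continuous trivially, since all fiber exponents vanish. Yet they are not Bernoulli.
\end{itemize}
Your criterion never uses the nonzero drift, and the nonzero drift is precisely what excludes this behavior.

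\paragraph{How the paper closes this step.} It argues by citation:
\begin{itemize}
\item Theorem \ref{main theorem} gives $\zeta=\mu\times m_{\MME}$ with $\mu$ a Gibbs equilibrium state.
\item Corollary \ref{projection not MME} gives $\mu(\tau)\neq0$.
\item \cite[Theorems 1.1 and 1.2]{DQ26} then yield the Bernoulli property, exponential mixing (of all orders), and the CLT, under the respective mixing hypotheses on $\rho$.
\end{itemize}
So your ``alternatively, cite \cite{DQ26}'' is not an optional shortcut. It is the step that actually carries the Bernoulli conclusion.
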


The deduction uses the product description, the fact that the base
measure is a Gibbs equilibrium state, and its nonzero drift, together with \cite[Theorems 1.1 and 1.2]{DQ26}. For diagonal
actions on the homogeneous spaces in Theorem \ref{main theorem-SL-corollary-1}, the required
multiple mixing is supplied by \cite{BEG20}.
The role of the maximizing measure is significant: the zero-drift
constructions in \cite{DDKN22-2} include smooth skew products which satisfy
the central limit theorem and are $K$ but not Bernoulli. Those examples
concern selected invariant products; here entropy maximization gives
products with positive fiber entropy and nonzero drift.

The same framework applies to homogeneous actions beyond the diagonal
examples. Haar measure supplies the smooth measure of maximal entropy for
the fiber action; see \cite[Theorem 9]{Bow71-1}.

\begin{theorem}\label{main theorem-algebra}
Let $f\colon X\to X$ be a topologically mixing $C^{1+\beta}$ Anosov
diffeomorphism of a closed manifold. Let $G$ be a semisimple Lie group
without compact factors, $\Gamma<G$ a cocompact lattice, and
$\rho\colon\R^k\to G$ a continuous homomorphism inducing a locally free
action on $Y=G/\Gamma$. If $\tau\colon X\to\R^k$ is $C^{1+\beta}$ and
$(\tau,\rho)$ is jointly nontrivial, then $F$ has finitely many ergodic MMEs.
Every such measure is $\mu\times m_{\operatorname{Haar}}$, where $\mu$ is an
equilibrium state of $f$ for a H\"older continuous potential.
Under the mixing hypotheses of Corollary \ref{main theorem-mixing}, the
corresponding statistical conclusions hold.
\end{theorem}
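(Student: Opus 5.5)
The plan is to reduce Theorem \ref{main theorem-algebra} to Theorem \ref{main theorem} (and, for the statistical part, to Corollary \ref{main theorem-mixing}) by checking the standing hypotheses on the fiber action $\rho_{\mathbf t}=$ left translation by $\rho(\mathbf t)$ on $Y=G/\Gamma$.

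\textbf{Regularity.} The action is real-analytic, so each $\rho_{\mathbf t}$ is $C^\infty$ and $C^{1+\beta}$. The smooth/hyperbolic alternative is then automatic. Joint nontriviality is assumed, so the only hypothesis left to verify concerns the reference measure.

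\textbf{The reference measure $m_{\MME}=m_{\operatorname{Haar}}$.} Three properties are needed.
\begin{enumerate}[(i)]
\item \emph{Smooth and invariant.} Haar measure is smooth and invariant under left translation.
\item \emph{Maximal entropy for each element.} By \cite[Theorem 9]{Bow71-1}, for each $\mathbf t$ the topological entropy of translation by $g=\rho(\mathbf t)$ equals $\sum\max\{\log|\lambda|,0\}$ over eigenvalues $\lambda$ of $\operatorname{Ad}(g)$. Haar measure realizes this value, since for translations the Pesin formula holds with the Lyapunov spectrum given by $\operatorname{Ad}(g)$. So $m_{\operatorname{Haar}}$ is an MME for every $\rho_{\mathbf t}$.
\item \emph{Ergodic for the whole $\R^k$-action.} This is the main point to check. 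Since $G$ has no compact factors, Moore's ergodicity theorem (or Howe--Moore) gives ergodicity of any closed subgroup with noncompact image in each simple factor.
\end{enumerate}

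\textbf{Ergodicity via (C2).} Condition (C2) gives $\dim Z_\rho<k$, so there is $\mathbf s$ with $H(\mathbf s)>0$. Then $\operatorname{Ad}\rho(\mathbf s)$ has an eigenvalue of modulus $\ne1$, so $\rho(\R^k)$ is unbounded.

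Unboundedness alone does not yet give ergodicity, since it must hold in every simple factor. Here I use accessibility of $\rho_{\mathbf s}$. The stable and unstable horospherical subgroups $U^\pm$ of $\rho(\mathbf s)$ generate a normal subgroup $N$, and accessibility forces $N$ to act transitively on $G/\Gamma$, so $N=G$. Hence $\rho(\mathbf s)$ projects nontrivially and noncompactly to every simple factor, and Moore's theorem gives ergodicity of the single element $\rho_{\mathbf s}$, hence of the action. Alternatively, the Mautner phenomenon gives that $m_{\operatorname{Haar}}$-invariant functions are $U^\pm$-invariant, hence $G$-invariant, hence constant.

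\textbf{Conclusion.} With these facts, Theorem \ref{main theorem} yields finiteness and the product form $\mu\times m_{\operatorname{Haar}}$ with $\mu$ a H\"older equilibrium state.

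\textbf{Statistical conclusions.} For these we apply Corollary \ref{main theorem-mixing}. Under its mixing hypotheses on the action (exponential mixing, or of all orders, as available for semisimple homogeneous actions via spectral gap and \cite{BEG20}-type results), the Bernoulli, mixing and CLT conclusions transfer.

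\textbf{Main obstacle.} The delicate step is ergodicity of the full action from (C2) when $G$ has several factors. I would handle it through the Mautner argument above rather than through a case analysis.
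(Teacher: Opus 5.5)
Your proposal is correct and follows the paper's route. The paper deduces this theorem directly from Theorem \ref{main theorem} and Corollary \ref{main theorem-mixing}, noting that the translations are $C^\infty$ and that Haar measure is the smooth common measure of maximal entropy via \cite[Theorem 9]{Bow71-1}. The paper never spells out ergodicity of Haar measure for the action, so your Mautner-plus-accessibility argument fills that gap rather than departing from the paper: the subgroups $U^\pm$ of a positive-entropy element generate a group acting transitively, so $\rho_{\mathbf s}$-invariant $L^2$ functions are constant.
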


We also consider potentials pulled back from the base. This restriction
leaves the fiberwise entropy maximization unchanged, while replacing
$\mathcal E(\mu)$ by
\[
h_\mu(f)+\int_X\psi\,\rd\mu+H(\mu(\tau)),
\qquad \phi=\psi\circ\pi.
\]

\begin{theorem}\label{main theorem-equilibrium state}
Under the hypotheses of Theorem \ref{main theorem}, let
$\phi=\psi\circ\pi$ be H\"older continuous. Then $(F,\phi)$ has a nonempty
finite set of ergodic equilibrium states. Each is $\mu\times m_{\MME}$, where
$\mu$ is an equilibrium state of $f$ for a H\"older continuous potential.
The same conclusion holds if the smoothness or hyperbolicity assumption on
each $\rho_{\mathbf t}$ is replaced by the requirement that it satisfy at
least one of the following conditions:
\begin{enumerate}[(i)]
\item $\rho_{\mathbf t}$ is $C^\infty$;
\item $Y$ is a hyperbolic set for $\rho_{\mathbf t}$;
\item $\rho_{\mathbf t}$ preserves a $\rho_{\mathbf t}$-neutralized homogeneous
measure, in the sense of Definition \ref{neutralized homogeneous}.
\end{enumerate}
Under the mixing hypotheses of Corollary \ref{main theorem-mixing}, the corresponding
statistical conclusions hold for these equilibrium states.
\end{theorem}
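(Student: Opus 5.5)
The plan is to run the proof of Theorem \ref{main theorem} with the base entropy $h_\mu(f)$ replaced by the base free energy $h_\mu(f)+\int_X\psi\,\rd\mu$. This works because $\phi=\psi\circ\pi$ is constant along fibers. For any $\zeta\in\mathcal M(F,X\times Y)$ with $\pi_*\zeta=\mu$ we have $\int\phi\,\rd\zeta=\int_X\psi\,\rd\mu$. The potential therefore contributes a term that depends only on the projection.

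\textbf{Step 1: the pressure formula.} By Theorem \ref{higher le:fiber-entropy}, which needs only the cone property for nonzero elements, the supremum of $h_\zeta(F)+\int\phi\,\rd\zeta$ over lifts of an ergodic $\mu$ equals
\[
\mathcal E_\psi(\mu)=h_\mu(f)+\int_X\psi\,\rd\mu+H(\mu(\tau)).
\]
Taking the supremum over $\mu$ and applying the variational principle for $F$ gives
\[
P(F,\phi)=\sup_{\mu\in\mathcal M_e(f,X)}\mathcal E_\psi(\mu).
\]
For the alternative hypotheses (i)--(iii), I would invoke the results of Section \ref{sect:neutralized-entropy} to obtain the cone property. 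Case (iii) is precisely the neutralized-homogeneous situation, and (i), (ii) are the cases already used in Theorem \ref{main theorem}. This step should be formal once the cone property is available.

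\textbf{Step 2: the nonlinear thermodynamic problem on the base.} Existence of an equilibrium state follows from upper semicontinuity of $\zeta\mapsto h_\zeta(F)$, using the entropy formula and expansivity or the Yomdin bound in the $C^\infty$ case, together with ergodic decomposition. For finiteness I repeat the argument of Theorem \ref{goal theorem} with $\psi$ added. On each linearity cone $C$ of $H$ we have $H(\mathbf t)=\ell_C\cdot\mathbf t$, so on $\{\mu:\mu(\tau)\in C\}$ the functional becomes
\[
h_\mu(f)+\int_X\bigl(\psi+\ell_C\cdot\tau\bigr)\,\rd\mu.
\]
This is a classical pressure functional with a H\"older potential, and it has a unique equilibrium state $\mu_C$ by \cite{Bow75}. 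The subtle point is a maximizer whose drift lies on the boundary of $C$, where it is not the free equilibrium state. Here I would use strict convexity of the pressure function restricted to directions transverse to the hyperplane; this uses (C1), which ensures that $D\cdot\tau$ is never cohomologous to a constant. Combined with convexity of $H$ and rotation theory on $\rot(\tau)$, this should show that a constrained maximizer is an equilibrium state for $\psi+\mathbf a\cdot\tau$, with $\mathbf a$ a subgradient of $H$, and that there are at most boundedly many per face.

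I need to check that adding $\psi$ does not break the step showing $H(\mu(\tau))>0$ at maximizers. If the maximizer lies in $Z_\rho$, a perturbation along a direction with $H>0$, which exists since $\dim Z_\rho<k$, increases $H$ at first order. Meanwhile the free energy $h+\int\psi$ decreases only at second order near its own maximum restricted to the constraint, again by (C1). The argument should be insensitive to $\psi$.

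\textbf{Step 3: product rigidity of lifts.} Let $\zeta$ be an ergodic equilibrium state with $\mu=\pi_*\zeta$. Then $\mu$ maximizes $\mathcal E_\psi$ and $h_\zeta(F)-h_\mu(f)=H(\mu(\tau))$, which is the maximal relative entropy. This equals the sum of positive fiber exponents, since $m_{\MME}$ is smooth and the Pesin formula \eqref{intro:eq:action-entropy} applies along the drift. Theorem \ref{relative SRB}, applied to $F$ and to $F^{-1}$, gives absolute continuity along $\widehat W^u$ and $\widehat W^s$. Since $H(\mu(\tau))>0$, (C2) supplies accessibility, and Proposition \ref{unique measure} yields $\zeta_x=m_{\MME}$ for $\mu$-almost every $x$. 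Hence each of the finitely many $\mu$ has exactly one ergodic lift.

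The statistical properties then follow exactly as in Corollary \ref{main theorem-mixing}, using the Gibbs property of $\mu$ and its nonzero drift. I expect the main obstacle to be Step 2, in particular the boundary or face case and the positivity of the drift in the presence of $\psi$.
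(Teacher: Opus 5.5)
Your architecture matches the paper's. The pressure formula comes from the relative variational principle plus the cone property, which Corollary \ref{higher le:fiber-entropy-corollary} supplies under (i)--(iii). This is followed by a nonlinear problem on the base, and then product rigidity through Theorem \ref{relative SRB}, accessibility and ergodicity of $\mu\times m_{\MME}$.

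\textbf{Step 2 takes a different route, and the boundary obstacle you anticipate does not arise.} Your subgradient guess is right and needs only a supporting-hyperplane inequality. For each Weyl chamber $C$, convexity gives $H(\mathbf t)\ge\ell_C\cdot\mathbf t$ on all of $\mathbb R^k$, with equality on $\overline C$. So a global maximizer $\mu^*$ with $\mu^*(\tau)\in\overline C$ satisfies
\[
\mathcal E_\psi(\mu^*)=h_{\mu^*}(f)+\int_X(\psi+\ell_C\cdot\tau)\,\rd\mu^*\le P_{\top}(f,\psi+\ell_C\cdot\tau)\le\mathcal E_\psi(\mu_C)\le\mathcal E_\psi(\mu^*).
\]
Hence $\mu^*$ is an equilibrium state of $\psi+\ell_C\cdot\tau$, and $\mu^*=\mu_C$ even when the drift lies on a wall. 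Every maximizer is therefore ergodic, and there are at most as many as Weyl chambers. Strict convexity, (C1) and rotation theory play no role in this count.

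This is lighter than the paper's route in Theorem \ref{goal theorem}, all of which would have to be adapted to include $\psi$. That route uses a maximal independent subfamily of the $\phi_i$, Lemma \ref{not boundary} via the Legendre property, strict concavity of $\mathsf h+J$ on each sign region, and Jenkinson's theorem inside Lemma \ref{at most one nonlinear}.

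The same inequality, with the supporting function $0$, gives positivity without your second-order analysis. If $H(\mu^*(\tau))=0$, then $\mu^*=\mu_\psi$. Because $Z_\rho\subset\overline C$ for every $C$, also $\mu^*=\mu_{\psi+\ell_C\cdot\tau}$ for every $C$. Then (C1) forces every $\ell_C=0$, contradicting $\dim Z_\rho<k$. This is the mechanism of Corollary \ref{projection not MME}.

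\textbf{For existence, do not rely on upper semicontinuity of $\zeta\mapsto h_\zeta(F)$.} It is not available here: $F$ is not expansive, since $(x,y)$ and $(x,\rho_{\mathbf s}y)$ stay $O(\|\mathbf s\|)$-close forever. Moreover $F$ is only $C^{1+\beta}$, and (iii) offers no such tool. Instead maximize the upper semicontinuous functional $\mathcal E_\psi$ on $\mathcal M(f,X)$ and lift the maximizer to $\mu^*\times m_{\MME}$ via Theorem \ref{higher le:product}, as the paper does.

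\textbf{The step that does not go through as written is in Step 3.} Theorem \ref{relative SRB} needs \eqref{relative SRB-(1)} with the fiber exponents of $F$ with respect to $\zeta$. The formula \eqref{intro:eq:action-entropy} instead computes the exponents of $\rho_{\mu(\tau)}$ with respect to $m_{\MME}$. Lemma \ref{all LY} identifies the two only for product measures, which is what you are trying to prove. So the smoothness of $m_{\MME}$ and the Pesin formula say nothing about $\zeta$ yet.

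Since $\|S_n\tau(x)-n\mu(\tau)\|=o(n)$, the fiber exponents of $\zeta$ at a typical $(x,y)$ are the forward exponents of $\rho_{\mu(\tau)}$ at $y$. But $y$ is distributed by the $Y$-marginal of $\zeta$, which need not be $m_{\MME}$ or even $\rho_{\mu(\tau)}$-invariant. The fibered Ruelle inequality only says that $H(\mu(\tau))=h_\zeta(F)-h_\mu(f)$ is at most the right-hand side of \eqref{relative SRB-(1)}. You need the reverse inequality, and its analogue for $F^{-1}$.

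The reverse inequality is automatic when $D\rho_{\mathbf s}$ is constant in a right-invariant framing, as for the algebraic actions of Section \ref{sect:homogeneous-bounds}, but it requires an argument in general. The paper asserts the same conclusion in the sentence before Proposition \ref{fiberwise rigidity proposition} without supplying this inequality, so you are following it here. Still, the reason you give does not close the step.
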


For comparison, equilibrium states of potentials constant on fibers have
been studied for isometric extensions by Spatzier and Visscher
\cite{SV18}, and for center isometries by Carrasco and Rodriguez Hertz
\cite{CRH24}. Here the fibers may contribute positive entropy, and this
contribution changes the equilibrium problem on the base.

The arguments also apply over mixing hyperbolic basic sets of Axiom A
diffeomorphisms. For a base with several basic components, one applies the
results componentwise and compares their pressures; the statistical
conclusions retain their mixing hypotheses. Finally, the weak variational
principle for $r$-neutralized entropy (Conjecture \ref{weak conjecture}) would remove
the additional alternatives in Theorem \ref{main theorem-equilibrium state}. This last
extension is conditional on that conjecture.

It is worth mentioning that, 
as stated in \cite[Theorem 5.10.3]{BS02}, 
for connected closed Riemannian manifolds, 
topological mixing is equivalent to topological transitivity. 

\paragraph{Organization of the paper.}
Section \ref{sect:neutralized-entropy} develops $r$-neutralized entropy. Section \ref{sect: Entropy Formulas for the Skew Product} establishes the cone
property and the entropy formulas. Section \ref{sect: Invariant foliations and MME projections} classifies the maximizing base
measures by nonlinear thermodynamic formalism. Section \ref{sect:relative-LY} proves the relative
Ledrappier--Young theorem, and Section \ref{sect: Fiberwise and global measure rigidity} establishes fiberwise and product
rigidity. Section \ref{sect:homogeneous-bounds} treats homogeneous actions, the bound on the number of
ergodic MMEs, and examples illustrating the role of joint nontriviality.

\paragraph{Disclosure on AI.}
This work was originated while the first named author was a postdoc at UMD back in 2021, and mostly completed in 2025. AI tools (ChatGPT) were only used to check grammar and improve the writing of this manuscript.  All mathematical ideas
and results are due to the authors, who take full responsibility for the correctness.

\paragraph{Acknowledgements.}
We thank Dmitry Dolgopyat and Adam Kanigowski for introducing us to this
problem, and Snir Ben Ovadia for bringing the weak variational principle for
neutralized entropy to our attention and for his valuable remarks at the
2025 Workshop on Dynamical Systems and Mathematical Physics at Great Bay
University. This research was supported by the National Key R\&D Program
of China (No. 2024YFA1015100). C. Dong was also supported by the Nankai
Zhide Foundation, the Fundamental Research Funds for the Central Universities, the Tianjin Natural
Science Foundation Outstanding Youth Project (25JCJQJC00310), and NSFC
12526207. Q. Qiao was supported by the Nankai Zhide Foundation.

\section{Neutralized entropy}\label{sect:neutralized-entropy}

This section develops the entropy theory needed for the cone property. 
Its core topics include 
sequences of action parameters with a fixed asymptotic direction, 
and $r$-neutralized topological entropy.

Let $\{\rho_{{\bf t}}\}_{{\bf t}\in\mathbb{R}^k}$ be a locally free $C^{1+\beta}$ $\mathbb{R}^k$-action on a closed Riemannian manifold $(Y,d)$.
Given a sequence $A:=\{{\bf a}(n)\}_{n=1}^\infty\subset\mathbb{R}^k$,
we set ${\bf a}(0)={\bf 0}=(0,\dots,0)\in\mathbb{R}^k$. For each $n\in\mathbb{N}$, we define the \textbf{$n$-sequence Bowen metric} $d_n^A$ by
\begin{align*}
d_n^A(x,y):=\max\Bigl\{d\bigl(\rho_{{\bf a}_i}(x),\rho_{{\bf a}_i}(y)\bigr): 0\leq i\leq n-1\Bigr\},\quad \forall\,x,y\in Y.
\end{align*}
Given $n\in\mathbb{N}$, $\epsilon>0$ and $x\in Y$, the \textbf{sequence Bowen ball} is defined as
\begin{align*}
B(x,A,n,\epsilon):=\bigl\{y\in Y: d_n^A(x,y)<\epsilon\bigr\}.
\end{align*}
A nonempty set $E$ is said to be \textbf{$(A,n,\epsilon)$-spanning} if
$\displaystyle Y\subset \bigcup_{x\in E} B(x,A,n,\epsilon)$.

Let $S_d\bigl(\{\rho_{{\bf t}}\},A,n,\epsilon\bigr)$ denote the minimal cardinality of an $(A,n,\epsilon)$-spanning set.
The \textbf{topological sequence entropy} of $\{\rho_{{\bf t}}\}$ with respect to the sequence $A$ is defined by
\begin{align*}
h_{\operatorname{top}}^A\bigl(\{\rho_{{\bf t}}\}\bigr):=\lim_{\epsilon\to 0}\varlimsup_{n\to\infty} \frac{1}{n}\log S_d\bigl(\{\rho_{{\bf t}}\},A,n,\epsilon\bigr).
\end{align*}
For a Borel probability measure $\mu$, 
the \textbf{local sequence entropy} $h_{\mu}^A\bigl(\{\rho_{{\bf t}}\},x\bigr)$ 
is defined by
\begin{align*}
h_{\mu}^A\bigl(\{\rho_{{\bf t}}\},x\bigr)&:=\lim_{\epsilon\to 0}\varlimsup_{n\to\infty}-\frac{1}{n}\log \mu\bigl(B(x,A,n,\epsilon)\bigr).
\end{align*}

Throughout the remainder of the paper, unless otherwise specified,
all sequences $A=\{{\bf a}(n)\}_{n=1}^\infty$ we consider satisfy the following conditions:
\[
K_0=\sup_{i\geq 1}\bigl\|{\bf a}(i+1)-{\bf a}(i)\bigr\|<\infty,
\]
and there exists ${\bf a}\in\mathbb{R}^k$ such that
$\displaystyle \frac{{\bf a}(n)}{n}\to{\bf a}$ as $n\to\infty$.

\subsection{Neutralized sequence entropy}\label{subsect: Neutralized sequence entropy}

Motivated by the neutralized entropy introduced by Ben Ovadia and Rodriguez Hertz \cite{ORH23}, 
we introduce its sequence version for $\mathbb R^k$-actions. 
This will be used to link the entropy of sequences asymptotic to 
a fixed direction to the entropy of the corresponding action element.

Let $\{\rho_{\bf t}\}$ be a locally free $C^{1+\beta}$ action of $\mathbb{R}^k$ on a manifold $Y$. 
Fix a $\{\rho_{\bf t}\}$-invariant Borel probability measure $\mu$. 
The {\bf neutralized local sequence entropy} is defined by
\begin{align*}
\mathcal{E}_{\mu}^A\bigl(\{\rho_{\bf t}\},x\bigr)&:=\lim_{r\to 0}\varlimsup_{n\to\infty}-\frac{1}{n}\log \mu\bigl(B(x,A,n,e^{-nr})\bigr).
\end{align*}

We define
\[
\Upsilon  :=\sup\Bigl\{\bigl\|d\rho_{\bf t}|_{T_yY}\bigr\| \;:\; y\in Y,\ {\bf t}\in\mathbb{R}^k,\ \|{\bf t}\|\leq 1\Bigr\}+1<\infty.
\]
The supremum is finite due to the compactness of $Y$, 
and we add $1$ to guarantee that $\Upsilon$ is strictly larger than all such operator norms.
Fix an element ${\bf a}\in \mathbb{R}^k$. For all $x\in Y$, $r>0$, 
and for all sufficiently large $n$, we have 
\begin{align*}
B(\rho_{\bf a}x,A,n,e^{-2nr})
&\subset B(\rho_{\bf a}x,A,n,\Upsilon^{-|\bf a|}e^{-nr})\subset \rho_{\bf a}\bigl(B(x,A,n,e^{-nr})\bigr)\\
&\subset B(\rho_{\bf a}x,A,n,\Upsilon^{|\bf a|}e^{-nr}) \subset B(\rho_{\bf a}x,A,n,e^{-\frac{nr}{2}}).
\end{align*}
This implies that $\mathcal{E}_{\mu}^A\bigl(\{\rho_{\mathbf{t}}\},x\bigr)$ 
is a $\rho_{\mathbf{a}}$-invariant function. 
Due to the arbitrariness of $\mathbf{a}\in \mathbb{R}^k$, 
$\mathcal{E}_{\mu}^A\bigl(\{\rho_{\mathbf{t}}\},x\bigr)$ 
is constant $\mu$-almost everywhere for any ergodic 
$\{\rho_{\mathbf{t}}\}$-invariant Borel probability measure $\mu$. 
A similar result holds for $h_{\mu}^A\bigl(\{\rho_{{\bf t}}\},x\bigr)$. 
These two almost everywhere constants are denoted by 
$\mathcal{E}_{\mu}^A\bigl(\{\rho_{\mathbf{t}}\}\bigr)$ and $h_{\mu}^A\bigl(\{\rho_{\mathbf{t}}\}\bigr)$, respectively.

Fix $r>0$ and $x\in Y$. Since $\dfrac{{\bf a}(n)}{n}\to \bf a$, for all sufficiently large $n$,
\begin{align}\label{Neutralized sequence entropy-(1)}
B(x,A,n,e^{-nr})\supset B\bigl(x,\Upsilon^{-|{\bf a} n|-n}e^{-nr}\bigr).
\end{align}
Recall that $m_Y$ denotes the dimension of the manifold $Y$.
For any $\rho_{\bf a}$-invariant measure $\mu$, 
\[
\varlimsup_{\epsilon\to 0}\frac{\log \mu\bigl(B(x,\epsilon)\bigr)}{\log \epsilon}\leq m_Y
\]
holds for $\mu$-almost every $x\in Y$.
Combining this inequality with \eqref{Neutralized sequence entropy-(1)}, we conclude that 
$h_{\mu}^{A}\bigl(\{\rho_{\bf t}\},x\bigr)$ and 
$\mathcal{E}_{\mu}^{A}\bigl(\{\rho_{\bf t}\},x\bigr) $ 
are finite for $\mu$-almost every $x\in Y$.

Ben Ovadia and Rodriguez Hertz established the Besicovitch--Bowen covering lemma
and the Bowen-Lebesgue density lemma for Bowen balls $B(f,\cdot,n,e^{-nr})$ in \cite{ORH23}.
A related version of the Bowen-Lebesgue density lemma can be found in \cite[Lemma 2.2]{DQ25}.
Since Lyapunov exponents are linear functionals and $\dfrac{{\bf a}(n)}{n}\to \bf a$,
the volume ratio between $B(x,A,n,e^{-nr})$ and $B(\rho_{\bf a},x,n,e^{-nr})$
is bounded by an exponential term of small order.  
To formulate the corresponding adaptations precisely, we first recall
the Lyapunov charts and Pesin sets associated with the
$\mathbb R^k$-action.

A point $x\in Y$ is called {\bf regular} for $\{\rho_{\bf t}\}$ if there exist linear functionals
$\chi_i\colon\mathbb{R}^k\to\mathbb{R}$, $i=1,\dots, r(x)$,
and a decomposition $T_x Y=E_1(x) \oplus \cdots \oplus E_{c(x)}(x)$
such that for every ${\bf t}\in\mathbb{R}^k$ and every nonzero vector $v\in E_i(x)$,
\[
\lim_{n\to \pm \infty}\frac{1}{n}\log\|D_x(\rho(n{\bf t}))v\|=\chi_i({\bf t}).
\]
We denote by $\widehat{\Gamma}_Y$ the set of regular points.
By Proposition \ref{chart1}, $\mu(\widehat{\Gamma}_Y)=1$ for any ergodic $G$-invariant measure $\mu$.
Similarly, for a diffeomorphism $f$, 
we may define its regular points, and denote by $\widehat{\Gamma}_X$ the set of such points.

Next, we introduce the Lyapunov chart associated with the $\mathbb{R}^k$-action, 
which will be used to define the Pesin chart of the skew product. 

Let $\{{\bf t}_1, \cdots, {\bf t}_k\}$ be the standard basis 
for $\mathbb{R}^k$ with respect to 
the standard norm $\|\cdot \|$ on $\mathbb{R}^k$. 
For each ${\bf t}_i$, we denote the corresponding Lyapunov exponents by 
$$
\chi_1({\bf t}_i)>\cdots>\chi_{c({\bf t}_i)}({\bf t}_i).
$$ 
For each $i$, we define 
$$
\chi_+({\bf t}_i)=\min\{\chi_i({\bf t}_i): \chi_i({\bf t}_i)>0\},\quad 
\chi_-({\bf t}_i)=\max\{\chi_i({\bf t}_i): \chi_i({\bf t}_i)<0\},
$$ 
and 
$$
\Delta({\bf t}_i)=\min\{\chi_j({\bf t}_i)-\chi_{j+1}({\bf t}_i), j=1, \cdots, c({\bf t}_i)-1\}.
$$

Let $m_Y$ denote the dimension of the manifold $Y$ 
and let $|\cdot |$ be the usual norm on $\mathbb{R}^{m_Y}$. 
For $\rho>0$, let $B(\rho)$ denote the ball in $\mathbb{R}^{m_Y}$ 
centered at the origin of radius $\rho$.

\begin{proposition}\label{chart2}
For any $\epsilon>0$ satisfying 
$$
\epsilon\leq \epsilon_0:=\frac{1}{200m_Y k}\min\{\Delta({\bf t}_i), \chi_{+}({\bf t}_i), -\chi_{-}({\bf t}_i), \; i=1,\cdots, k\},
$$
 there exists a measurable function $l:\Gamma'\to [0,\infty)$ such that 
$\displaystyle l(\rho({\bf t})x)\leq l(x)e^{\epsilon \|{\bf t}\|}$, 
together with a family of embeddings $\Phi_x:B(l(x)^{-1})\to Y$ 
defined for each $x\in \Gamma_Y$, for which the following properties hold: 
\begin{itemize}
\item[(i)] $\Phi_x(0)=x$. The preimages
\[
D\Phi_x(0)^{-1}(T\mathcal{O}(x))
\quad\text{and}\quad
R_i(x)=D\Phi_x(0)^{-1}(E_i(x))
\]
of $T\mathcal{O}(x)$ and $E_i(x)$, respectively, are mutually orthogonal in $\mathbb{R}^m$,
where $E_i(x)$ stands for the Lyapunov subspace 
associated with the corresponding Lyapunov exponent in the Lyapunov decomposition. 

\item[(ii)] Let $\tilde{{\bf t}}_x=\Phi_{\rho({\bf t})x}^{-1}\circ \rho({\bf t})\circ \Phi_x$
be the chart map between the local coordinates at $x$ and $\rho({\bf t})x$.
Then, for any ${\bf s},{\bf t}\in\mathbb{R}^k$, we have 
\[
(\widetilde{{\bf t}+{\bf s}})_x
=\tilde{{\bf t}}_{\rho({\bf s})x}\circ \tilde{{\bf s}}_x
=\tilde{{\bf s}}_{\rho({\bf t})x}\circ \tilde{{\bf t}}_x
\]
and
\[
(\widetilde{{\bf t}+{\bf s}})_x^{-1}
=\tilde{{\bf s}}_x^{-1}\circ \tilde{{\bf t}}_{\rho({\bf s})x}^{-1}
=\tilde{{\bf t}}_x^{-1}\circ \tilde{{\bf s}}_{\rho({\bf t})x}^{-1}.
\]

\item[(iii)] For every integer $q$ with $1\leq q\leq m$, 
every nonzero vector $u\in E_q(x)$, and every $v\in T\mathcal{O}(x)$,
\[
|u|e^{\chi_q(x, {\bf t})-\epsilon \|{\bf t}\|}
\leq |D\tilde{{\bf t}}_x(0)u|
\leq |u|e^{\chi_q(x, {\bf t})+\epsilon \|{\bf t}\|},
\quad \forall\,{\bf t}\in\mathbb{R}^k,
\]
\[
|v|e^{-\epsilon \|{\bf t}\|}
\leq |D\tilde{{\bf t}}_x(0)v|
\leq 2|v|,
\]
\[
|u|e^{\chi_q(x, {\bf t})+\chi_q(x, {\bf s})
-\epsilon (\|{\bf t}\|+\|{\bf s}\|)}
\leq |D\widetilde{({\bf t}+{\bf s})}_x(0)u|
\leq |u|e^{\chi_q(x, {\bf t})+\chi_q(x, {\bf s})
+\epsilon(\|{\bf t}\|+\|{\bf s}\|)},\quad \forall\,{\bf t},{\bf s}\in\mathbb{R}^k.
\]

\item[(iv)] Let $L(\Psi)$ denote the Lipschitz constant of a map $\Psi$.
Then for any ${\bf t}\in\mathbb{R}^k$ with $\|{\bf t}\|\leq 1$,
\[
L(\tilde{{\bf t}}_x-D\tilde{{\bf t}}_x(0))\leq \epsilon,
\quad
L(D\tilde{{\bf t}}_x)\leq l(x).
\]

\item[(v)] For all $u, v\in B(l(x)^{-1})$, 
\[
K^{-1}d(\Phi_x u, \Phi_x v)
\leq |u-v|
\leq l(x)\,d(\Phi_x u, \Phi_x v),
\]
where $K$ is a universal constant independent of all other parameters. 
\end{itemize}
\end{proposition}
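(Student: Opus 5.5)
The plan is to build Lyapunov charts for the $\R^k$-action in the usual Pesin way: change the Riemannian metric to a Lyapunov (adapted) metric, and then read everything off in exponential coordinates. The action is abelian, so the higher-rank Oseledets theorem (Proposition \ref{chart1}) gives, on the full-measure set $\Gamma_Y$, one splitting $T_xY=T\mathcal O(x)\oplus\bigoplus_i E_i(x)$ that is common to all $\rho_{\mathbf t}$. Each $E_i$ carries a linear functional $\chi_i$. The orbit direction $T\mathcal O(x)$ is the image of the infinitesimal generators and has zero exponent, since local freeness makes $d\rho_{\mathbf t}$ act on it by a bounded linear map.

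First, for $u\in E_q(x)$ I define the Lyapunov norm
\[
\|u\|'_x=\int_{\R^k}|D\rho_{\mathbf s}u|\,e^{-\chi_q(\mathbf s)-\epsilon\|\mathbf s\|/2}\,\rd\mathbf s ,
\]
and I use the analogous weight $e^{-\epsilon\|\mathbf s\|/2}$ on $T\mathcal O(x)$. Different subspaces are declared orthogonal. The integral converges by the regularity of $x$: the tempered estimate $|D\rho_{\mathbf s}u|\le C_\delta(x)e^{\chi_q(\mathbf s)+\delta\|\mathbf s\|}$ holds uniformly in directions $\mathbf s$, which one obtains from the limits along the basis $\mathbf t_1,\dots,\mathbf t_k$ together with the cocycle property. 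A change of variables $\mathbf s\mapsto\mathbf s+\mathbf t$ in the integral then gives the two-sided bounds in (iii), with exponent $\chi_q(\mathbf t)\pm\epsilon\|\mathbf t\|$. The composite bound in (iii) follows because $\chi_q$ is linear and $\|\mathbf t+\mathbf s\|\le\|\mathbf t\|+\|\mathbf s\|$. For the orbit direction, the upper constant $2$ is obtained by restricting to $\|\mathbf t\|$ small or by rescaling the generators. The choice $\epsilon\le\epsilon_0$ guarantees that the rates $\chi_q\pm\epsilon$ do not cross the gaps $\Delta$, which keeps the estimates consistent.

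Next I fix a linear isometry $C(x)\colon(\R^{m_Y},|\cdot|)\to(T_xY,\|\cdot\|')$ that sends the coordinate blocks to $T\mathcal O(x)$ and to the $E_i(x)$; this gives the orthogonality in (i). Set $\Phi_x=\exp_x\circ C(x)$ on a small ball, and define the chart maps $\tilde{\mathbf t}_x=\Phi^{-1}_{\rho_{\mathbf t}x}\circ\rho_{\mathbf t}\circ\Phi_x$. The composition rules in (ii) are then immediate from the group law $\rho_{\mathbf t+\mathbf s}=\rho_{\mathbf t}\circ\rho_{\mathbf s}$, because the intermediate charts cancel.

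The function $l$ is defined as a large multiple of $\|C(x)^{-1}\|$ times the $C^{1+\beta}$ norms of $\rho_{\mathbf t}$ for $\|\mathbf t\|\le1$, raised to a power. From the formula for $\|\cdot\|'$ one gets the tempering $\|C(\rho_{\mathbf t}x)^{-1}\|\le\|C(x)^{-1}\|e^{\epsilon\|\mathbf t\|/2}$, and after rescaling $\epsilon$ this yields $l(\rho_{\mathbf t}x)\le l(x)e^{\epsilon\|\mathbf t\|}$. Property (v) comes from comparing $|\cdot|$ with $\|\cdot\|'$ and $\|\cdot\|'$ with the Riemannian norm: in one direction $\|\cdot\|'\ge c|\cdot|_{\text{Riem}}$ with a uniform $c$ (this gives $K$), and in the other direction the comparison is controlled by $\|C^{-1}\|\le l$.

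The step I expect to be hardest is (iv): showing that the chart maps are $\epsilon$-close to their derivatives in Lipschitz norm on the ball $B(l(x)^{-1})$, and that $L(D\tilde{\mathbf t}_x)\le l(x)$. Here I would use the $C^{1+\beta}$ (Hölder) modulus of $D\rho_{\mathbf t}$, uniform over $\|\mathbf t\|\le1$ by compactness. The distortion of $\Phi_x$ is at most $\|C(x)^{-1}\|$ times the Riemannian distortion. So on a ball of radius $r$ the error $L(\tilde{\mathbf t}_x-D\tilde{\mathbf t}_x(0))$ is bounded by $\|C(x)^{-1}\|\cdot r^\beta$, and shrinking the radius to $l(x)^{-1}$, with $l$ a sufficiently large power of $\|C^{-1}\|$, makes this error at most $\epsilon$. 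The subtle point is that this bound must hold uniformly over all $\mathbf t$ with $\|\mathbf t\|\le1$ at once, not just along a single time-one map, and that the resulting $l$ must still satisfy the tempering in $\mathbf t$. This is handled by taking a supremum over the compact parameter set.
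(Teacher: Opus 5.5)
The paper gives no proof of this proposition. It is recalled as the $\mathbb{R}^k$-analogue of Hu's $(\epsilon,l)$-charts, built on Proposition~\ref{chart1}. Your construction (averaged Lyapunov metric, $\Phi_x=\exp_x\circ C(x)$, tempered $l$) is the standard one, and (i), (ii), the $E_q$-estimates in (iii) and (v) work as you describe.

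\textbf{Tempering of $l$.} This is the main genuine gap. The inequality $\|C(\rho_{\mathbf t}x)^{-1}\|\le e^{\epsilon\|\mathbf t\|/2}\|C(x)^{-1}\|$ does not follow from the formula for $\|\cdot\|'$. That formula controls $\|D\rho_{\mathbf t}w\|'/\|w\|'$, but says nothing about the Riemannian ratio $|D\rho_{\mathbf t}w|/|w|$ or about the angles between the $E_i(\rho_{\mathbf t}x)$.
\begin{itemize}
\item If $E_q$ is a line, the size of $C(\cdot)^{-1}$ on $E_q$ is multiplied along the orbit exactly by $\frac{\|D\rho_{\mathbf t}w\|'/\|w\|'}{|D\rho_{\mathbf t}w|/|w|}$.
\item This ratio exceeds $e^{\epsilon\|\mathbf t\|}$ as soon as $|D\rho_{\mathbf t}w|/|w|<e^{\chi_q(\mathbf t)-\frac32\epsilon\|\mathbf t\|}$. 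That happens even for $\|\mathbf t\|\le1$ whenever the pointwise expansion on $E_q$ is not $\epsilon$-close to $\chi_q$.
\item The same problem sits in your (iv) estimate: its prefactor is $\|C(\rho_{\mathbf t}x)^{-1}\|$, not $\|C(x)^{-1}\|$.
\end{itemize}
The standard repair is a tempering kernel. First show $\|C(\rho_{\mathbf s}x)^{-1}\|\le B(x)e^{\epsilon'\|\mathbf s\|}$ for all $\mathbf s$. This uses Proposition~\ref{chart1}(1),(4) for each block, plus tempered angles between each $E_q$ and the sum of the remaining subspaces. Then set $A(x):=\sup_{\mathbf s}\|C(\rho_{\mathbf s}x)^{-1}\|e^{-2\epsilon'\|\mathbf s\|}$, so that $A\ge\|C^{-1}\|$ and $A(\rho_{\mathbf t}x)\le e^{2\epsilon'\|\mathbf t\|}A(x)$, and build $l$ from $A$ with $\epsilon'$ small.

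Relatedly, the direction-uniform bound $|D\rho_{\mathbf s}u|\le C_\delta(x)e^{\chi_q(\mathbf s)+\delta\|\mathbf s\|}|u|$ cannot be assembled from the basis directions and the cocycle identity. Conjugation only controls the one-parameter constants at intermediate points such as $\rho_{n\mathbf t_2}x$ up to factors like $\Upsilon^{2|n|}$. Take this bound from Proposition~\ref{chart1}; that is also why the charts live on its full-measure set rather than on all regular points.

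\textbf{Orbit block.} With the weight $e^{-\epsilon\|\mathbf s\|/2}$ you only get $\|D\rho_{\mathbf t}v\|'\le\min\{e^{\epsilon\|\mathbf t\|/2},C_0^2\}\|v\|'$, where $C_0$ is the distortion constant of the orbit frame. This is not the bound $2|v|$ for all $\mathbf t$. Restricting to small $\mathbf t$ does not prove (iii), and rescaling the generators does not change these ratios. Instead, declare the generators $X_i=\frac{\rd}{\rd s}\rho_{s\mathbf t_i}\big|_{s=0}$ orthonormal at each point. By commutativity $D\rho_{\mathbf t}X_i(x)=X_i(\rho_{\mathbf t}x)$, so $D\tilde{\mathbf t}_x(0)$ is the identity on the orbit block. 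Local freeness and compactness make this norm uniformly comparable to the Riemannian one, as (v) needs.

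\textbf{Euclidean norm on $E_q$.} Use the squared integral $\int|D\rho_{\mathbf s}u|^2e^{-2\chi_q(\mathbf s)-\epsilon\|\mathbf s\|}\,\rd\mathbf s$. Your $L^1$ expression is not Euclidean when $\dim E_q>1$, so an isometry $C(x)$ from $(\mathbb{R}^{m_Y},|\cdot|)$ need not exist. The change-of-variables argument is unaffected.

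\textbf{Second half of (iv).} You never establish $L(D\tilde{\mathbf t}_x)\le l(x)$, and your method cannot. For an action that is only $C^{1+\beta}$ with $\beta<1$, $D\rho_{\mathbf t}$ need not be Lipschitz near $x$. Since the other factors of $\tilde{\mathbf t}_x$ are smooth and invertible, $L(D\tilde{\mathbf t}_x)$ is then infinite on every ball. What the H\"older modulus gives is $\|D\tilde{\mathbf t}_x(v)-D\tilde{\mathbf t}_x(w)\|\le l(x)|v-w|^\beta$ for $\|\mathbf t\|\le1$. That is the correct $C^{1+\beta}$ form of (iv); the Lipschitz form needs $C^2$, so this is really a defect of the statement. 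Say explicitly which version you prove.
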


We call each such family of local charts $\{\Phi_z: z\in \Gamma'\}$ 
the set of $(\epsilon, l)$-charts. 
Fix a sufficiently small $\epsilon > 0$ as in Propositions \ref{chart1} and \ref{chart2}. 
For each $l \ge 1$, we define 
\begin{align*}
\Gamma_{l,\epsilon}:=\left\{z\in \Gamma_Y: l(z)\leq l,\,C_{\epsilon}(z)\leq l,\, K_{\epsilon}(z)\geq \frac{1}{l}\right\}.
\end{align*}

By adapting the arguments from \cite{ORH23,DQ25}, 
we obtain the following lemmas.

\begin{lemma}\label{ORH23-Lemma 2.2}
Fix small $\epsilon>0$ and large $l\in\mathbb{N}$,
and let $\Phi_{x_0}\bigl(B(l(x_0)^{-1})\bigr)$ denote the Lyapunov chart at $x_0\in \Gamma_{l,\epsilon}$.
Let $ F \subset \Phi_{x_0}\bigl(B(l(x_0)^{-1})\bigr) \cap \Gamma_{l,\epsilon} $ be a measurable subset.
Then $F$ can be covered by sequential Bowen balls $B(\cdot, A,n,e^{-n\epsilon})$
with covering multiplicity bounded by $ e^{3 n m_Y\epsilon} $,
where $n$ is sufficiently large depending on $\Gamma_{l,\epsilon}$.
\end{lemma}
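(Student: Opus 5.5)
We follow the Besicovitch-type covering argument of \cite[Lemma 2.2]{ORH23}. The idea is to compare each sequential Bowen ball $B(x,A,n,e^{-n\epsilon})$, for $x\in\Gamma_{l,\epsilon}$, with a box in the Lyapunov chart of Proposition \ref{chart2}. The plan has three steps: control the shape of one ball, build a multiplicity-controlled cover in chart coordinates, and then count.

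\textbf{Step 1: shape of a single ball.} Fix $x\in F$. Using the properties (ii)--(iv) of Proposition \ref{chart2}, write $\rho_{{\bf a}(i)}$ in charts as a composition of at most $\|{\bf a}(i)\|+1$ chart maps $\tilde{\bf t}$ with $\|{\bf t}\|\le 1$. This is possible because $\|{\bf a}(i+1)-{\bf a}(i)\|\le K_0$ and ${\bf a}(i)/i\to{\bf a}$. Along the composition, the chart size $l(\rho_{{\bf a}(i)}x)$ grows by at most $e^{\epsilon\|{\bf a}(i)\|}$. The radius $e^{-n\epsilon}$ is much smaller than the chart radius, provided we adjust $\epsilon$ by a factor depending on $\|{\bf a}\|+K_0$; this is harmless because $\epsilon\le\epsilon_0$ is small.

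Hence the nonlinear corrections are negligible. The ball is sandwiched between two chart boxes around $x$: the box has side $e^{-n\epsilon}\min_{i<n}e^{-\chi_q({\bf a}(i))\mp c\epsilon n}$ in direction $R_q(x)$, and side comparable to $e^{-n\epsilon}$ in the orbit direction, by the second line of (iii). Because ${\bf a}(i)/i\to{\bf a}$ and the $\chi_q$ are linear, the minimum over $i$ equals $e^{-n\max\{\chi_q({\bf a}),0\}+o(n)}$. The inner and outer boxes therefore differ in each of the $m_Y$ directions by a factor at most $e^{c'n\epsilon}$, with $c'$ close to $1$.

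\textbf{Step 2: the cover.} Work in the coordinates of $\Phi_{x_0}$. All points of $F$ lie in $\Gamma_{l,\epsilon}$, so their charts are uniformly comparable to $\Phi_{x_0}$, and their Oseledets splittings are uniformly transverse with angles bounded by $l$. The boxes from Step 1, centered at different points of $F$, are therefore approximately translates of one fixed box $Q_n$. The error is again at most a factor $e^{n\epsilon}$ per direction once $n$ is large depending on $l$.

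Take a maximal $\tfrac12$-inner-box separated subset $\{x_j\}\subset F$ and cover $F$ by the outer balls $B(x_j,A,n,e^{-n\epsilon})$. A point $y$ lies in some $B(x_j,\cdot)$ only if $x_j$ lies in the outer box of $y$. By separation, the inner half-boxes of such $x_j$ are disjoint and all contained in an enlarged outer box. Comparing volumes gives at most $\prod_{q=1}^{m_Y}e^{2c'n\epsilon+n\epsilon}\le e^{3nm_Y\epsilon}$ such $j$, after tuning the constants.

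\textbf{Main obstacle.} The delicate step is Step 1. We must make the chart comparison uniform along an $A$-sequence rather than along iterates of a single element. The worry is that between consecutive ${\bf a}(i)$ the path may have non-unit length or leave the chart. We would handle this by interpolating with unit steps, which adds at most $K_0$ steps each, and by absorbing the resulting tempering error $e^{\epsilon(\|{\bf a}\|+K_0+1)n}$ into a rescaled $\epsilon$. The bound $e^{3nm_Y\epsilon}$ should be read with $\epsilon$ normalized accordingly, or equivalently with $\epsilon$ in the charts taken as a small multiple of the $\epsilon$ in the ball radius.
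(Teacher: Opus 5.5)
Your skeleton (a maximal separated family in $F$ followed by a volume count) is the standard Ben Ovadia--Rodriguez Hertz argument that the paper defers to. However, the geometric model you feed into it is false. The gap is the sentence ``hence the nonlinear corrections are negligible'' in Step 1.

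Those corrections are small compared with the chart radius and with the long sides of your box. But the box is exponentially anisotropic: in the directions with $\chi_q(\mathbf a)>0$ its sides are about $e^{-n\epsilon}e^{-n\chi_q(\mathbf a)}$. At scale $r=e^{-n\epsilon}$ the chart maps differ from their linear parts by an amount of order $r^{1+\beta}$, times a constant depending on $l$. Such an error, once it enters an expanding coordinate, is multiplied by $e^{n\chi_q(\mathbf a)}$. It therefore exceeds $e^{-n\epsilon}$ as soon as $\chi_q(\mathbf a)>\beta\epsilon$, which is the typical situation since $\epsilon$ is small compared with the exponents.

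Geometrically, $B(x,A,n,e^{-n\epsilon})$ is a thin curvilinear neighbourhood of the local centre-stable plaque of $x$ for $\rho_{\mathbf a}$. In the linear coordinates of $\Phi_x$ that plaque bends by about $r^{1+\beta}$, far more than the width of the neighbourhood. Already for $g(u,s)=(2u+s^2,\,s/2)$ at the fixed point $0$, whose Lyapunov chart is essentially the identity, one has $W^s(0)=\{u=-\tfrac{4}{7}s^2\}$. Moreover, $B(g,0,n,e^{-n\epsilon})$ is essentially the $2^{-n}e^{-n\epsilon}$-neighbourhood of this parabola over $|s|<e^{-n\epsilon}$.
\begin{itemize}
\item It contains points with $|u|\approx e^{-2n\epsilon}$, much larger than your outer $u$-side $2^{-n}e^{(c-1)n\epsilon}$.
\item It misses the point $(0,s)$ with $|s|$ of order $e^{-(1+c)n\epsilon}$, which lies in your inner box. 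Its offset $\tfrac{4}{7}s^2$ from the parabola is amplified to about $2^{n}e^{-2(1+c)n\epsilon}\gg e^{-n\epsilon}$.
\end{itemize}
So neither inclusion holds with $e^{O(n\epsilon)}$ slack. For the same reason, the Step 2 claim that boxes at different centres are approximate translates fails. The Oseledets directions at different points of $F$ differ by angles that are not $\ll e^{-n\chi_q(\mathbf a)}$, and uniform transversality does not help. Your covering and multiplicity count both rely on that claim.

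The repair is to drop boxes altogether. Since $d^A_n$ is a metric, take $\{x_j\}\subset F$ maximal $e^{-n\epsilon}$-separated for $d^A_n$. Then:
\begin{itemize}
\item the balls $B(x_j,A,n,e^{-n\epsilon})$ cover $F$;
\item the balls $B(x_j,A,n,\tfrac{1}{2}e^{-n\epsilon})$ are pairwise disjoint;
\item if $z\in B(x_j,A,n,e^{-n\epsilon})$ for all $j\in J_z$, the triangle inequality puts every $B(x_j,A,n,\tfrac{1}{2}e^{-n\epsilon})$, $j\in J_z$, inside $B(x_{j_0},A,n,\tfrac{5}{2}e^{-n\epsilon})$ for a fixed $j_0\in J_z$.
\end{itemize}
Hence $\#J_z\le\sup_{x\in F}\operatorname{Vol}B(x,A,n,\tfrac{5}{2}e^{-n\epsilon})/\inf_{x\in F}\operatorname{Vol}B(x,A,n,\tfrac{1}{2}e^{-n\epsilon})$. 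What must then be proved is the uniform two-sided estimate
\[
\operatorname{Vol}B(x,A,n,c\,e^{-n\epsilon})=\exp\Bigl(-n\bigl(m_Y\epsilon+\textstyle\sum_q m_q\max\{\chi_q(\mathbf a),0\}\bigr)\pm O(n\epsilon')\Bigr)
\]
for $x\in\Gamma_{l,\epsilon}$ and $c\in[\tfrac{1}{2},\tfrac{5}{2}]$, where $\epsilon'$ is the rescaled chart tempering. Your proposal never establishes this; the volume bound in your Step 2 comes from the false box model.

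Volumes do not see the curvature, so the estimate can be proved in two steps.
\begin{enumerate}
\item Compare with $\rho_{\mathbf a}$-Bowen balls of radii $e^{-n\epsilon}\Upsilon^{\pm o(n)}$, using $\|\mathbf a(i)-i\mathbf a\|=o(n)$ exactly as in \eqref{inclusion}. This also settles the zigzag worry in your last paragraph, which is not the real obstacle.
\item Integrate over a foliation of the chart by $u$-admissible disks. Use the graph transform and the fact that the Jacobian of the chart map along such disks is $e^{\sum_q m_q\max\{\chi_q(\mathbf a),0\}\pm O(\epsilon')}$ per step.
\end{enumerate}
Your caveat that the tempering must be a small multiple of $\epsilon/(\|\mathbf a\|+K_0)$ is correct and remains necessary. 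Note also that $n$ must be taken large depending on $A$, not only on $\Gamma_{l,\epsilon}$.
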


\begin{lemma}\label{ORH23-Lemma 2.3}
Fix $\epsilon>0$, let $\mu$ be an ergodic $\rho_{\bf a}$-invariant Borel probability measure,
and let $F$ be a measurable set with $\mu(F)>0$.
Then for $\mu$-almost every $x \in F$,
\[
\lim_{n \rightarrow \infty} -\frac{1}{n} \log \frac{\mu\bigl(B(x,A,n,e^{-nr})\cap F\bigr)}{\mu\bigl(B(x,A,n,e^{-nr})\bigr)}=0.
\]
\end{lemma}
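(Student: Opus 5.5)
This is a Lebesgue density statement for the sequential Bowen balls $B(x,A,n,e^{-nr})$. I would prove it with the same Borel--Cantelli scheme used for the classical Bowen--Lebesgue density lemma in \cite{ORH23} and \cite[Lemma 2.2]{DQ25}. The covering input is replaced by the sequential covering Lemma \ref{ORH23-Lemma 2.2}.

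The ratio $\mu(B\cap F)/\mu(B)$ is at most $1$, so $-\frac1n\log(\cdot)\ge 0$. It therefore suffices to show that for every $\delta>0$ and $\mu$-a.e. $x\in F$ we have
\[
\mu\bigl(B(x,A,n,e^{-nr})\cap F\bigr)\ge e^{-n\delta}\mu\bigl(B(x,A,n,e^{-nr})\bigr)
\]
for all large $n$. I would first reduce to the case $F\subset\Gamma_{l,\epsilon}\cap\Phi_{x_0}(B(l(x_0)^{-1}))$. We have $\mu(\bigcup_l\Gamma_{l,\epsilon})=1$, and each $\Gamma_{l,\epsilon}$ is covered by countably many (by compactness, finitely many) chart neighbourhoods. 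Intersecting with $F$, a.e. point of $F$ lies in some such piece $F'$. For that piece, $B\cap F'\subset B\cap F$, so the inequality for $F'$ implies the one for $F$.

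Fix such a piece, still called $F$, and set $G=\Phi_{x_0}(B(l(x_0)^{-1}))\cap\Gamma_{l,\epsilon}$. Define the bad set
\[
D_n=\Bigl\{x\in F:\ \mu\bigl(B(x,A,n,e^{-nr})\cap F\bigr)<e^{-n\delta}\mu\bigl(B(x,A,n,e^{-nr})\bigr)\Bigr\}.
\]
I would apply Lemma \ref{ORH23-Lemma 2.2} to $D_n$, with $\epsilon$ replaced by $r$, or with $\epsilon\le r$ so the ball radii match. This gives a subcover $\{B(x_j,A,n,e^{-nr})\}$ with $x_j\in D_n$ and multiplicity at most $e^{3nm_Yr}$. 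The lemma should be applied in a Besicovitch form, with centers in the set being covered; if necessary I would re-prove this by a maximal-disjoint selection, which is what the chart construction supports. Then
\[
\mu(D_n)\le\sum_j\mu(B_j\cap F)\le e^{-n\delta}\sum_j\mu(B_j)\le e^{-n\delta}e^{3nm_Yr}.
\]
Choosing $r<\delta/(6m_Y)$, this is summable in $n$, and Borel--Cantelli shows that a.e. $x\in F$ lies in only finitely many $D_n$.

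The step I expect to be the real obstacle is that $r$ is fixed in the statement rather than chosen small relative to $\delta$. I would handle this with a nesting argument. For $r'\ge r$ we have $B(x,A,n,e^{-nr'})\subset B(x,A,n,e^{-nr})$. The multiplicity constant in Lemma \ref{ORH23-Lemma 2.2} comes from comparing $e^{-nr}$-balls with the chart scale, through volume ratios controlled by the Lyapunov exponents along ${\bf a}(n)\approx n{\bf a}$ up to $e^{O(n\epsilon)}$. So I expect it to depend only on the chart parameter $\epsilon$ and not on the radius exponent.

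I would therefore re-derive the covering with multiplicity $e^{3nm_Y\epsilon}$ for balls of radius $e^{-nr}$, for any $r\ge\epsilon$, keeping the chart accuracy $\epsilon$ free. Taking $\epsilon<\delta/(6m_Y)$ then makes the Borel--Cantelli sum converge for arbitrary fixed $r$. The remaining checks are routine: the sequence balls are measurable and the Besicovitch selection is available inside a single chart, which follows from the assumption $K_0<\infty$ and the chart estimates (iii)--(v) of Proposition \ref{chart2}. Letting $\delta\downarrow0$ along a countable sequence gives the limit $0$.
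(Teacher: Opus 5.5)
Your argument is correct, and it is exactly the adaptation of the Ben Ovadia--Rodriguez Hertz Bowen--Lebesgue density argument that the paper invokes without writing out (it only says the lemma follows by adapting \cite{ORH23,DQ25}): localize to a Pesin-block piece in one chart, cover the bad set $D_n$ by sequence Bowen balls centred in $D_n$ with multiplicity $e^{O(n\epsilon)}$ as in Lemma \ref{ORH23-Lemma 2.2}, sum, and apply Borel--Cantelli; your observation that this multiplicity depends only on the chart accuracy $\epsilon$, which can be taken small relative to both $\delta$ and the fixed $r$, and not on the radius exponent, is precisely what makes the argument work for fixed $r$. The only caveat comes from the paper's set-up rather than your argument: the full-measure property of $\bigcup_l\Gamma_{l,\epsilon}$ in your reduction comes from Proposition \ref{chart1}, i.e.\ from invariance under the whole action, so for a merely $\rho_{\bf a}$-invariant $\mu$ you would run the same scheme in Pesin charts of $\rho_{\bf a}$, comparing $B(x,A,n,e^{-nr})$ with $\rho_{\bf a}$-Bowen balls of radii $e^{-n(r\pm o(1))}$ via $\max_{i<n}\|{\bf a}(i)-i{\bf a}\|=o(n)$.
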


\begin{lemma}\label{ORH23-Lemma 3.2} 
Let $\mu$ be a Borel probability measure which is invariant and
ergodic with respect to the $\mathbb R^k$-action. 
Let $K$ be a measurable set of positive $\mu$-measure, 
and let $\delta > 0$.
Assume that there exists $ \Delta > 0$ such that
$h_{\mu}^{A}\bigl(\{\rho_{\bf t}\},x\bigr) \leq \Delta$ for $\mu$-almost every $x\in K$.
Then, for all sufficiently large $n$ (depending on $\delta$ and $K$),
there exist a measurable subset $K_\delta \subseteq K$,
a finite collection $\mathcal{A}_{n,\delta}$, and $0 < \rho \leq \delta$ such that
\[
\bigcup_{x \in \mathcal{A}_{n,\delta}} B(x,A,n,\rho) \supseteq K_\delta,\quad
\frac{\mu(K_\delta)}{\mu(K)} \geq e^{-\delta},\quad
\# \mathcal{A}_{n,\delta} \leq e^{n(\Delta+\delta)}.
\]
\end{lemma}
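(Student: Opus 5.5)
This is a sequential analogue of the Katok-type covering argument in \cite[Lemma 3.2]{ORH23}, and I would follow that scheme. The plan has three stages: make the local entropy bound uniform on a large subset, pick a maximal separated set inside that subset, and then bound its cardinality using measure estimates on the balls.

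\textbf{Uniformization.} Since $h_{\mu}^{A}(\{\rho_{\bf t}\},x)\le\Delta$ for $\mu$-a.e.\ $x\in K$, for each such $x$ there is $\rho_0(x)>0$ with
\[
\varlimsup_{n\to\infty}-\tfrac1n\log\mu\bigl(B(x,A,n,2\rho)\bigr)<\Delta+\delta/2
\qquad\text{for all } \rho\le\rho_0(x).
\]
This uses that the quantity is monotone in the radius. Choose $\rho\le\min\{\delta,\,\cdot\,\}$ so that $\{x\in K:\rho_0(x)\ge\rho\}$ has measure at least $e^{-\delta/2}\mu(K)$. Next, define
\[
N(x)=\min\bigl\{N:\ \mu\bigl(B(x,A,n,2\rho)\bigr)\ge e^{-n(\Delta+\delta)}\ \text{for all } n\ge N\bigr\}.
\]
This is finite a.e.\ and measurable, since the map $x\mapsto\mu(B(x,A,n,r))$ is lower semicontinuous for open balls. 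Choose $N_0$ so large that
\[
K_\delta:=\{x\in K:\ \rho_0(x)\ge\rho,\ N(x)\le N_0\}
\]
satisfies $\mu(K_\delta)\ge e^{-\delta}\mu(K)$. Then for every $n\ge N_0$, every $x\in K_\delta$ has $\mu(B(x,A,n,2\rho))\ge e^{-n(\Delta+\delta)}$.

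\textbf{Covering.} For $n\ge N_0$, let $\mathcal A_{n,\delta}$ be a maximal $(A,n,2\rho)$-separated subset of $K_\delta$ for the metric $d_n^A$. It is finite by compactness. By maximality, the balls $B(x,A,n,2\rho)$ with $x\in\mathcal A_{n,\delta}$ cover $K_\delta$. Working with radius $2\rho$ is harmless: rename $2\rho$ as $\rho$ by halving the initial choice, so the radius stays $\le\delta$. The balls $B(x,A,n,\rho)$ with $x\in\mathcal A_{n,\delta}$ are pairwise disjoint, because $d_n^A$ is a genuine metric and the centers are $2\rho$-separated.

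\textbf{Counting, and the main obstacle.} To count, I want a lower bound on the measure of the disjoint balls of radius $\rho$, so the uniformization above should be run at radius $\rho$ rather than $2\rho$. This is consistent because the limit in $\epsilon$ defining $h^A_\mu$ is monotone, so the bound holds at every small radius. With that adjustment,
\[
1\ge\sum_{x\in\mathcal A_{n,\delta}}\mu\bigl(B(x,A,n,\rho)\bigr)\ge\#\mathcal A_{n,\delta}\,e^{-n(\Delta+\delta)},
\]
which gives $\#\mathcal A_{n,\delta}\le e^{n(\Delta+\delta)}$. The covering is then by $2\rho$-balls; shrinking $\rho$ by a factor $2$ at the outset yields exactly the statement with radius at most $\delta$.

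The only delicate point is making the pointwise $\varlimsup$ bound uniform both in $n$ and over a set of measure at least $e^{-\delta}\mu(K)$. This is precisely what the measurable functions $\rho_0$ and $N$ handle. Ergodicity and invariance are not really needed beyond guaranteeing that $h^A_\mu$ is a.e.\ finite.
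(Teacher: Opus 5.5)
Your argument is correct. It is essentially the adaptation of the Katok-type covering lemma of \cite{ORH23} that the paper invokes without writing out: uniformize $\mu(B(x,A,n,\rho))\geq e^{-n(\Delta+\delta)}$ for $n\geq N_0$ on a subset of relative measure at least $e^{-\delta}$, take a maximal $2\rho$-separated set for $d_n^A$, and count using the disjointness of the $\rho$-balls. (The auxiliary function $\rho_0$ is superfluous, since monotonicity in the radius already gives $\varlimsup_n-\frac1n\log\mu(B(x,A,n,r))\leq h_\mu^A(\{\rho_{\bf t}\},x)\leq\Delta$ for every $r>0$.)
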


We will use the following  auxiliary lemma. Denote by $\lfloor s \rfloor$ the integer part of $s$.

\begin{lemma}\label{NSE-Lemma}
Let $\mu$ be a Borel probability measure which is invariant and
ergodic with respect to the $\mathbb R^k$-action 
such that all nonzero Lyapunov exponents of 
$\rho_{\bf a}$ with respect to $\mu$ are uniformly bounded below in absolute value by some constant $\chi_0>0$.
Let
\[
0<\epsilon<\frac{\chi_0}{100m_Y},\quad r\geq 4\epsilon,\quad
\gamma>\sqrt{r}.
\]
Choose $\ell\in\mathbb{N}$ 
such that and $\mu(\Gamma_{\ell,\epsilon})>0$, 
and fix a measurable subset $K\subset\Gamma_{\ell,\epsilon}$ and $n'$ 
satisfying $\mu(K)>0$ and 
\begin{align*}
K\subset \left\{x\in Y: \forall n\geq n',\, x\in \Gamma_{\ell,\epsilon} \cap 
\left( \bigcup_{{\bf j}:\,n(1+\gamma)\leq \frac{\|{\bf j}\|}{\|{\bf a}\|}\leq n(1+2\gamma)}  \rho_{\bf j}^{-1} \left(\Gamma_{\ell,\epsilon}\right) \right)  \right\}. 
\end{align*}
Assume that $h^A_\mu\bigl(\{\rho_{\bf t}\},x\bigr)\leq\Delta$ for $\mu$-almost every $x\in K$.
Then there exists a subset $K_\epsilon\subseteq K$ with 
$\mu(K_\epsilon)\geq e^{-\epsilon}\mu(K)$,
and for all sufficiently large $n$, 
there exists a finite set $\mathcal{A}_n\subseteq K_\epsilon$ satisfying
\[
\bigcup_{x\in\mathcal{A}_n} B(x,n,e^{-nr})\supseteq K_\epsilon
\quad\text{and}\quad
\#\mathcal{A}_n\leq e^{n(1+2\gamma)(\Delta+3m_Y r)}.
\]
\end{lemma}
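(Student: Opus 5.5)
Lemma \ref{NSE-Lemma} follows the scheme of \cite[Lemma 3.3]{ORH23} and \cite{DQ25}. We start from a cover by sequence Bowen balls of fixed radius, which Lemma \ref{ORH23-Lemma 3.2} provides. We then refine each such ball into exponentially small Bowen balls $B(\cdot,n,e^{-nr})$ for the single element $\rho_{\bf a}$, and control the number of pieces through the Lyapunov charts of Proposition \ref{chart2} and the covering estimate of Lemma \ref{ORH23-Lemma 2.2}.

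\emph{Step 1: fixed-radius cover.} Apply Lemma \ref{ORH23-Lemma 3.2} to $K$ with $\delta=\epsilon$. This gives a subset $K_\epsilon\subseteq K$ with $\mu(K_\epsilon)\geq e^{-\epsilon}\mu(K)$, a radius $0<\rho\le\epsilon$, and, for each large $m$, a family $\mathcal A'_m$ with $\#\mathcal A'_m\le e^{m(\Delta+\epsilon)}$ whose balls $B(x,A,m,\rho)$ cover $K_\epsilon$. We take $m=\lfloor n(1+2\gamma)\rfloor$.

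\emph{Step 2: from the sequence direction to the time-$n$ map.} The time $m$ is chosen longer than $n$ so that ${\bf a}(m)$ overshoots $n{\bf a}$ by roughly $\gamma n\|{\bf a}\|$. Since ${\bf a}(j)/j\to{\bf a}$ and the increments of $A$ are bounded by $K_0$, for large $n$ each parameter $j{\bf a}$ with $0\le j\le n$ lies within $o(n)$ of some ${\bf a}(i)$ with $i\le m$. The hypothesis on $K$ provides a return ${\bf j}$ with $n(1+\gamma)\le\|{\bf j}\|/\|{\bf a}\|\le n(1+2\gamma)$ and $\rho_{\bf j}x\in\Gamma_{\ell,\epsilon}$. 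At that return the chart has bounded size $\ell^{-1}$, so inside a set of diameter $\rho$ in the $d_m^A$ metric the dynamics is in the linear regime of Proposition \ref{chart2}(iii)--(iv) at both endpoints $x$ and $\rho_{\bf j}x$.

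Consequently, each piece $B(x,A,m,\rho)\cap K_\epsilon$ lies in a chart image that is roughly a box. Its side along $E_q$ is about $e^{-\max(\chi_q(m{\bf a}),0)+O(\epsilon m)}\rho$, and its side along the orbit direction is $O(\rho)$. The lower bound $\chi_0$ on the nonzero exponents, together with $\epsilon<\chi_0/100m_Y$, keeps the separation of scales uniform.

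\emph{Step 3: counting.} Inside this box we count how many balls $B(y,n,e^{-nr})$ are needed to cover it. Along each direction, the ratio between the box side and the side of a small Bowen ball is at most $e^{nr+O(\epsilon n)}$: the expanding parts are already contracted by the overshoot, and the $\gamma n$ surplus time, with $\gamma>\sqrt r$, absorbs the $e^{-nr}$ discrepancy in the neutral directions. Lemma \ref{ORH23-Lemma 2.2} bounds the covering multiplicity by $e^{3nm_Y\epsilon}$. Each piece therefore needs at most $e^{m_Y(nr+3\epsilon n)}$ small balls, and using $r\ge4\epsilon$ this is at most $e^{2nm_Yr}$. Multiplying by $\#\mathcal A'_m$ gives
\[
\#\mathcal A_n\le e^{n(1+2\gamma)(\Delta+\epsilon)}\,e^{2nm_Yr}\le e^{n(1+2\gamma)(\Delta+3m_Yr)}.
\]
Finally, we recentre the balls at points of $K_\epsilon$ by doubling radii, which the $e^{-nr}$ slack absorbs.

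The main obstacle is Step 2: making the comparison between sequence Bowen balls and the Pesin box rigorous. The sequence $A$ only follows $m{\bf a}$ up to $o(m)$, and the chart size is controlled only at the return points. I would handle this by using Proposition \ref{chart2}(ii) to factor $\rho_{{\bf a}(i)}$ as $\rho_{i{\bf a}}$ composed with a bounded-drift correction of norm $o(i)$. The tempering $l(\rho_{\bf t}x)\le l(x)e^{\epsilon\|{\bf t}\|}$ then shows that this correction costs only $e^{O(\epsilon n)}$ in every size estimate.
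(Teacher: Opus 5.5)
Your Step 1 and your final bookkeeping coincide with the paper's proof. Step 2, however, has a gap as you formulate it.

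\textbf{The gap in Step 2.} You need the entire piece $B(x,A,m,\rho)\cap K_\epsilon$, with $\rho$ fixed, to be a box in the chart at $x$ whose expanding sides are $\approx e^{-\chi_q(m{\bf a})}\rho$. Proposition \ref{chart2} only gives estimates inside charts of radius $l(\rho_{\bf t}x)^{-1}\geq\ell^{-1}e^{-\epsilon\|{\bf t}\|}$. Membership in the long ball only keeps $\rho_{{\bf a}(i)}y$ within $\rho$ of $\rho_{{\bf a}(i)}x$.
\begin{itemize}
\item When $\rho_{\bf a}$ has zero exponents transverse to the orbit (the lemma allows this), points of the ball really are displaced by order $\rho$ in those directions. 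After a time independent of $n$ you can no longer place them in the tempered charts. Nothing then bounds their expanding components at time $0$.
\item Good charts at the two endpoints $x$ and $\rho_{\bf j}x$ do not cover the intermediate times.
\item Your factorization $\rho_{{\bf a}(i)}=\rho_{{\bf a}(i)-i{\bf a}}\circ\rho_{i{\bf a}}$ plus tempering only handles the $o(i)$ drift of $A$, not a $\rho$-sized set lying outside the charts.
\item For the same reason, your sentence that the $\gamma n$ surplus ``absorbs the $e^{-nr}$ discrepancy in the neutral directions'' is backwards. Extra time produces contraction only in the expanding directions, never in the neutral ones.
\item Lemma \ref{ORH23-Lemma 2.2} bounds multiplicity, not cardinality. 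Turning it into a count inside your box needs a lower volume bound for the small balls, which you do not supply.
\end{itemize}

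\textbf{The missing idea.} Cut by tiny metric balls before using charts. The paper takes a Besicovitch cover $C_n$ of $K_\epsilon$ by metric balls of radius $e^{-2nr}$, with $\#C_n\leq B_dC_Ye^{2m_Ynr}$. It then uses the single-ball inclusion \eqref{NSE-Lemma-(1)}:
\[
B\bigl(x,A,\lfloor n(1+2\gamma)\rfloor,\rho\bigr)\cap B\subseteq B\bigl(x,A,\lfloor \|{\bf j}_x\|/\|{\bf a}\|\rfloor,\rho\bigr)\cap B(x,2e^{-2nr})\subseteq B(x,A,n,e^{-nr}).
\]
\begin{itemize}
\item The metric ball controls the zero-exponent and contracting components. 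They start at size $2e^{-2nr}$, and since $r\geq4\epsilon$ dominates the $\epsilon$-tempering, they stay below $e^{-nr}$ inside the charts up to time $n$.
\item Only the expanding components use the long ball and the return $\rho_{{\bf j}_x}x\in\Gamma_{\ell,\epsilon}$ at a time of at least $n(1+\gamma)$. The margin $\gamma n$ shrinks them to about $e^{-(\chi_0-O(\epsilon))\gamma n}\rho<e^{-nr}$. This is where the overshoot and the condition $\gamma>\sqrt r$ are used.
\end{itemize}
Each piece then lies in a single small ball, so the bound is just $\#C_n\cdot e^{\lfloor n(1+2\gamma)\rfloor(\Delta+\epsilon)}$. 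No box geometry or volume counting is needed.

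Note also that the balls produced are the sequence balls $B(x,A,n,e^{-nr})$, not Bowen balls of $\rho_{\bf a}$. This is the form used in the proof of Theorem \ref{SNE is SE}.
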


\begin{proof}
By Lemma \ref{ORH23-Lemma 3.2}, 
for any sufficiently large $n$ depending on $\epsilon$ and $K$,
there exist a measurable subset $K_\epsilon\subset K$,
a finite collection $\mathcal{A}_{n,\epsilon}$, 
and a constant $0<\rho\leq\epsilon$ such that
\[
\bigcup_{x\in\mathcal{A}_{n,\epsilon}} B\bigl(x,A,\lfloor n(1+2\gamma)\rfloor,\rho\bigr)\supseteq K_\epsilon,\quad
\mu(K_\epsilon)\geq e^{-\epsilon}\mu(K)>0,\quad
\#\mathcal{A}_{n,\epsilon}\leq e^{n(1+2\gamma)(\Delta+\epsilon)}.
\]

Let $C_n$ be a finite Besicovitch cover of $K_\epsilon$ by metric balls of radius $e^{-2nr}$.
For each $B\in C_n$, the intersection $K_\epsilon\cap B$ can be covered by at most $e^{n(\Delta+\epsilon)}$ Bowen balls
of the form $B\bigl(\cdot,A,\lfloor n(1+2\gamma)\rfloor,\rho\bigr)$.
Let $B_d$ be the Besicovitch constant on $Y$, and let $C_Y>0$ be a uniform constant such that
for all $x\in Y$ and all sufficiently large $n$,
\[
\operatorname{Vol}\bigl(B(x,e^{-2nr})\bigr)\geq\frac{1}{C_Y}e^{-2m_Y n r}.
\]
Hence $K_\epsilon$ can be covered by at most 
$B_d C_Y\,e^{2m_Y n r} e^{\lfloor n(1+2\gamma)\rfloor(\Delta+\epsilon)}$ 
such Bowen balls in total.

For each $B\in C_n$, let $\mathcal{A}_{n,\epsilon}^B$ be the corresponding covering set given by Lemma \ref{ORH23-Lemma 3.2} for $B\cap K_\epsilon$.
For every $x\in\mathcal{A}_{n,\epsilon}^B$, 
 and notice that for ${\bf j}_x$ with 
$
n(1+\gamma) \|{\bf a}\|\leq \|{\bf j}_x\|\leq n(1+2\gamma) \|{\bf a}\| 
$, 
\begin{align}\label{NSE-Lemma-(1)}
B(x,A,\bigl\lfloor n(1+2\gamma)\bigr\rfloor,\rho)\cap B \subseteq 
B\!\left(x,A,\left\lfloor \frac{\|{\bf j}_x\|}{\|{\bf a}\|} \right\rfloor,\rho\right)\cap B(x,2e^{-2nr}) 
\subseteq B(x,A,n,e^{-nr})
\end{align}
for all $n$ large enough depending on $l$ and $\epsilon$. 
Inclusion \eqref{NSE-Lemma-(1)} follows from standard arguments in Pesin theory;
see, for instance, \cite[Corollary 3.3]{ORH23}.
The small ball $B(x,2e^{-2nr})$ controls the directions associated with zero Lyapunov exponents,
while the Bowen ball component dominates the directions corresponding to nonzero Lyapunov exponents,
due to the uniform lower bound $\chi_0$ on their absolute values.
Define
\[
\mathcal{A}_n:=\bigcup_{B\in C_n}\mathcal{A}_{n,\epsilon}^B.
\]
Then $\mathcal{A}_n$ covers $K_\epsilon$, and for all sufficiently large $n$, 
 the cardinality of $\mathcal{A}_n$ is bounded above by 
\[
B_d C_Y e^{2m_Y n r} e^{\lfloor n(1+2\gamma)\rfloor(\Delta+\epsilon)}\leq e^{n(1+2\gamma)(\Delta+3m_Y r)}.
\]
This completes the proof.
\end{proof}

We are now ready to establish the main result of this subsection.

\begin{theorem}\label{SNE is SE}
Let $\{\rho_{\bf t}\}_{{\bf t}\in\mathbb{R}^k}$ be a $C^{1+\beta}$ $\mathbb R^k$-action on a closed Riemannian manifold $(Y,d)$.
Let $A=\{{\bf a}(n)\}_{n=1}^\infty$ be a sequence in $\mathbb R^k$ satisfying
\[
K_0=\sup_{i\geq 1}\bigl\|{\bf a}(i+1)-{\bf a}(i)\bigr\|<\infty,
\]
and suppose there exists ${\bf a}\in\mathbb{R}^k$ such that $\dfrac{{\bf a}(n)}{n}\to\bf a$ as $n\to\infty$.
Given any ergodic invariant measure $\mu$ for $\mathbb R^k$-action, 
 for $\mu$-almost every $x\in Y$, we have 
\[
{\mathcal{E}}_{\mu}^A\bigl(\{\rho_{\bf t}\},x\bigr) = {h}_{\mu}^A\bigl(\{\rho_{\bf t}\},x\bigr).
\]
\end{theorem}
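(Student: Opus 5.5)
The inequality $\mathcal E_\mu^A\geq h_\mu^A$ is immediate. For every fixed $\epsilon>0$ and $r>0$ we have $e^{-nr}<\epsilon$ once $n$ is large, so $B(x,A,n,e^{-nr})\subset B(x,A,n,\epsilon)$. Hence
\[
-\tfrac1n\log\mu\bigl(B(x,A,n,e^{-nr})\bigr)\geq -\tfrac1n\log\mu\bigl(B(x,A,n,\epsilon)\bigr).
\]
Taking $\varlimsup_n$, then $r\to0$, then $\epsilon\to0$ gives $\mathcal E_\mu^A(\{\rho_{\bf t}\},x)\geq h_\mu^A(\{\rho_{\bf t}\},x)$ for every $x$. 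Both quantities are $\mu$-a.e. constant, as shown above, so only the reverse inequality remains. Write $\Delta:=h_\mu^A(\{\rho_{\bf t}\})<\infty$.

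For the reverse inequality I would argue by contradiction and use the covering Lemma \ref{NSE-Lemma}. Suppose $\mathcal E_\mu^A>\Delta+\eta$ for some $\eta>0$. If $\rho_{\bf a}$ has no nonzero exponents, or if ${\bf a}=0$, the statement should reduce directly to small metric balls via \eqref{Neutralized sequence entropy-(1)}; otherwise set $\chi_0$ to be the minimal absolute value of the nonzero exponents of $\rho_{\bf a}$.

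Fix a small $\epsilon$ and take $r\geq4\epsilon$ and $\gamma>\sqrt r$, all so small that $(1+2\gamma)(\Delta+3m_Yr)+\epsilon<\Delta+\eta/2$. By Pesin theory $\mu(\Gamma_{\ell,\epsilon})\to1$ as $\ell\to\infty$. By the ergodic theorem for the $\mathbb R^k$-action, a.e. point returns to $\Gamma_{\ell,\epsilon}$ at some parameter ${\bf j}$ in each shell $n(1+\gamma)\|{\bf a}\|\leq\|{\bf j}\|\leq n(1+2\gamma)\|{\bf a}\|$. This lets me choose $K$ with $\mu(K)>0$ satisfying the hypotheses of Lemma \ref{NSE-Lemma}. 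That lemma then gives $K_\epsilon$ covered by at most $e^{n(\Delta+\eta/2)}$ balls $B(y,A,n,e^{-nr})$ with centres $y\in K_\epsilon$.

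Next I would turn the contradiction hypothesis into a set. Because $\mathcal E^A_\mu(x)$ is defined with $\lim_{r\to0}$ of a quantity that is monotone in $r$, the hypothesis gives, for all small $r$, a subset $G\subset K_\epsilon$ of measure at least $\tfrac12\mu(K_\epsilon)$ such that for every $x\in G$ we have $\mu\bigl(B(x,A,n,e^{-nr/2})\bigr)\leq e^{-n(\Delta+\eta)}$ for infinitely many $n$. The change of radius from $e^{-nr}$ to $e^{-nr/2}$ is harmless, since the limit in $r$ absorbs it.

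Now I need to pass from "infinitely many $n$, depending on $x$" to a single $n$. I would use Lemma \ref{ORH23-Lemma 2.3} together with a Fatou/Borel–Cantelli-type argument to find a single large $n$ for which the set $G_n$ of such $x$ has $\mu(G_n)\geq c>0$. If this extraction fails (because the $\varlimsup$ only gives infinitely many $n$ for each $x$), I would instead work with the covering along those $n$ directly, which is possible since Lemma \ref{NSE-Lemma} holds for all large $n$.

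Then I would cover $G_n$ by the balls $B(y,A,n,e^{-nr})$ from Lemma \ref{NSE-Lemma}, keeping only balls that meet $G_n$. If such a ball contains $x\in G_n$, then $B(y,A,n,e^{-nr})\subset B(x,A,n,2e^{-nr})\subset B(x,A,n,e^{-nr/2})$, so each kept ball has measure at most $e^{-n(\Delta+\eta)}$. Summing over the balls,
\[
c\leq\mu(G_n)\leq e^{n(\Delta+\eta/2)}e^{-n(\Delta+\eta)}\to0,
\]
which is a contradiction.

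The main obstacle is the inclusion \eqref{NSE-Lemma-(1)} used inside Lemma \ref{NSE-Lemma}: controlling sequential Bowen balls of fixed radius over a slightly longer time window by exponentially small balls. I take it as given via \cite[Corollary 3.3]{ORH23}, and the place to check it is how that inclusion depends on $\chi_0$ and on the drift ${\bf a}(n)/n\to{\bf a}$. The degenerate case ${\bf a}=0$ also needs a separate treatment, handled directly by the volume bound \eqref{Neutralized sequence entropy-(1)}.
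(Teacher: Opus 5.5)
Your main case follows the paper's route. You argue by contradiction, apply Lemma~\ref{NSE-Lemma} on a set with shell recurrence to $\Gamma_{\ell,\epsilon}$ to get roughly $e^{n(h+\eta/2)}$ neutralized sequence Bowen balls, bound each ball's measure by the contradiction hypothesis, and compare. Your radius-doubling device, justified by monotonicity of the $\varlimsup$ in $r$, is a fine substitute for the paper's choice of centres inside the set being covered.

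The step that fails as you state it is extracting one large $n$ with $\mu(G_n)\ge c>0$. From $G\subset\limsup_n G_n$ you only get $\sum_n\mu(G_n)=\infty$. Fatou points the wrong way, since $\mu(\limsup_n G_n)\ge\limsup_n\mu(G_n)$. Typewriter-type sequences have $\mu(G_n)\to0$ while every point lies in infinitely many $G_n$.

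No fixed $c$ is needed, and your fallback is the right idea once made precise. For every $n\ge N_1$ the cover gives $\mu(G_n)\le e^{-n\eta/2}$, and $G\subset\bigcup_{n\ge N}G_n$ for every $N$. Hence $\mu(G)\le\sum_{n\ge N}e^{-n\eta/2}\to0$. This is how the paper concludes: a first-hitting time $n_x^N$ makes the pieces disjoint, $K_\epsilon=\bigsqcup_{n\ge N}K_n$, and it sums $\mu(K_n)\le e^{-\lambda n/8}$. Lemma~\ref{ORH23-Lemma 2.3} is not needed.

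The degenerate case does not follow from \eqref{Neutralized sequence entropy-(1)}. That inclusion shrinks the radius by $\Upsilon^{-(\|{\bf a}\|+1)n}$. With the pointwise-dimension bound it gives only $\mathcal E_\mu^A\le m_Y(\|{\bf a}\|+1)\log\Upsilon$, which is finiteness (all the paper uses it for), not $\mathcal E_\mu^A=h_\mu^A$.

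For ${\bf a}={\bf 0}$, use the drift instead. You have $\|{\bf a}(i)\|\le\delta n+C_\delta$ for $0\le i<n$, so $B(x,\Upsilon^{-\delta n-C_\delta-1}e^{-nr})\subset B(x,A,n,e^{-nr})$. This gives $\varlimsup_{n\to\infty}-\frac1n\log\mu(B(x,A,n,e^{-nr}))\le m_Y(r+\delta\log\Upsilon)$ for every $\delta>0$, hence $\mathcal E_\mu^A=h_\mu^A=0$.

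For ${\bf a}\ne{\bf 0}$ with only zero exponents, you need subexponential growth along the sequence. The paper handles every case without positive exponents at once, via the a.e.\ inclusion $B(x,e^{-3nr})\subset B(x,A,n,e^{-2nr})$ for large $n$. That inclusion likewise forces $\mathcal E_\mu^A=0$.
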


\begin{proof}
We first treat the case where all Lyapunov exponents of
$\rho_{\bf a}$ with respect to $\mu$ are non-positive.
Fix $r>0$ arbitrarily.
For $\mu$-almost every $x\in Y$, there exists 
$N=N(x,r)\in\mathbb N$ such that, for every $n\geq N$, 
\begin{align}\label{SNE is SE-(1)}
B(x,e^{-3nr})\subset B(x,A,n,e^{-2nr}).
\end{align}
Recall that $m_Y$ is the dimension of $Y$. For $\mu$-a.e.\ $x\in Y$,
we have
$$
\varlimsup_{n\to\infty}-\frac1n\log\mu\bigl(B(x,e^{-3nr})\bigr)
=3r\varlimsup_{n\to\infty}\frac{\log\mu\bigl(B(x,e^{-3nr})\bigr)}
{\log(e^{-3nr})} \leq 3rm_Y.
$$ 
Combining this with \eqref{SNE is SE-(1)}, we obtain
$$
\varlimsup_{n\to\infty}-\frac1n\log\mu\bigl(B(x,A,n,e^{-2nr})\bigr)\leq 3rm_Y.
$$
Taking the limit as $r\to 0$ and using
$h_{\mu}^A(\{\rho_{\bf t}\},x)
\leq{\mathcal E}_{\mu}^A(\{\rho_{\bf t}\},x)$, we obtain
$$
{\mathcal E}_{\mu}^A\bigl(\{\rho_{\bf t}\},x\bigr)
=
h_{\mu}^A\bigl(\{\rho_{\bf t}\},x\bigr)
=0
$$
for $\mu$-almost every $x\in Y$. 
Note that this includes the case ${\bf a}={\bf 0}$.

We now assume that $\rho_{\bf a}$ has a positive Lyapunov exponent.
Since $\mu$ is ergodic, 
there exists $\chi_0>0$ such that the absolute value 
of every nonzero Lyapunov exponent is bounded below by $\chi_0$.
Moreover, both functions
${\mathcal E}_{\mu}^A(\{\rho_{\bf t}\},x)$ and 
$h_{\mu}^A(\{\rho_{\bf t}\},x)$ are constant $\mu$-almost everywhere.
We denote the constant value of
$h_{\mu}^A(\{\rho_{\bf t}\},x)$ by $h$.

We proceed by contradiction. 
Suppose that ${\mathcal E}_{\mu}^A\bigl(\{\rho_{\bf t}\},x\bigr)>h$ 
for $\mu$-almost every $x$. 
Since ${\mathcal E}_{\mu}^A(\{\rho_{\bf t}\},x)$ is constant $\mu$-almost everywhere, 
we may choose $\lambda>0$ sufficiently small so that
\begin{align}\label{SNE is SE-(2)}
\lambda<\min\left\{\chi_0,\frac{1}{100}\right\},
\quad
8\lambda^2\left(h+\frac{\lambda}{2}\right)
<\frac{\lambda}{8},
\end{align}
and
$$
{\mathcal E}_{\mu}^A\bigl(\{\rho_{\bf t}\},x\bigr)
\geq
h_{\mu}^A\bigl(\{\rho_{\bf t}\},x\bigr)+2\lambda
=h+2\lambda
$$
for $\mu$-almost every $x\in Y$.

Choose $\displaystyle 0<r< \min\left\{ \frac{\lambda}{8m_Y},\lambda^4 \right\}$ 
sufficiently small so that $\mu(F_1)>1-\lambda^4$, where
$$
F_1 := \left\{x\in Y:\varlimsup_{n\to\infty}
-\frac1n\log\mu\bigl(B(x,A,n,e^{-nr})\bigr) >h+\frac{7\lambda}{8}\right\}.
$$
Choose $\displaystyle 0<\epsilon<\min\left\{\frac{\chi_0}{100m_Y},\frac{r}{4}\right\}$, 
and then choose $\ell\in\mathbb N$ sufficiently large so that
$$
\mu(F_2)\geq1-2\lambda^4,\quad F_2:=F_1\cap\Gamma_{\ell,\epsilon}.
$$

We next establish the recurrence property required in
Lemma \ref{NSE-Lemma}. Set $T=\rho_{\bf a}$. 
Since $\mu$ is invariant with respect to $T$, we define 
\begin{align}
q(x):=\lim_{m\to\infty} \frac{1}{m}\sum_{j=0}^{m-1}\mathbf 1_{F_2}(T^jx). 
\end{align}
By the Birkhoff ergodic theorem, $q(x)$ is well defined for almost every $x\in Y$ and 
$$
\int_Y q(x)\,\rd\mu=\mu(F_2)\geq 1-2\lambda^4. 
$$
We define 
$$
H:=\left\{ x:q(x) \,\, \text{is well defined and}\, \, q(x)>1-2\lambda^2 \right\}
$$
Then the set $H$ is $T$-invariant 
and $\mu(H)>1-2\lambda^2$. Indeed, otherwise, we have 
\begin{align*}
1-2\lambda^4 &\leq \mu(F_2)=\int_Y q(x)\,\rd\mu
= \int_{H}q(x)\,\rd\mu+\int_{Y\setminus H}q(x)\,\rd\mu \\
&\leq \mu(H) +(1-\mu(H))\left( 1-2\lambda^2 \right)
= 1+2\lambda^2 \mu(H)-2\lambda^2\\
&=1+2\lambda^2 (1-2\lambda^2)-2\lambda^2=1-4\lambda^4, 
\end{align*}
which yields a contradiction. Thus, we have $\mu(H)>1-2\lambda^2$. 
Then we obtain 
\begin{align*}
\mu(F_2\cap H)\geq 1-2\lambda^2-2\lambda^4>1-\frac{5}{2}\lambda^2. 
\end{align*}
Therefore, by Egorov's theorem, after increasing $n_0$ if necessary, 
we can choose a measurable set $F_3\subset F_2\cap H$ such that
$\mu(F_3)\geq1-3\lambda^2$ and for any $x\in F_3$ and $m\geq n_0$,  
\begin{align}\label{NSE-Lemma-(3)}
\left| \frac{1}{m}\sum_{j=0}^{m-1} \mathbf 1_{F_2}(T^jx) -q(x) \right|<\frac{\lambda^2}{100}. 
\end{align}

We claim that, after increasing $n_0$ if necessary, 
for every $x\in F_3$ and every $n\geq n_0$, 
there exists $j\in\mathbb N$ such that 
$$
n(1+4\lambda^2)\leq j\leq n(1+8\lambda^2) \quad \text{and} 
\quad T^jx\in F_2. 
$$
Indeed, otherwise, for infinitely many $n$, there would be no
visit to $F_2$ during this interval. Setting
$m_n=\lfloor n(1+8\lambda^2)\rfloor$, we would obtain
$$
\frac1{m_n}
\sum_{j=0}^{m_n-1}\mathbf 1_{F_2}(T^jx)
\leq
\frac{n(1+4\lambda^2)+1}
     {n(1+8\lambda^2)-1}
<1-3\lambda^2<1-2\lambda^2-\frac{1}{100}\lambda^2
$$
for all sufficiently large such $n$, contradicting \eqref{NSE-Lemma-(3)} and the definition of $H$.

Then for every $x\in F_3$ and every $n\geq n_0$, 
there exists $j\in \mathbb{N}$ satisfying
\begin{align}\label{NSE-Lemma-(4)}
n(1+4\lambda^2) \leq j\leq n(1+8\lambda^2)
\quad \text{and} \quad \rho_{j{\bf a}}(x)\in F_2.
\end{align}
Set $\gamma=4\lambda^2$. Since $r<\lambda^4$, we have $\gamma=4\lambda^2>\sqrt r$. 
For ${\bf j}=j{\bf a}$, \eqref{NSE-Lemma-(4)} yields 
$$
n(1+\gamma) \leq \frac{\|{\bf j}\|}{\|{\bf a}\|} \leq n(1+2\gamma),
$$
and $\rho_{\bf j}(x)\in\Gamma_{\ell,\epsilon}$.
Thus $F_3$ satisfies the recurrence assumption of
Lemma \ref{NSE-Lemma}.

Applying Lemma \ref{NSE-Lemma} to $K=F_3$ with
$ \displaystyle \Delta=h+\frac{\lambda}{8}$, 
we obtain a measurable subset $K_\epsilon\subset F_3$ with
$$
\mu(K_\epsilon)
\geq e^{-\epsilon}\mu(F_3)>0.
$$
By the construction in the proof of Lemma \ref{NSE-Lemma}, there
exists $N_1\in\mathbb N$ such that, for every measurable subset
$K'\subset K_\epsilon$ and every $n\geq N_1$, $K'$ can be covered
by sequence Bowen balls
$B(x,A,n,e^{-nr})$, $x\in\mathcal A_n(K')\subset K'$, with
$$
\#\mathcal A_n(K') \leq e^{n(1+2\gamma) \left(h+\frac{\lambda}{8}+3m_Yr\right)}.
$$

Let $N\geq\max\{n_0,N_1\}$ be sufficiently large. 
For each $x\in K_\epsilon$, we define
\[
n_x^N := \min\left\{n \geq N : -\frac{1}{n} \log \mu\bigl(B(x,A,n,e^{-nr})\bigr) > h + \frac{6\lambda}{8}\right\},
\]
and set
\[
K_n := \bigl\{x \in K_\epsilon : n_x^N = n\bigr\},\quad \forall\,n\geq N.
\]
Since $K_\epsilon\subset F_1$, $n_x^N$ is well defined for every
$x\in K_\epsilon$, and
$$
K_\epsilon=\bigsqcup_{n=N}^{\infty}K_n.
$$

Applying the preceding covering estimate to $K'=K_n$,
we obtain a finite set $\mathcal A_n\subset K_n$ such that
$$
K_n\subset
\bigcup_{x\in\mathcal A_n}B(x,A,n,e^{-nr})
$$
and
\begin{align*}
\#\mathcal A_n \leq 
e^{n(1+8\lambda^2) \left(h+\frac{\lambda}{8}+3m_Yr\right)}
\leq e^{n(1+8\lambda^2) \left(h+\frac{\lambda}{2}\right)},
\end{align*}
where the last inequality follows from
$\displaystyle r<\frac{\lambda}{8m_Y}$.

For every $x\in\mathcal A_n\subset K_n$, 
by the definition of $n_x^N$, we have 
$$ 
\mu\bigl(B(x,A,n,e^{-nr})\bigr)
<
e^{-n\left(h+\frac{6\lambda}{8}\right)}.
$$
Therefore, we obtain 
\begin{align*}
\mu(K_n) 
\leq e^{n(1+8\lambda^2) \left(h+\frac{\lambda}{2}\right)} e^{-n\left(h+\frac{6\lambda}{8}\right)}
=e^{-\frac{\lambda}{4}n} e^{8\lambda^2n \left(h+\frac{\lambda}{2}\right)}
< e^{-\frac{\lambda}{8}n},
\end{align*}
where the last inequality follows from \eqref{SNE is SE-(2)}.
Summing over $n\geq N$, we obtain
$$
0<\mu(K_\epsilon)
=
\sum_{n=N}^{\infty}\mu(K_n)
\leq
\sum_{n=N}^{\infty}e^{-\frac{\lambda}{8}n}
=
\frac{e^{-\frac{\lambda N}{8}}}
{1-e^{-\frac{\lambda}{8}}}.
$$
Letting $N\to\infty$, the right-hand side tends to zero, which
contradicts $\mu(K_\epsilon)>0$.

This contradiction yields ${\mathcal E}_{\mu}^A\bigl(\{\rho_{\bf t}\},x\bigr) 
\leq h_{\mu}^A\bigl(\{\rho_{\bf t}\},x\bigr)$ for $\mu$-a.e.\ $x \in Y$. 
Combining this with definitions, we conclude that
${\mathcal E}_{\mu}^A\bigl(\{\rho_{\bf t}\},x\bigr)
=h_{\mu}^A\bigl(\{\rho_{\bf t}\},x\bigr)$ 
for $\mu$-almost every $x\in Y$. 
\end{proof}

\subsection{$r$-neutralized topological entropy}\label{subsect: r-neutralized topological entropy}

In this subsection, we study the topological counterpart of neutralized entropy. 
Using Yomdin theory \cite{Yom87,Yom87-2}, 
we obtain an upper bound for the $r$-neutralized topological entropy 
in terms of the usual topological entropy and derivative growth.

Let $T:X\to X$ be a continuous map on a compact metric space $(X,d)$. 
For $n\in \mathbb{N}$, $r>0$, and $x\in X$, 
the \textbf{$r$-neutralized Bowen ball} is defined as 
\begin{align*}
B\left(T,x, n, e^{-nr}\right) := \left\{y \in X: d\left(T^{i}(y), T^{i}(x)\right) < e^{-nr}, \,\forall\, 0 \leq i < n \right\}.
\end{align*}
When no confusion arises, we abbreviate $B(T,x,n,e^{-nr})$ as $B(x,n,e^{-nr})$. A set $E \subset X$ is said to be $(n, e^{-nr})$-spanning if 
$$
X \subset \bigcup_{x \in E} B(x, n, e^{-nr}).
$$
Let $S_{d}(T, n, e^{-nr})$ denote the minimal cardinality of such a spanning set. 
Then, we define the \textbf{$r$-neutralized topological entropy} $h_{d}^r(T)$ by
\begin{align*}
h_{d}^r(T) := \varlimsup_{n \rightarrow \infty} \frac{1}{n} \log S_{d}(T, n, e^{-nr}).
\end{align*}

Analogously, for a fixed scale $\epsilon>0$, one can define the Bowen ball 
$B(x,n,\epsilon)$ and the corresponding minimal cardinality of 
a spanning set $S_{d}(T, n, \epsilon)$ by replacing the term $e^{-nr}$ with $\epsilon$. 
The \textbf{topological entropy} $h_{\top}(T)$ is then defined by 
\begin{align*}
h_{\top}(T) := \lim_{\epsilon\to 0}\varlimsup_{n \rightarrow \infty} \frac{1}{n} \log S_{d}(T, n, \epsilon).
\end{align*}

\begin{definition}
Let $f \colon M \to M$ be a $C^1$ map. We define $R(f)$ by 
\[
R(f) = \lim_{n\to\infty} \frac{1}{n} \log \max_{x\in M} \| Df^n|T_xM \|,
\]
where the norm is induced by the metric $d$.
\end{definition}

We state our main result of this subsection as follows. 

\begin{theorem}\label{estimate theorem}
Let $f: M \to M$ be a $C^k$ diffeomorphism of a compact manifold $(M,d)$, 
and let $r > 0$. Then we have 
\begin{align*}
h_{d}^r(f) \leq h_{\top}(f) + \frac{\operatorname{dim}M}{k}R(f) + r\operatorname{dim}M. 
\end{align*} 
\end{theorem}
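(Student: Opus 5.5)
The plan is to bound $S_d(f,n,e^{-nr})$ by first covering $M$ with a fixed-scale spanning set and then refining each fixed-scale Bowen ball into $e^{-nr}$-scale Bowen balls, using Yomdin's reparametrization to count the pieces. Fix $\delta>0$ and choose $\epsilon>0$ small so that
\[
\varlimsup_{n\to\infty}\frac1n\log S_d(f,n,\epsilon)\le h_{\top}(f)+\delta .
\]
Fix also $C^k$ charts of size comparable to $\epsilon$. Then the refinement step needs a local statement: for each $x$, the set $B(x,n,\epsilon)$ can be covered by at most
\[
C_\delta\, e^{n(\frac{\dim M}{k}(R(f)+\delta)+r\dim M+\delta)}
\]
sets of the form $B(y,n,e^{-nr})$. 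Multiplying the two counts, taking $\frac1n\log$ and $\varlimsup$, and then letting $\delta\to0$ gives the stated inequality.

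To prove the local statement, cover $B(x,n,\epsilon)$ by the image of a $C^k$ cube $\sigma\colon[0,1]^{\dim M}\to M$ of size $\epsilon$, with $C^k$ norm bounded. Then I would apply Yomdin's algebraic lemma iteratively along the orbit, in the form used by Yomdin and by Buzzi and Gromov \cite{Yom87,Yom87-2}.

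\begin{itemize}
\item At each step $i\mapsto i+1$, the curve or cube $f^{i}\circ\sigma$ restricted to a piece is reparametrized by maps $\psi$ such that $f^{i+1}\circ\sigma\circ\psi$ has $C^k$ norm at most $\epsilon$.
\item The number of new pieces needed per step is at most $C\,\|Df\|_{\mathrm{loc}}^{\dim M/k}$ times a constant depending only on $k$, $\dim M$ and the $C^k$ size of $f$ at scale $\epsilon$.
\item Working with blocks of length $p$, where $p$ is chosen so that $\frac1p\log\max\|Df^p\|\le R(f)+\delta$, the per-block multiplicative factor is at most $C_p\, e^{p\frac{\dim M}{k}(R(f)+\delta)}$.
\item The constant $C_p$ contributes $\frac1p\log C_p$ per unit time. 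This is at most $\delta$ once $\epsilon$ is small, because shrinking the scale makes the higher derivatives of $f^p$ in rescaled charts negligible.
\end{itemize}

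After $n$ steps we obtain roughly $e^{n(\frac{\dim M}{k}(R(f)+\delta)+\delta)}$ parametrized pieces. Each piece $\sigma\circ\psi$ has the property that $f^i\circ\sigma\circ\psi$ has image of diameter at most $\epsilon$ for all $i<n$, with uniformly bounded derivative relative to $\epsilon$.

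Next, each piece is subdivided to reach the scale $e^{-nr}$. Subdivide the domain cube of each final piece into subcubes of side $e^{-nr}\epsilon^{-1}$ times a constant; this costs a factor $\lesssim e^{nr\dim M}$. On each subcube, the bounded $C^1$ norm of $f^i\circ\sigma\circ\psi$ for every $i<n$ forces the image under $f^i$ to have diameter $<e^{-nr}$. Hence each subcube lies in one $B(y,n,e^{-nr})$, which gives the local statement.

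The main obstacle is making the Yomdin reparametrization uniform over all intermediate times $i<n$, not only the final one. This uniformity is exactly what is needed to control $d(f^i y,f^i x)$ for every $i$ in the definition of the neutralized Bowen ball. I would handle it by using the reparametrization lemma in the form where each new family refines the previous one, so that the $C^1$ bounds are inherited at all earlier times. The constant bookkeeping with blocks of length $p$ then has to absorb the non-multiplicativity of $\|Df^n\|$; the limit defining $R(f)$ exists by submultiplicativity.
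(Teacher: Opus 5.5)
Your argument is correct in outline, but it orders the two main steps differently from the paper.

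\textbf{The paper's route.} It first cuts the initial $\epsilon$-cube into roughly $(C_1e^{nr}/\epsilon)^{\dim M}$ subcubes. It then zooms the charts in by the factor $\epsilon/(C_1e^{-nr})$ and runs the reparametrization of Lemma \ref{Yom87-Lemma 2.3} at scale $e^{-nr}$. This makes the higher derivatives of the rescaled maps automatically bounded by $N_1(f)$. At intermediate times only image containment is needed, and that is inherited from $\operatorname{Im}\psi_{n,j}\subset\operatorname{Im}\psi_{i,j'}$.

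\textbf{Your route.} You run Yomdin's lemma at the fixed scale $\epsilon$ along the orbit of $x$ and subdivide only the final pieces. The price is that you need $C^1$ control, not merely image control, of $f^i\circ\sigma\circ\psi$ at every $i<n$. Nesting of the images does not give this. It does follow once every reparametrization satisfies $\|\psi\|_{C^1}\le 1$. That property is part of the standard Yomdin--Gromov lemma, though not of the version quoted as Theorem \ref{Yom87-Theorem 2.1}, so you should state it explicitly.

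\textbf{What your ordering buys.} The object being covered really is $B(x,n,\epsilon)$. In the rescaled set-up, the window $\overline{B}(C_1^{-2}\epsilon,\mathbb{R}^{m_M})$ corresponds to an $e^{-nr}$-neighbourhood of the orbit of $x_0$. So the inclusion $\omega_{p_0}^{-1}\circ\langle C_1e^{-nr}/\epsilon\rangle(S_n)\supset B(x_0,n,\epsilon)$ holds only for the unrescaled, scale-$\epsilon$ construction that you use. The count your route yields is $\kappa^n\cdot O(e^{nr\dim M})$ pieces per $\epsilon$-Bowen ball, which is the bound the paper also states.

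\textbf{Two points of bookkeeping need adjusting.}

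First, $\frac1p\log C_p\le\delta$ does not come from shrinking $\epsilon$. Shrinking $\epsilon$ only removes the dependence on the higher derivatives of $f^p$. The algebraic constant $C(k,\dim M)$ and any $(\log\max\|Df^p\|)^{B}$ factor remain. You must take $p$ large first, so that these contribute less than $\delta$ after dividing by $p$, and then take $\epsilon$ small depending on $p$. This is the same mechanism as the paper's passage to $f^q$ via Lemma \ref{r topological entropy lemma}. Keeping the Yomdin constants scale-free, as you implicitly do, matters here, because $\epsilon$ must shrink as $p$ grows.

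Second, with blocks of length $p$ you directly control only times that are multiples of $p$. The intermediate times cost a fixed factor of order $(\max\|Df\|)^{p\dim M}$ in the final subdivision, which does not affect the exponential rate.
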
  

As an immediate consequence, we obtain the following result. 

\begin{corollary}\label{estimate corollary}
Let $f: M \to M$ be a $C^{\infty}$ diffeomorphism of a compact manifold $(M,d)$. 
Then we have 
\begin{align*}
\lim_{r\to 0}h_{d}^r(f) =h_{\top}(f).
\end{align*} 
\end{corollary}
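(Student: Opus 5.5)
The plan is to derive Corollary \ref{estimate corollary} directly from Theorem \ref{estimate theorem}, which holds for every finite order of differentiability. A $C^\infty$ diffeomorphism is in particular $C^k$ for every $k\in\mathbb N$. So for each fixed $k$ and each $r>0$, Theorem \ref{estimate theorem} gives
\[
h_d^r(f)\leq h_{\top}(f)+\frac{\dim M}{k}R(f)+r\dim M .
\]

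The first point to check is that $R(f)$ is a finite number independent of $k$. Submultiplicativity of $\max_x\|Df^n|T_xM\|$ gives existence of the limit by Fekete's lemma. Compactness of $M$ gives $R(f)\leq\log\max_x\|Df|T_xM\|<\infty$, and since $f$ is a diffeomorphism $R(f)\geq 0$. Because the left-hand side does not depend on $k$, we may let $k\to\infty$, which yields
\[
h_d^r(f)\leq h_{\top}(f)+r\dim M \quad\text{for every } r>0.
\]

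For the reverse inequality, use that for any fixed $\epsilon>0$ and all $n$ with $e^{-nr}<\epsilon$ we have $B(x,n,e^{-nr})\subset B(x,n,\epsilon)$. Hence $S_d(f,n,\epsilon)\leq S_d(f,n,e^{-nr})$, and so
\[
\varlimsup_{n\to\infty}\tfrac1n\log S_d(f,n,\epsilon)\leq h_d^r(f).
\]
Letting $\epsilon\to0$ gives $h_{\top}(f)\leq h_d^r(f)$ for every $r>0$.

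Putting the two bounds together gives $h_{\top}(f)\leq h_d^r(f)\leq h_{\top}(f)+r\dim M$. It follows that $\lim_{r\to0}h_d^r(f)$ exists and equals $h_{\top}(f)$, and in fact $h^r_d(f)$ is monotone in $r$.

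There is no real obstacle here. The only care needed is to make sure the $k\to\infty$ limit is legitimate: $R(f)$ is defined without reference to $k$ and is finite, and the constants in Theorem \ref{estimate theorem} do not depend on $k$ in any hidden way.
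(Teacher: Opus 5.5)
Your argument is correct and follows the same route as the paper, which records the corollary as an immediate consequence of Theorem \ref{estimate theorem}. Apply that theorem for every finite $k$ and let $k\to\infty$, which only uses $R(f)<\infty$, to get $h_d^r(f)\le h_{\top}(f)+r\dim M$; the trivial bound $h_{\top}(f)\le h_d^r(f)$ from $B(x,n,e^{-nr})\subset B(x,n,\epsilon)$ then closes the squeeze. The monotonicity of $r\mapsto h_d^r(f)$ that you mention is true, but it comes directly from the nesting $B(x,n,e^{-nr'})\subset B(x,n,e^{-nr})$ for $r<r'$ rather than from the two bounds, and it is not needed.
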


We begin by sketching our modified proof, 
which leverages the bound on the growth rates of smooth volumes under iteration \cite{Yom87,Yom87-2} 
to estimate the number of $r$-neutralized Bowen balls $B(\cdot,n,e^{-nr})$ 
needed to cover a Bowen ball $B(\cdot,n,\epsilon)$. 
Our argument is built upon \cite[Theorem 2.1 and Lemma 2.3]{Yom87}, 
which were originally established using semialgebraic sets.

Before proceeding to the proof of Theorem \ref{estimate theorem}, 
we first establish some auxiliary results. 

\begin{lemma}\label{r topological entropy lemma}
Let $f: M \to M$ be a $C^1$ diffeomorphism on a compact manifold $(M,d)$ and 
let $r>0$. For any $m\in \mathbb{N}$, we have 
$h^{mr}_{d}\left(f^{m}\right)= m h^r_{d}(f)$. 
\end{lemma}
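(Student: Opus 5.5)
The plan is to prove the two inequalities $h^{mr}_d(f^m)\le m h^r_d(f)$ and $h^{mr}_d(f^m)\ge m h^r_d(f)$ separately, by comparing neutralized Bowen balls of $f^m$ at time $n$ with neutralized Bowen balls of $f$ at time $mn$ (and at intermediate times). The basic observation is that
\[
B\bigl(f^m,x,n,e^{-n(mr)}\bigr)=\bigl\{y: d(f^{mi}y,f^{mi}x)<e^{-(mn)r},\ 0\le i<n\bigr\},
\]
so the radius $e^{-nmr}$ coincides exactly with the neutralized radius of $f$ at time $mn$. What differs is only the set of times being controlled: the $f^m$-ball sees only the times $0,m,\dots,m(n-1)$.

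For the inequality $h^{mr}_d(f^m)\le m h^r_d(f)$, note that $B(f,x,mn,e^{-mnr})\subset B(f^m,x,n,e^{-nmr})$. Hence every $(mn,e^{-mnr})$-spanning set for $f$ is an $(n,e^{-nmr})$-spanning set for $f^m$, and
\[
S_d(f^m,n,e^{-nmr})\le S_d(f,mn,e^{-mnr}).
\]
Taking $\frac1n\log$ and $\limsup_n$ gives $h^{mr}_d(f^m)\le m\varlimsup_n\frac1{mn}\log S_d(f,mn,e^{-mnr})\le m h^r_d(f)$.

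For the reverse inequality, let $L>1$ be a Lipschitz constant for $f$ (available since $f$ is $C^1$ on a compact manifold). If $d(f^{mi}y,f^{mi}x)<\delta$, then $d(f^{mi+j}y,f^{mi+j}x)<L^{j}\delta$ for $0\le j<m$. Thus
\[
B\bigl(f^m,x,n,L^{-m}e^{-nmr}\bigr)\subset B(f,x,mn,e^{-mnr}).
\]
The extra factor $L^{-m}$ is a fixed constant, and it must be absorbed into the exponential scale. Given $\eta>0$, we have $L^{-m}e^{-nmr}\ge e^{-n(mr+\eta)}$ for $n$ large, so $B(f^m,x,n,e^{-n(mr+\eta)})\subset B(f,x,mn,e^{-mnr})$. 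This yields
\[
S_d(f,mn,e^{-mnr})\le S_d\bigl(f^m,n,e^{-n(mr+\eta)}\bigr).
\]

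This is where I expect the main obstacle. Since $h^r_d$ is defined without a limit in $r$, the perturbation $mr\to mr+\eta$ is not automatically harmless. I would handle it differently: rather than enlarging the exponent, I would pass from $(n,e^{-nmr})$-spanning sets for $f^m$ to $(n,L^{-m}e^{-nmr})$-spanning sets. Covering each ball of radius $e^{-nmr}$ by balls of radius $L^{-m}e^{-nmr}$ costs only a multiplicative constant $C(m,L,\dim M)$. To make this work with Bowen balls, I would cover each of the $n$ time-slices. This requires care, because a naive count gives $C^n$, which is exponential and not harmless. The right route is the one-step version: cover $B(f^m,x,n,e^{-nmr})$ by Bowen-type sets using the local product structure of charts, yielding a polynomial-in-$n$ or constant factor. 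If that fails, I would instead compare directly with time $m(n+1)$, using the shift $d(f^{j}y,f^{j}x)\le L^{j}d(y,x)$ only at the final block. I would also fall back on the fact that, for the intended application (Theorem \ref{estimate theorem}), the inequality $\le$ together with a version where $r$ is perturbed is enough. I would also verify that $\limsup$ along the subsequence $mn$ agrees with the full $\limsup$. The interpolation step is easy: for $mn\le N<m(n+1)$ we have $S_d(f,N,e^{-Nr})\le S_d(f,m(n+1),e^{-m(n+1)r})$, because Bowen balls shrink both with time and with radius.
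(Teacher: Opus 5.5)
There is a genuine gap, in the reverse inequality. Your inequality $h^{mr}_d(f^m)\le m\,h^r_d(f)$ is correct and coincides with the paper's argument. Your interpolation between multiples of $m$ is also fine; the paper leaves it implicit.

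The problem is $m\,h^r_d(f)\le h^{mr}_d(f^m)$. You correctly obtain $B(f^m,x,n,L^{-m}e^{-nmr})\subset B(f,x,mn,e^{-mnr})$, but you never remove the constant $L^{-m}$, and none of the routes you list does so:
\begin{enumerate}
\item The route you call ``the right one'' covers an $f^m$-Bowen ball by a constant or polynomial number of Bowen balls of radius smaller by a fixed factor, via charts. You give no argument for it, and for a general $C^1$ diffeomorphism there is no reason for such a bound; as you note yourself, the natural count is $C^n$.
\item Comparing with time $m(n+1)$, i.e.\ one extra block, gains only a factor $e^{-mr}$ in the radius. This beats $L^{-m}$ only when $r\ge\log L$.
\item Retreating to a perturbed exponent $mr+\eta$ does not prove the lemma as stated. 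You only know that $h^{r'}_d$ is nondecreasing in $r'$, which gives no control of $h^{mr+\eta}_d(f^m)$ by $h^{mr}_d(f^m)$ as $\eta\to0$.
\end{enumerate}

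The missing idea is that for neutralized balls the radius is tied to the time. A constant factor in the radius can therefore be absorbed by a \emph{bounded} shift of the time index, and adding times only shrinks a Bowen ball. Fix $k\in\mathbb N$ with $kr\ge\log L$. Then $e^{-(n+k)mr}=e^{-kmr}e^{-nmr}\le L^{-m}e^{-nmr}$, so
\[
B\bigl(f^m,x,n+k,e^{-(n+k)mr}\bigr)\subset B\bigl(f^m,x,n,L^{-m}e^{-nmr}\bigr)\subset B\bigl(f,x,mn,e^{-mnr}\bigr).
\]
Hence $S_d(f,mn,e^{-mnr})\le S_d(f^m,n+k,e^{-(n+k)mr})$. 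Since $(n+k)/n\to1$, this gives $\varlimsup_n\frac{1}{mn}\log S_d(f,mn,e^{-mnr})\le\frac1m h^{mr}_d(f^m)$, and your interpolation step then yields $m\,h^r_d(f)\le h^{mr}_d(f^m)$. This is exactly the paper's proof, with $k=\lfloor l_0r^{-1}\rfloor+1$.

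The same device also rescues your $\eta$-route. Take $n'=\lceil n(mr+\eta)/(mr)\rceil\ge n$; then $B(f^m,x,n',e^{-n'mr})\subset B(f^m,x,n,e^{-n(mr+\eta)})$. Hence $h^{mr+\eta}_d(f^m)\le\frac{mr+\eta}{mr}\,h^{mr}_d(f^m)$, and letting $\eta\to0$ closes the gap. Your own interpolation step already uses this simultaneous monotonicity in time and radius.
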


\begin{proof}
Let $l_0 := \max\{\|Df|T_xM\| : x \in M\} < \infty$. 
For $m, n \in \mathbb{N}$, we have 
\[
\max _{0 \leq i < n} d\left(f^{im}(x), f^{im}(y)\right)
\leq \max _{0 \leq i \leq mn-1} d\left(f^{i}(x), f^{i}(y)\right).
\]
This directly implies that
\[
B(f,x, mn, e^{-mnr}) \subset B(f^m,x, n, e^{-mnr})
\]
for all $x \in M$, which consequently yields
$h^{mr}_{d}\left(f^{m}\right) \leq m h^r_{d}(f)$.

To prove the reverse inequality, recall the definition of $l_0$. For any $x, y \in M$, we have
\[
B\big(f^m,x, e^{-mnr-2ml_0}\big) \subset B\big(f,x, m, e^{-mnr}\big). 
\]
Set $k = \lfloor l_0 r^{-1} \rfloor + 1$. It then follows that
\begin{align*}
B(f^m,x, n+k, e^{-rm(n+k)})
&\subset B(f^m,x, n, e^{-rm(n+k)}) \subset B(f^m,x, n, e^{-mnr}e^{-2ml_0}) \\
&= \bigcap_{i=0}^{n-1} f^{-im} B\big(f^{im}(x), e^{-mnr}e^{-2ml_0}\big) \\
&\subset \bigcap_{i=0}^{n-1} f^{-im} B\big(f,f^{im}(x), m, e^{-mnr}\big) = B(f,x, mn, e^{-mnr}),
\end{align*}
which yields $m h^r_{d}(f) \leq h^{mr}_{d}\left(f^{m}\right)$.
This completes the proof.
\end{proof} 

In the remainder of this subsection, we assume that 
 $f : M \to M$ is a $C^{k}$ ($k\geq 1$) diffeomorphism of a compact manifold $(M,d)$, 
 and we let $m_{\vol}$ denote the volume measure on $M$. 
Here, $B(x, n, e^{-nr})$ refers to the $r$-neutralized Bowen ball 
defined by the dynamical system $(M, f, d)$. 
Similarly, $B(x,\epsilon)$ denotes the ball in $M$ with respect to the metric $d$.

We denote by $m_M$ the dimension of the manifold $M$. 
We equip the Euclidean space $\mathbb{R}^{m_M}$ with the usual Euclidean metric $d_{\text{Eucl}}$.
For any $r > 0$, let $\overline{B}(r,\mathbb{R}^{m_M}) \subset \mathbb{R}^{m_M}$ denote the closed ball of radius $r$ centered at the origin. 
We fix a finite $C^\infty$-atlas $\Omega = (\Omega_p, \omega_p)$ 
with open domains $\Omega_p$ covering $M$, 
where each $\omega_p: \Omega_p \to \mathbb{R}^{m_M}$ is a $C^\infty$ 
diffeomorphism onto its image. 
On each $\Omega_p$, we equip the local region with 
the metric $d_p : = \omega_p^* d_{\text{Eucl}}$. 
Without loss of generality, 
we assume that for any $k \geq 1$, all the $k$-th derivatives 
of the transition maps $\omega_{p_1} \circ \omega_{p_2}^{-1}$ 
are uniformly bounded by $C_0$ whenever defined. 
Furthermore, let $C_1 \geq 1$ be a constant such that on each $\Omega_p$, 
the metric $d$ on $M$ is bi-Lipschitz equivalent to the induced local metric 
$d_p$, meaning that
\begin{align}\label{def:C1}
\frac{1}{C_1} d_p \leq d \leq C_1 d_p.
\end{align}

Let $d^s$ denote the $s$-th order derivative, defined locally via the charts 
in $\Omega$ and their corresponding images in $\mathbb{R}^{m_M}$. 
For each $s = 1, \dots, k$, we define the quantity $N_s(f)$ 
as the maximum of $\|d^s f\|$ taken over all points in $M$ and all charts in $\Omega$:
\begin{equation*}
N_s(f) := \max_{(\Omega_p, \omega_p) \in \Omega} \sup_{x \in \Omega_p} \|d^s (f \circ \omega_p^{-1})(\omega_p(x))\|.
\end{equation*}
%For brevity, we denote the first-order quantity $N_1(f)$ simply by $N_1$. 

Now, we define $\epsilon_0 > 0$ (depending on $(M,d)$, 
the atlas $\Omega$, and $f$) to be the supremum of the real numbers $\epsilon$ 
satisfying the following conditions:
\begin{itemize}
    \item[(i)] For any $x \in M$, the ball of radius $10 N_1(f) C_1^2 \epsilon$ 
    with respect to the metric $d$, centered at $x$, is contained in some chart $\Omega_p$ of $\Omega$.
    \item[(ii)] For any $s = 2, \dots, k$, $(3 C_1 \epsilon)^{s-1} N_s(f) \leq N_1(f)$.
\end{itemize} 

Next, we present an adapted version of \cite[Theorem 2.1]{Yom87}. 
This theorem was originally established only for $l=1$ 
and $l=2$ in \cite{Yom87}, the proof for general $l$ was subsequently provided 
in \cite{Yom87-2} by considering a suitable semialgebraic set, 
where the parameter was refined from $\frac{2l}{k}$ to $\frac{l}{k}$. 
In our adapted version, the radii $1$ and $2$ are replaced 
by $C_1^{-2}\epsilon$ and $C_1^{2}\epsilon$, respectively. 
Since $\epsilon>0$ is fixed throughout this context, 
the original argument of Yomdin remains valid, 
though the resulting parameters will consequently depend on $\epsilon$. 

\begin{theorem}[\cite{Yom87,Yom87-2} Theorem 2.1]\label{Yom87-Theorem 2.1}
Let $\epsilon>0$ and $p,l\in\mathbb{N}$ with $l\leq p$. 
Let $g : \overline{B}(C_1^2\epsilon,\mathbb{R}^p) \to \mathbb{R}^{p}$ 
be a $C^k$-mapping ($1 \leq k < \infty$) such that
\[
\max_{x \in \overline{B}(C_1^2\epsilon, \mathbb{R}^p)} \| d^s g(x) \| \leq N, \quad s = 1, \dots, k.
\]
Let $\sigma : [0,1]^l \to \overline{B}(C_1^2\epsilon, \mathbb{R}^p)$ be a $C^k$-mapping 
such that $\displaystyle \max_{x \in [0,1]^l} \| d^s \sigma(x) \| \leq 1$ for $s = 1, \dots, k$.

Then there exist at most 
$$
\kappa = \kappa(\epsilon, k, p, l, N) = A(\epsilon,k, p, l) (\log N)^{B(\epsilon,k, p, l)} \cdot N^{\frac{l}{k}}
$$ 
$C^k$-mappings $\psi_j : [0,1]^l \to [0,1]^l$ satisfying the following properties:
\begin{itemize}
    \item[(1)] each $\psi_j$ maps $[0,1]^l$ diffeomorphically onto its image, for $j = 1, \dots, \kappa$;
    \item[(2)] $\displaystyle \bigcup_{j=1}^{\kappa} \operatorname{Im}(\psi_j) \supseteq S$, 
    where $S = (g \circ \sigma)^{-1}\left(\overline{B}(C_1^{-2}\epsilon,\mathbb{R}^{p})\right)$ 
    is the set of points $x \in [0,1]^l$ such that $g \circ \sigma(x) \in \overline{B}(C_1^{-2}\epsilon,\mathbb{R}^{p})$;
    \item[(3)] $\operatorname{Im}(g \circ \sigma \circ \psi_j) \subseteq \overline{B}(C_1^2\epsilon,\mathbb{R}^{p})$ for all $j = 1, \dots, \kappa$;
    \item[(4)] $\displaystyle \max_{x \in [0,1]^l} \| d^s (g \circ \sigma \circ \psi_j(x)) \| \leq 1$ for $s = 1, \dots, k$ and $j = 1, \dots, \kappa$.
\end{itemize}
Here, the constants $A= A(\epsilon,k, p, l)$ and $B=B(\epsilon,k, p, l)$ depend only on $\epsilon$, $k$, $p$, and $l$.
\end{theorem}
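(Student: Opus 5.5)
The proof will be a rescaling of Yomdin's original statement in \cite{Yom87,Yom87-2}, which covers the unit-radius case: domain $\overline{B}(2,\mathbb{R}^p)$ and target balls of radii $1$ and $2$. Fix $\epsilon>0$ and set $c=C_1^{-2}\epsilon$. Let $L_c(x)=cx$ be the dilation, and consider the normalized map
\[
\tilde g:=L_c^{-1}\circ g\circ L_{C_1^{4}c}\colon \overline{B}(1,\mathbb{R}^p)\to\mathbb{R}^p,
\]
where $C_1^4c=C_1^2\epsilon$. For $s\le k$ the chain rule gives $\|d^s\tilde g\|\le c^{-1}(C_1^2\epsilon)^{s}N\le C(\epsilon)N$, with $C(\epsilon)$ depending only on $\epsilon$ and $C_1$. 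Similarly, $\tilde\sigma:=L_{C_1^2\epsilon}^{-1}\circ\sigma$ takes values in $\overline{B}(1,\mathbb{R}^p)$ and has $s$-th derivatives bounded by $(C_1^2\epsilon)^{-1}$.

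This derivative bound may exceed $1$, so a preliminary subdivision is needed. Cut $[0,1]^l$ into $M^l$ subcubes of side $1/M$, with $M=M(\epsilon)\ge (C_1^2\epsilon)^{-1}$. Reparametrize each subcube affinely by $[0,1]^l$; after this, $\tilde\sigma$ has all derivatives of orders $1,\dots,k$ bounded by $1$. The number of pieces, $M^l$, depends only on $\epsilon$ and $l$, so it can be absorbed into $A(\epsilon,k,p,l)$.

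On each piece, apply the unit-scale Yomdin theorem to $\tilde g$ and the rescaled $\tilde\sigma$. The target balls in that theorem are $\overline{B}(1)$ and $\overline{B}(2)$ in normalized coordinates, which correspond to $\overline{B}(c)$ and $\overline{B}(2c)$ in the original ones. Since $C_1\ge1$, we have $2c\le C_1^2\epsilon$ when $C_1^4\ge 2$. If $C_1^4<2$, instead use Yomdin's version with outer radius $C_1^4$, which holds for any fixed ratio of the two radii, again at the cost of constants. The theorem yields at most $A'(\log(CN))^{B'}(CN)^{l/k}$ maps $\psi_j$ satisfying (1)--(4) at unit scale. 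Since $(CN)^{l/k}=C^{l/k}N^{l/k}$ and $(\log CN)^{B'}\lesssim(\log N)^{B'}$ for $N\ge2$, the count has the stated form, after enlarging $A$ to cover small $N$.

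It remains to undo the scaling. Property (2) pulls back directly, because $S$ is defined by membership of $g\circ\sigma$ in $\overline{B}(C_1^{-2}\epsilon)$, which is exactly membership of the rescaled map in $\overline{B}(1)$. Property (3) follows from the radius comparison above. For (4), at unit scale we have $\|d^s(\tilde g\circ\tilde\sigma\circ\psi_j)\|\le1$, and rescaling back multiplies these bounds by $c$, which may exceed $1$. Fix this by subdividing each $\psi_j$'s domain once more into $\lceil c\rceil^{l}$ cubes, again a factor depending only on $\epsilon$ and $l$. Such a subdivision preserves (1)--(3), and it scales $d^s$ by $\lceil c\rceil^{-s}\le c^{-1}$, which restores (4).

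The main obstacle is bookkeeping rather than new ideas. One must check that every rescaling and subdivision factor depends only on $(\epsilon,k,p,l)$ and never on $N$, so that the exponent $l/k$ on $N$ is preserved. One must also check that Yomdin's semialgebraic argument in \cite{Yom87-2} indeed allows an arbitrary fixed ratio between the inner and outer radii.
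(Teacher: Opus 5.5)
Your reduction is correct, and it is more explicit than the paper's treatment. The paper gives no argument beyond the remark that Yomdin's original proof runs unchanged once the radii $1,2$ are replaced by $C_1^{-2}\epsilon, C_1^2\epsilon$, with constants now depending on $\epsilon$. You instead use the unit-scale theorem as a black box, via three steps:
\begin{enumerate}
\item independent dilations of source and target;
\item a preliminary subdivision into $M^l$ cubes to normalize $\sigma$;
\item a final subdivision into $\lceil c\rceil^l$ cubes to restore (4).
\end{enumerate}
This makes visible that every correction factor depends only on $(\epsilon,C_1,k,p,l)$, so the exponent $l/k$ on $N$ is untouched; the paper leaves this implicit.

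Two adjustments are needed.
\begin{enumerate}
\item To match the hypotheses you quote (domain $\overline B(2,\mathbb{R}^p)$), dilate the source by $C_1^2\epsilon/2$ rather than $C_1^2\epsilon$. This is harmless, since only $g\circ\sigma$ enters.
\item Your fallback for $C_1^4<2$ appeals again to re-running Yomdin's argument, but it can also be made black-box. Set $\delta=\tfrac12(C_1^2-C_1^{-2})\epsilon$ and cover $\overline B(C_1^{-2}\epsilon)$ by $K(\epsilon,C_1,p)$ balls $\overline B(z_i,\delta)$ with $|z_i|\le C_1^{-2}\epsilon$. Apply the ratio-$2$ theorem to $\delta^{-1}(g-z_i)$, whose derivatives of order $s\ge 1$ satisfy the same bounds as those of $g$ up to the factor $\delta^{-1}$. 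Since $\overline B(z_i,2\delta)\subset\overline B(C_1^2\epsilon)$, this yields (1)--(4) at the cost of a factor $K$.
\end{enumerate}

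This covering argument also shows that your phrase ``any fixed ratio'' must mean a ratio strictly larger than $1$. For $C_1=1$ the statement is false: take $l=p=1$, $\epsilon=1$, $\sigma(x)=x-\tfrac12$, $g(y)=1+y^2$, $N=2$. Then $S=\{\tfrac12\}$, while (3) forces $\operatorname{Im}\psi_j\subset S$, which no diffeomorphism onto its image can satisfy. So one must assume $C_1>1$, which costs nothing since $C_1$ may be enlarged.

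Likewise, when $B>0$ the count $A(\log N)^B N^{l/k}$ tends to $0$ as $N\to 1^+$. Hence no enlargement of $A$ handles small $N$. The bound has to be read for $N\ge 2$, or with $\log N$ replaced by, say, $\log(N+2)$.
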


Fix $n\in\mathbb{N}$ with $e^{-nr}<\epsilon$ and $x_0\in M$. 
Let $x_i = f^i(x_0)$, $i=0,1,\dots,n$. For each $i$, 
we fix some chart $\Omega_{p_i}$, containing the ball $B(x_i,\epsilon)$ 
together with the ball of the radius $10 N_1(f) C_1^2 \epsilon$ 
in metric $d$ centered at $x_i$ (see property (i)).
Let $\tilde{B}_i$ denote the $C_1\epsilon$-ball in metric $d_{p_i}$, 
centered at $x_i$. By \eqref{def:C1}, $\tilde{B}_i$ contains the 
$\epsilon$-ball at $x_i$ in $M$ with respect to the metric $d$.

Hence the set $\tilde{\mathscr{B}} = \{ x \in M : f^i(x) \in \tilde{B}^i,\ i=0,\dots,n \}$ 
contains the Bowen ball $B(x_0,n,\epsilon)$. 

We note also that each $\tilde{B}_i$ is mapped by $f$ into $\Omega_{p_{i+1}}$. 
Indeed, by the definition of $N_1(f)$, $f(\tilde{B}_i)$ is contained in the 
$N_1(f) C_1^2 \epsilon$-ball in metric $d$ at $x_{i+1}$, 
which belongs entirely to $\Omega_{p_{i+1}}$. 

 Define $f_i : \overline{B}(C_1^2\epsilon,\mathbb{R}^{m_M})\subset \mathbb{R}^{m_M} \to \mathbb{R}^{m_M}$ by
\[
f_i = \left\langle \frac{\epsilon}{C_1 e^{-nr}} \right\rangle \circ \omega_{p_i} \circ f \circ \omega_{p_{i-1}}^{-1} \circ \left\langle \frac{C_1 e^{-nr}}{\epsilon} \right\rangle, \quad i=1,\dots,n,
\]
where $\langle \beta \rangle$ denotes the linear mapping 
$x \mapsto \beta x$ of $\mathbb{R}^{m_M}$.

By the assumptions above, $f_i$ are well-defined and satisfy the following conditions:
\begin{itemize}
    \item[(1)] $\displaystyle \max_{x \in \overline{B}(C_1^2\epsilon, \mathbb{R}^{m_M})} \| d^s f_i(x) \| \leq (C_1 e^{-nr}\epsilon^{-1})^{s-1} N_s(f) \leq N_1(f)$, $s=1,\dots,k$.
    \item[(2)] Let $S_i = \{ x \in \mathbb{R}^{m_M} : F_j(x) \in \overline{B}(C_1^{-2}\epsilon, \mathbb{R}^{m_M}),\ j=0,1,\dots,i \}$, where $F_j = f_j \circ \cdots \circ f_1$, $j=1,\dots,n$, $F_0 = \mathrm{Id}$. Then
    \[
    \tilde{\mathscr{B}} = \omega_{p_0}^{-1} \circ \left\langle \frac{C_1 e^{-nr}}{\epsilon} \right\rangle (S_n)\supset B(x_0,n,\epsilon).
    \]
\end{itemize}

Following \cite[Lemma 2.3]{Yom87}, the required parameterization result can be stated as follows. 

\begin{lemma}\label{Yom87-Lemma 2.3}
Let $\tilde{\sigma} : [0,1]^l \to \overline{B}(C_1^2\epsilon, \mathbb{R}^p)$ 
be a $C^k$-mapping, satisfying $\| d^s \tilde{\sigma} \| \leq 1$, $s=1,\dots,k$. 
Then for $i=0,\dots,n$, there exist not more than $\kappa^i$ diffeomorphisms 
$\psi_{i,j} : [0,1]^l \to [0,1]^l$, $j=1,\dots,\kappa^i$ such that
\begin{itemize}
    \item[(1)] $\| d^s (F_i \circ \tilde{\sigma} \circ \psi_{i,j}) \| \leq 1$, $s=1,\dots,k$,
    \item[(2)] $F_i \circ \tilde{\sigma} \circ \psi_{i,j}\left([0,1]^l\right) \subseteq \overline{B}(C_1^2\epsilon,\mathbb{R}^{m_M})$,
    \item[(3)] $\displaystyle \tilde{\sigma}^{-1}(S_i) \subseteq \bigcup_{j=1}^{\kappa^i} \psi_{i,j}\left([0,1]^l\right)$.
\end{itemize}
Here, the constant $\kappa$ is given by 
$\kappa=A(\epsilon,k, p, l) (\log N_1(f))^{B(\epsilon,k, p, l)} \cdot N_1(f)^{\frac{l}{k}}$. 
\end{lemma}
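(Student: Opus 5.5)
The plan is to argue by induction on $i$, applying Theorem \ref{Yom87-Theorem 2.1} once at each step with $g=f_i$, $p=m_M$ and $N=N_1(f)$.

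For the base case $i=0$ we take the single map $\psi_{0,1}=\mathrm{Id}$. Since $F_0=\mathrm{Id}$, property (1) is the hypothesis $\|d^s\tilde\sigma\|\le1$, and property (2) holds because $\tilde\sigma$ already takes values in $\overline{B}(C_1^2\epsilon,\mathbb{R}^{m_M})$. Property (3) is trivial.

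For the inductive step, suppose the family $\{\psi_{i,j}\}_{j\le\kappa^i}$ has been built for some $i<n$. For each $j$ set $\sigma_j:=F_i\circ\tilde\sigma\circ\psi_{i,j}$. By (1) and (2) at level $i$, $\sigma_j$ maps $[0,1]^l$ into $\overline{B}(C_1^2\epsilon,\mathbb{R}^{m_M})$ with $\|d^s\sigma_j\|\le1$ for $s\le k$. By condition (1) on the rescaled maps $f_i$, the map $g=f_{i+1}$ satisfies $\|d^sg\|\le N_1(f)$ on $\overline{B}(C_1^2\epsilon,\mathbb{R}^{m_M})$ for all $s\le k$. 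This bound is where the choice of $\epsilon_0$ (condition (ii)) and the rescaling by $C_1e^{-nr}/\epsilon$ enter, since the $s$-th derivative picks up the factor $(C_1e^{-nr}\epsilon^{-1})^{s-1}\le1$.

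Theorem \ref{Yom87-Theorem 2.1} then provides at most $\kappa$ maps $\phi_{j,m}\colon[0,1]^l\to[0,1]^l$ with three properties:
\begin{itemize}
\item their images cover $(f_{i+1}\circ\sigma_j)^{-1}\bigl(\overline{B}(C_1^{-2}\epsilon,\mathbb{R}^{m_M})\bigr)$;
\item $f_{i+1}\circ\sigma_j\circ\phi_{j,m}$ lands in $\overline{B}(C_1^2\epsilon,\mathbb{R}^{m_M})$;
\item all derivatives of order $\le k$ of $f_{i+1}\circ\sigma_j\circ\phi_{j,m}$ are bounded by $1$.
\end{itemize}
We define $\psi_{i+1,(j,m)}:=\psi_{i,j}\circ\phi_{j,m}$, which gives at most $\kappa\cdot\kappa^i=\kappa^{i+1}$ maps. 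Each is a composition of diffeomorphisms onto their images, hence a diffeomorphism onto its image. Because $F_{i+1}=f_{i+1}\circ F_i$, we have $F_{i+1}\circ\tilde\sigma\circ\psi_{i+1,(j,m)}=f_{i+1}\circ\sigma_j\circ\phi_{j,m}$. Properties (1) and (2) at level $i+1$ therefore follow directly from the second and third bullets.

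For the covering property (3), take $t\in\tilde\sigma^{-1}(S_{i+1})$. Since $S_{i+1}\subset S_i$, we have $t\in\tilde\sigma^{-1}(S_i)$, so by induction $t=\psi_{i,j}(u)$ for some $j$ and $u$. Then $f_{i+1}(\sigma_j(u))=F_{i+1}(\tilde\sigma(t))\in\overline{B}(C_1^{-2}\epsilon,\mathbb{R}^{m_M})$. By the first bullet, $u$ lies in the image of some $\phi_{j,m}$, and hence $t$ lies in the image of $\psi_{i+1,(j,m)}$. This completes the induction.

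The only step needing care is checking that the hypotheses of Theorem \ref{Yom87-Theorem 2.1} hold uniformly at every stage, so that the same $\kappa$ (which depends only on $\epsilon,k,p,l,N_1(f)$) works at each step. In particular one must confirm that the domain of $f_{i+1}$ contains the image of $\sigma_j$, which is exactly property (2) at level $i$. The remaining bookkeeping is routine.
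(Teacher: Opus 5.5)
Your proposal is correct and follows the paper's proof: induction on $i$ starting from $\psi_{0,1}=\mathrm{Id}$, one application of Theorem \ref{Yom87-Theorem 2.1} with $g=f_{i+1}$ to each $\sigma_j=F_i\circ\tilde\sigma\circ\psi_{i,j}$, and the compositions $\psi_{i,j}\circ\phi_{j,m}$, with your covering step (via $S_{i+1}\subset S_i$) written out more carefully than the paper does. One small correction to your side remark on $\|d^sf_{i+1}\|\le N_1(f)$: the factor $(C_1e^{-nr}\epsilon^{-1})^{s-1}\le 1$ alone does not give this bound, because what is needed is $C_1e^{-nr}\epsilon^{-1}\le 3C_1\epsilon$ combined with condition (ii); this bound is recorded in the paper before the lemma as hypothesis (1) on the $f_i$, so it does not affect your argument.
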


\begin{proof}
We proceed by induction on $i$. For the base case $i=0$, the mapping $\tilde{\sigma}$ itself trivially satisfies the conditions of the lemma. 
Assume that the lemma holds for some $t \geq 0$. 
For each $j = 1, \dots, \kappa^t$, we define the mapping $\sigma_j : [0,1]^l \to \mathbb{R}^{m_M}$ by
\[
\sigma_j = F_t \circ \tilde{\sigma} \circ \psi_{t,j}.
\]
Each $\sigma_j$, together with the mapping $f_{t+1} : \overline{B}(C_1^2\epsilon, \mathbb{R}^{m_M}) \to \mathbb{R}^{m_M}$, 
satisfies the assumptions of Theorem \ref{Yom87-Theorem 2.1}. 
Hence, by applying Theorem \ref{Yom87-Theorem 2.1} to each $\sigma_j$, 
we can find at most $\kappa$ mappings $\psi_q : [0,1]^l \to [0,1]^l$ 
such that 
\begin{align*}
\| d^s (f_{t+1} \circ \sigma_j \circ \psi_q) \| \leq 1,\quad \text{for}\quad s=1,\dots,k, 
\end{align*}
and $\operatorname{Im}(f_{t+1} \circ \sigma_j \circ \psi_q) \subseteq \overline{B}(C_1^2\epsilon,\mathbb{R}^{m_M})$. 
Furthermore, the union of the images of $\psi_q$ satisfies
\[
\bigcup_{q=1}^{\kappa} \operatorname{Im}(\psi_q) 
\supseteq (f_{t+1} \circ \sigma_j)^{-1}\left(\overline{B}(C_1^{-2}\epsilon, \mathbb{R}^{m_M})\right) 
\supseteq \left(F_{t+1}\circ \tilde{\sigma}\circ \psi_{t,j}\right)^{-1}(S_{t+1}).
\]
It follows that $f_{t+1} \circ \sigma_j \circ \psi_q = F_{t+1} \circ \tilde{\sigma} \circ \psi_{t,j} \circ \psi_q$. 
Thus, the mappings $\psi_{t+1,\alpha} : [0,1]^l \to [0,1]^l$, defined by 
$\psi_{t+1,\alpha} = \psi_{t,j} \circ \psi_q$, where $\alpha = (j,q)$, 
satisfy the required conditions of the lemma for $i = t+1$. 
Their total number does not exceed $\kappa^t \cdot \kappa = \kappa^{t+1}$, 
and their images cover $\tilde{\sigma}^{-1}(S_{t+1})$.
This completes the induction step and proves the lemma. 
\end{proof} 

With these preparations, we are now ready to prove Theorem \ref{estimate theorem}. 

\begin{proof}[Proof of Theorem \ref{estimate theorem}]
Fix $\epsilon < \epsilon_0$. 
Consider a finite family of $C^{m_M}$ diffeomorphisms $\{\hat{\sigma}\}$ from $[0,1]^{m_M}$ to $M$ 
such that the union of their images covers $M$. 
%We assume the number of these diffeomorphisms is $C_2$. 
By choosing a sufficiently fine covering or scaling $\epsilon$ if necessary, 
we can assume that for any point $x' \in M$, the ball $B(x', \epsilon)$ 
is entirely contained in the image of some diffeomorphism from this family. 

Now, fix an arbitrary point $x_0 \in M$ and an integer $n \in \mathbb{N}$ with $e^{-nr} < \epsilon$. 
According to the aforementioned property, we can select and fix 
a diffeomorphism $\sigma$ from 
$\{\hat{\sigma}\}$ such that its image contains $B(x_0,\epsilon)$. 

Note that we can subdivide $[0,1]^{m_M}$ into 
$c(\sigma) \left(\frac{C_1 e^{nr}}{\epsilon}\right)^{m_M}$ subcubes 
and reparametrize them linearly by
\[
h_\beta : [0,1]^{m_M} \to [0,1]^{m_M} 
\]
in such a way that the mappings
\[
\left\langle \frac{\epsilon}{C_1e^{-nr}} \right\rangle \circ \omega_{p_0} \circ \sigma \circ h_\beta : [0,1]^{m_M} \to \mathbb{R}^{m_M}
\]
have all derivatives up to order $k$ bounded by $1$, 
and those of their images, which intersect 
$\overline{B}(C_1^{-2}\epsilon,\mathbb{R}^{m_M})$, 
are contained in $\overline{B}(C_1^{2}\epsilon,\mathbb{R}^{m_M})$, 
where $c(\sigma)$ is a constant depending only on the diffeomorphism $\sigma$. 

By Lemma \ref{Yom87-Lemma 2.3}, 
on each of the $c(\sigma) \left(\frac{C_1 e^{nr}}{\epsilon}\right)^{m_M}$ subcubes, 
there are at most $\kappa^n$ $C^k$-mappings $\psi_{n,j}:[0,1]^{m_M}\to [0,1]^{m_M}$, where 
$$
\kappa = A(\epsilon,k, m_{M}, m_{M}) (\log N)^{B(\epsilon,k, m_{M}, m_{M})} N_1(f)^{\frac{m_M}{k}}. 
$$ 
For brevity, we fix one subcube (corresponding to a fixed $\beta$) 
and denote the mapping $\left\langle \frac{\epsilon}{C_1e^{-nr}} \right\rangle \circ \omega_{p_0} \circ \sigma \circ h_\beta$ 
simply by $\tilde{\sigma}$. 
Then, we have $\| d^s \tilde{\sigma} \| \leq 1$ for all $s=1,\dots,k$. 
According to Lemma \ref{Yom87-Lemma 2.3}, 
on each of the $c(\sigma) \left(\frac{C_1 e^{nr}}{\epsilon}\right)^{m_M}$ 
subcubes, there exist at most $\kappa^n$ such $C^k$-mappings $\psi_{n,j}$. 
Consequently, ranging over all subcubes, the total number of these mappings 
is bounded by $\tilde{\kappa} = \kappa^n \cdot c(\sigma) \left(\frac{C_1 e^{nr}}{\epsilon}\right)^{m_M}$, 
where $j = 1, \dots, \tilde{\kappa}$. 
By applying Lemma \ref{Yom87-Lemma 2.3} to each $\tilde{\sigma}$, 
we deduce that for each $j$, 
\begin{align*}
F_n \circ \tilde{\sigma} \circ \psi_{n,j}\left([0,1]^{m_M} \right)
&=\left\langle \frac{\epsilon}{C_1 e^{-nr}} \right\rangle \circ \omega_{p_n} \circ f^n \circ \omega_{p_{0}}^{-1} \circ \left\langle \frac{C_1 e^{-nr}}{\epsilon} \right\rangle \circ \tilde{\sigma} \circ \psi_{n,j}\left([0,1]^{m_M} \right)\\
&=\left\langle \frac{\epsilon}{C_1 e^{-nr}} \right\rangle \circ \omega_{p_n} \circ f^n \circ \sigma \circ h_\beta \circ \psi_{n,j}\left([0,1]^{m_M} \right)\\
&\subset \overline{B}(C_1^2\epsilon,\mathbb{R}^{m_M}).  
\end{align*}
This implies that for some $z_j \in M$, 
\begin{align*}
f^n\left( \sigma \circ h_\beta \circ \psi_{n,j}\left([0,1]^{m_M} \right)\right)
\subset \omega_{p_n}^{-1} \left( \overline{B}(C_1^2e^{-nr},\mathbb{R}^{m_M}) \right)
\subset B(f^n z_j,C_1^3e^{-nr}). 
\end{align*}
By the construction of $\psi_{n,j}$, every $\psi_{p+1,\alpha}=\psi_{p,j}\circ \psi_q$; 
thus, $\operatorname{Im}(\psi_{p+1,\alpha})\subset \operatorname{Im} (\psi_{p,j})$. 
This yields that for any $0 \leq k < n$, 
\begin{align*}
f^k\left( \sigma \circ h_\beta \circ \psi_{n,j}\left([0,1]^{m_M} \right)\right)
=f^k\left( \sigma \circ h_\beta \circ \psi_{k,j'}\left([0,1]^{m_M} \right)\right)
\subset B(f^k z_j,C_1^3e^{-nr}). 
\end{align*}
Thus, by choosing $n_* \in \mathbb{N}$ satisfying $e^{n_*r} \geq C_1^3$, 
we immediately conclude that
\begin{align*}
\sigma \circ h_\beta \circ \psi_{n,j}\left([0,1]^{m_M} \right)
&\subset \bigcap_{k=0}^n f^{-k}\left( B(f^kz_j,C_1^3e^{-nr}) \right)\\
&=B(z_j,n,C_1^3e^{-nr})\subset B\left(z_j,n,e^{-(n-n_*)r}\right)
\subset B\left(z_j,n-n_*,e^{-(n-n_*)r}\right). 
\end{align*}

In the opposite direction, recalling condition (2) that 
$\omega_{p_0}^{-1} \circ \langle \frac{C_1 e^{-nr}}{\epsilon} \rangle (S_n) \supset B(x_0,n,\epsilon)$, 
we obtain 
\begin{align*}
\bigcup_{j=1}^{\tilde{\kappa}} \omega_{p_0}^{-1}\circ \left\langle \frac{C_1 e^{-nr}}{\epsilon} \right\rangle \circ \tilde{\sigma}\circ \psi_{i,j}\left([0,1]^{m_M} \right)
\supset \omega_{p_0}^{-1}\circ \left\langle \frac{C_1 e^{-nr}}{\epsilon} \right\rangle S_n
\supset B(x_0,n,\epsilon). 
\end{align*}
It follows that 
\begin{align*}
    \bigcup_{j=1}^{\tilde{\kappa}} B\left(z_j,n-n_*,e^{-(n-n_*)r}\right) \supset 
\bigcup_{j=1}^{\tilde{\kappa}} {\sigma}\circ h_\beta \circ \psi_{n,j}\left([0,1]^{m_M} \right)
\supset B(x_0,n,\epsilon).
\end{align*}
By the arbitrariness of $x_0 \in M$ and the fact that the images of the 
family $\{\hat{\sigma}\}$ cover $M$, we conclude that 
every Bowen ball $B(\cdot,n,\epsilon)$ can be covered by at most 
\[
\kappa^n \left(\frac{C_1 e^{nr}}{\epsilon}\right)^{m_M} \cdot \max_{\hat{\sigma}}c(\hat{\sigma})
\]
balls of the form $B(\cdot, n-n_*,e^{-(n-n_*)r})$. Thus, we obtain
\begin{align*}
&S_d(n-n_*,e^{-(n-n_*)r})\\
\leq & \kappa^n \left(\frac{C_1 e^{nr}}{\epsilon}\right)^{m_M} S_d(n,\epsilon) \cdot \max_{\hat{\sigma}}c(\hat{\sigma})\\
\leq & A(\epsilon,k, m_M, m_M)^n (\log N_1(f))^{nB(\epsilon,k, m_M, m_M)} 
N_1(f)^{\frac{nm_M}{k}} \left(\frac{C_1 e^{nr}}{\epsilon}\right)^{m_M} S_d(n,\epsilon) \cdot \max_{\hat{\sigma}}c(\hat{\sigma}) . 
\end{align*}
Due to the finiteness of $\{\hat{\sigma}\}$, 
we have $\max_{\hat{\sigma}}c(\hat{\sigma}) < \infty$. 
This implies that   
\begin{align*}
h_{d}^r(f)
=& \varlimsup_{n\to\infty} \frac{1}{n-n_*} \log S_d(f,n-n_*,e^{-(n-n_*)r})\\
\leq & h_{\top}(f)+ \log A(\epsilon,k, m_M, m_M)+ B(\epsilon,k, m_M, m_M)\log \log N_1(f)  \\
&\quad\quad\quad  +\frac{m_M }{k}\log N_1(f)+rm_M.
\end{align*}

Next, for any fixed $q \in \mathbb{N}$, we apply the above inequality to 
$f^q$, $e^{-qnr}$, and $N_1(f^q)$ instead of $f$, $e^{-nr}$, and $N_1(f)$.  
Combining this with Lemma \ref{r topological entropy lemma}, we obtain
\begin{align*}
h_{d}^r(f)=\frac{h_{d}^{qr}(f^q)}{q}
%&\leq \frac{h_{\top}(f^q)}{q}+\frac{A(\epsilon,k, m_M, m_M)}{q}
%+B(\epsilon,k, m_M, m_M)\frac{\log \log N_1(f^q)}{q}+\frac{m_M}{qk}\log N_1(f^q)+\frac{qrm_M}{q}\\
=h_{\top}(f)&+\frac{\log A(\epsilon,k, m_M, m_M)}{q}\\
&+B(\epsilon,k, m_M, m_M)\frac{\log \log N_1(f^q)}{q}+\frac{m_M}{qk}\log N_1(f^q)+rm_M.
\end{align*}
Since $q$ can be chosen arbitrarily large, by taking $q \to \infty$, we immediately deduce 
\begin{align*}
h_{d}^r(f) \leq h_{\top}(f)+\frac{m_M}{k}R(f)+rm_M. 
\end{align*}
Thus, we complete the proof of the theorem. 
\end{proof}

At the end of this subsection, 
we present corresponding results for linear maps on $\mathbb{R}^m$ 
and endomorphisms on Lie groups, based on the arguments in \cite{Bow71-1}. 
The proofs heavily rely on the fact that the Haar measure of an 
$r$-neutralized Bowen ball is independent of its center. 
Following \cite{Bow71-1}, topological and $r$-neutralized entropies 
on noncompact spaces are defined via the supremum over compact subsets. 
Since the arguments are almost identical to those in \cite{Bow71-1}, 
we omit the detailed proofs. 

\begin{lemma}\label{Bow71-1-Proposition 12}
    Let $T: M \to M$ be a differentiable map of an $m$-dimensional Riemannian manifold $(M,d)$. 
    Then
    \[
    h^r_{d}(T) \leq \max\left\{0,\, m \log \sup_{x\in M} \|dT|_{T_x M}\|\right\}+mr.
    \]
\end{lemma}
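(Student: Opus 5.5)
We follow Bowen's volume argument \cite[Proposition 12]{Bow71-1}, adapted to the shrinking radius $e^{-nr}$. Set $a:=\sup_{x\in M}\|dT|_{T_xM}\|$; if $a=\infty$ there is nothing to prove. Since $h^r_d(T)$ is defined as a supremum over compact subsets $K\subset M$, we fix a compact $K$ and bound the growth rate of the minimal number of $r$-neutralized Bowen balls covering $K$. Replacing $a$ by $\max\{a,1\}$ costs nothing, because the claimed bound already contains $\max\{0,\cdot\}$. So we assume $a\ge 1$.

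\textbf{Step 1: a Euclidean ball inside each Bowen ball.} We show that for every $x$ and $n$,
\[
B\bigl(x,\delta e^{-nr}a^{-n}\bigr)\subset B(x,n,e^{-nr}),
\]
with $\delta>0$ depending only on $K$. The point is that $T$ is $a$-Lipschitz on geodesically convex neighborhoods. Inductively, if $d(x,y)<\delta e^{-nr}a^{-n}$, then $d(T^ix,T^iy)\le a^i d(x,y)<e^{-nr}$ for $0\le i<n$. To make this rigorous on a noncompact manifold, we work inside a compact neighborhood of $K$ where the injectivity radius is bounded below. We then shrink $\delta$ so that all the points involved stay in the region where the mean value inequality along short geodesics applies.

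\textbf{Step 2: counting.} We take a maximal $(n,e^{-nr})$-separated set $E\subset K$. It spans $K$ at scale $e^{-nr}$, which suffices because separated and spanning counts differ only by a harmless change of radius, $e^{-nr}\mapsto \tfrac12 e^{-nr}$, invisible in the exponential rate. The half-radius Euclidean balls $B(y,\tfrac{\delta}{2}e^{-nr}a^{-n})$ for $y\in E$ are pairwise disjoint: two points of $E$ are $e^{-nr}$-separated in $d_n$, and the half-radius Euclidean balls sit inside half-radius Bowen balls by Step 1. All these balls lie in a fixed compact neighborhood $K'$ of $K$. By volume comparison on $K'$, each has Riemannian volume at least $c\,(\delta e^{-nr}a^{-n})^m$. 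Hence
\[
\#E\le \frac{\vol(K')}{c\,\delta^m}\,e^{nmr}a^{nm}.
\]
Taking $\frac1n\log$ and $\varlimsup$ gives $m\log a+mr$, which yields the claim after taking the supremum over $K$.

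\textbf{Main obstacle.} The only real difficulty is Step 1 in the noncompact setting. We need uniform control of the geodesic and Lipschitz estimates along orbit segments that may leave $K$. We would handle this as Bowen does, by noting that only the distance from $T^ix$ to $T^iy$ matters. This distance can be controlled by integrating $\|dT\|$ along a short curve, which is valid globally on $M$ since $a$ is a global supremum. The separated-versus-spanning bookkeeping at the shrinking scale is routine, as in the proof of Lemma \ref{r topological entropy lemma}.
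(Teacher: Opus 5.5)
Your proposal is correct and follows the argument the paper intends: the paper omits the proof and defers to Bowen's volume-counting argument for \cite[Proposition 12]{Bow71-1}, which you reproduce at the shrinking scale $e^{-nr}a^{-n}$. One simplification: Step 1 needs no localization to a compact neighborhood, because for the Riemannian path metric one has $d(Tx,Ty)\le a\,d(x,y)$ globally (apply $\|dT\|\le a$ to the lengths of curves), so $\delta=1$ works for every $x\in M$.
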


\begin{proposition}\label{Bow71-1-Theorem 15}
    Let $T\colon \mathbb{R}^m\to\mathbb{R}^m$ be a linear map 
    and let $d$ be the standard Euclidean metric on $\mathbb{R}^m$. Then 
    \[
    h^r_{d}(T) = \sum_{|\lambda_i|>1} \log |\lambda_i|+mr=h_{\top}(T)+mr, 
    \]
    where $\{\lambda_i\}$ are the eigenvalues of $T$. 
\end{proposition}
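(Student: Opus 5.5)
Following Bowen's argument for linear maps, we take Lebesgue (Haar) measure $m$ on $\mathbb{R}^m$ and note that, by translation invariance and linearity, the $r$-neutralized Bowen ball satisfies
\[
B(T,x,n,e^{-nr})=x+B(T,0,n,e^{-nr}),\qquad B(T,0,n,\delta)=\delta\,B(T,0,n,1).
\]
So $m\bigl(B(T,x,n,e^{-nr})\bigr)=e^{-nmr}\,m\bigl(B(T,0,n,1)\bigr)$ for every $x$. The plan is to compute the exponential growth rate of $m(B(T,0,n,1))^{-1}$ and then turn volume estimates into spanning and separated set counts on a fixed compact set $K$.

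For the volume, we decompose $\mathbb{R}^m=\bigoplus_i V_i$ into real generalized eigenspaces, taking a norm adapted to this splitting; this is harmless because it changes distances by a bounded factor, absorbed into an $n_*$-shift as in the proof of Theorem \ref{estimate theorem}. Then $B(T,0,n,1)$ is comparable, up to factors subexponential in $n$ coming from Jordan blocks, to the product over $i$ of the sets $\{v\in V_i:\|T^jv\|<1,\ 0\le j<n\}$. On $V_i$ with $|\lambda_i|\le1$ this set contains a ball of radius $n^{-m}$. For $|\lambda_i|>1$ it has volume $\approx|\det T|_{V_i}|^{-n}$ up to polynomial factors. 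Hence
\[
-\frac1n\log m\bigl(B(T,0,n,1)\bigr)\to\sum_{|\lambda_i|>1}\log|\lambda_i|.
\]

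For the upper bound, fix a compact $K$ and let $E$ be a maximal $(n,e^{-nr})$-separated subset of $K$. The balls $B(T,x,n,e^{-nr}/2)$, $x\in E$, are disjoint and lie in a fixed neighbourhood of $K$ after a scale adjustment. This gives
\[
\#E\le \mathrm{const}\cdot e^{nmr}\,e^{n\sum_{|\lambda_i|>1}\log|\lambda_i|+o(n)},
\]
so $h^r_d(T)\le \sum_{|\lambda_i|>1}\log|\lambda_i|+mr$.

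For the lower bound, any $(n,e^{-nr})$-spanning set of $K$ must have at least $m(K)\big/\bigl(e^{-nmr}m(B(T,0,n,1))\bigr)$ points. Taking the supremum over $K$ gives equality. The identity $\sum_{|\lambda_i|>1}\log|\lambda_i|=h_{\top}(T)$ is Bowen's theorem \cite[Theorem 15]{Bow71-1}.

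The main obstacle is the volume estimate for Jordan blocks and for eigenvalues on the unit circle. We must show that the polynomial distortion and the $|\lambda_i|=1$ directions contribute only $o(n)$ to $\log m(B(T,0,n,1))$. On the $|\lambda_i|<1$ directions the set is essentially a fixed ball. We would handle this with the explicit estimate $\|T^j|_{V_i}\|\le C n^{m}|\lambda_i|^j$ for $0\le j<n$.
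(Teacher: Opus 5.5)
Your proposal is correct and follows the route the paper indicates. The paper omits the proof as ``almost identical'' to Bowen's, resting on the center-independence of the Lebesgue measure of $r$-neutralized Bowen balls; you combine exactly this with a packing/covering comparison on a compact $K$ of positive volume, and with a Jordan-block volume estimate. As a minor shortcut, the scaling $B(T,0,n,\delta)=\delta B(T,0,n,1)$ makes the decay rate of $\operatorname{vol}(B(T,0,n,\delta))$ independent of $\delta$, so Bowen's own proof of his Theorem 15 already identifies that rate as $\sum_{|\lambda_i|>1}\log|\lambda_i|$, and the $+mr$ term is just the factor $e^{-nmr}$.
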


\begin{proposition}\label{Bow71-1-Corollary 16}
If \( T \) is an endomorphism of a $m$-dimensional Lie group \( G \) 
and \( d \) is a right-invariant metric, then
\[
 h^r_{d}(T) =\sum_{|\lambda_i|>1} \log |\lambda_i|+mr= h_{\top}(T)+mr,
\]
where $\{\lambda_i \}$ denote the eigenvalues of the 
tangent map \( dT|_{T_eG} \) on the tangent space $T_eG$ at the identity $e\in G$.
\end{proposition}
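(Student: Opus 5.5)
We prove that for an endomorphism $T$ of an $m$-dimensional Lie group $G$ with right-invariant metric $d$,
\[
h^r_d(T)=\sum_{|\lambda_i|>1}\log|\lambda_i|+mr=h_{\top}(T)+mr .
\]
The plan follows Bowen's argument \cite[Corollary 16]{Bow71-1}, with the fixed radius $\epsilon$ replaced throughout by $e^{-nr}$. Both bounds rest on one fact. Since $d$ is right-invariant and $T$ is a homomorphism, $d(Tx,Ty)=d(T(xy^{-1}),e)$. Hence the $r$-neutralized Bowen ball satisfies
\[
B(x,n,e^{-nr})=B(e,n,e^{-nr})\cdot x ,
\]
and its right Haar measure $\vol(B(x,n,e^{-nr}))$ does not depend on $x$. (Following Bowen, the ball $B(e,n,e^{-nr})$ is intersected with a compact set, since entropy on a noncompact space is defined as a supremum over compact subsets.) Standard volume counting then gives, for a compact $K$ with nonempty interior, that the minimal spanning cardinality $S_d(n,e^{-nr};K)$ and the maximal separated cardinality are both comparable to $\vol(K)/\vol(B(e,n,e^{-nr}))$, up to constants coming from enlarging or shrinking the radius by a factor $2$. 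A factor $2$ only shifts $n$ by an amount $O(1/r)$, which does not affect the rate. We conclude
\[
h^r_d(T)=\varlimsup_{n\to\infty}-\frac1n\log\vol\bigl(B(e,n,e^{-nr})\bigr).
\]

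It remains to compute the volume of this small Bowen ball at the identity. Work in exponential coordinates near $e$. There $T=\exp\circ L\circ\exp^{-1}$ with $L=dT|_{T_eG}$, because a continuous endomorphism satisfies $T(\exp v)=\exp(Lv)$. Since all points of $B(e,n,e^{-nr})$ stay within $e^{-nr}<\epsilon$ of $e$ for $0\le i<n$, the ball corresponds under $\exp$ to the linear neutralized Bowen ball
\[
\{v:\ |L^iv|\lesssim e^{-nr},\ 0\le i<n\},
\]
up to bi-Lipschitz distortion of bounded constant. The Haar density is smooth and positive near $e$, so it is comparable to Lebesgue measure there. This reduces the problem to Proposition \ref{Bow71-1-Theorem 15}, or directly to its volume computation. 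Using the real Jordan decomposition of $L$, the linear ball has Lebesgue volume
\[
e^{-nrm}\prod_{|\lambda_i|>1}|\lambda_i|^{-n}\,e^{o(n)},
\]
where the polynomial Jordan factors are absorbed into $e^{o(n)}$. Taking $-\frac1n\log$ gives $\sum_{|\lambda_i|>1}\log|\lambda_i|+mr$.

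The final equality with $h_{\top}(T)$ is Bowen's theorem \cite[Corollary 16]{Bow71-1}. It can also be read off by the same computation at a fixed scale $\epsilon$, which drops the $mr$ term.

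The main obstacle is the coordinate comparison. Points of the ball must remain in the exponential chart for all $i<n$; this holds automatically because the radius is below $\epsilon$. The distortion constants of $\exp$ must also be uniform, which holds on a fixed compact neighborhood. The most delicate part is making sure the factor-$2$ radius changes cost only $e^{o(n)}$. This follows from $e^{-(n+c)r}=e^{-cr}e^{-nr}$ and the monotonicity in $n$ of the balls.
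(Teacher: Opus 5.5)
Your proposal is correct and follows essentially the route the paper indicates: Bowen's argument from \cite{Bow71-1} with the fixed radius replaced by $e^{-nr}$. It rests on the center-independence of the Haar measure of $r$-neutralized Bowen balls, so that $h^r_d(T)$ is the exponential decay rate of the volume of $B(e,n,e^{-nr})$, together with the local conjugacy $T\circ\exp=\exp\circ\, dT|_{T_eG}$, which reduces the problem to the linear computation of Proposition~\ref{Bow71-1-Theorem 15}. One hypothesis should be made explicit: your comparison $d(\exp v,e)\asymp|v|$ needs $d$ to be a right-invariant Riemannian metric, which the statement tacitly assumes; for a snowflaked right-invariant metric such as $\sqrt{d}$ the $mr$ term would double.
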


\section{Cone property and entropy formulas}\label{sect: Entropy Formulas for the Skew Product}

In this section, using the results of \S \ref{sect:neutralized-entropy}, 
we first investigate the cone property and then complete the proofs 
of Theorem \ref{higher le:product} and Theorem \ref{higher le:fiber-entropy}. 

\subsection{Cone property}\label{subsect: Cone property}

This subsection is devoted to the cone property. 
It identifies the entropy along sequences with a prescribed asymptotic direction 
with the topological entropy of the corresponding action element. 
Our main purpose is to derive this property from the weak variational principle
for $r$-neutralized entropy.

Given ${\bf a}\in\mathbb{R}^k$ and $\delta>0$, 
 we first define the cone with axis ${\bf a}$ as 
 \begin{align*}
 \mathcal{C}_{\bf a}(\delta):=\left\{{\bf v}\in\mathbb{R}^k :\angle({\bf v},{\bf a})<\delta\right\},
 \end{align*}
 where $\angle({\bf v},{\bf a})$ denotes the angle between vectors ${\bf v}$ and ${\bf a}$. 
 Let $B_{\mathbb{R}^k}(n)$ denote the closed ball in $\mathbb{R}^k$ of radius $n$ centered at the origin. 
We define the $n$-cone metric as  
\begin{align*}
d^{{\bf a},\delta}_n(x,y):=\sup\left\{ d(\rho_{\bf t}x,\rho_{\bf t}y): {\bf t}\in \mathcal{C}_{{\bf a}}(\delta)\cap B_{\mathbb{R}^k}(n|{\bf a}|) \right\},\quad \forall\,x,y\in Y. 
\end{align*}
Given ${\bf a}\in\mathbb{R}^k$, $n\in\mathbb{N}$, $\epsilon>0$ and $x\in Y$, 
the \textbf{cone Bowen ball} $B(x,{\bf a},\delta,n,\epsilon)$ is given by
\begin{align*}
B(x,{\bf a},\delta,n,\epsilon):=\{y\in Y: d_n^{{\bf a},\delta}(x,y)<\epsilon\}.
\end{align*}
A nonempty set $E\subset Y$ is said to be $({\bf a},\delta ,n,\epsilon)$-spanning if
$$
 Y\subset \bigcup_{x\in E} B(x,{\bf a},\delta,n,\epsilon).
$$
Let $S_d(\{\rho_{\bf t}\},{\bf a},\delta,n,\epsilon)$ be the minimal cardinality of an 
$({\bf a},\delta,n,\epsilon)$-spanning set.

The \textbf{topological cone entropy} of $\{\rho_{\bf t}\}$ along the direction ${\bf a}$ is defined by
\begin{align*}
h_{\top}^{\bf a}(\{\rho_{\bf t}\}):=\lim_{\delta\to 0}\lim_{\epsilon\to 0}\varlimsup_{n\to\infty} \frac{1}{n}\log S_d(\{\rho_{\bf t}\},{\bf a},\delta,n,\epsilon).
\end{align*}

\begin{definition}\label{def:cone property}
A locally free $C^{1+\beta}$ $\mathbb{R}^k$-action $\{\rho_{\bf t}\}_{t\in\mathbb{R}^k}$ on a closed Riemannian manifold $Y$ 
is said to have the {\bf cone property} if, 
for  any vector ${\bf a}\in\mathbb{R}^k\setminus \{{\bf 0}\}$ and 
any sequence $A:=\{{\bf a}(n)\}_{n=1}^\infty$ satisfying 
\begin{align*}
K_0:=\sup_{i\geq 1} \|{\bf a}(i+1)-{\bf a}(i)\|<\infty,\quad \text{and}\quad 
\frac{{\bf a}(n)}{n}\to {\bf a}\,\text{ as }\, n\to\infty, 
\end{align*}
we have $ h_\mathrm{top}^{\bf a}(\{\rho_{\bf t}\})
=h_\mathrm{top}^A(\{\rho_{\bf t}\})=h_\mathrm{top}(\rho_{\bf a})$. 
\end{definition}

It follows directly from the definitions of sequence entropy and cone entropy that 
\begin{align}\label{lower cone entropy}
h_\mathrm{top}^{\bf a}(\{\rho_{\bf t}\})\geq h_\mathrm{top}^A(\{\rho_{\bf t}\}),\quad h_\mathrm{top}^{\bf a}(\{\rho_{\bf t}\})\geq h_\mathrm{top}(\rho_{\bf a}).
\end{align}
The opposite inequalities are much more delicate to verify. 
In the subsequent discussion, we present several examples 
satisfying the cone property. Moreover, the cone property is closely related 
to a weak form of the variational principle for $r$-neutralized entropy, 
which was introduced by the authors in \cite{DQ25}.

Next, we introduce the notion of $r$-neutralized entropy, 
which was proposed in \cite{DQ25}. 

 For each $\mu\in \mathcal{M}(f,X)$, 
the {\bf $r$-neutralized local entropy} $h_{\mu,d}^{r}(f,x)$ and 
  the {\bf lower $r$-neutralized local entropy} $\underline{h}_{\mu,d}^{r}(f,x)$ 
  are defined respectively by
  \begin{align*}
    h_{\mu,d}^{r}(f,x):=\varlimsup_{n\to\infty}-\frac{1}{n}\log \mu (B(x,n,e^{-nr})), \quad \underline{h}_{\mu,d}^{r}(f,x):=\varliminf_{n\to\infty}-\frac{1}{n}\log \mu (B(x,n,e^{-nr})). 
  \end{align*}
They are $f$-invariant, and are constant $\mu$-a.e. if $\mu$ is ergodic. 
We then define the {\bf $r$-neutralized entropy} $h_{\mu,d}^{r}(f)$ and
the {\bf lower $r$-neutralized entropy} $\underline{h}_{\mu,d}^{r}(f)$ as 
\[
  h_{\mu,d}^{r}(f):=\int h_{\mu,d}^{r}(f,x)\, \rd \mu, \quad \underline{h}_{\mu,d}^{r}(f):=\int \underline{h}_{\mu,d}^{r}(f,x) \,\rd \mu.
\]

In \cite{DQ25}, we introduced a general variational principle for $r$-neutralized entropy. 
In this work, we focus on its smooth version and formulate it as the following conjecture.

\begin{conjecture}[Variational Principle \cite{DQ25}]\label{conjecture}
Let $f : X \to X$ be a $C^{1+\beta}$ diffeomorphism of a closed manifold $(X,d)$. 
Then, for any $r > 0$,
  \begin{align*}
    h_{d}^r(f) = \sup\{h_{\nu, d}^r(f) : \mu \in \mathcal{M}(f, X)\}, \quad 
    \underline{h}_{d}^r(f) = \sup\{\underline{h}_{\nu, d}^r(f) : \mu \in \mathcal{M}(f, X)\}.
  \end{align*}
\end{conjecture}

Before presenting the variational principle for 
the $r$-neutralized entropy related to the cone property, 
we first introduce the Brin-Katok local entropy and 
entropy formulas for the $r$-neutralized entropy.

We recall the following result by Brin and Katok \cite{BK83}. 

\begin{theorem}[\cite{BK83}] \label{thm:Brin-Katok}
Let $(X, d)$ be a compact metric space and $f : X \to X$ a continuous map 
preserving a Borel probability measure $\mu$ with finite metric entropy $h_\mu(f)$. 
Then for $\mu$-a.e. $x \in X$, the following limit exists:
\begin{equation*}
h_\mu(f, x) := \lim_{\epsilon \to 0} \liminf_{n \to \infty} -\frac{\log \mu(B(x, n, \epsilon))}{n} = \lim_{\epsilon \to 0} \limsup_{n \to \infty} -\frac{\log \mu(B(x, n, \epsilon))}{n}.
\end{equation*}
We call $h_\mu(f, x)$ the {\bf Brin-Katok local entropy} of $f$ at $x$.

Furthermore, the function $h_\mu(f, \cdot)$ is $f$-invariant and satisfies
$\displaystyle \int_X h_\mu(f, x) \, \mathrm{d}\mu(x) = h_\mu(f)$. 
\end{theorem}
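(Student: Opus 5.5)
Since the statement is the classical theorem of Brin and Katok, I would prove it by comparing Bowen balls with atoms of refined partitions and applying the Shannon--McMillan--Breiman (SMB) theorem. I would first reduce to the ergodic case. The functions
\[
\underline h(x,\epsilon):=\liminf_{n\to\infty}-\tfrac1n\log\mu(B(x,n,\epsilon)),\qquad
\overline h(x,\epsilon):=\limsup_{n\to\infty}-\tfrac1n\log\mu(B(x,n,\epsilon))
\]
are monotone in $\epsilon$. They are also $f$-invariant up to a change of scale. Indeed, $f(B(x,n,\epsilon))\subset B(fx,n-1,\epsilon)$, which gives $\overline h(fx,\epsilon)\le\overline h(x,\epsilon)$. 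Conversely, uniform continuity gives $\delta(\epsilon)\to0$ with $B(fx,n-1,\delta(\epsilon))\subset f(B(x,n,\epsilon))$, which gives $\overline h(x,\epsilon)\le \overline h(fx,\delta(\epsilon))$. After letting $\epsilon\to0$, both limits are $f$-invariant a.e. The general case then follows by passing to the ergodic decomposition, where the integral identity becomes Jacobs' formula $h_\mu(f)=\int h_{\mu_x}(f)\,d\mu(x)$. This reduction requires the local quantities to be computed with respect to the ergodic components rather than $\mu$. That is the classical technical point, handled by a density argument on the components. So the core statement is: for ergodic $\mu$, both limits equal $h_\mu(f)$ a.e.

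\emph{Upper bound.} Fix $\epsilon>0$ and take a finite partition $\mathcal P$ with $\operatorname{diam}\mathcal P<\epsilon$. Then the atom $\mathcal P^n(x)=\bigvee_{i<n}f^{-i}\mathcal P(x)$ is contained in $B(x,n,\epsilon)$, so
\[
-\tfrac1n\log\mu(B(x,n,\epsilon))\le-\tfrac1n\log\mu(\mathcal P^n(x)).
\]
By SMB the right-hand side tends a.e. to $h_\mu(f,\mathcal P)\le h_\mu(f)$. Hence $\overline h(x,\epsilon)\le h_\mu(f)$ for every $\epsilon$.

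\emph{Lower bound.} This is the main obstacle. Fix $\delta>0$ and choose a finite partition $\mathcal P$ with $h_\mu(f,\mathcal P)>h_\mu(f)-\delta$, or $>1/\delta$ if $h_\mu(f)=\infty$. I would choose $\mathcal P$ so that its boundary neighbourhoods are small, meaning $\mu(U_\epsilon(\partial\mathcal P))<\delta$ for small $\epsilon$; this is possible by choosing ball radii with null spheres.

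By Birkhoff's theorem, for a.e. $x$ and large $n$, the orbit segment $x,\dots,f^{n-1}x$ visits $U_\epsilon(\partial\mathcal P)$ at most $2\delta n$ times. For such $x$, any $y\in B(x,n,\epsilon)$ lies in the same $\mathcal P$-atom as $x$ at all other times. Hence $B(x,n,\epsilon)$ meets at most $\binom{n}{2\delta n}(\#\mathcal P)^{2\delta n}=e^{n\theta(\delta)}$ atoms of $\mathcal P^n$, where $\theta(\delta)\to0$.

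Let $G_n$ be the union of the atoms of $\mathcal P^n$ with measure at most $e^{-n(h-2\delta)}$, where $h=h_\mu(f,\mathcal P)$. By SMB, $\mu(G_n^c)\to0$. Call $x$ bad at time $n$ if $\mu(B(x,n,\epsilon))>e^{-n(h-3\delta-\theta)}$. I would then show that $\sum_n\mu(\{\text{bad at }n\}\cap G_n\cap\text{good orbits})<\infty$. To do this, pick a maximal $(n,\epsilon/2)$-separated family of bad points. The doubled balls around these points cover the bad set, and each ball of the family has large measure, so the family has cardinality at most $e^{n(h-3\delta-\theta)}$. Each doubled ball meets at most $e^{n\theta}$ small atoms, each of measure at most $e^{-n(h-2\delta)}$. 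This gives a bound of order $e^{-n\delta}$, which is summable. Borel--Cantelli then yields $\underline h(x,2\epsilon)\ge h_\mu(f)-4\delta-\theta(\delta)$ a.e.

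The delicate bookkeeping is matching the radii $\epsilon$ versus $2\epsilon$ in the covering step and replacing $G_n$ by its eventual version along the orbit. The latter is handled by passing to $\liminf$ of indicators via Egorov. Letting $\epsilon\to0$ and then $\delta\to0$ closes the lower bound. Together with the upper bound, this shows that the limit exists, equals $h_\mu(f)$ a.e., and that its integral is $h_\mu(f)$.
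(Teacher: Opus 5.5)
The paper does not prove this statement. It quotes it from \cite{BK83} and uses it as a black box, so the comparison is with the standard Brin--Katok argument. Your ergodic core is that argument: the upper bound from $\mathcal P^n(x)\subset B(x,n,\epsilon)$ and SMB, and the lower bound from boundary-visit counting plus the Borel--Cantelli covering estimate. It is sound up to bookkeeping.

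The genuine gap is the non-ergodic case, which is exactly where the invariance and the integral formula have content. Your reduction presupposes $h_\mu(f,x)=h_{\mu_x}(f)$ for the ergodic component $\mu_x$ of $x$. But $h_\mu(f,x)$ is built from $\mu(B(x,n,\epsilon))=\int\mu_z(B(x,n,\epsilon))\,\mathrm d\mu(z)$, which mixes all components, whereas your ergodic result only controls $\mu_x(B(x,n,\epsilon))$. A ``density argument on the components'' cannot bridge this:
\begin{enumerate}
\item If the ergodic decomposition is non-atomic, each $\mu_x$ is singular to $\mu$, so there is no density at all.
\item Even if $\mu=\sum_i c_i\mu_i$, the bound $\mu\ge c_i\mu_i$ gives only $\mu(B)\ge c_i\mu_i(B)$, i.e.\ the upper bound on the $\mu$-local entropy.
\item The lower bound requires showing that $\sum_{j\ne i}c_j\mu_j(B(x,n,\epsilon))$ is small for $\mu_i$-typical $x$. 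Nothing like a differentiation theorem for Bowen balls is available in a general compact metric space to do this.
\end{enumerate}
So as written this step fails, and Jacobs' formula cannot be invoked until $h_\mu(f,x)=h_{\mu_x}(f)$ has been proved.

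The fix is to drop the reduction and run your argument directly for $\mu$.
\begin{enumerate}
\item The non-ergodic SMB theorem gives an $f$-invariant a.e.\ limit $h(x,\mathcal P)=\lim_n-\frac1n\log\mu(\mathcal P^n(x))$ with $\int h(\cdot,\mathcal P)\,\mathrm d\mu=h_\mu(f,\mathcal P)$. Your upper bound then becomes $\overline h(x,\epsilon)\le h(x,\mathcal P)$ whenever $\operatorname{diam}\mathcal P<\epsilon$.
\item For the lower bound, localize to the invariant sets $A_j=\{j\delta\le h(\cdot,\mathcal P)<(j+1)\delta\}$, and define the bad and small-atom sets inside $A_j$ with threshold $j\delta$. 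The disjoint-ball count still uses only $\mu(X)=1$. Each atom meeting the small-atom set has $\mu$-measure at most $e^{-n(j-1)\delta}$, so the same computation gives a summable bound.
\item The boundary-visit frequency is now $E(\mathbf{1}_{U_\epsilon}\mid\mathcal I)$ rather than $\mu(U_\epsilon)$. It decreases to $0$ a.e.\ as $\epsilon\to0$, which is all you need.
\item Take a refining sequence $\mathcal P_k$ of partitions with $\mu$-null boundaries and mesh tending to $0$. You get $\lim_{\epsilon}\underline h(x,\epsilon)=\lim_{\epsilon}\overline h(x,\epsilon)=\sup_k h(x,\mathcal P_k)$ a.e. This function is invariant, and monotone convergence gives $\int=\lim_k h_\mu(f,\mathcal P_k)=h_\mu(f)$, with no need for Jacobs' formula.
\end{enumerate}

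Smaller points:
\begin{enumerate}
\item Your reverse inclusion $B(fx,n-1,\delta(\epsilon))\subset f(B(x,n,\epsilon))$ uses continuity of $f^{-1}$ and is false for non-invertible $f$. It is also unnecessary: $\overline h(f\cdot,\epsilon)\le\overline h(\cdot,\epsilon)$ together with $f_*\mu=\mu$ already forces a.e.\ invariance.
\item In a general compact metric space, ``same atom outside $U_\epsilon$'' requires $U_\epsilon:=\{z:B(z,\epsilon)\not\subset\mathcal P(z)\}$, not the metric $\epsilon$-neighbourhood of $\partial\mathcal P$. A clopen partition of a Cantor set has empty boundary but still splits small balls.
\item Your $\theta$ contains the term $2\delta\log\#\mathcal P$, and $\#\mathcal P$ grows as $\delta\to0$, so $\theta(\delta)\to0$ is not automatic. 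The visit-frequency threshold must be chosen after $\mathcal P$. This is harmless, since $\epsilon\to0$ is taken with $\mathcal P$ fixed.
\end{enumerate}
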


The original concept of neutralized entropy was first introduced by 
Ben Ovadia and Rodriguez Hertz in \cite{ORH23}, 
and their work inspires our generalization to the $r$-neutralized entropy 
proposed in \cite{DQ25}. One central result from \cite{ORH23} is stated below. 

\begin{theorem}[\cite{ORH23} Theorem 3.4]\label{ORH23-Theorem 3.4}
Let $ f: M \rightarrow M $ be a $ C^{1+\beta} $ diffeomorphism on a closed Riemannian manifold $ M $
and $ \mu $ an $f$-invariant Borel probability measure. Then 
  \begin{align*}
   \lim_{r\to 0} h^r_{\mu,d}(f,x) =\lim_{r\to 0} \underline{h}^r_{\mu,d}(f,x)= h_\mu(f,x). 
  \end{align*}
\end{theorem}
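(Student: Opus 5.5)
The plan is to sandwich the $r$-neutralized local entropies between the Brin--Katok local entropy from below and a counting estimate from above.

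\emph{Monotonicity and the lower bound.} If $0<r'<r$, then $B(x,n,e^{-nr})\subset B(x,n,e^{-nr'})$. Hence $h^r_{\mu,d}(f,x)$ and $\underline h^r_{\mu,d}(f,x)$ are nonincreasing as $r$ increases, so both limits as $r\to0$ exist in $[0,\infty]$. For every fixed $\epsilon>0$ we have $e^{-nr}<\epsilon$ once $n$ is large, so $B(x,n,e^{-nr})\subset B(x,n,\epsilon)$. Therefore
\[
\underline h^r_{\mu,d}(f,x)\ \ge\ \liminf_{n\to\infty}-\tfrac1n\log\mu\bigl(B(x,n,\epsilon)\bigr).
\]
Letting $\epsilon\to0$ and applying Theorem \ref{thm:Brin-Katok} gives $\underline h^r_{\mu,d}(f,x)\ge h_\mu(f,x)$ for $\mu$-a.e.\ $x$ and every $r>0$. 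Since $h^r\ge\underline h^r$ by definition, it remains to show
\[
\lim_{r\to0}h^r_{\mu,d}(f,x)\le h_\mu(f,x)\quad\text{for }\mu\text{-a.e. }x.
\]

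\emph{Reduction to a level set.} The functions $x\mapsto h_\mu(f,x)$ and $x\mapsto\lim_{r\to0}h^r_{\mu,d}(f,x)$ are $f$-invariant. Arguing by contradiction, I would assume there are $h\ge0$, $\lambda>0$ and an invariant set $G$ with $\mu(G)>0$ on which
\[
h_\mu(f,x)\in[h,h+\lambda^2]\quad\text{and}\quad h^r_{\mu,d}(f,x)\ge h+2\lambda\ \text{ for all small } r>0.
\]
Replacing $\mu$ by its normalized restriction to $G$ (still invariant), and shrinking $G$ so that the relevant exponents are constant on it, reduces us to the setting of the proof of Theorem \ref{SNE is SE}. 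I would then repeat that argument with the constant sequence direction (the case $k=1$, $A=\{n\}$). The two ingredients are the following.
\begin{enumerate}[(a)]
\item \emph{Fixed-scale covering.} By Brin--Katok and a Katok-type argument (the analogue of Lemma \ref{ORH23-Lemma 3.2}), a subset $K$ of $G$ of almost full measure is covered by at most $e^{n(h+\lambda/8)}$ Bowen balls $B(\cdot,n,\rho)$ for all large $n$.
\item \emph{Refinement to neutralized scale.} Each such ball, intersected with a Pesin set $\Gamma_{\ell,\epsilon}$, is covered by at most $e^{3nm\epsilon}$ balls $B(\cdot,n,e^{-nr})$ with $r\ge4\epsilon$ (Lemma \ref{ORH23-Lemma 2.2}). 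In Lyapunov charts, the image of a $\rho$-Bowen ball at time $i$ is a box whose sides in each Oseledets direction are controlled up to $e^{\pm\epsilon n}$. Subdividing each box into pieces of diameter $e^{-nr}$ costs a factor at most $e^{nmr}$. Points near the end of the time interval are handled by recurrence to the Pesin set at a time in $[n(1+\gamma),n(1+2\gamma)]$, exactly as in Lemma \ref{NSE-Lemma}.
\end{enumerate}
This gives a cover of $K$ by at most $e^{n(1+2\gamma)(h+\lambda/8+3mr)}$ neutralized balls.

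\emph{Contradiction.} With these counts, I would partition $K$ by the first time $n_x\ge N$ at which $\mu(B(x,n,e^{-nr}))<e^{-n(h+6\lambda/8)}$. This time exists because $h^r\ge h+2\lambda$ on $K$. The same summation as in Theorem \ref{SNE is SE} gives
\[
\mu(K)\le\sum_{n\ge N}e^{-\lambda n/8}\longrightarrow0\quad\text{as }N\to\infty,
\]
which contradicts $\mu(K)>0$.

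\emph{Main obstacle.} The hard step is (b): making the chart estimate uniform, in particular bounding the neutral (zero-exponent) and slowly contracting directions without losing more than $e^{O(r)n}$. The $C^{1+\beta}$ regularity enters only here, through the Lyapunov charts of Proposition \ref{chart2} type for $f$. My first attempt would be the Besicovitch cover by metric balls of radius $e^{-2nr}$ used in Lemma \ref{NSE-Lemma}: the small ball controls the nonexpanding directions, while the Bowen component controls the expanding ones.
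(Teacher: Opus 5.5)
Your architecture matches the paper's sequence analogue, Theorem~\ref{SNE is SE} with Lemma~\ref{NSE-Lemma}. That is: the Brin--Katok lower bound, then an upper bound by contradiction through a fixed-scale Katok covering, a refinement to neutralized scale at Pesin points returning to the block at a time in $[n(1+\gamma),n(1+2\gamma)]$, the first-time partition $K=\bigsqcup_n K_n$, and the summation $\sum_n e^{-\lambda n/8}$. The paper gives no proof of the cited statement itself; those two results are its adaptation of \cite{ORH23}. Two small slips. First, $h^r_{\mu,d}$ and $\underline h^r_{\mu,d}$ are non\emph{de}creasing in $r$, so $\lim_{r\to0}$ is an infimum. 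Second, for non-ergodic $\mu$ you cannot make the exponents constant on $G$; you can only bound the nonzero ones away from $0$, which is all that is used.

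The gap is step (b), the only step with real content, and as written it fails. Lemma~\ref{ORH23-Lemma 2.2} bounds the \emph{multiplicity} of a cover by neutralized balls. It does not count neutralized balls inside a fixed-scale Bowen ball, and the count $e^{3nm\epsilon}$ is false. For a hyperbolic automorphism of $\mathbb T^2$, $B(x,n,\rho)$ has about $\rho^2e^{2nr}$ times the volume of $B(y,n,e^{-nr})$, and $e^{2nr}\ge e^{8n\epsilon}$. More seriously, $f^iB(x,n,\rho)$ is not a box in Lyapunov charts. The charts at $f^ix$ are only guaranteed size $\ell^{-1}e^{-\epsilon i}$, far below $\rho$, so fixed-scale closeness gives no chart control along the orbit. (Compare Theorem~\ref{estimate theorem}, the paper's only uniform refinement, which pays an extra Yomdin term $\frac{\dim M}{k}R(f)$.)

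What replaces (b) is the inclusion \eqref{NSE-Lemma-(1)}, i.e.\ \cite[Corollary 3.3]{ORH23}:
\[
B\bigl(x,\lfloor n(1+2\gamma)\rfloor,\rho\bigr)\cap B(x,2e^{-2nr})\subset B(x,n,e^{-nr}).
\]
It holds for $x\in\Gamma_{\ell,\epsilon}$ with $f^jx\in\Gamma_{\ell,\epsilon}$ for some $j\in[n(1+\gamma),n(1+2\gamma)]$, $r\ge 4\epsilon$, $\gamma>\sqrt r$. Heuristically, the metric ball keeps the non-expanding components below $e^{-2nr+O(\epsilon n)}$. Meanwhile, $\rho$-closeness for $\gamma n$ further iterates, ending in the Pesin block, pushes the expanding components at \emph{every} time $i<n$ below $\rho e^{-\gamma n(\chi_0-\epsilon)}<e^{-nr}$. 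So recurrence supplies extra time for the whole interval, not only for points near its end.

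Given the inclusion, no subdivision of boxes is needed:
\begin{enumerate}[(i)]
\item cover $K$ by $\lesssim e^{2nmr}$ metric balls of radius $e^{-2nr}$;
\item cover each ball's trace on $K_n$ by at most $e^{n(1+2\gamma)(h+\delta)}$ Bowen balls of length $\lfloor n(1+2\gamma)\rfloor$ centred in $K_n$;
\item observe that each such intersection lies in a single neutralized ball centred in $K_n$, which gives the count used in the summation.
\end{enumerate}
You list this cover only as a ``first attempt''; it is in fact the proof. Establishing the inclusion by Pesin theory is the step your proposal still has to supply.
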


Later, the authors established entropy formulas for $r$-neutralized entropy 
that hold without taking the limit as $r\to 0$.

\begin{theorem}[\cite{DQ25} Theorem 1.1]\label{DQ25-theorem 1.1 1.2}
Let $ f: X \to X$ be a $ C^{1+\beta} $ diffeomorphism on a closed Riemannian manifold $ X $ 
and let $ \mu $ be an $f$-invariant Borel probability measure. 
Let $d^u_\mu(x)$ and $d^s_\mu(x)$ denote the unstable 
and stable pointwise dimensions of $\mu$, respectively, 
and let $d^c(x)$ be the multiplicity of zero Lyapunov exponents at $x$. 

Then, for any given $r>0$ and $\mu$-a.e. $x\in X$, we have 
  \begin{align*}
    h^r_{\mu,d}(f,x) \leq h_\mu(f,x)+r(d^u_\mu(x)+d^s_\mu(x)+d^c(x)).
  \end{align*}
 \end{theorem}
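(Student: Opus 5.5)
The statement to prove is Theorem \ref{DQ25-theorem 1.1 1.2}: for $\mu$-a.e.\ $x$, the upper $r$-neutralized local entropy is at most the Brin--Katok local entropy plus $r$ times the total "dimension" $d^u_\mu+d^s_\mu+d^c$.

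I would prove it by comparing the shrinking-radius Bowen ball $B(x,n,e^{-nr})$ with the fixed-radius Bowen ball $B(x,n,\epsilon)$, working inside Pesin charts. The plan is to show that $B(x,n,\epsilon)$, intersected with a Pesin set, meets a bounded-cost family of neutralized balls; more precisely, to bound $\mu(B(x,n,e^{-nr}))$ from below by $\mu(B(x,n,\epsilon))$ times a factor $e^{-nr(d^u+d^s+d^c)-o(n)}$. First reduce to ergodic $\mu$, using the ergodic decomposition: all quantities are local and a.e.\ defined. Fix $\delta>0$ and a Pesin block $\Lambda_\ell$ of measure close to $1$, with Lyapunov charts $\Phi_x$ as in Proposition \ref{chart2} (specialized to $k=1$). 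In the chart at $x$, the Bowen ball $B(x,n,\epsilon)$ looks like a box whose sides are about $\epsilon e^{-n\chi_i^+}$ in the unstable directions and $\epsilon$ in the stable and central directions. The neutralized ball $B(x,n,e^{-nr})$ is comparable to the box in which every side is shrunk by a further factor $e^{-nr}$. Here I use that stable and central directions do not expand, up to $e^{\delta n}$ errors, and that unstable directions expand exactly at the Lyapunov rate along the orbit. The $e^{\pm\delta n}$ chart distortions only cost $e^{O(\delta)n}$, which is absorbed at the end.

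The key step is a measure-theoretic product estimate. I would use the Ledrappier--Young structure: the conditional measures on unstable leaves have dimension $d^u$ and on stable leaves dimension $d^s$, with the local product structure from Barreira--Pesin--Schmeling (exact dimensionality in the hyperbolic directions). For the center directions, I use only the trivial bound that shrinking by $e^{-nr}$ in $d^c$ directions costs at most a factor $e^{-nr d^c}$ in volume-like terms. Concretely, I aim to show
\[
\mu\bigl(B(x,n,e^{-nr})\bigr)\ \ge\ \mu\bigl(B(x,n,\epsilon)\bigr)\,e^{-nr(d^u_\mu+d^s_\mu+d^c)-C\delta n}
\]
for all large $n$ and a.e.\ $x$. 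To get it, pull back by $f^{n}$ (or push forward to the midpoint), so that the Bowen ball becomes an ordinary ball. Unstable conditionals of $f^n(B(x,n,\epsilon))$ are then balls of radius $\epsilon$ versus $e^{-nr}$, whose ratio is $e^{-nr d^u}$ by exact dimensionality of $\mu^u$ at $f^nx$. For this, $f^nx$ must lie in a set where the dimension limits are uniform, which Birkhoff recurrence gives for a positive-frequency set of $n$; a subsequence suffices for the $\varlimsup$ after a standard interpolation. The stable direction is handled similarly, and the product structure combines them with a $\delta$-loss.

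I expect the main obstacle to be the center directions, together with the fact that the ratio estimate must hold along all $n$ (the statement uses $\varlimsup$) rather than along a recurrent subsequence. For the center, I would try a Lebesgue-density/Besicovitch argument in the spirit of Lemma \ref{ORH23-Lemma 2.3}. The $d^c$-dimensional transversal slices of a Bowen ball are subdivided into $e^{nr d^c}$ pieces of size $e^{-nr}$. Each piece contains a neutralized ball around some point, and a pigeonhole/density argument shows that the piece containing $x$ typically carries at least an average share. For all $n$, I would interpolate using the bounded distortion $\Upsilon$ between times $n$ and $n+m$, as in the inclusion chain before \eqref{Neutralized sequence entropy-(1)}. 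Finally, letting $\epsilon\to0$ and then $\delta\to0$, and invoking Theorem \ref{thm:Brin-Katok}, yields the inequality.
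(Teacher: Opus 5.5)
The paper does not prove this statement; it is quoted from \cite{DQ25}. I therefore assess your sketch on its own terms. The genuine gap is the step you call the measure-theoretic product estimate, and that step carries essentially all the content.

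The Barreira--Pesin--Schmeling product structure is a theorem about \emph{hyperbolic} measures and round balls at typical points. The statement, however, concerns arbitrary invariant measures, zero exponents allowed (this is why $d^c$ appears), and no product structure of $\mu$ along unstable, stable and centre directions is available there. Even for hyperbolic $\mu$, your two ratios are read at different places. The unstable ratio is read at $f^nx$, where $f^n(B(x,n,\epsilon))$ is not an ordinary ball: it has width about $\epsilon e^{-n|\chi^-|}$ in the stable directions. The stable ratio is read at $x$.

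Moreover, exact dimensionality of $\mu^u_{f^nx}$ controls only $u$-balls centred at $f^nx$ on the single leaf $W^u(f^nx)$. The box also meets many other unstable plaques, in disks centred at points that need not be typical for their conditionals. Conditionals on nearby plaques are not related by holonomy in general, so leafwise ratios do not determine the ratio of the full $\mu$-masses.

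The centre pigeonhole does not repair this. Once you isolate the centre slab of width $e^{-nr}$ containing $x$ (which must be \emph{contained in} a neutralized ball, not contain one), you still need the $u$- and $s$-ratios inside it, which is the same alignment problem. A quick check shows that something substantial is missing. Since $B(x,n,e^{-nr})\subset B(x,e^{-nr})$, your key inequality with $r\to\infty$ would give $\overline{d}_\mu(x)\le d^u_\mu(x)+d^s_\mu(x)+d^c(x)$ for every invariant measure. That bound does not follow from leafwise dimensions plus a Euclidean count in the centre: a measure can have local dimension strictly larger than the dimension of its projection plus that of its fibre conditionals when those conditionals are misaligned. The dynamics has to control how conditionals on different leaves fit together, and your sketch contains no mechanism for this.

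Three further steps fail as written, though these are repairable.
\begin{enumerate}
\item The reduction to ergodic $\mu$ is not valid for statements about $\mu(B(x,n,e^{-nr}))$. For $(s,t)\mapsto(s,t+\alpha)$ on $\mathbb{T}^2$ with $\alpha$ irrational and Lebesgue measure, one has $h^r_{\mu,d}=2r$ everywhere, while each ergodic component (Lebesgue on a vertical circle) gives $r$. So the inequality for components says nothing about $\mu$, and you should work with $\mu$ directly.
\item Good returns along a merely positive-frequency set of times do not bound a $\varlimsup$. You need a good time in every window $[n,(1+\gamma)n]$, which forces the good set to have measure close to one, as arranged in the proof of Theorem \ref{SNE is SE}.
\item ``Typically'' in the pigeonhole step must be made uniform in $x$. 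This requires a cover fixed in advance with controlled multiplicity, together with a Borel--Cantelli argument.
\end{enumerate}

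The contradiction--covering--Borel--Cantelli scheme of Theorem \ref{SNE is SE} (following \cite{ORH23}) would reduce the problem to a covering count. You would need to show that the good part of a typical Bowen ball can be covered by $e^{nr(d^u_\mu+d^s_\mu+d^c)+o(n)}$ neutralized balls, instead of the Euclidean $e^{nr\dim X}$. The missing idea is precisely how to obtain such a count from $d^u_\mu$ and $d^s_\mu$ without a product structure.
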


In \cite{DQ25}, we verified that linear Anosov diffeomorphisms 
and topological Markov shifts satisfy Conjecture \ref{conjecture}. 
Furthermore, Proposition \ref{Bow71-1-Corollary 16} implies that 
any endomorphism of a Lie group satisfies Conjecture \ref{conjecture} 
with respect to a right-invariant metric. 
Recently, Chen, Xie and Zhou constructed a counterexample 
to this conjecture (see \cite[Example 2.4]{CXZ26}).

By applying Theorem \ref{ORH23-Theorem 3.4} (or Theorem \ref{DQ25-theorem 1.1 1.2}) to Conjecture \ref{conjecture}, 
we formulate a {\bf Weak Variational Principle} for $r$-neutralized entropy.
This weaker version of the principle was originally suggested 
by Professor Ben Ovadia. 

\begin{conjecture}[Weak Variational Principle]\label{weak conjecture}
Let $f : M \to M$ be a $C^{1+\beta}$ diffeomorphism of a closed manifold $(M,d)$. 
Then 
\begin{align*}
\lim_{r\to 0}h^r_{d}(f) =
\lim_{r\to 0} \sup\{h^r_{\mu,d}(f) : \mu \in \mathcal{M}(f, M)\} = 
\sup\{h_{\mu,d}(f) : \mu \in \mathcal{M}(f, M)\} = h_{\top}(f).
\end{align*}
\end{conjecture}

We now briefly outline some important classes of dynamical systems that satisfy 
Conjecture \ref{weak conjecture}, such as Axiom A diffeomorphisms on basic sets 
(see Example \ref{weak conjecture-hyperbolic}) and 
$C^{\infty}$ diffeomorphisms on compact manifolds 
(see Example \ref{weak conjecture-infty}). 

\begin{example}\label{weak conjecture-hyperbolic}
If $X$ is a hyperbolic set of $f$, then $f$ satisfies 
Conjecture \ref{weak conjecture} with respect to any metric $d$ on $X$.

To see this, fix a metric $d$ on $X$. 
By \cite[Proposition 20.2.6]{KH95}, there exist $\epsilon > 0$ and $0 < \gamma < 1$ 
such that, for any $x, y \in X$, the condition $d(f^k x, f^k y) < \epsilon$ 
for all $k \in [-N, N]$ implies $d(x, y) < \gamma^N$.
For a fixed $r > 0$, letting $c = -2r / \log \gamma$, 
we obtain the following ball inclusions:
\begin{align*}
B(x, -cn, cn+n, \epsilon) \subset B(x, n, e^{-nr}) \subset B(x, n, \epsilon), 
\end{align*}
where 
$$ 
B(x, -cn, cn+n, \epsilon):=\bigcap_{k=-cn}^{cn+n} f^{-k}\bigl(B(f^k x, \epsilon)\bigr).
$$
The assertion for hyperbolic sets then follows immediately from these inclusions.

Furthermore, this argument extends naturally to the global setting. 
When $f$ is an Axiom A diffeomorphism, a similar local estimate holds 
by \cite[Lemma 4.2]{Bow75}. Consequently, 
Axiom A systems also satisfy Conjecture \ref{weak conjecture} for any metric. 
\end{example}

\begin{example}\label{weak conjecture-infty}
Corollary \ref{estimate corollary} implies that 
Conjecture \ref{weak conjecture} holds for $C^{\infty}$ diffeomorphisms. 
In particular, endomorphisms of compact Lie groups satisfy Conjecture \ref{weak conjecture}. 
\end{example}

Beyond the aforementioned systems, motivated by the notion of $T$-homogeneity 
introduced by Bowen in \cite[Definition 6]{Bow71-1}, we introduce the definition of 
the neutralized homogeneous measure, which leads to another significant 
class of dynamical systems satisfying Conjecture \ref{weak conjecture}. 

\begin{definition}\label{neutralized homogeneous}
Let $T: X \to X$ be a uniformly continuous map on a metric space $(X, d)$. 
  A Borel measure \(\mu\) on \(X\) is said to be 
{\bf \(T\)-neutralized homogeneous} if it satisfies the following conditions: 
\begin{itemize}
\item[(i)] $\mu(K)<\infty$ for every compact subset $K\subset X$. 
\item[(ii)] $\mu(K)>0$ for some compact subset $K\subset X$. 
\item[(iii)] For each \(r > 0\), there exist \(r'=r'(r) > 0\) and \(c=c(r) > 0\) such that
    \[
    \mu(B(y, n, e^{-nr'})) \leq c \mu(B(x, n,e^{-nr}))
    \]
    for all \(n \geq 0\) and all \(x, y \in X\). 
 Moreover, $r'$ may be chosen so that $r'\to 0$ as $r\to 0$. 
\end{itemize}
If $X$ is compact, we additionally require $\mu$ to be a probability measure, 
in which case conditions (i) and (ii) are naturally satisfied. 
\end{definition}

This concept is particularly natural in the context of algebraic dynamical systems. 
For instance, when $X$ is a Lie group equipped with a right-invariant metric 
and $T$ is a group endomorphism, the right Haar measure on $X$ satisfies 
this neutralized homogeneity property. 
Furthermore, our notion of neutralized homogeneity should not be confused with 
the algebraic sense used in Ratner's theory, i.e., Haar measures on homogeneous spaces.

The following lemma utilizes the neutralized homogeneous measure 
to establish the relationship between $r$-neutralized entropy 
and $r$-neutralized topological entropy. 

\begin{lemma}\label{homogeneous measure lemma}
  Let $T$ be a uniformly continuous map on a locally compact metric space $(X, d)$. 
  If $T$ preserves a neutralized $T$-homogeneous measure $\mu$, 
  then for any $x \in X$,
  \begin{align*}\lim_{r \to 0} h^r_d(T) = \lim_{r \to 0} h^r_{\mu, d}(T, x).
  \end{align*}
\end{lemma}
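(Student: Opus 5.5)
We imitate Bowen's argument for $T$-homogeneous measures \cite[Proposition 7]{Bow71-1}, with the extra care needed to compare the neutralized radii $e^{-nr}$ and $e^{-nr'}$. We prove two inequalities for a fixed $x\in X$ and then let $r\to0$, using that $r'(r)\to0$ as $r\to0$ by Definition \ref{neutralized homogeneous}(iii). Both limits exist because $h^r_d(T)$ and $h^r_{\mu,d}(T,x)$ are monotone in $r$, since smaller radii give smaller balls. For noncompact $X$, spanning numbers are taken relative to a compact $K$ and then the supremum over $K$ is taken, as in \cite{Bow71-1}.

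\emph{Upper bound on the topological side.} Fix a compact $K$ and $r>0$, and let $r'=r'(r)$. Choose a maximal $(n,e^{-nr'})$-separated subset $E\subset K$ (distinct points have $d_n$-distance at least $e^{-nr'}$). It is spanning at scale $e^{-nr'}$. The balls $B(y,n,\tfrac12 e^{-nr'})$, $y\in E$, are pairwise disjoint and contained in the $1$-neighbourhood $K_1$ of $K$, which has finite measure by (i) and local compactness. Since $\tfrac12e^{-nr'}\ge e^{-n(r'+\eta)}$ for large $n$, we apply (iii) with the pair $(r'+\eta)$ in place of $r$. To keep this clean, rename the parameters: for each $s>0$, (iii) gives $s'$ with
\[
\mu\bigl(B(y,n,e^{-ns'})\bigr)\le c\,\mu\bigl(B(x,n,e^{-ns})\bigr).
\]
Taking the separated set at scale $2e^{-ns'}$ gives
\[
\#E\,c^{-1}\mu\bigl(B(x,n,e^{-ns})\bigr)\le \sum_{y\in E}\mu\bigl(B(y,n,e^{-ns'})\bigr)\le\mu(K_1).
\]
Hence $S_d(n,2e^{-ns'};K)\le c\,\mu(K_1)/\mu(B(x,n,e^{-ns}))$. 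Taking $\limsup\frac1n\log$, the factor $2$ is absorbed by an arbitrarily small shift of the rate, and we get $h^{s'+\eta}_d(T)\le h^{s}_{\mu,d}(T,x)$. Letting $s\to0$ (so $s'\to0$) and then $\eta\to0$, using monotonicity, gives $\lim_r h^r_d(T)\le\lim_r h^r_{\mu,d}(T,x)$.

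\emph{Lower bound.} Take a compact $K$ with $\mu(K)>0$ by (ii), and a minimal $(n,e^{-ns'})$-spanning set $F$ of $K$. Then
\[
\mu(K)\le\sum_{y\in F}\mu\bigl(B(y,n,e^{-ns'})\bigr)\le \#F\,c\,\mu\bigl(B(x,n,e^{-ns})\bigr).
\]
So $-\frac1n\log\mu(B(x,n,e^{-ns}))\le\frac1n\log\#F+O(1/n)$. Taking $\limsup$ gives $h^s_{\mu,d}(T,x)\le h^{s'}_d(T)$, and letting $s\to0$ gives the reverse inequality. Note that this inequality holds for every $x$, not only almost every $x$, which matches the statement.

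\emph{Main obstacle.} The delicate point is the bookkeeping of scales. Condition (iii) changes the exponent from $r$ to $r'$. The halving or doubling of radii coming from separated versus spanning sets must be absorbed into an arbitrarily small change of exponent, $2e^{-nr}\le e^{-n(r-\eta)}$ for large $n$. This works only because we pass to $\lim_{r\to0}$ and both quantities are monotone in $r$. So the proof hinges on $r'(r)\to0$, and that is exactly why the lemma is stated only in the limit. In the noncompact case, there is the further minor issue that the volume bound needs $\mu(K_1)<\infty$, which local compactness provides.
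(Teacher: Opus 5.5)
Your strategy is the paper's: a packing argument with disjoint Bowen balls for $\lim_r h^r_d(T)\le\lim_r h^r_{\mu,d}(T,x)$, and a covering argument over a compact $K$ with $\mu(K)>0$ for the reverse inequality. Your lower-bound half is correct and coincides with the paper's.

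\textbf{The gap: (iii) is used in the wrong direction in the upper bound.} A packing bound needs a \emph{lower} bound on the measure of the disjoint balls centred at $y\in E$, in terms of the fixed ball at $x$. Since (iii) holds for all pairs of points, you may exchange the roles of $x$ and $y$. That gives
\[
\mu\bigl(B(y,n,e^{-ns})\bigr)\ \ge\ c^{-1}\,\mu\bigl(B(x,n,e^{-ns'})\bigr).
\]
This is a lower bound for balls of the \emph{input} radius $e^{-ns}$, in terms of the ball at $x$ of the \emph{output} radius $e^{-ns'}$.

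Your displayed inequality $\#E\,c^{-1}\mu(B(x,n,e^{-ns}))\le\sum_{y\in E}\mu(B(y,n,e^{-ns'}))$ needs instead $\mu(B(x,n,e^{-ns}))\le c\,\mu(B(y,n,e^{-ns'}))$, which is the converse. It would follow from (iii) only if $s'\le s$. That is not available: (iii) stays true when $s'$ is increased, so in general $s'>s$. Hence $h^{s'+\eta}_d(T)\le h^{s}_{\mu,d}(T,x)$ is not established. Your first sentence, applying (iii) with input equal to the exponent of the packing balls, was the right instinct; the renaming then swapped the two scales.

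\textbf{The repair.} Place the separated set at the input scale.
\begin{itemize}
\item Take $E\subset K$ maximal $(n,2e^{-ns})$-separated. Then the balls $B(y,n,e^{-ns})$, $y\in E$, are pairwise disjoint and, for large $n$, lie in a fixed set $U$ with $\mu(U)<\infty$.
\item The inequality above gives $\#E\le c\,\mu(U)/\mu(B(x,n,e^{-ns'}))$.
\item $E$ is $(n,e^{-n(s-\eta)})$-spanning for large $n$, since $2e^{-ns}\le e^{-n(s-\eta)}$. Hence $h^{s-\eta}_d(T,K)\le h^{s'}_{\mu,d}(T,x)$ for $0<\eta<s$.
\item Take the supremum over $K$, choose $\eta=s/2$, and let $s\to0$ (so $s'\to0$). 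Monotonicity in the exponent then yields the claim.
\end{itemize}
This is the paper's inequality $h^r_d(T,K)\le h^{r'}_{\mu,d}(T,y)$. The paper absorbs the factor $\tfrac12$ by a time shift $n_r$ rather than by perturbing the exponent; the two devices are equivalent.

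\textbf{A smaller point.} In a merely locally compact space, the $1$-neighbourhood of $K$ need not have finite measure. As in the paper, choose $\varepsilon>0$ such that the $\varepsilon$-neighbourhood of $K$ lies inside an open $U$ with $\mu(U)<\infty$. The shrinking balls lie in it once $n$ is large.
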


  \begin{proof}
    First, let $K \subset X$ be an arbitrary compact subset. 
    By condition (i) and the local compactness of the space, 
    there exists a neighborhood $U$ of $K$ such that $\mu(U) < \infty$. 
    Choose $\varepsilon > 0$ sufficiently small 
    so that the $\varepsilon$-neighborhood 
    $$
    B_\varepsilon(K) := \{ y \in X : d(y, K) < \varepsilon \}
    $$ 
    is contained in $U$. 
    Fix $r > 0$. If $E \subset K$ is an $(n, e^{-nr})$-separated set, 
    it follows from the definitions that the collection of balls 
    $\displaystyle \left\{ B\left(x, n, \frac{1}{2}e^{-nr}\right) \right\}_{x \in E}$
     consists of pairwise disjoint sets contained in $U$. 
     For this fixed $r$, there exists a sufficiently large $n_r \in \mathbb{N}$ 
     such that for all $n > n_r$ and $x \in X$, the following inclusions hold: 
     \begin{align*}
      B\left(x, n, \frac{1}{2}e^{-nr}\right) \supset B(x, n, e^{-(n+n_r)r}) \supset B(x, n+n_r, e^{-(n+n_r)r}).
    \end{align*}
    By the $T$-neutralized homogeneity of $\mu$, 
    there exist $r' > 0$ and $c > 0$ such that 
    $$ 
     \mu\left(B\left(y, m, e^{-mr'}\right)\right) \leq c \mu\left(B\left(x, m, e^{-mr}\right)\right)
     $$ 
     for all $x, y \in X$ and $m \geq 1$. 
     Applying this to our setting with $m = n+n_r$, we obtain
     $$ 
      N_d(T, K, n, e^{-nr}) \cdot \frac{1}{c} \mu\left(B\left(y, n+n_r, e^{-(n+n_r)r'}\right)\right) \leq \mu(U),
      $$
      which implies
      \begin{align*}h^r_d(T, K) &= \varlimsup_{n \to \infty} \frac{1}{n} \log N_d(T, K, n, e^{-nr}) \\
        &\leq \varlimsup_{n \to \infty} -\frac{1}{n} \log \mu(B(y, n+n_r, e^{-(n+n_r)r'}))= h_{\mu, d}^{r'}(T, y).
      \end{align*}
      Since $K$ is arbitrary, 
      it follows that $h^r_d(T) \leq h_{\mu, d}^{r'}(T, y)$ holds for any $y \in X$. 
      Taking the limit as $r \to 0$ (which implies $r' \to 0$), we have
      \begin{align}\label{homogeneous measure lemma-(1)}
        \lim_{r \to 0} h^r_d(T) \leq \lim_{r' \to 0} h_{\mu, d}^{r'}(T, y).
      \end{align}
      
      Conversely, consider a compact set $K$ such that $\mu(K) > 0$, 
      which exists by condition (ii). 
      Given $r' > 0$, choose $r > 0$ and $c > 0$ according to the 
      neutralized homogeneity such that
      $$ 
      \mu\left(B\left(y, n, e^{-nr}\right)\right) \leq c \mu\left(B\left(x, n, e^{-nr'}\right)\right)
      $$
      for all $x, y \in X$ and $n \geq 0$. 
      If $F \subset X$ is an $(n, e^{-nr})$-spanning set for $K$, 
      then 
      $$  
      K \subset \bigcup_{x \in F} B(x, n, e^{-nr}).
      $$
Thus, we have 
      $$  
      \mu(K) \leq \sum_{x \in F} \mu\left(B\left(x, n, e^{-nr}\right)\right) \leq S_d(T, K, n, e^{-nr}) \cdot c \mu\left(B\left(y, n, e^{-nr'}\right)\right),
      $$
      where $y$ is any point in $X$. 
      This yields
      \begin{align*}
        h^{r'}_{\mu, d}(T, y) = \varlimsup_{n \to \infty} -\frac{1}{n} \log \mu\left(B\left(y, n, e^{-nr'}\right)\right) 
        \leq \varlimsup_{n \to \infty} \frac{1}{n} \log S_d(T, K, n, e^{-nr}) = h^r_d(T, K).
      \end{align*}
      Taking the limit as $r' \to 0$ and noting that $h^r_d(T, K) \leq h^r_d(T)$, 
      we obtain
      \begin{align}\label{homogeneous measure lemma-(2)}
        \lim_{r' \to 0} h^{r'}_{\mu, d}(T, y) \leq \lim_{r \to 0} h^r_d(T).
      \end{align}

This lemma immediately follows from \eqref{homogeneous measure lemma-(1)} and \eqref{homogeneous measure lemma-(2)}. 
    \end{proof} 

Thus, we have the following result. 

\begin{example}\label{weak conjecture-homogeneous}
Every $C^{1+\beta}$ diffeomorphism $f$ on a closed manifold $(X,d)$ 
that preserves an $f$-neutralized homogeneous measure 
satisfies Conjecture \ref{weak conjecture}. 

This example follows directly from Lemma \ref{homogeneous measure lemma} 
and Theorem \ref{ORH23-Theorem 3.4}.
\end{example}

\begin{remark}\label{remark-2-lie group-homogeneous measure}
Combining Lemma \ref{homogeneous measure lemma}, 
Theorem \ref{ORH23-Theorem 3.4}, and Conjecture \ref{weak conjecture}, 
we conclude that if a $C^{1+\beta}$ diffeomorphism $f$ on a closed manifold preserves an $f$-neutralized homogeneous measure $\mu$, 
then $\mu$ is the measure of maximal entropy of $f$. 
\end{remark}

\begin{lemma}\label{topological sequence entropy bounds sequence entropy}
Let $\mu$ be a Borel probability measure which is invariant and
ergodic with respect to the $\mathbb R^k$-action. 
Let $A=\{\mathbf{a}(n)\}_{n=1}^\infty$ be a sequence in $\mathbb R^k$ satisfying
\[
K_0=\sup_{i\geq 1}\bigl\|\mathbf{a}(i+1)-\mathbf{a}(i)\bigr\|<\infty,
\]
and suppose there exists $\mathbf{a}\in\mathbb{R}^k$ satisfying $\dfrac{\mathbf{a}(n)}{n}\to\mathbf{a}$ as $n\to\infty$. 
Then $h_{\top}^{A}(\{\rho_{\mathbf{t}}\}) \geq h_{\mu}^{A}(\{\rho_{\mathbf{t}}\})$.
\end{lemma}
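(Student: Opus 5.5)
This is the sequence analogue of the easy half of the variational principle, and the plan is the standard Bowen-ball counting argument. Fix $\eta>0$ and write $h=h_\mu^A(\{\rho_{\mathbf t}\})$, the $\mu$-a.e.\ constant value of the local sequence entropy. If $h=0$ there is nothing to prove, so assume $h>0$.

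First choose the scale. By definition of $h_\mu^A(\{\rho_{\mathbf t}\},x)$ as a limit in $\epsilon$ of a $\varlimsup$ in $n$, there is $\epsilon_0>0$ such that for $\mu$-a.e.\ $x$ and every $\epsilon\le 2\epsilon_0$,
\[
\varlimsup_{n\to\infty}-\frac1n\log\mu\bigl(B(x,A,n,2\epsilon)\bigr)>h-\eta .
\]
The quantity is monotone in $\epsilon$, so the dependence on the point is harmless. Because we only have a $\varlimsup$, we cannot simply fix a large $n$ and use the inequality for all $x$ simultaneously.

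So the second step is to pass to a covering argument along a subsequence. Suppose, for contradiction, that $h_{\top}^A<h-2\eta$. Then for some $\epsilon\le\epsilon_0$ and all large $n$ there is an $(A,n,\epsilon)$-spanning set $E_n$ with $\#E_n\le e^{n(h-2\eta)}$. Let
\[
G_n=\Bigl\{x:\ \mu\bigl(B(x,A,n,2\epsilon)\bigr)\le e^{-n(h-\eta)}\Bigr\}.
\]
We want $\mu(\bigcup_{n\ge N}G_n)=1$ for every $N$. For each spanning point $z\in E_n$ whose ball $B(z,A,n,\epsilon)$ meets $G_n$, pick $x$ in the intersection; then $B(z,A,n,\epsilon)\subset B(x,A,n,2\epsilon)$, which has measure at most $e^{-n(h-\eta)}$. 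Hence
\[
\mu(G_n)\le \#E_n\, e^{-n(h-\eta)}\le e^{-n\eta},
\]
and $\sum_{n\ge N}\mu(G_n)\to0$. This contradicts $\mu(\bigcup_{n\ge N}G_n)=1$. Therefore $h_{\top}^A\ge h-2\eta$, and letting $\eta\to0$ gives the lemma.

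The main point to check, and the only potential obstacle, is that $\mu$-a.e.\ $x$ lies in infinitely many $G_n$ at a fixed scale. This needs the $\varlimsup$ in the definition of $h^A_\mu$, which the paper uses, together with the choice of $\epsilon$ uniform a.e. Ergodicity under the $\mathbb R^k$-action (making the local entropy constant) supplies that uniformity. Note that no invariance of $\mu$ under an individual map is needed, since spanning sets control the measure of Bowen balls directly. The hypotheses on $K_0$ and $\mathbf a(n)/n$ enter only through the well-definedness and a.e.\ constancy of $h_\mu^A$ established earlier.
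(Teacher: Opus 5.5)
Your counting-and-summation argument is the one the paper uses. The paper phrases it directly rather than by contradiction: Borel--Cantelli forces $\mu(K_n)>e^{-n\delta}$ for infinitely many $n$, hence $S_d(\{\rho_{\mathbf t}\},A,n,\epsilon)>e^{n(h-3\delta)}$ infinitely often. However, the step you single out as the only obstacle is justified incorrectly. Ergodicity makes the \emph{limit} $h^A_\mu(\{\rho_{\mathbf t}\},x)$ constant a.e. It says nothing about the fixed-scale quantity $g_\epsilon(x):=\varlimsup_{n}-\frac1n\log\mu\bigl(B(x,A,n,\epsilon)\bigr)$. For a general sequence $A$ there is no sub-invariance at a fixed scale. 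Commutativity and invariance of $\mu$ only give $\rho_{\mathbf t}B(x,A,n,\epsilon)\subset B(\rho_{\mathbf t}x,A,n,L_{\mathbf t}\epsilon)$, where $L_{\mathbf t}$ is the Lipschitz constant of $\rho_{\mathbf t}$, and hence $g_{L_{\mathbf t}\epsilon}(\rho_{\mathbf t}x)\le g_\epsilon(x)$. So the threshold scale below which $g_\epsilon(x)>h-\eta$ need not be bounded away from zero $\mu$-a.e. Your choice of a single $\epsilon_0$ valid for $\mu$-a.e.\ $x$ is therefore unsupported, and so is the consequence $\mu\bigl(\bigcup_{n\ge N}G_n\bigr)=1$.

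The repair is immediate, and it is exactly what the paper does: you only need positive measure. Since $g_{2\epsilon}(x)\uparrow h$ for a.e.\ $x$ as $\epsilon\downarrow0$, pick $\epsilon$ so that $K=\{x: g_{2\epsilon}(x)>h-\eta\}$ satisfies $\mu(K)>0$. Every $x\in K$ lies in infinitely many $G_n$, so $\mu\bigl(\bigcup_{n\ge N}G_n\bigr)\ge\mu(K)>0$ for all $N$. This still contradicts $\sum_{n\ge N}\mu(G_n)\le\sum_{n\ge N}e^{-n\eta}\to0$.

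One smaller point concerns your contradiction hypothesis. The bound $\#E_n\le e^{n(h-2\eta)}$ holds for \emph{every} fixed $\epsilon$ and all large $n$, because $\varlimsup_n\frac1n\log S_d(\{\rho_{\mathbf t}\},A,n,\epsilon)$ is nondecreasing as $\epsilon\downarrow0$ with limit $h^A_{\top}(\{\rho_{\mathbf t}\})$. This is what lets you apply it at the $\epsilon$ already fixed, rather than at ``some'' $\epsilon\le\epsilon_0$. With these adjustments your proof coincides with the paper's.
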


\begin{proof}
Since $h_\mu^A(\{\rho_{\bf t}\},x)$ is invariant under the $\mathbb R^k$-action 
and $\mu$ is ergodic with respect to this action, 
it is constant for $\mu$-almost every $x$.
Denote its constant value by $h$. 
Fix $\delta>0$. There exist $\epsilon>0$ and a measurable set $K\subset Y$ with $\mu(K)>0$ such that
\[
\varlimsup_{n\to\infty} -\frac1n\log\mu(B(x,A,n,2\epsilon)) > h-\delta \quad \text{for every } x\in K.
\]
For $n\in\mathbb N$, set
\[
K_n := \left\{ x\in K: \mu(B(x,A,n,2\epsilon)) < e^{-n(h-2\delta)} \right\}.
\]
Clearly, every $x\in K$ belongs to infinitely many $K_n$, which implies $\displaystyle K\subset\limsup_{n\to\infty}K_n$.

We claim that $\mu(K_n)>e^{-n\delta}$ for infinitely many $n$. 
Otherwise, $\mu(K_n)\leq e^{-n\delta}$ for all large $n$, 
which implies $\displaystyle \sum_{n=1}^{\infty}\mu(K_n)<\infty$. 
By the Borel--Cantelli lemma, we conclude that 
$\displaystyle \mu\left(\limsup_{n\to\infty}K_n\right)=0$, 
contradicting $\mu(K)>0$.

For each such $n$, let $E_n$ be a minimal $(A,n,\epsilon)$-spanning set. If $B(y,A,n,\epsilon)\cap K_n\neq\varnothing$, choose $x_y$ in this intersection. Then $B(y,A,n,\epsilon) \subset B(x_y,A,n,2\epsilon)$, so
\[
\mu(B(y,A,n,\epsilon)) < e^{-n(h-2\delta)}.
\]
Since these balls $\{B(y,A,n,\epsilon)\}_{y\in E_n}$ cover $K_n$, we obtain
\[
e^{-n\delta} < \mu(K_n) \leq S_d(\{\rho_{\mathbf{t}}\},A,n,\epsilon) e^{-n(h-2\delta)}.
\]
Therefore, $S_d(\{\rho_{\mathbf{t}}\},A,n,\epsilon) > e^{n(h-3\delta)}$ for infinitely many $n$. 
It follows that 
$$
h_{\top}^{A}(\{\rho_{\mathbf{t}}\}) \geq h-3\delta. 
$$
Taking $\delta\to0$ yields 
$h_{\top}^{A}(\{\rho_{\mathbf{t}}\}) \geq h = h_{\mu}^{A}(\{\rho_{\mathbf{t}}\})$, 
completing the proof. 
\end{proof}

Next, we prove the main result of this subsection.

\begin{proposition}\label{homogeneous proposition}
Let $\{\rho_{\bf t}\}_{{\bf t}\in\mathbb R^k}$ be a locally free
$C^{1+\beta}$ $\mathbb R^k$-action on a closed Riemannian manifold
$Y$. Suppose that the action preserves a Borel probability measure
$m_{\rm MME}$ which is ergodic with respect to the
$\mathbb R^k$-action and is a measure of maximal entropy for
$\rho_{\bf t}$ for every ${\bf t}\in\mathbb R^k$. 
If each $\rho_{\bf t}$ satisfies Conjecture \ref{weak conjecture} for all ${\bf t}\in\mathbb{R}^k$, 
then $\{\rho_{\bf t}\}$ has the cone property. 
\end{proposition}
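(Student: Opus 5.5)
The plan is to prove the chain of inequalities
\[
h_{\top}(\rho_{\bf a})\le h^A_{\mu}(\{\rho_{\bf t}\})\le h^A_{\top}(\{\rho_{\bf t}\})\le h^{\bf a}_{\top}(\{\rho_{\bf t}\})\le \lim_{r\to0}h^r_d(\rho_{\bf a})=h_{\top}(\rho_{\bf a}),
\]
where $\mu=m_{\MME}$ and ${\bf a}\neq{\bf 0}$ is fixed. The third inequality is \eqref{lower cone entropy}. The second is Lemma \ref{topological sequence entropy bounds sequence entropy} applied to the ergodic measure $m_{\MME}$. The last equality is exactly the hypothesis that $\rho_{\bf a}$ satisfies Conjecture \ref{weak conjecture}. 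So two comparisons remain: a lower bound through the measure, and an upper bound for cone balls by $r$-neutralized Bowen balls of $\rho_{\bf a}$. Both rest on one distortion estimate. Since $\|{\bf t}\|\le 1$ implies $\|d\rho_{\bf t}\|<\Upsilon$, we have $d(\rho_{\bf w}u,\rho_{\bf w}v)\le \Upsilon^{\|{\bf w}\|+1}d(u,v)$ for every ${\bf w}\in\R^k$.

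For the lower bound I write $\rho_{i{\bf a}}=\rho_{i{\bf a}-{\bf a}(i)}\circ\rho_{{\bf a}(i)}$. Because ${\bf a}(n)/n\to{\bf a}$, we have $\max_{i<n}\|i{\bf a}-{\bf a}(i)\|=o(n)$. The distortion estimate then gives, for large $n$,
\[
B(x,A,n,e^{-nr})\subset B(\rho_{\bf a},x,n,e^{-nr/2}).
\]
Hence $\mathcal E^A_\mu(\{\rho_{\bf t}\},x)\ge \lim_{r\to0}h^{r/2}_{\mu,d}(\rho_{\bf a},x)=h_\mu(\rho_{\bf a},x)$ for $\mu$-a.e.\ $x$, using Theorem \ref{ORH23-Theorem 3.4}. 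The left-hand side is constant a.e., while $h_\mu(\rho_{\bf a},x)$ may not be, since $\mu$ is ergodic only for the whole action. I therefore integrate and use Theorem \ref{thm:Brin-Katok} to get $\mathcal E^A_\mu\ge h_\mu(\rho_{\bf a})=h_{\top}(\rho_{\bf a})$; the last equality holds because $m_{\MME}$ is an MME for $\rho_{\bf a}$. Finally, Theorem \ref{SNE is SE} gives $\mathcal E^A_\mu=h^A_\mu$, which is the first inequality in the chain.

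For the upper bound, take ${\bf t}\in\mathcal C_{\bf a}(\delta)\cap B_{\R^k}(n|{\bf a}|)$ and write ${\bf t}=s{\bf a}+{\bf w}$. Here $s\in[0,n]$ and $\|{\bf w}\|\le C\delta n$, with $C$ depending only on $|{\bf a}|$. Using $\rho_{\bf t}=\rho_{{\bf w}+(s-\lfloor s\rfloor){\bf a}}\circ\rho_{\lfloor s\rfloor{\bf a}}$, the distortion estimate gives
\[
d(\rho_{\bf t}x,\rho_{\bf t}y)\le \Upsilon^{C\delta n+|{\bf a}|+1}\,e^{-nr}<\epsilon
\]
whenever $y\in B(\rho_{\bf a},x,n+1,e^{-nr})$, $r>2C\delta\log\Upsilon$ and $n$ is large. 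Hence an $(n+1,e^{-nr})$-spanning set for $\rho_{\bf a}$ is $({\bf a},\delta,n,\epsilon)$-spanning for every fixed $\epsilon$. This gives $\lim_{\epsilon\to0}\varlimsup_n\frac1n\log S_d(\{\rho_{\bf t}\},{\bf a},\delta,n,\epsilon)\le h^{r}_d(\rho_{\bf a})$ with $r=r(\delta):=3C\delta\log\Upsilon$. Letting $\delta\to0$ forces $r\to0$, so $h^{\bf a}_{\top}\le\lim_{r\to0}h^r_d(\rho_{\bf a})$; this limit exists by monotonicity in $r$.

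I expect the main obstacle to be bookkeeping rather than any new idea. The order of limits in $h^{\bf a}_{\top}$ (first $\epsilon\to0$, then $\delta\to0$) is compatible with the argument, because for fixed $\delta$ the bound holds uniformly in $\epsilon$. The non-integer parameter $s$ and the mismatch between $n$ and $n+1$ only cost bounded factors. The one genuinely delicate point is the lower bound, where ergodicity holds for the action rather than for $\rho_{\bf a}$. This is why I pass through the a.e.-constant quantity $\mathcal E^A_\mu$ and integrate, instead of asserting a pointwise equality with $h_\mu(\rho_{\bf a},x)$.
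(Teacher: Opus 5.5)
Your proposal is correct and takes essentially the same route as the paper. It proves the chain $h_{\top}(\rho_{\bf a})\le h^A_{m_{\MME}}(\{\rho_{\bf t}\})\le h^A_{\top}(\{\rho_{\bf t}\})\le h^{\bf a}_{\top}(\{\rho_{\bf t}\})\le\lim_{r\to0}h^r_d(\rho_{\bf a})=h_{\top}(\rho_{\bf a})$. The upper bound compares cone Bowen balls with $r$-neutralized Bowen balls of $\rho_{\bf a}$ and then invokes Conjecture \ref{weak conjecture}. The lower bound comes from Theorem \ref{SNE is SE}, Lemma \ref{topological sequence entropy bounds sequence entropy} and the MME property of $m_{\MME}$.

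The differences are minor. For the lower bound, the paper compares $B(x,A,n,\Upsilon^{-2n\delta})$ with fixed-scale balls $B(\rho_{\bf a},x,n,\epsilon)$ and applies Brin--Katok directly, taking a supremum over action-ergodic measures; you pass through Theorem \ref{ORH23-Theorem 3.4} instead. Your explicit choice $r(\delta)=3C\delta\log\Upsilon$ pins down the radius needed in the cone inclusion, which the paper states only as $\Upsilon^{-n|{\bf a}|\delta}$. Your integration step handles the fact that $m_{\MME}$ need not be ergodic for $\rho_{\bf a}$, which the paper treats quickly.
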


\begin{proof}
Fix a sequence $A=\{{\bf a}(n)\}_{n=1}^\infty$ satisfying
$\displaystyle K_0=\sup_{i\geq 1} \|{\bf a}(i+1)-{\bf a}(i)\|<\infty$, 
and assume that there exists ${\bf a}\in\mathbb{R}^k$
such that $\displaystyle \frac{{\bf a}(n)}{n}\to {\bf a}$ as $n\to\infty$.

Recall that 
\[
\Upsilon:=\sup\bigl\{\|d\rho_{\bf t}|_{T_yY}\| : y\in Y,\ {\bf t}\in\mathbb{R}^k,\ \|{\bf t}\|\leq 1\bigr\}+1<\infty.
\]
Given $\epsilon,\delta>0$, let $B(\rho_{\bf a},x,n,\epsilon)$ denote the Bowen ball for the diffeomorphism $\rho_{\bf a}$.
For ${\bf t}\in \mathcal{C}_{{\bf a}}(\delta)\cap B(n|{\bf a}|)$, 
we write ${\bf t}={\bf t}_1+{\bf t}_2$, where ${\bf t}_1=\tilde{t}{\bf a}$ 
for some $0\leq \tilde{t}\leq n|{\bf a}|$ and $|{\bf t}_2|<n|{\bf a}|\delta$.
Then, for sufficiently large $n$, we have 
\[
B(\rho_{\bf a},x,n,\Upsilon^{-n|{\bf a}|\delta})\subset B(x,{\bf a},\delta,n,\epsilon).  
\] 
Combining this with the assumption that $\rho_{\mathbf{a}}$ 
satisfies Conjecture \ref{weak conjecture}, we deduce 
\begin{align}\label{lower cone entropy-2}
h_\mathrm{top}^{{\bf a}}(\{\rho_{\bf t}\})\leq \lim_{r\to 0}h^r_{\top}(\rho_{\bf a})=h_{\top}(\rho_{\bf a}).
\end{align}

Let $\mu$ be an ergodic measure for this $\mathbb{R}^k$-action. 
Given $\epsilon,\delta>0$, there exists $N\in\mathbb{N}$ such that 
$|{\bf a}(n)-{\bf a}n |<n\delta$ holds for all $n\geq N$. 
For any sufficiently large $n\geq N$ such that 
$\Upsilon^{-n\delta}\leq \epsilon \Upsilon^{-N |{\bf a}|\delta}$, it follows that 
\begin{align}
B(x,A,n,\Upsilon^{-2n\delta}) &=B(x,\Upsilon^{-2n\delta})\cap B(x,A,n,\Upsilon^{-2n\delta})\nonumber\\
&\subset  B(\rho_{{\bf a}},\epsilon \Upsilon^{-N |{\bf a}|\delta})\cap
\rho_{{\bf a}}^{-N}\left(B(\rho_{\bf a},\rho_{{\bf a}}^{N}x,n-N,\epsilon)\right)\nonumber\\
&\subset B(\rho_{{\bf a}},x,N,\epsilon)\cap
\rho_{{\bf a}}^{-N}\left(B(\rho_{{\bf a}},\rho_{{\bf a}}^{N}x,n-N,\epsilon)\right)
= B(\rho_{{\bf a}},x,n,\epsilon).\label{inclusion}
\end{align}
By Theorem \ref{SNE is SE}, taking the limits 
 $n\to\infty$, $\delta\to 0$, and $\epsilon\to 0$ successively, 
we conclude that
\begin{align}\label{lower cone entropy-3}
{h}_{\mu}^{A}(\{\rho_{\bf t}\})=
{\mathcal{E}}_{\mu}^{A}(\{\rho_{\bf t}\})\geq h_\mu(\rho_{{\bf a}}).
\end{align}
Taking the supremum over all ergodic measures $\mu\in \mathcal{M}_e(\{\rho_{\bf t}\},X)$,
it follows from \eqref{lower cone entropy-3} and Lemma \ref{topological sequence entropy bounds sequence entropy} that 
\begin{align}
h_{\top}^{A}(\{\rho_{\bf t}\})&\geq \sup\{{h}_{\mu}^{A}(\{\rho_{\bf t}\}):\mu\in \mathcal{M}_e(\{\rho_{{\bf t}}\},Y) \}\nonumber\\
&\geq \sup\{h_{\mu}(\rho_{{\bf a}}):\mu\in \mathcal{M}_e(\{\rho_{{\bf t}}\},Y) \}
=h_{m_{\MME}}(\rho_{{\bf a}})=h_{\top}(\rho_{{\bf a}}).\label{lower cone entropy-4}
\end{align}
%where the last equality holds since $\mu$ is a measure of maximal entropy for $\rho_{{\bf a}}$.
Therefore, the proposition follows immediately from \eqref{lower cone entropy}, \eqref{lower cone entropy-2} 
and \eqref{lower cone entropy-4}.
\end{proof}

\subsection{Entropy formulas for skew products}
\label{subsect: Entropy formulas for skew products}

In this subsection, we complete the proofs of the entropy formulas for skew products. 

We first prove the entropy formula for the invariant product measures (Theorem \ref{higher le:product}). 

\begin{proof}[Proof of Theorem \ref{higher le:product}]
Since $\mu$ is ergodic, by the Birkhoff Ergodic Theorem,
for $\mu$-a.e. $x\in X$, we have 
\[
\lim_{n\to\infty}
\frac1n\sum_{i=0}^{n-1}\tau(f^ix)
=\int_X\tau\,\rd\mu=\mu(\tau).
\]
We denote by $X_0$ the set of points $x$ satisfying the above equality.

Fix $\epsilon,\delta>0$ and $x\in X_0$.
Let $\displaystyle A=\left\{\sum_{i=0}^{j-1}\tau(f^ix)\right\}_{j=1}^{\infty}$
 and ${\bf a}=\mu(\tau)$.
By an argument similar to the proof of \eqref{inclusion} in
Proposition \ref{homogeneous proposition}, 
for any $y\in Y$ and sufficiently large $n$, we have 
\[
B(y,A,n,\Upsilon^{-2n\delta})
\subset
B(\rho_{\mu(\tau)},y,n,\epsilon).
\]

We first explain how to pass from the fiber sequence Bowen ball to
the Bowen ball of the skew product. 
Since $\tau$ is $C^{1+\beta}$ on the compact manifold $X$, 
there exists $L_\tau>0$ such that
\[
\|\tau(x_1)-\tau(x_2)\|
\leq L_\tau d(x_1,x_2)
\quad\text{for all }x_1,x_2\in X.
\]
Moreover, since the action is $C^1$ and $Y$ is compact, 
there exists $C_\rho>0$ such that
\[
d(\rho_{\bf u}z,z)\leq C_\rho\|{\bf u}\|
\]
for every $z\in Y$ and every sufficiently small ${\bf u}\in\mathbb R^k$.

Suppose that
\[
(x',y') \in B\bigl(F,(x,y),n,\Upsilon^{-3n\delta}\bigr).
\]
Then, for every $0\leq j<n$, we have $d(f^jx,f^jx')<\Upsilon^{-3n\delta}$.
Therefore, 
\begin{align}\label{higher le:product-(5)}
\|S_j\tau(x')-S_j\tau(x)\| 
\leq \sum_{i=0}^{j-1} \|\tau(f^ix')-\tau(f^ix)\|  
\leq L_\tau j\Upsilon^{-3n\delta} 
\leq L_\tau n\Upsilon^{-3n\delta}.
\end{align}
Using the commutativity of the $\mathbb R^k$-action, we have
$\rho_{S_j\tau(x')}
= \rho_{S_j\tau(x')-S_j\tau(x)} \circ\rho_{S_j\tau(x)}$. Thus,
\begin{align*}
d\bigl( \rho_{S_j\tau(x)}y, \rho_{S_j\tau(x)}y' \bigr)
&\leq d\bigl( \rho_{S_j\tau(x)}y, \rho_{S_j\tau(x')}y' \bigr) 
+ d\bigl(\rho_{S_j\tau(x')}y', \rho_{S_j\tau(x)}y' \bigr) \\
&\leq \Upsilon^{-3n\delta} + C_\rho \|S_j\tau(x')-S_j\tau(x)\| \\
&\leq (1+C_\rho L_\tau n)\Upsilon^{-3n\delta} <\Upsilon^{-2n\delta}
\end{align*}
for all sufficiently large $n$. 
Hence, we obtain $y'\in B(y,A,n,\Upsilon^{-2n\delta})$.
Together with the preceding inclusion and the fact that
$\Upsilon^{-3n\delta}<\epsilon$ for sufficiently large $n$, this gives
\[
B\bigl(F,(x,y),n,\Upsilon^{-3n\delta}\bigr) 
\subset B(f,x,n,\epsilon) \times B(\rho_{\mu(\tau)},y,n,\epsilon).
\] 
Applying Theorem \ref{thm:Brin-Katok} to $f$ and $\rho_{\mu(\tau)}$, 
and Theorem \ref{ORH23-Theorem 3.4} to $F$,
then letting $n\to\infty$, $\epsilon\to0$, and finally
$\delta\to0$, we obtain for $\mu\times\nu$-almost every $(x,y)$, 
\[
h_{\mu\times\nu}(F,(x,y)) \geq h_\mu(f,x) + h_\nu(\rho_{\mu(\tau)},y).
\]
Since $\mu$ is ergodic,
$h_\mu(f,x)=h_\mu(f)$ for $\mu$-a.e. $x$. Integrating over
$X\times Y$, we conclude that
\begin{align}\label{higher le:product-(1)}
h_{\mu\times\nu}(F) \geq h_\mu(f) + h_\nu(\rho_{\mu(\tau)}).
\end{align}

For the reverse direction, 
a similar argument yields that for all sufficiently large $n$, 
\begin{align}\label{higher le:product-(3)}
B(\rho_{\mu(\tau)},y,n,\Upsilon^{-2n\delta}) \subset B(y,A,n,\epsilon/2).
\end{align}
We claim that for all sufficiently large $n$,
\begin{align}\label{higher le:product-(4)}
B(f,x,n,\Upsilon^{-3n\delta})  \times B(\rho_{\mu(\tau)},y,n,\Upsilon^{-3n\delta})
\subset B(F,(x,y),n,\epsilon).
\end{align}
Indeed, let
$x'\in B(f,x,n,\Upsilon^{-3n\delta})$ and  
$y'\in B(\rho_{\mu(\tau)},y,n,\Upsilon^{-3n\delta})$. 
Since
\[
B(\rho_{\mu(\tau)},y,n,\Upsilon^{-3n\delta}) 
\subset B(\rho_{\mu(\tau)},y,n,\Upsilon^{-2n\delta}),
\]
it follows from \eqref{higher le:product-(3)} that
\[
d\bigl( \rho_{S_j\tau(x)}y, \rho_{S_j\tau(x)}y' \bigr)
< \frac{\epsilon}{2}
\quad \text{for every }0\leq j<n.
\]
By \eqref{higher le:product-(5)}, for all sufficiently large $n$, we have 
\begin{align*}
d\bigl( \rho_{S_j\tau(x)}y', \rho_{S_j\tau(x')}y' \bigr)
\leq C_\rho \|S_j\tau(x')-S_j\tau(x)\|  
\leq C_\rho L_\tau n\Upsilon^{-3n\delta} < \frac{\epsilon}{2}.
\end{align*}
Therefore, we conclude that 
\begin{align*}
d\bigl( \rho_{S_j\tau(x)}y, \rho_{S_j\tau(x')}y' \bigr)
\leq d\bigl( \rho_{S_j\tau(x)}y, \rho_{S_j\tau(x)}y' \bigr) 
+ d\bigl( \rho_{S_j\tau(x)}y', \rho_{S_j\tau(x')}y' \bigr) 
< \frac{\epsilon}{2}+\frac{\epsilon}{2} =\epsilon.
\end{align*}
Since $d(f^jx,f^jx')<\Upsilon^{-3n\delta}<\epsilon$, 
we obtain \eqref{higher le:product-(4)}.

Using \eqref{higher le:product-(4)}, Theorem \ref{thm:Brin-Katok}, and
Theorem \ref{ORH23-Theorem 3.4}, and then letting
$n\to\infty$, $\epsilon\to0$, and $\delta\to0$, we obtain 
\begin{align*}%\label{higher le:product-(2)}
h_{\mu\times\nu}(F) \leq h_\mu(f) + h_\nu(\rho_{\mu(\tau)}).
\end{align*}
Combining this with \eqref{higher le:product-(1)}, we complete the proof. 
\end{proof}

We now turn from product measures to measures of maximal relative entropy, 
starting with the following relative variational principle due to Ledrappier and Walters \cite{LW77}.

\begin{theorem}[\cite{LW77} (1.2) and Theorem 2.1]\label{LW77-Theorem 2.1}
Let $ X, Y $ be compact metric spaces, and let $ T : X \to X $, $ S : Y \to Y $, $ \pi : X \to Y $ be continuous maps such that $ \pi $ is surjective and $ \pi \circ T = S \circ \pi $.
Given $ \mu \in \mathcal{M}(Y, S) $, we have 
\begin{align*}
\sup \left\{ h_\zeta(T):\zeta \in \mathcal{M}(T, X) \text{ and } \zeta \circ \pi^{-1} = \mu \right\}= h_\mu(S) + \int_Y h_{\top}(T, \pi^{-1}(y))  \,\rd\mu(y).
\end{align*}
%Moreover, for $ f \in C(X) $, 
%\begin{align*}
%\sup \left\{ h_\mu(T|S) + \mu(f) : \mu \in \mathcal{M}(T, X) \text{ and } \mu \circ \pi^{-1} = \nu \right\} = \int_Y P_{\top}(T, f, \pi^{-1}(y)) \, \rd\nu(y).
%\end{align*}
\end{theorem}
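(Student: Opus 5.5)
Since this is the relative variational principle of Ledrappier and Walters \cite{LW77}, we would reproduce the standard two-inequality argument, working throughout relative to the factor $\sigma$-algebra $\pi^{-1}\mathcal B_Y$. The first step is measurability and invariance of the fiber entropy function. We would check that $y\mapsto h_{\top}(T,\pi^{-1}(y))$, taken in Bowen's sense for the compact set $\pi^{-1}(y)$, is Borel; it is a limit over $\epsilon$ and $n$ of upper semicontinuous spanning-number functions. Since $T(\pi^{-1}y)\subset\pi^{-1}(Sy)$, it is subinvariant, $h_{\top}(T,\pi^{-1}y)\le h_{\top}(T,\pi^{-1}Sy)$. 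Integrating against the invariant measure $\mu$ then shows that it is $\mu$-a.e. invariant, so the right-hand side is well defined. We would also record the Abramov--Rokhlin decomposition $h_\zeta(T)=h_\mu(S)+h_\zeta(T\mid\pi^{-1}\mathcal B_Y)$ for every $\zeta$ with $\pi_*\zeta=\mu$. This reduces both directions to statements about conditional entropy.

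\emph{Upper bound.} Fix $\zeta$ with $\pi_*\zeta=\mu$ and a finite partition $\alpha$ of $X$ of small diameter $\epsilon$. Disintegrate $\zeta=\int\zeta_y\,\rd\mu(y)$ with $\zeta_y$ supported on $\pi^{-1}(y)$. Then
\[
H_\zeta\Bigl(\bigvee_{i=0}^{n-1}T^{-i}\alpha\Bigm|\pi^{-1}\mathcal B_Y\Bigr)\le\int_Y\log\#\Bigl\{A\in\bigvee_{i=0}^{n-1}T^{-i}\alpha:A\cap\pi^{-1}(y)\neq\emptyset\Bigr\}\,\rd\mu(y).
\]
As in the proof of the classical variational principle, the cardinality of atoms meeting a fiber is bounded by a constant to the power $n$ times the minimal $(n,\epsilon')$-spanning number of $\pi^{-1}(y)$. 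One can use a Lebesgue-number argument, or the Misiurewicz trick of passing to $T^m$ to absorb the constant. We would divide by $n$ and apply Fatou's lemma, in its reverse form, which is legitimate because the integrand is bounded by $\log\#\alpha$. Letting $\epsilon\to0$ then gives $h_\zeta(T\mid\pi^{-1}\mathcal B_Y)\le\int h_{\top}(T,\pi^{-1}y)\,\rd\mu$.

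\emph{Lower bound.} Fix $\epsilon>0$. By a measurable selection theorem, choose for $\mu$-a.e.\ $y$ an $(n,\epsilon)$-separated set $E_n(y)\subset\pi^{-1}(y)$ of maximal cardinality, depending measurably on $y$. Set
\[
\sigma_n=\int_Y\frac1{\#E_n(y)}\sum_{x\in E_n(y)}\delta_x\,\rd\mu(y),\qquad \zeta_n=\frac1n\sum_{i=0}^{n-1}T^i_*\sigma_n.
\]
Since $\pi_*T^i_*\sigma_n=S^i_*\mu=\mu$, every weak$^*$ limit $\zeta$ of a subsequence $\zeta_{n_j}$ is $T$-invariant with $\pi_*\zeta=\mu$. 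Next, choose $\alpha$ with $\operatorname{diam}\alpha<\epsilon$ and $\zeta(\partial\alpha)=0$. Each atom of $\bigvee_0^{n-1}T^{-i}\alpha$ contains at most one point of $E_n(y)$, which gives
\[
H_{\sigma_n}\Bigl(\bigvee_0^{n-1}T^{-i}\alpha\Bigm|\pi^{-1}\mathcal B_Y\Bigr)=\int\log\#E_n(y)\,\rd\mu.
\]
We would then run Misiurewicz's block-decomposition argument relative to $\pi^{-1}\mathcal B_Y$, using concavity of conditional entropy. This yields
\[
h_\zeta(T\mid\pi^{-1}\mathcal B_Y)\ge\limsup_j\frac1{n_j}\int\log\#E_{n_j}(y)\,\rd\mu.
\]
We choose the subsequence to realize this $\limsup$, and use Fatou's lemma again to compare with $\int\limsup_n\frac1n\log\#E_n(y)\,\rd\mu$. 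A diagonal argument in $\epsilon$, together with upper semicontinuity of the supremum over the weak$^*$-compact set of lifts, gives the reverse inequality.

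The main obstacle is this last Misiurewicz step, for two reasons. First, conditioning is on the infinite $\sigma$-algebra $\pi^{-1}\mathcal B_Y$, so weak$^*$ upper semicontinuity of $\zeta\mapsto H_\zeta(\beta\mid\pi^{-1}\mathcal B_Y)$ is not automatic. We would first approximate by conditioning on $\pi^{-1}\gamma$ for finite partitions $\gamma$ of $Y$ with $\mu(\partial\gamma)=0$, and then refine $\gamma$. Second, the interchange of $\limsup_n$ with $\int\rd\mu$ only works in one direction. We would handle this by choosing $n_j$ along which $\int\frac1n\log\#E_n\,\rd\mu$ is nearly maximal, so that the fiberwise $\limsup$ is recovered in the limit $\epsilon\to0$.
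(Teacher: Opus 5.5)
The paper gives no proof of this statement; it quotes it from \cite{LW77}. Your overall plan is the right one: reduce via Abramov--Rokhlin to conditional entropy, bound from above by fiberwise counting, and bound from below by a relative Misiurewicz construction. The relative block argument itself is sound. With $\mathcal A=\pi^{-1}\mathcal B_Y$ one has $T^{-1}\mathcal A\subset\mathcal A$, hence $H_{\sigma}(T^{-p}\beta\mid\mathcal A)\le H_{T^p_*\sigma}(\beta\mid\mathcal A)$. Conditional entropy is concave on measures with marginal $\mu$, and your $\pi^{-1}\gamma_k$ device gives upper semicontinuity.

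The genuine gap is the final step of the lower bound. Let $s_n(\epsilon,K)$ denote the largest cardinality of an $(n,\epsilon)$-separated subset of $K$ (your $\#E_n(y)$). At a fixed scale $\epsilon$, your construction proves
\[
\sup_{\pi_*\zeta=\mu}h_\zeta(T\mid\pi^{-1}\mathcal B_Y)\ \ge\ \limsup_{n\to\infty}\int_Y\frac1n\log s_n(\epsilon,\pi^{-1}y)\,\rd\mu(y).
\]
You need $\int_Y\limsup_n$ on the right instead. Fatou's lemma gives $\int\liminf\le\liminf\int$, and its reverse form gives $\limsup\int\le\int\limsup$. Neither yields $\limsup\int\ge\int\limsup$, and that inequality fails for general bounded sequences: indicators of intervals of length $1/j$ sweeping repeatedly across $[0,1]$ have integrals tending to $0$ but pointwise $\limsup$ equal to $1$. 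Choosing $n_j$ to nearly maximize $\int\frac1n\log\#E_n\,\rd\mu$ only recovers $\limsup_n\int$, because different fibers may reach their $\limsup$ at different times. Letting $\epsilon\to0$ cannot help, since the failed interchange already occurs at each fixed $\epsilon$. As written, your lower bound stops at $\lim_{\epsilon\to0}\limsup_n\int\frac1n\log s_n(\epsilon,\pi^{-1}y)\,\rd\mu$, which you have not related to $\int h_{\top}(T,\pi^{-1}y)\,\rd\mu$.

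The missing idea is subadditivity along base orbits. For a finite open cover $\mathcal U$ of $X$, let $b_n(y)$ be the logarithm of the least number of elements of $\bigvee_{i=0}^{n-1}T^{-i}\mathcal U$ needed to cover $\pi^{-1}(y)$.
\begin{enumerate}
\item Because $T^n(\pi^{-1}y)\subset\pi^{-1}(S^ny)$, one has $b_{n+m}(y)\le b_n(y)+b_m(S^ny)$. Also $b_n$ is upper semicontinuous and $0\le b_n\le n\log\#\mathcal U$.
\item Kingman's subadditive ergodic theorem (which does not need $S$ invertible) gives $\frac1nb_n\to\bar b_{\mathcal U}$ $\mu$-a.e., with $\int\bar b_{\mathcal U}\,\rd\mu=\lim_n\frac1n\int b_n\,\rd\mu$. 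So for cover numbers the interchange is legitimate.
\item Suppose the Lebesgue number of $\mathcal U$ exceeds $2\epsilon$. A maximal $(n,\epsilon)$-separated subset of $\pi^{-1}(y)$ is $(n,\epsilon)$-spanning, so $b_n(y)\le\log s_n(\epsilon,\pi^{-1}y)$. Your estimate then yields $\sup_\zeta h_\zeta(T\mid\pi^{-1}\mathcal B_Y)\ge\int\bar b_{\mathcal U}\,\rd\mu$.
\item Along a refining sequence of covers with diameters tending to $0$, $\bar b_{\mathcal U_k}$ increases to $h_{\top}(T,\pi^{-1}y)$ a.e. Monotone convergence then finishes the lower bound.
\end{enumerate}

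A smaller point concerns your upper bound. For a partition $\alpha$ of small diameter, the number of atoms met by a small ball is bounded only by a constant depending on $\alpha$. Passing to $T^m$ cannot absorb it, because taking the supremum over finer partitions lets that constant grow. Use instead Walters' partition $\beta=\{B_0,B_1,\dots,B_k\}$ with compact $B_j\subset A_j$. It gives the universal factor $2^n$ and the universal error $H_\zeta(\xi\mid\beta)\le1$. The identities $h_\zeta(T^m\mid\pi^{-1}\mathcal B_Y)=m\,h_\zeta(T\mid\pi^{-1}\mathcal B_Y)$ and $h_{\top}(T^m,\pi^{-1}y)=m\,h_{\top}(T,\pi^{-1}y)$ then remove it.
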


For any $x \in X$ and $n \in \mathbb{N}$, 
we define the Birkhoff sum $S_n\tau$ by 
$\displaystyle S_n\tau(x):=\sum_{i=0}^{n-1}\tau(f^ix)$. 

With these preparations and the cone property, 
we now prove Theorem \ref{higher le:fiber-entropy}. 

\begin{proof}[Proof of Theorem \ref{higher le:fiber-entropy}]
By the Birkhoff ergodic theorem, for $\mu$-a.e. $x\in X$, 
setting $A=\left\{S_j\tau(x)\right\}_{j=1}^{\infty}$ and ${\bf a}=\mu(\tau)$,
we have  $\displaystyle \frac{S_n\tau(x)}{n}\longrightarrow {\bf a}$.

We first consider the case ${\bf a}\neq{\bf 0}$. By Proposition
\ref{homogeneous proposition}, for $\mu$-a.e. $x\in X$, we have 
\begin{equation*}
 h_{\top}(F,\pi^{-1}(x))
 =
 h_{\top}^A(\{\rho_{\bf t}\})
 =
 h_{\top}(\rho_{\mu(\tau)}).
\end{equation*}

We now turn to the case ${\bf a}={\bf 0}$. 
We claim that $h_{\top}^A(\{\rho_{\bf t}\})=0$.

Recall that
\[
\Upsilon =\sup\left\{ \left\|D\rho_{\bf t}|_{T_yY}\right\|:
y\in Y,\ {\bf t}\in\mathbb R^k,\ \|{\bf t}\|\leq 1 \right\}+1.
\]
Fix $\delta>0$. Since $\displaystyle \frac{S_j\tau(x)}{j}\to {\bf 0}$,
there exists $N_\delta\in\mathbb N$ such that
$\|S_j\tau(x)\|<j\delta$ for every $j\geq N_\delta$. 
Then, for every sufficiently large $n$ and every $0\leq j<n$,
\[
\|S_j\tau(x)\|\leq n\delta+C_\delta,
\]
where $\displaystyle C_\delta := \max_{0\leq j<N_\delta}\|S_j\tau(x)\|$. 
Fix $\epsilon>0$. If $d(y,z) <\epsilon \Upsilon^{-n\delta-C_\delta-1}$,
then, for every $0\leq j<n$,
\begin{align*}
d\left( \rho_{S_j\tau(x)}y, \rho_{S_j\tau(x)}z \right)
\leq \Upsilon^{\|S_j\tau(x)\|+1}d(y,z)
< \Upsilon^{n\delta+C_\delta+1} \epsilon \Upsilon^{-n\delta-C_\delta-1} 
= \epsilon.
\end{align*}
This yields 
\begin{align}\label{higher le:fiber-entropy-(1)}
B\left( y,\epsilon \Upsilon^{-n\delta-C_\delta-1} \right) \subset B(y,A,n,\epsilon).
\end{align}

Let $m_Y=\dim Y<\infty$. Since $Y$ is a closed Riemannian manifold, 
there exists a constant $C_0>1$ such that, for every sufficiently small 
$r>0$ and every $y\in Y$,
$$
C_0^{-1}r^{m_Y} \leq \operatorname{Vol}(B(y,r)) \leq C_0r^{m_Y}.
$$
Set $r_n:=\epsilon \Upsilon^{-n\delta-C_\delta-1}$. 
By the Besicovitch covering lemma, 
for all sufficiently large $n\in \mathbb{N}$, 
there exists a finite collection of balls $\{B(y_i,r_n)\}_{i=1}^{N_n}$
covering $Y$ with covering multiplicity bounded by a constant $C_Y$
depending only on the Riemannian manifold $Y$, and in particular
independent of $n$. Therefore, we obtain 
\begin{align*}
N_n C_0^{-1}r_n^{m_Y} 
\leq \sum_{i=1}^{N_n}\operatorname{Vol}(B(y_i,r_n)) 
\leq C_Y\operatorname{Vol}(Y).
\end{align*}
Hence, we have 
$$
N_n \leq C_0C_Y\operatorname{Vol}(Y)\,r_n^{-m_Y}
=C_0C_Y\operatorname{Vol}(Y)\,
\epsilon^{-m_Y}\Upsilon^{m_Y(n\delta+C_\delta+1)}.
$$

On the other hand, by \eqref{higher le:fiber-entropy-(1)},
\begin{align}\label{higher le:fiber-entropy-(2)}
B(y_i,r_n) \subset B(y_i,A,n,\epsilon)
\end{align}
for every $1\leq i\leq N_n$. Since the balls
$\{B(y_i,r_n)\}_{i=1}^{N_n}$ cover $Y$, the collection
$\{B(y_i,A,n,\epsilon)\}_{i=1}^{N_n}$ also covers $Y$. Consequently,
\begin{align*}
S_d(\{\rho_{\bf t}\},A,n,\epsilon) \leq N_n \leq
C_0C_Y\operatorname{Vol}(Y)\, \epsilon^{-m_Y}\Upsilon^{m_Y(n\delta+C_\delta+1)}.
\end{align*}
It follows that
$$
\varlimsup_{n\to\infty} \frac{1}{n} \log S_d(\{\rho_{\bf t}\},A,n,\epsilon)
\leq m_Y\delta\log \Upsilon.
$$
Since $\delta>0$ is arbitrary and
$S_d(\{\rho_{\bf t}\},A,n,\epsilon)\geq1$, we conclude that
$$
\varlimsup_{n\to\infty} \frac{1}{n} \log S_d(\{\rho_{\bf t}\},A,n,\epsilon) =0.
$$
Letting $\epsilon\to0$, we obtain
$h_{\top}^A(\{\rho_{\bf t}\})=0$. 
Since $\rho_{\bf 0}$ denotes the identity map, we also have
$h_{\top}(\rho_{\bf 0})=0$. 
Therefore, in the case ${\bf a}={\bf 0}$, for $\mu$-a.e. $x\in X$, we have 
$$
h_{\top}(F,\pi^{-1}(x)) = h_{\top}^A(\{\rho_{\bf t}\}) = 0
=h_{\top}(\rho_{\mu(\tau)}).
$$

Combining both cases, for $\mu$-a.e. $x\in X$, we have
\begin{equation*}
 h_{\top}(F,\pi^{-1}(x))
 =
 h_{\top}^A(\{\rho_{\bf t}\})
 =
 h_{\top}(\rho_{\mu(\tau)}).
\end{equation*}
Note that the continuous surjective map $\pi:X\times Y\to X$ 
satisfies $\pi\circ F=f\circ\pi$.
According to Theorem \ref{LW77-Theorem 2.1}, we have
\begin{align*}
\sup\left\{ h_\zeta(F) : \zeta\in \mathcal{M}(F,X\times Y) \text{ and } \pi_*\zeta=\mu\right\}
&= h_\mu(f) + \int_X h_{\top}(F, \pi^{-1}(x))  \,\rd\mu(x)\\
&=h_\mu(f) + \int_X h_{\top}(\rho_{\mu(\tau)})  \,\rd \mu(x)\\
&= h_\mu(f) + h_{\top}(\rho_{\mu(\tau)}) . 
\end{align*}
This proves the first assertion.

The second assertion follows directly by taking the supremum over all
ergodic measures $\mu\in\mathcal M_e(f,X)$ and applying the variational
principle for the skew product system $F$.
\end{proof}

\begin{remark}
When $\mu(\tau)\neq {\bf 0}$, the above proposition can be extended to the case 
where $Y$ is a compact metric space. 
When $\mu(\tau)={\bf 0}$, the same conclusion holds provided that 
$Y$ is a compact metric space with finite upper box dimension. 
Indeed, combining \eqref{higher le:fiber-entropy-(2)} with 
the definition of the upper box dimension, 
one can apply the preceding argument directly. 
\end{remark}

It is worth noting that, for the skew product considered above, 
the $\mathbb{R}^k$-action on the fiber satisfies the conditions 
in Examples \ref{weak conjecture-hyperbolic}, \ref{weak conjecture-infty}, 
and \ref{weak conjecture-homogeneous}. 
Thus, we derive the following corollary. 

\begin{corollary}\label{higher le:fiber-entropy-corollary}
Let $f: X \to X$ be a continuous map on a compact metric space $X$. 
Suppose $\{\rho_{\bf t}\}_{{\bf t} \in \mathbb{R}^k}$ is a locally free $C^{1+\beta}$ $\mathbb{R}^k$-action 
on a closed Riemannian manifold $Y$, and let $\tau: X \to \mathbb{R}^k$ be a continuous cocycle. 
Assume that for each ${\bf t} \in \mathbb{R}^k$, the map $\rho_{\bf t}$ satisfies at least one of the following conditions:
\begin{itemize}
\item $\rho_{\bf t}$ is of class $C^{\infty}$;
\item $Y$ is a hyperbolic set for $\rho_{\bf t}$; 
\item $\rho_{\bf t}$ preserves a $\rho_{\bf t}$-neutralized homogeneous measure.
\end{itemize}
Further assume that the $\mathbb{R}^k$-action preserves 
a Borel probability measure $m_{\mathrm{MME}}$ 
which is ergodic with respect to the $\mathbb{R}^k$-action 
and is a measure of maximal entropy for $\rho_{\mathbf{t}}$ for every $\mathbf{t}\in\mathbb{R}^k$. 
Then, we have
\begin{equation*}
\sup \left\{ h_\zeta(F) : \zeta \in \mathcal{M}(F, X \times Y) \text{ and } \pi_* \zeta = \mu \right\} = h_\mu(f) + h_{\operatorname{top}}(\rho_{\mu(\tau)}).
\end{equation*}
Consequently, the topological entropy of $F$ satisfies 
\begin{equation*}
h_{\operatorname{top}}(F) = \sup \left\{ h_\mu(f) + h_{\operatorname{top}}(\rho_{\mu(\tau)}) : \mu \in \mathcal{M}_e(f, X)\right\}.
\end{equation*}
\end{corollary}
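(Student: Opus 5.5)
The corollary is Theorem \ref{higher le:fiber-entropy} once the cone property is available. So the plan is to check the hypotheses of Proposition \ref{homogeneous proposition}: the action must preserve an ergodic measure $m_{\MME}$ that maximizes entropy for every element, and every $\rho_{\bf t}$ must satisfy Conjecture \ref{weak conjecture}. The first requirement is assumed outright. The second is verified element by element, according to which of the three listed alternatives $\rho_{\bf t}$ satisfies.

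\emph{Case $\rho_{\bf t}$ of class $C^\infty$.} Corollary \ref{estimate corollary}, obtained from Theorem \ref{estimate theorem} with $k\to\infty$, gives $\lim_{r\to0}h^r_d(\rho_{\bf t})=h_{\top}(\rho_{\bf t})$. This is Example \ref{weak conjecture-infty}. Theorem \ref{ORH23-Theorem 3.4}, together with the variational principle and the trivial bound $h^r_{\mu,d}\le h^r_d$, then fills in the middle equalities of Conjecture \ref{weak conjecture}.

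\emph{Case $Y$ hyperbolic for $\rho_{\bf t}$.} Example \ref{weak conjecture-hyperbolic} applies. Expansivity gives the inclusions $B(x,-cn,cn+n,\epsilon)\subset B(x,n,e^{-nr})\subset B(x,n,\epsilon)$, and these squeeze $h^r_d(\rho_{\bf t})$ between quantities that converge to $h_{\top}(\rho_{\bf t})$.

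\emph{Case $\rho_{\bf t}$ preserves a neutralized homogeneous measure.} Example \ref{weak conjecture-homogeneous} applies: Lemma \ref{homogeneous measure lemma} combined with Theorem \ref{ORH23-Theorem 3.4}.

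The element ${\bf t}={\bf 0}$ is the identity, and Conjecture \ref{weak conjecture} holds for it trivially, since $h^r_d(\mathrm{id})=m_Y r\to0$.

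With the conjecture verified for every $\rho_{\bf t}$, Proposition \ref{homogeneous proposition} gives the cone property for the action. The cone property for nonzero elements is exactly what the proof of Theorem \ref{higher le:fiber-entropy} uses in the case $\mu(\tau)\neq{\bf 0}$; the case $\mu(\tau)={\bf 0}$ was handled there by a direct covering argument that needs no extra hypothesis. The conclusion for ergodic $\mu$ then follows verbatim. Along the way we get $h_{\top}(F,\pi^{-1}(x))=h_{\top}(\rho_{\mu(\tau)})$ for $\mu$-a.e.\ $x$, and the relative variational principle Theorem \ref{LW77-Theorem 2.1} yields the first displayed formula.

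For the formula for $h_{\top}(F)$, I would use the variational principle for $F$. Every $\zeta\in\mathcal M(F,X\times Y)$ has ergodic components whose projections are ergodic. By the first formula, the entropy of each such component is at most $h_{\pi_*\zeta}(f)+h_{\top}(\rho_{\pi_*\zeta(\tau)})$. The supremum over ergodic $\mu$ is attained arbitrarily closely by lifts from Theorem \ref{LW77-Theorem 2.1}, which gives the reverse inequality.

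The main point needing care, rather than a real obstacle, is making sure the cone property can be applied element by element for a mixed family. Proposition \ref{homogeneous proposition} only uses Conjecture \ref{weak conjecture} for the single element $\rho_{\bf a}$ with ${\bf a}=\mu(\tau)$, so it suffices that each individual element falls into one of the three alternatives. I would also note that the first formula is only claimed for ergodic $\mu$; for nonergodic $\mu$ one would integrate $h_{\top}(\rho_{x\text{-drift}})$, which is not needed here.
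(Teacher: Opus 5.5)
Your proposal is correct and follows the paper's route. The paper's justification is the one-line remark that each of the three alternatives is covered by Examples \ref{weak conjecture-infty}, \ref{weak conjecture-hyperbolic} and \ref{weak conjecture-homogeneous}, so Proposition \ref{homogeneous proposition} gives the cone property, Theorem \ref{higher le:fiber-entropy} applies verbatim, and the $h_{\top}(F)$ formula follows from the variational principle for $F$. Two of your additions deserve comment: your check of the middle equalities of Conjecture \ref{weak conjecture} is harmless but unnecessary, because the proof of Proposition \ref{homogeneous proposition} only uses $\lim_{r\to0}h^r_d(\rho_{\bf a})=h_{\top}(\rho_{\bf a})$; and your remark that the first formula is meant for ergodic $\mu$ is correct.
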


\begin{remark}
Unlike Theorem \ref{higher le:product}, 
Theorem \ref{higher le:fiber-entropy} 
and Corollary \ref{higher le:fiber-entropy-corollary} 
require the cone property. 
This is because the connection between $r$-neutralized entropy and 
Brin-Katok local entropy is already established (Theorem \ref{ORH23-Theorem 3.4}), 
whereas the link between $r$-neutralized topological entropy and 
topological entropy remains incomplete. 
The latter relies on Conjecture \ref{weak conjecture}, 
which, in turn, necessitates the cone property. 
\end{remark}

\section{Thermodynamic classification of base projections}\label{sect: Invariant foliations and MME projections}

Using the nonlinear thermodynamic formalism, 
we establish Theorem \ref{goal theorem}, 
he main result of this section: 
the projections of ergodic MMEs form a nonempty finite set of 
equilibrium states for H\"older potentials. 

\subsection{Nonlinear thermodynamic formalism}\label{subsect: Nonlinear thermodynamic formalism}

In this subsection, we recall some background on nonlinear thermodynamic formalism. 
We refer the reader to \cite{BKL23, Ku24} for details.

Let $T: X \to X$ be a continuous map on a compact metric space $X$, and let
$\Phi: X \to \mathbb{R}^k$ be a vector-valued continuous function defined by
$\Phi(x) = \bigl(\phi_1(x), \dots, \phi_k(x)\bigr)$. 
Given a continuous function $G:\mathbb{R}^{d}\to \mathbb{R}$, we define an energy
$\mathcal{E}:\mathcal{M}(T,X)\to\mathbb{R}$ associated with the potentials $\{\phi_i\}_{i=1}^d$ by
\[
\mathcal{E}(\mu) := G\bigl(\mu(\phi_1), \mu(\phi_2), \dots, \mu(\phi_k)\bigr).
\]
The nonlinear pressure with respect to the invariant measure $\mu$ is defined by
\[
\Pi^{G}(T,\mu) := h_\mu(T) + \mathcal{E}(\mu).
\]

\begin{definition}[\cite{BKL23}]
Given $n\in\mathbb{N}$ and $\epsilon$, 
we define 
\[ 
 \zeta(\varepsilon, n) = \sup\{ \omega_n(\mathcal{C}): \mathcal{C} \text{ is a $(n, \varepsilon)$-separated set}\},
\]
where 
$$
\omega_n(\mathcal{C}) = \sum_{x \in \mathcal{C}} \exp \left(n\mathcal{E}(\Delta_x^n)\right), 
\quad \text{ and }\quad \Delta_x^n=\frac{1}{n}\sum_{i=0}^{n-1} \delta_{T^ix}. 
$$
The {\bf nonlinear topological pressure} 
%and {\bf lower nonlinear topological pressure} 
weighted by the energy $\mathcal{E}$ is defined as 
\begin{align*}
\Pi_{\mathrm{top}}^{G}(T) = \lim_{\varepsilon \to 0} \varlimsup_{n \to \infty} \frac{1}{n} \log \zeta(\varepsilon, n). 
%\underline{\Pi}_{\mathrm{top}}^{G}(T) &= \lim_{\varepsilon \to 0} \varliminf_{n \to \infty} \frac{1}{n} \log \zeta(\varepsilon, n).
\end{align*}
\end{definition}

 We will say that the system $(T, \mathcal{E})$ has an {\bf abundance of ergodic measures} if for 
 any $T$-invariant measure $\mu$ and $\varepsilon > 0$, 
 there is an ergodic $T$-invariant measure $\nu$ such that
\[
h_\nu(T) + \mathcal{E}(\nu) > h_\mu(T) + \mathcal{E}(\mu) - \varepsilon.
\]
Buzzi, Kloeckner and Leplaideur \cite{BKL23} and Kucherenko \cite{Ku24} 
introduced two related yet distinct conditions, 
both referred to as the abundance of ergodic measures.
In the present paper, we adopt the original definition 
given by Buzzi, Kloeckner and Leplaideur.
We note that uniformly hyperbolic diffeomorphisms satisfy both conditions;
see \cite{BKL23,Ku24} for further discussion.

\begin{theorem}[\cite{BKL23} Theorem A, \cite{Ku24} Theorem 2, Remark 2]\label{BKL23-Theorem A}
Let $T: X \to X$ be a continuous map on a compact metric space $X$ and 
$J: \mathbb{R}^k \to \mathbb{R}$ be a continuous nonlinearity function. 
Suppose $\Phi: X \to \mathbb{R}^k$ is a continuous potential such that the system $(T, \Phi)$ has an abundance of ergodic measures. Then
\begin{align*}
\Pi_{\mathrm{top}}^J(T) %=\underline{\Pi}_{\mathrm{top}}^J(T)
&= \sup\bigl\{h_\mu(T) + J\bigl(\mu(\Phi)\bigr):\mu \in \mathcal{M}(T,X)\bigr\}\\
&= \sup\bigl\{h_\mu(T) + J\bigl(\mu(\Phi)\bigr):\mu \in \mathcal{M}_e(T,X)\bigr\}.
\end{align*}
\end{theorem}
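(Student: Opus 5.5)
The statement makes three claims: $\Pi_{\mathrm{top}}^J(T)$ equals the supremum over $\mathcal M(T,X)$, which in turn equals the supremum over $\mathcal M_e(T,X)$. I would prove the chain
\[
\sup_{\mathcal M_e}\le\Pi_{\mathrm{top}}^J(T)\le\sup_{\mathcal M}=\sup_{\mathcal M_e}.
\]
The equality on the right is immediate from the abundance of ergodic measures. The trivial direction $\sup_{\mathcal M_e}\le\sup_{\mathcal M}$ needs nothing. For the reverse, given any $\mu\in\mathcal M(T,X)$ and $\varepsilon>0$, abundance produces an ergodic $\nu$ with $h_\nu(T)+J(\nu(\Phi))>h_\mu(T)+J(\mu(\Phi))-\varepsilon$. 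Here the energy is $\mathcal E(\Delta_x^n)=J\bigl(\frac1n S_n\Phi(x)\bigr)$.

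\emph{Lower bound} $\Pi_{\mathrm{top}}^J(T)\ge h_\mu(T)+J(\mu(\Phi))$ for ergodic $\mu$. Fix $\delta>0$. By continuity of $J$ on the compact set $\operatorname{conv}\Phi(X)$, choose $\eta>0$ with $|J(v)-J(\mu(\Phi))|<\delta$ whenever $\|v-\mu(\Phi)\|<\eta$. By the Birkhoff theorem and Egorov, there are a set $G$ with $\mu(G)>1/2$ and an $N$ such that $\|\frac1n S_n\Phi(x)-\mu(\Phi)\|<\eta$ for all $x\in G$ and $n\ge N$. Katok's entropy formula (equivalently Theorem \ref{thm:Brin-Katok} with a covering argument) gives, for small $\varepsilon$ and large $n$, an $(n,\varepsilon)$-separated subset of $G$ of cardinality at least $e^{n(h_\mu(T)-\delta)}$. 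Each of its points has weight at least $e^{n(J(\mu(\Phi))-\delta)}$. Hence $\zeta(\varepsilon,n)\ge e^{n(h_\mu+J(\mu(\Phi))-2\delta)}$, and letting $\delta\to0$ gives the bound.

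\emph{Upper bound} $\Pi_{\mathrm{top}}^J(T)\le\sup_{\mathcal M}$. This is the main step. Fix $\varepsilon,\delta>0$ and cover the compact set $\operatorname{conv}\Phi(X)\subset\R^k$ by finitely many cells $Q_1,\dots,Q_M$ of diameter $<\eta$, where $\eta$ is chosen so that $J$ oscillates by less than $\delta$ on each cell. Let $\mathcal C_n$ be an $(n,\varepsilon)$-separated set nearly realizing $\zeta(\varepsilon,n)$. Split $\mathcal C_n$ according to which cell contains $\frac1nS_n\Phi(x)$. Some piece $\mathcal C_n^{(i)}$ carries weight at least $\zeta(\varepsilon,n)/M$, so
\[
\tfrac1n\log\zeta(\varepsilon,n)\le\tfrac1n\log\#\mathcal C_n^{(i)}+\sup_{Q_i}J+\tfrac{\log M}{n}.
\]
Pass to a subsequence $n_j$ realizing the $\varlimsup$ along which the index $i$ is constant. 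Then run Misiurewicz's proof of the variational principle on $\sigma_n=\frac1{\#\mathcal C_n^{(i)}}\sum_{x\in\mathcal C_n^{(i)}}\Delta_x^n$. A weak$^*$ limit $\mu$ of $\sigma_{n_j}$ is invariant and satisfies $h_\mu(T)\ge\varlimsup_j\frac1{n_j}\log\#\mathcal C_{n_j}^{(i)}$, taking the partition with $\mu$-null boundaries of diameter $<\varepsilon$. Moreover $\mu(\Phi)=\lim\sigma_{n_j}(\Phi)$ lies in $\overline{Q_i}$, because $\sigma_n(\Phi)$ is an average of points of $Q_i$ and $Q_i$ is convex. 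Thus $J(\mu(\Phi))\ge\sup_{Q_i}J-\delta$, and so $\varlimsup_n\frac1n\log\zeta(\varepsilon,n)\le h_\mu(T)+J(\mu(\Phi))+\delta$. Letting $\delta\to0$ and then $\varepsilon\to0$ completes the argument.

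The delicate points are in the upper bound: the cell-pigeonholing must happen before the Misiurewicz construction, so that the limit measure retains a controlled rotation vector. The cells must be convex, so that averaged empirical measures stay in the closed cell. The number of cells must be independent of $n$, so that the loss $\frac{\log M}{n}$ vanishes.
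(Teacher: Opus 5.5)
The paper does not prove this statement; it quotes it from \cite{BKL23} and \cite{Ku24}. Your argument is correct and is essentially the standard proof behind \cite[Theorem A]{BKL23}, specialized to energies of the form $J(\mu(\Phi))$: the lower bound for ergodic $\mu$ comes from Katok (or Brin--Katok) on a Birkhoff--Egorov good set; the upper bound, which needs no abundance, comes from finitely many convex cells in $\R^k$ followed by Misiurewicz's construction; and abundance enters only to identify $\sup_{\mathcal M}$ with $\sup_{\mathcal M_e}$.
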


An invariant measure $\mu$ is called a {\bf nonlinear equilibrium state} for $G$ and $\Phi$ if it satisfies
\[
\Pi_{\mathrm{top}}^J(\Phi) = h_\mu(T) + J\bigl(\mu(\Phi)\bigr).
\]
We denote by $\NES(J,\Phi)$ the set consisting of nonlinear equilibrium states. 

Let $D(\omega,r) $ denote the Euclidean ball with center $\omega$ and radius $r$. 
We denote the corresponding set of points $w$ by 
$\operatorname{Rot}_{\mathrm{pt}}(\Phi)$, 
called the {\bf pointwise rotation set} of $\Phi$; that is
\[
\operatorname{Rot}_{\pt}(\Phi) = \left\{ {\bf w} \in \mathbb{R}^k : 
\forall r > 0\ \forall N\ \exists\, n \geq N\ \exists\, x \in X \, \text{ s.t. }\, \frac{1}{n} S_n \Phi(x) \in D({\bf w}, r) \right\}.
\]
Next, we discuss a measure-theoretic approach to rotation sets and 
define the {\bf generalized rotation set} of $\Phi$ by
\[
\operatorname{Rot}(\Phi) = \{\operatorname{rv}(\mu) : \mu \in \mathcal{M}(T,X)\}, 
\]
where the rotation vector $\operatorname{rv}(\mu)$ of the measure $\mu$ is defined by 
$$
\operatorname{rv}(\mu) = \left( \int \phi_1\, \rd\mu, \dots, \int \phi_k\, \rd\mu \right).
$$ 
Since $\mathcal{M}(f,X)$ is convex and weak* compact, and $\rv(\cdot)$ is affine and continuous, 
the generalized rotation set $\rot(\Phi)$ is compact and convex. 
We call $\mathcal{M}_\Phi({\bf w}) = \{ \mu \in \mathcal{M} : \operatorname{rv}(\mu) = {\bf w} \}$ 
the {\bf rotation class} of ${\bf w}\in\mathbb{R}^k$. 

\begin{remark}\label{KW14-remark}
  By \cite[Proposition 3 and the remark on page 814]{KW14}, 
  for topological mixing subshifts of finite type, 
    Axiom A basic sets and expansive systems with specification, 
    we can conclude that for any continuous function $\Phi:X\to \mathbb{R}^m$, 
    $\rot(\Phi)=\rot_{\pt}(\Phi)$. 
\end{remark}

In \cite{BKL23}, Buzzi, Kloeckner and Leplaideur introduced the following notions. 

\begin{definition}[\cite{BKL23} Definition 3.2]
    Let $T: X \to X$ be a continuous map on a compact metric space $X$, and let
$\Phi: X \to \mathbb{R}^k$ be the continuous function 
defined by
$\Phi(x) = \bigl(\phi_1(x), \dots, \phi_k(x)\bigr)$, where each $\phi_i: X \to \mathbb{R}$ is continuous. 
We define the {\bf entropy function} $\mathsf{h} : \mathbb{R}^k \to \mathbb{R} \cup \{-\infty\}$ by 
\[
\mathsf{h}({\bf w}) = \sup\bigl\{h_\mu(T):\mu\in \mathcal{M}_\Phi({\bf w})\bigr\}.
\]
The usual convention $\sup(\varnothing)=-\infty$ is understood. 

The {\bf entropy-potential diagram} is the hypograph of the entropy function:
\[
\mathcal{D} = \bigl\{ (z_0; z_1, \dots, z_d) \in [0,+\infty) \times \mathbb{R}^k :
\exists\, \mu \in \mathcal{M}(T,X),\ h(T,X) \geq z_0,\, \forall\, i,\, \mu(\phi_i) = z_i \bigr\}.
\]
If $h_{\mathrm{top}}(T) < \infty$, we have 
$\{ {\bf z} \in \mathbb{R}^k : \mathsf{h}({\bf z}) \ne -\infty \} = \rot(\Phi)$.
Since the Kolmogorov-Sinai entropy is affine, 
$\mathcal{D}$ is a convex set, i.e., $\mathsf{h}$ is concave.
\end{definition}

We define the rotation set $\mathcal{V}$ of 
the set of nonlinear equilibrium states by 
\begin{align*}
    \mathcal{V}:=\left\{ \mu(\Phi): \mu\in\NES(J,\Phi)\right\}. 
\end{align*}

The following result is contained implicitly in the proof of \cite[Claim 3]{BKL23}, 
without the smoothness of $J$.  
We provide the proof for the convenience of the reader.

\begin{lemma}\label{not boundary}
Let $f\colon X\to X$ be a topologically mixing Anosov diffeomorphism on a closed manifold $X$ and 
let $J: \mathbb{R}^k \to \mathbb{R}$ be a Lipschitz continuous function. 
Then for any H\"older continuous potential $\Phi: X \to \mathbb{R}^k$ satisfying the independence condition, 
we have $\mathcal{V}\subset \inter \rot(\Phi)$. 
\end{lemma}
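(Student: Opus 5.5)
The plan is to argue by contradiction, following the strategy of \cite[Claim 3]{BKL23} but using only the Lipschitz property of $J$ in place of differentiability. Suppose $\mu\in\NES(J,\Phi)$ has ${\bf w}_0=\mu(\Phi)\in\partial\rot(\Phi)$. Since $\rot(\Phi)$ is compact and convex, there is a supporting hyperplane at ${\bf w}_0$: a unit vector $D\in\R^k$ with $D\cdot{\bf w}\le D\cdot{\bf w}_0$ for all ${\bf w}\in\rot(\Phi)$. Hence $\mu$ maximizes $\nu\mapsto\int D\cdot\Phi\,\rd\nu$ over $\mathcal M(f,X)$. The independence condition says that $D\cdot\Phi$ is not cohomologous to a constant, so $\rot(\Phi)$ has nonempty interior and $D\cdot\Phi$ is a nonconstant H\"older function modulo coboundaries.

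Next, I will compare $\mu$ with the equilibrium states $\mu_t$ of the H\"older potentials $-t\,D\cdot\Phi$ for small $t>0$, which are the Gibbs measures given by Bowen's theory \cite{Bow75}. Write $P(t)$ for the pressure of $-tD\cdot\Phi$. Then $P$ is real analytic, and $P'(t)=-\int D\cdot\Phi\,\rd\mu_t$. Because $\mu$ maximizes $\int D\cdot\Phi$ and $D\cdot\Phi$ is not cohomologous to a constant, the key quantitative input is a ``boundary entropy gap''. I would formulate it as
\[
h_{\mu_t}(f)-h_\mu(f)\ \ge\ c\,t^{-1}\bigl(D\cdot{\bf w}_0-D\cdot\mu_t(\Phi)\bigr)\quad\text{with } c\,t^{-1}\to\infty,
\]
or, more simply, as a comparison through the variational principle:
\[
h_{\mu_t}(f)-t\,D\cdot\mu_t(\Phi)\ \ge\ h_\mu(f)-t\,D\cdot{\bf w}_0 .
\]
This rearranges to
\[
h_{\mu_t}(f)-h_\mu(f)\ \ge\ t\,\bigl(D\cdot\mu_t(\Phi)-D\cdot{\bf w}_0\bigr).
\]
That inequality has the wrong sign to help directly, so I will instead move \emph{into} the interior along a segment. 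For $s\in(0,1)$, fix an interior rotation vector ${\bf w}_1=\nu_1(\Phi)$ with $\nu_1$ an equilibrium state (for instance the MME), and consider $\nu_s=(1-s)\mu+s\nu_1$ followed by ergodic approximation. The cleaner route is to use the concave entropy function $\mathsf h$, following \cite{BKL23}. At a boundary point ${\bf w}_0$ of $\rot(\Phi)$ where a hyperplane supports, $\mathsf h$ has infinite directional derivative in every inward direction. The reason is that, for equilibrium states $\mu_t$ of $tD\cdot\Phi$ with $t\to+\infty$, one has $\mathsf h(\mu_t(\Phi))=P(t)-tP'(t)$ and $D\cdot\mu_t(\Phi)=P'(t)\to D\cdot{\bf w}_0$. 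Then $\frac{d}{d(P')}(P-tP')=-t\to-\infty$. Moving inward from the boundary therefore gains entropy at an unbounded rate relative to the displacement.

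Granting this infinite-slope property, the contradiction is immediate. Consider a point ${\bf w}_\eta={\bf w}_0+\eta({\bf w}_1-{\bf w}_0)$ with $\eta$ small. Concavity of $\mathsf h$ combined with the infinite slope gives $\mathsf h({\bf w}_\eta)-\mathsf h({\bf w}_0)\ge M\eta$ for arbitrarily large $M$ once $\eta$ is small. On the other side, the Lipschitz bound gives $J({\bf w}_\eta)-J({\bf w}_0)\ge-\mathrm{Lip}(J)\,|{\bf w}_1-{\bf w}_0|\,\eta$. Hence $\sup_{\mathcal M_\Phi({\bf w}_\eta)}\{h+J\}>h_\mu(f)+J({\bf w}_0)$, and the abundance of ergodic measures (Theorem \ref{BKL23-Theorem A}) turns this into a contradiction with $\mu$ being a nonlinear equilibrium state.

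The main obstacle is establishing the infinite inward slope at every boundary point, not only at those reached as limits of $\mu_t(\Phi)$ along a single direction $D$. Boundary points may be non-exposed, or may lie on flat faces where $P'(t)\to D\cdot{\bf w}_0$ only in the $D$-component. I would handle this in two steps. First, reduce to the one-dimensional projection: $\mathsf h({\bf w})\ge\mathsf h_D(D\cdot{\bf w})$-type estimates are obtained by taking convex combinations of $\mu$ with $\mu_t$. Second, use the mixture $(1-s)\mu+s\mu_t$, whose entropy is affine in $s$ and whose rotation vector moves inward by $s(\mu_t(\Phi)-{\bf w}_0)$. The gain $s(h_{\mu_t}-h_\mu)$ must then dominate $s\,\mathrm{Lip}(J)|\mu_t(\Phi)-{\bf w}_0|$. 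This requires $h_{\mu_t}(f)-h_\mu(f)>\mathrm{Lip}(J)\,|\mu_t(\Phi)-{\bf w}_0|$ for suitable finite $t$, which is exactly what has to be checked carefully using the analyticity of $P$ and the nondegeneracy $P''>0$.
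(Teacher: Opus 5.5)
Your final step (concavity of $\mathsf h$, an infinite inward slope of $\mathsf h$ at ${\bf w}_0$, and the Lipschitz bound on $J$) is the same as the paper's. The gap is that the infinite-slope property, which carries all the content, is not established by what you propose.

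\textbf{The one-direction computation proves the wrong thing.} Your computation with the equilibrium states $\mu_t$ of $tD\cdot\Phi$ only computes the slope of the one-variable function $\mathsf h_D(s)=\sup\{\mathsf h({\bf w}):D\cdot{\bf w}=s\}$, which is the Legendre transform of $t\mapsto P_{\top}(f,tD\cdot\Phi)$, along the curve $t\mapsto\mu_t(\Phi)$. Since $\mathsf h({\bf w})\le\mathsf h_D(D\cdot{\bf w})$, this gives no lower bound on $\mathsf h({\bf w}_0+\eta{\bf u})-\mathsf h({\bf w}_0)$. Moreover, the limit points of $\mu_t(\Phi)$ lie on the face $\rot(\Phi)\cap\{D\cdot{\bf w}=D\cdot{\bf w}_0\}$ and need not equal ${\bf w}_0$.

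\textbf{Your two repairs do not close this.} The estimate ``$\mathsf h({\bf w})\ge\mathsf h_D(D\cdot{\bf w})$'' goes the wrong way. For the mixture $(1-s)\mu+s\mu_t$, the equilibrium inequality only yields $h_{\mu_t}(f)-h_\mu(f)\ge t\,D\cdot({\bf w}_0-\mu_t(\Phi))$. This controls the normal component of $\mu_t(\Phi)-{\bf w}_0$, but not its tangential component, which need not even tend to $0$ when the face is not a single point. So the inequality you need, $h_{\mu_t}(f)-h_\mu(f)>\mathrm{Lip}(J)\,|\mu_t(\Phi)-{\bf w}_0|$, is left unproved. Analyticity of the one-variable pressure cannot supply it, because that function only sees the $D$-component of rotation vectors.

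\textbf{The missing idea.} Use the equilibrium states whose rotation vectors lie \emph{on the segment} ${\bf z}_t={\bf w}_0+t{\bf u}$, not those of a fixed direction $D$.
\begin{enumerate}
\item The paper takes from \cite[Corollary 3.12]{BKL23} that $(X,f,\Phi)$ is a Legendre system, so $|\nabla\mathsf h({\bf z})|\to\infty$ as ${\bf z}\to\partial\rot(\Phi)$.
\item Divide the supporting inequality $\mathsf h({\bf w})\le\mathsf h({\bf z}_t)+\nabla\mathsf h({\bf z}_t)\cdot({\bf w}-{\bf z}_t)$ by $|\nabla\mathsf h({\bf z}_t)|$ and let $t\to0^+$. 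Every limit direction ${\bf n}$ of $\nabla\mathsf h({\bf z}_t)/|\nabla\mathsf h({\bf z}_t)|$ then satisfies ${\bf n}\cdot({\bf w}-{\bf w}_0)\ge0$ on $\rot(\Phi)$.
\item Since ${\bf w}_0+{\bf u}$ is interior, ${\bf n}\cdot{\bf u}\ge c>0$ uniformly, so $\nabla\mathsf h({\bf z}_t)\cdot{\bf u}\to+\infty$.
\end{enumerate}
You can also see this directly. By Proposition \ref{Jen01-Corollary 3}, write ${\bf z}_t=\mu_{{\bf v}_t}(\Phi)$; then $-{\bf v}_t$ is a supergradient of $\mathsf h$ at ${\bf z}_t$. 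Also $|{\bf v}_t|\to\infty$, because ${\bf v}\mapsto\mu_{\bf v}(\Phi)$ is continuous with values in $\inter\rot(\Phi)$. The same normalization then gives $\mathsf h({\bf z}_t)-\mathsf h({\bf w}_0)\ge -t\,{\bf v}_t\cdot{\bf u}\ge ct|{\bf v}_t|$.

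This argument handles non-exposed points and flat faces automatically, which is exactly where your fixed-direction approach breaks down. With it in place, your last paragraph goes through. You also do not need abundance there: $h_\mu(f)\le\mathsf h({\bf w}_0)$ suffices.
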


\begin{proof} 
By \cite[Corollary 3.12]{BKL23}, the system 
$(X,f,\Phi)$ is a $C^\omega$ Legendre system
(see \cite[Definition 3.10]{BKL23}). 
In particular, the entropy function $\mathsf h$ is concave on
$\rot(\Phi)$ and satisfies
$$
|\nabla\mathsf h(z)|\to\infty
\quad\text{as $z$ approaches the boundary of $\rot(\Phi)$}.
$$

We first note that every point in $\mathcal{V}$ is a maximizer of
$\mathsf{h}+J$ on $\rot(\Phi)$. Indeed, let
$z=\mu(\Phi)\in\mathcal{V}$, where $\mu$ is a nonlinear equilibrium state.
By the definition of $\mathsf{h}$, we have
$$
\mathsf{h}(z)\geq h_\mu(f).
$$
If the inequality is strict, there exists 
$\nu\in\mathcal{M}_\Phi(z)$ such that $h_\nu(f)>h_\mu(f)$,
and hence
$$
h_\nu(f)+J(z)>h_\mu(f)+J(z),
$$
contradicting the fact that $\mu$ is a nonlinear equilibrium state.
Therefore, we conclude that 
$$
\mathsf{h}(z)=h_\mu(f),
$$
and consequently $z$ is a maximizer of $\mathsf{h}+J$ on $\rot(\Phi)$.
Thus, it suffices to show that no point on the boundary of
$\rot(\Phi)$ can be such a maximizer.

Fix a point $ z_0 $ on the boundary of $\rot(\Phi)$ 
and choose $\mathbf u\in\mathbb R^k$ such that
$z_0+\mathbf u\in\operatorname{int}\operatorname{Rot}(\Phi)$.
By convexity of $\operatorname{Rot}(\Phi)$, we have 
$$
z_0+t\mathbf u\in\operatorname{int}\operatorname{Rot}(\Phi)
\quad \text{for all $t\in(0,1]$.}
$$
Set $g(t): = \mathsf{h}(z_0 + t\mathbf u) $. 
Since $\mathsf{h}$ is concave, the derivative of $ g $ has a limit 
(finite or infinite) as $ t \to 0^{+} $. 
For all sufficiently small $ t > 0 $, we have 
$$ 
g'(t) = \nabla \mathsf{h}(z_0 + t \mathbf u) \cdot \mathbf u .
$$

For each sufficiently small $t>0$, let $H_t$ be the tangent
hyperplane to the upper boundary of $\mathcal D$ at the point
$\bigl(\mathsf h(z_0+t\mathbf u),z_0+t\mathbf u\bigr)$. 
Its normal vector is $(1,-\nabla\mathsf h(z_0+t\mathbf u))$. 
As $ t \to 0 $, $ |\nabla \mathsf{h}| \to \infty $ so any 
accumulation point $ H_0 $ of $ H_t $ is vertical, 
of the form $ \mathbb{R} \times L $ where $ L $ is a hyperplane of 
$ \mathbb{R}^k $ and is normal to an accumulation point of the direction of 
$ \nabla \mathsf{h}(z_0 + t\mathbf u) $. 
Since $ \mathcal{D} $ is contained in a half-space delimited by 
$ H_0 $, $ L $ must be a supporting hyperplane of $ \rot(\Phi) $ at $ z_0 $. 
Since $ \mathbf u $ points into the interior of $ \rot(\Phi) $, 
the angle between $ \mathbf u $ and $ L $ is bounded away from $0$. 
It follows that for some constant $ c > 0 $ and all sufficiently small $ t > 0 $,
\[
\nabla \mathsf{h}(z_0 + t\mathbf u) \cdot \mathbf u 
\geq c \, \bigl|\nabla \mathsf{h}(z_0 + t\mathbf u)\bigr| \, |\mathbf u| 
\to \infty.
\]
Thus, $g'(t)\to +\infty$ as $t\to0^+$.

Since $g$ is concave, we have 
$$
\frac{\mathsf{h}(z_0+t\mathbf u)-\mathsf{h}(z_0)}{t}=\frac{g(t)-g(0)}{t}\geq g'(t)
\to+\infty \quad \text{as $t\to0^+$}. 
$$
Let $L_J>0$ be a Lipschitz constant for $J$. Then
$$
J(z_0+t\mathbf u)-J(z_0) \geq -L_Jt|\mathbf u|.
$$
Combining the above estimates yields 
\begin{align*}
\frac{(\mathsf{h}+J)(z_0+t\mathbf u)-(\mathsf{h}+J)(z_0)}{t}
\geq 
\frac{\mathsf{h}(z_0+t\mathbf u)-\mathsf{h}(z_0)}{t} 
-L_J|\mathbf u|
\to+\infty \quad \text{as $t\to0^+$}.
\end{align*}
Hence, for all sufficiently small $t>0$,
$$
(\mathsf{h}+J)(z_0+t\mathbf u)>(\mathsf{h}+J)(z_0).
$$
This demonstrates that 
no point $ z_0 $ on the boundary of $\rot(\Phi)$ 
can be a maximizer of the functional $\mathsf{h}(\cdot)+J(\cdot) $, 
completing the proof of the lemma. 
\end{proof}

The result above  is also related to the theory of 
localized topological entropy introduced by Kucherenko and Wolf in \cite{KW14,KW15}.

\begin{definition}[\cite{KW14,KW15}]
Given $n\in\mathbb{N}$ and $\varepsilon, r > 0$, 
and for each point ${\bf w}$ in the rotation set, 
a set $E \subset X$ is called an $(n, \varepsilon, w, r)$-set 
if $E$ is $(n, \varepsilon)$-separated and for all $x \in E$, 
\[
\frac{1}{n} S_n \Phi(x) = \left( \frac{1}{n} S_n \phi_1(x), \dots, \frac{1}{n} S_n \phi_k(x) \right) \in D({\bf w}, r)
\]
For all $n \in \mathbb{N}$ and $\varepsilon, r > 0$, 
let $E_n(\varepsilon, {\bf w}, r)$ be a maximal $(n, \varepsilon, {\bf w}, r)$-set. %(with respect to set inclusion). 
We define
\[
\mathrm{h}({\bf w}, r) = \lim_{\varepsilon \to 0} \limsup_{n \to \infty} 
\frac{1}{n} \log \operatorname{card} E_n(\varepsilon, {\bf w}, r),
\]
with the convention $\log 0 = -\infty$.
The \textbf{localized topological entropy} of $w$ is defined by
\[
\mathrm{h}({\bf w}) = \lim_{r \to 0} \mathrm{h}({\bf w}, r).
\]
  
\end{definition}

Kucherenko and Wolf proved the following variational principle, 
which asserts that for topologically mixing Anosov diffeomorphisms, 
 the localized topological entropy $\mathsf{h}(w)$ coincides with 
the entropy function $\mathrm{h}(w)$. 

\begin{theorem}[\cite{KW15} Corollary 1]\label{KW15-Corollary 1}
Let $f:X\to X$ be a topologically mixing Anosov diffeomorphism on a closed manifold $X$.
Let $\Phi : X \to \mathbb{R}^m$ be continuous and let $w \in \operatorname{Rot}(\Phi)$. Then
\[
\mathsf{h}({\bf w}) = \sup\bigl\{h_\mu(f):\mu\in \mathcal{M}_\Phi({\bf w})\bigr\}
=\mathrm{h}({\bf w}).
\]
\end{theorem}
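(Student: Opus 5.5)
The plan is to prove the two inequalities $\mathrm{h}(\mathbf w)\le\mathsf h(\mathbf w)$ and $\mathsf h(\mathbf w)\le\mathrm{h}(\mathbf w)$ separately, for a fixed $\mathbf w\in\rot(\Phi)$. Throughout we use that a topologically mixing Anosov diffeomorphism is expansive and has specification. Expansiveness gives upper semicontinuity of $\mu\mapsto h_\mu(f)$, and it lets us work at a fixed scale $\varepsilon$ below the expansivity constant.

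\emph{Upper bound.} Fix $r>0$ and $\varepsilon$ below the expansivity constant, and let $E_n=E_n(\varepsilon,\mathbf w,r)$.
\begin{enumerate}
\item Following Misiurewicz's proof of the variational principle, set $\sigma_n=\frac1{\operatorname{card}E_n}\sum_{x\in E_n}\delta_x$ and $\mu_n=\frac1n\sum_{i<n}f^i_*\sigma_n$.
\item Choose a subsequence realizing the $\limsup$ in the definition of $\mathrm{h}(\mathbf w,r)$, along which $\mu_n\to\mu$ weak$^*$. Then $\mu$ is $f$-invariant.
\item Since $\int\Phi\,\rd\mu_n=\frac1{\operatorname{card}E_n}\sum_{x\in E_n}\frac1nS_n\Phi(x)\in \overline{D(\mathbf w,r)}$, we get $\rv(\mu)\in\overline{D(\mathbf w,r)}$.
\item The standard partition argument gives $h_\mu(f)\ge\limsup\frac1n\log\operatorname{card}E_n$.
\end{enumerate}
Hence $\mathrm{h}(\mathbf w,r)\le\sup\{h_\mu(f):\rv(\mu)\in\overline{D(\mathbf w,r)}\}$; at expansive scale the limit in $\varepsilon$ is harmless. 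Now let $r\to0$. Pick near-maximizers $\mu_r$, extract a weak$^*$ limit $\mu$, and note that $\rv(\mu)=\mathbf w$ because $\rv$ is continuous. Upper semicontinuity of entropy then yields $\mathrm{h}(\mathbf w)\le h_\mu(f)\le\mathsf h(\mathbf w)$.

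\emph{Lower bound.} Fix $\mu\in\mathcal M_\Phi(\mathbf w)$, $r>0$ and $\delta>0$.
\begin{enumerate}
\item First produce an \emph{ergodic} $\nu$ with $|\rv(\nu)-\mathbf w|<r/2$ and $h_\nu(f)>h_\mu(f)-\delta$. To do this, approximate $\mu$ by a finite convex combination of ergodic measures from its ergodic decomposition, without losing entropy or moving the rotation vector much. Then glue generic orbit segments of these components in the right proportions using specification, as in the abundance-of-ergodic-measures property of \cite{BKL23,Ku24} and in Katok-type horseshoe constructions.
\item Given such a $\nu$, apply Birkhoff's theorem and Katok's entropy formula. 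For large $n$, the set $G_n$ of $\nu$-generic points with $\frac1nS_n\Phi(x)\in D(\mathbf w,r)$ has $\nu(G_n)>1/2$.
\item Any $(n,\varepsilon)$-spanning set of $G_n$ therefore has cardinality at least $e^{n(h_\nu(f)-\delta)}$. A maximal $(n,\varepsilon)$-separated subset of $G_n$ spans it, so it is an $(n,\varepsilon,\mathbf w,r)$-set of at least this cardinality.
\end{enumerate}
This gives $\mathrm{h}(\mathbf w,r)\ge h_\mu(f)-2\delta$. Letting $\delta\to0$, then $r\to0$, and taking the supremum over $\mu\in\mathcal M_\Phi(\mathbf w)$ yields $\mathrm{h}(\mathbf w)\ge\mathsf h(\mathbf w)$.

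The main obstacle is the ergodic approximation in the lower bound, in particular when $\mathbf w$ lies on $\partial\rot(\Phi)$ and $\mu$ is non-ergodic. The entropy loss in the specification gluing must be controlled uniformly, and the gap times must contribute only $O(r)$ to the Birkhoff averages. I would first try the standard construction: choose long generic blocks for each ergodic component, with the gap length fixed by the specification constant at scale $\varepsilon$. The periodic measures produced from the glued orbits then have rotation vectors within $r/2$ of $\mathbf w$ and entropy within $\delta$ of $h_\mu(f)$.
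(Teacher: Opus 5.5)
The paper gives no proof of this statement. It is quoted from \cite{KW15} and used as a black box, so there is no internal argument to compare with, and your two-inequality scheme serves as a self-contained proof.

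Your upper bound is correct as written. The Misiurewicz estimate $\limsup_n\frac1n\log\operatorname{card}E_n(\varepsilon,\mathbf w,r)\le\sup\{h_\nu(f):\rv(\nu)\in\overline{D(\mathbf w,r)}\}$ holds for every $\varepsilon>0$, so no expansivity scale is needed there. Expansiveness enters only through upper semicontinuity of entropy in the $r\to0$ step.

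Steps 2 and 3 of your lower bound are also correct once an ergodic $\nu$ as in Step 1 is available. Katok's bound is only needed for $\varepsilon$ small depending on $\delta$, which is harmless because $\mathrm h(\mathbf w,r)$ is a monotone limit in $\varepsilon$.

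The one place where what you wrote fails is your proposed construction for Step 1. A measure equidistributed on a periodic orbit has zero entropy. So the ``periodic measures produced from the glued orbits'' cannot have entropy within $\delta$ of $h_\mu(f)$: specification gluing produces exponentially many \emph{points}, not a single high-entropy measure. Any of the following repairs works.
\begin{enumerate}
\item[(a)] Cite entropy density of ergodic measures for systems with specification (Eizenberg--Kifer--Weiss, Pfister--Sullivan). For every invariant $\mu$ there are ergodic $\nu_j\to\mu$ weak$^*$ with $h_{\nu_j}(f)\to h_\mu(f)$, and continuity of $\rv$ then gives Step 1.
\item[(b)] Replace the periodic measures by an ergodic measure of nearly maximal entropy on the compact invariant set (a horseshoe) of points shadowing all concatenations of the chosen blocks.
\item[(c)] Bypass $\nu$ altogether.
\begin{itemize}
\item Choose ergodic $\mu_1,\dots,\mu_p$ and rational weights $\lambda_i$ with $|\sum_i\lambda_i\rv(\mu_i)-\mathbf w|<r/4$ and $\sum_i\lambda_ih_{\mu_i}(f)>h_\mu(f)-\delta$.
\item Use Katok's formula for each $\mu_i$ to get $(\lambda_iN,\varepsilon)$-separated sets $S_i$ of points whose $\lambda_iN$-step averages are within $r/8$ of $\rv(\mu_i)$, with $\operatorname{card}S_i\ge e^{\lambda_iN(h_{\mu_i}(f)-\delta)}$.
\item Shadow each tuple in $S_1\times\cdots\times S_p$ by specification with gaps $m(\varepsilon/4)$.
\item The shadowing points are $(N+pm,\varepsilon/2)$-separated. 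For $N$ large and $\varepsilon$ small (modulus of continuity of $\Phi$), their Birkhoff averages lie in $D(\mathbf w,r)$.
\item This yields $\mathrm h(\mathbf w,r)\ge h_\mu(f)-2\delta$ directly.
\end{itemize}
\end{enumerate}

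Finally, your worry about $\mathbf w\in\partial\rot(\Phi)$ is unfounded. The lower bound only ever needs rotation vectors, or Birkhoff averages, inside $D(\mathbf w,r)$, never an exact rotation vector, so boundary points need no separate treatment.
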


In \cite{Ku24}, Kucherenko established the following relation between the nonlinear topological pressure and the localized topological entropy.

\begin{theorem}[\cite{Ku24} Theorem 1]\label{Ku24-Theorem 1}
    Let $T: X \to X$ be a continuous map on a compact metric space $X$ and 
    $J: \mathbb{R}^k \to \mathbb{R}$ be a continuous nonlinearity function. 
    Then for any continuous potential $\Phi: X \to \mathbb{R}^k$, we have 
   \begin{align*}
    \Pi_{\top}^J(\Phi) = \sup\left\{\mathsf{h}({\bf w}) + J({\bf w}): {\bf w} \in \operatorname{Rot}_{\pt}(\Phi)\right\}.
 \end{align*}
    \end{theorem}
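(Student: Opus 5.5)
The plan is to prove the two inequalities separately. The basic observation is that the energy of an empirical measure depends only on the Birkhoff average:
\[
\mathcal E(\Delta_x^n)=J\Bigl(\tfrac1n S_n\Phi(x)\Bigr).
\]
All averages $\tfrac1nS_n\Phi(x)$ lie in the compact ball $\overline D(0,\|\Phi\|_\infty)$. Let $\omega$ be the modulus of continuity of $J$ on the slightly larger ball $\overline D(0,\|\Phi\|_\infty+1)$.

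\emph{Lower bound.} Fix ${\bf w}\in\rot_{\pt}(\Phi)$, together with $r\in(0,1)$ and $\varepsilon>0$. A maximal $(n,\varepsilon,{\bf w},r)$-set $E_n(\varepsilon,{\bf w},r)$ is in particular $(n,\varepsilon)$-separated. Every $x$ in it has its average in $D({\bf w},r)$, so $\mathcal E(\Delta^n_x)\ge J({\bf w})-\omega(r)$. Hence
\[
\zeta(\varepsilon,n)\ge \operatorname{card}E_n(\varepsilon,{\bf w},r)\,e^{n(J({\bf w})-\omega(r))}.
\]
Taking $\varlimsup_n\frac1n\log$ and then $\varepsilon\to0$ gives $\Pi^J_{\top}(\Phi)\ge \mathrm h({\bf w},r)+J({\bf w})-\omega(r)$. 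Since $\mathrm h({\bf w},r)$ is nonincreasing as $r\downarrow0$, it is at least $\mathrm h({\bf w})$. Letting $r\to0$ and then taking the supremum over ${\bf w}$ gives $\ge$. (If the card is $0$ for all large $n$ the term is $-\infty$ and there is nothing to prove.)

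\emph{Upper bound, fixed $r$.} Fix $r>0$ and let $P_r=\{{\bf w}_1,\dots,{\bf w}_M\}\subset\rot_{\pt}(\Phi)$ be an $r$-net of the compact set $\rot_{\pt}(\Phi)$; this set is closed by its definition. The key claim is that there is $N$ such that for every $n\ge N$ and every $x$, the average $\frac1nS_n\Phi(x)$ lies within $2r$ of $\rot_{\pt}(\Phi)$. If not, compactness produces a limit point of averages along $n\to\infty$ that lies outside $\rot_{\pt}(\Phi)$, which contradicts the definition of the pointwise rotation set. Consequently, for $n\ge N$ every $x$ has its average in some $D({\bf w}_i,3r)$. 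Now split an arbitrary $(n,\varepsilon)$-separated set $\mathcal C$ into the $M$ pieces $\mathcal C_i=\{x\in\mathcal C:\frac1nS_n\Phi(x)\in D({\bf w}_i,3r)\}$. Each $\mathcal C_i$ is an $(n,\varepsilon,{\bf w}_i,3r)$-set, so its cardinality is at most that of a maximal one, up to the standard comparison between separated and maximal sets, which only costs a change $\varepsilon\to\varepsilon/2$. On $\mathcal C_i$ the weight satisfies $\mathcal E\le J({\bf w}_i)+\omega(3r)$. Therefore
\[
\zeta(\varepsilon,n)\le \sum_{i=1}^M \operatorname{card}E_n(\varepsilon/2,{\bf w}_i,3r)\,e^{n(J({\bf w}_i)+\omega(3r))}.
\]
Because $M$ does not depend on $n$ or $\varepsilon$, the $\varlimsup_n$ and $\lim_\varepsilon$ commute with the finite maximum. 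This yields
\[
\Pi^J_{\top}(\Phi)\le \max_i\bigl(\mathrm h({\bf w}_i,3r)+J({\bf w}_i)\bigr)+\omega(3r).
\]

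\emph{Upper bound, $r\to0$.} This is the step I expect to be the main obstacle, since the finite net changes with $r$ and $\mathrm h(\cdot,r)$ is only controlled pointwise in ${\bf w}$. Take $r_j\to0$ and let ${\bf w}^{(j)}$ attain the maximum at scale $r_j$. Pass to a subsequence with ${\bf w}^{(j)}\to{\bf w}^*\in\rot_{\pt}(\Phi)$, using closedness of the set. For any fixed $r'>0$ and all large $j$ we have $D({\bf w}^{(j)},3r_j)\subset D({\bf w}^*,r')$, so monotonicity of the counting gives $\mathrm h({\bf w}^{(j)},3r_j)\le \mathrm h({\bf w}^*,r')$. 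Continuity of $J$ gives $J({\bf w}^{(j)})\to J({\bf w}^*)$. Letting $j\to\infty$ and then $r'\to0$ gives $\Pi^J_{\top}(\Phi)\le \mathrm h({\bf w}^*)+J({\bf w}^*)$, which is at most the supremum. Together with the lower bound this proves the equality.
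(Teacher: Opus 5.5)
Your argument is correct. The paper gives no proof of this statement; it imports it from \cite{Ku24}, so there is no internal argument to compare with.

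Both inequalities are handled cleanly. For the lower bound you feed a localized separated set into $\zeta(\varepsilon,n)$. For the upper bound you use four ingredients:
\begin{enumerate}
\item the uniform-in-$x$ approach of $\tfrac1nS_n\Phi(x)$ to the compact set $\operatorname{Rot}_{\pt}(\Phi)$, where closedness of $\operatorname{Rot}_{\pt}(\Phi)$ is built into its definition;
\item a finite $r$-net;
\item uniform continuity of $J$ on a ball containing all averages;
\item a compactness argument on the maximizing net points as $r\to0$.
\end{enumerate}
Your $\varepsilon\to\varepsilon/2$ comparison also correctly covers the case where ``maximal'' means maximal under inclusion. The proof is entirely elementary and needs neither invariant measures nor any abundance hypothesis.

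One point you should make explicit. You proved the identity with the localized entropy $\mathrm h(\mathbf w)=\lim_{r\to0}\mathrm h(\mathbf w,r)$. The statement is typeset with $\mathsf h(\mathbf w)$, which the paper defines as $\sup\{h_\mu(T):\mu\in\mathcal M_\Phi(\mathbf w)\}$. Your reading is the right one: it is what the sentence introducing the theorem announces.

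It is also forced, because with the measure-theoretic $\mathsf h$ the identity fails for general $T$. Take the following system:
\begin{itemize}
\item $X$ is the disjoint union of two full $2$-shifts, with $\Phi\equiv0$ on the first and $\Phi\equiv1$ on the second, together with a fixed point where $\Phi=\tfrac12$;
\item $J(w)=-100|w-\tfrac12|$.
\end{itemize}
Then $\operatorname{Rot}_{\pt}(\Phi)=\{0,\tfrac12,1\}$ and $\Pi^J_{\top}(\Phi)=\max\{\log2-50,\,0\}=0$. This agrees with $\sup_{\mathbf w}\{\mathrm h(\mathbf w)+J(\mathbf w)\}$. However, the average of the two Parry measures has rotation vector $\tfrac12$ and entropy $\log 2$, so $\mathsf h(\tfrac12)+J(\tfrac12)\geq\log2>0$.

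In the paper's application (mixing Anosov base, Lemma \ref{at most one nonlinear}), Theorem \ref{KW15-Corollary 1} and Remark \ref{KW14-remark} make the two versions coincide. So nothing downstream is affected, but a proof of the statement as literally written could not succeed.
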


We introduce the following characterization for $\operatorname{Rot}(\Phi)$ that
has a nonempty interior.

\begin{proposition}[\cite{KW14}, Proposition 6]\label{KW14-Proposition-6}
For a continuous potential 
$\Phi = (\phi_1, \dots, \phi_k): X \to \mathbb{R}^k$,
the following statements are equivalent:
\begin{itemize}
    \item[(i)] No non-trivial linear combination of $\{\phi_i\}_{i=1}^k$ is cohomologous to a constant.
    \item[(ii)] $\operatorname{int} \operatorname{Rot}(\Phi) \neq \varnothing$.
\end{itemize}
\end{proposition}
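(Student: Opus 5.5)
The plan is to reduce both statements to a statement about supporting hyperplanes of the convex set $\rot(\Phi)$. Throughout, $D\cdot\Phi=\sum_i D_i\phi_i$ for $D\in\R^k$.

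\textbf{Step 1 (convex geometry).} As noted above, $\rot(\Phi)$ is a nonempty compact convex subset of $\R^k$. A convex set in $\R^k$ has empty interior if and only if it lies in an affine hyperplane. Hence $\inter\rot(\Phi)=\varnothing$ if and only if there exist a nonzero $D\in\R^k$ and $c\in\R$ such that
\[
\int_X D\cdot\Phi\,\rd\mu=D\cdot\operatorname{rv}(\mu)=c\qquad\text{for every }\mu\in\mathcal M(T,X).
\]
So (ii) fails exactly when some nontrivial combination $\psi=D\cdot\Phi-c$ has zero integral against every invariant measure.

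\textbf{Step 2 ((ii)$\Rightarrow$(i), the easy direction).} Suppose $D\cdot\Phi=c+u-u\circ T$ with $u$ continuous and $D\neq0$. Integrating against any invariant $\mu$ gives $\mu(D\cdot\Phi)=c$, since $\int u\,\rd\mu=\int u\circ T\,\rd\mu$. Thus $\rot(\Phi)$ lies in the hyperplane $\{D\cdot w=c\}$ and has empty interior. The contrapositive is (ii)$\Rightarrow$(i).

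\textbf{Step 3 ((i)$\Rightarrow$(ii), the main obstacle).} Suppose $\inter\rot(\Phi)=\varnothing$. Step 1 gives $\psi=D\cdot\Phi-c$ with $\int\psi\,\rd\mu=0$ for all invariant $\mu$, and we must show $\psi$ is a coboundary. By the standard weak$^*$ compactness argument (empirical measures $\frac1n\sum_{j<n}\delta_{T^jx}$ accumulate on invariant measures), this already gives
\[
\frac1n S_n\psi\to0\quad\text{uniformly on }X,
\]
but uniform sublinear growth alone does not produce a continuous transfer function for a general continuous map. Here I would follow the convention of \cite{KW14}. In the setting where the proposition is applied, $T=f$ is a topologically mixing Anosov diffeomorphism and $\Phi$ is H\"older. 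In that setting:
\begin{enumerate}[(a)]
\item Every periodic orbit measure $\mu_p=\frac1{n}\sum_{j<n}\delta_{f^jp}$ with $f^np=p$ is invariant. Hence $S_n\psi(p)=0$ for every periodic point $p$ of period $n$.
\item The Liv\v{s{\i}}c theorem then yields a H\"older $u$ with $\psi=u-u\circ f$. Therefore $D\cdot\Phi$ is cohomologous to the constant $c$, contradicting (i).
\end{enumerate}
For a genuinely arbitrary continuous $T$, I would instead read ``cohomologous to a constant'' in the weak sense used by \cite{KW14}: $D\cdot\Phi-c$ has zero integral against all invariant measures, equivalently lies in the closure of continuous coboundaries. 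With that reading, Step 1 alone closes the equivalence. The only genuinely nontrivial input is therefore the Liv\v{s}ic step, and I would make sure the proposition is only invoked for H\"older $\Phi$ over the Anosov base $f$.
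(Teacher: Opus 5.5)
Your argument is correct. The paper gives no proof of its own here; the result is simply quoted from \cite{KW14}, so there is no competing route to compare. Under the weak reading of ``cohomologous to a constant'' (the function $D\cdot\Phi-c$ integrates to zero against every invariant measure), your Steps~1 and~2 are the entire proof. For H\"older $\Phi$ over a transitive Anosov base, your Liv\v{s}ic step shows that the weak and strict readings coincide, so the equivalence then holds with genuine continuous, indeed H\"older, transfer functions.

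Your caution about the strict reading is justified. With the definition of cohomology from the introduction (continuous $u$), the literal statement fails for general continuous maps. Take an irrational circle rotation, $k=1$, and $\phi_1$ a continuous zero-mean function with unbounded Birkhoff sums. Such functions exist by a Baire category argument. They cannot be continuous coboundaries, because coboundaries have uniformly bounded Birkhoff sums. Then $\rot(\Phi)=\{0\}$, yet no nonzero multiple of $\phi_1$ is cohomologous to a constant.

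The strict direction (i)$\Rightarrow$(ii) is also the one the paper actually needs. Its hypothesis (C1) is phrased with continuous transfer functions, whereas Lemmas~\ref{not boundary} and~\ref{at most one nonlinear} work with $\inter\rot(\Phi)\neq\varnothing$. Your Liv\v{s}ic step, applied to the $C^{1+\beta}$ potentials $\phi_i=m_i\langle\tau,\mathbf u_i\rangle$ over the mixing Anosov base $f$, is exactly what bridges the two.
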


We say that $\Phi$, or equivalently the family $\{\phi_i\}_{i=1}^k$, satisfies the \textbf{independence condition} 
if it satisfies conditions (i) and (ii) of Proposition \ref{KW14-Proposition-6}.

Given ${\bf v}=(v_1,\cdots,v_d) \in \mathbb{R}^k$, 
we denote by ${\bf v}\cdot \Phi$ the linear combination $v_1 \phi_1 + \cdots + v_d \phi_d$. 
Let $\operatorname{ES}({\bf v}\cdot\Phi)$ denote the set of equilibrium states of $v\cdot\Phi$. 
We write $\ES({\bf v})$ as shorthand for $ES({\bf v}\cdot\Phi)$ when no confusion arises, 
meaning that 
$\ES(\cdot)$ is a map from $\mathbb{R}^k$ to the power set of $\mathcal{M}$. 
We will be interested in the $d$-parameter family $\ES(\mathbb{R}^k)$,
which we call the family of \textbf{$\Phi$-equilibrium states}.
If for every ${\bf v}\in\mathbb{R}^k$, the set $\ES({\bf v}\cdot \Phi)$ is a singleton, 
the system $(X,T,\Phi)$ is called the system with {\bf unique linear equilibrium states}. 

Jenkinson characterized the interior of the rotation set via these equilibrium states in \cite{Jen01}. 
The corresponding result is stated below. 

\begin{proposition}[\cite{Jen01} Corollary 3]\label{Jen01-Corollary 3}
Let $(X, T)$ be a dynamical system for which the entropy map 
$\mu \mapsto h_\mu(T)$ is upper semicontinuous, 
and suppose $\Phi: X \to \mathbb{R}^k$ is continuous. 
If the map ${\bf v} \mapsto P_{\top}(T,{\bf v}\cdot \Phi)$ is strictly convex, then 
\[
\operatorname{int}\bigl(\operatorname{Rot}(\Phi)\bigr) = \rv\left(\ES(\mathbb{R}^k)\right).
\]
\end{proposition}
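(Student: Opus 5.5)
The plan is to prove both inclusions with convex duality for the pressure function $P(\mathbf v):=P_{\top}(T,\mathbf v\cdot\Phi)$. By the variational principle, $P(\mathbf v)=\sup_{\mu\in\mathcal M(T,X)}\{h_\mu(T)+\mathbf v\cdot\rv(\mu)\}$. As a supremum of affine functions of $\mathbf v$, $P$ is finite and convex. Upper semicontinuity of $\mu\mapsto h_\mu(T)$ will be used in two ways. First, $\ES(\mathbf v)$ is a nonempty, weak$^*$ compact, convex set for every $\mathbf v$. Second, a weak$^*$ limit of equilibrium states for $\mathbf v_n\to\mathbf v$ is an equilibrium state for $\mathbf v$; this follows from continuity of $P$ and usc of entropy.

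\emph{Inclusion $\rv(\ES(\mathbb R^k))\subset\inter\rot(\Phi)$.} Let $\mu\in\ES(\mathbf v)$ and $\mathbf w=\rv(\mu)$, and suppose $\mathbf w\in\partial\rot(\Phi)$. Since $\rot(\Phi)$ is compact and convex, there is a supporting hyperplane at $\mathbf w$. So there exists $\mathbf u\neq0$ with $\mathbf u\cdot\mathbf z\le\mathbf u\cdot\mathbf w$ for all $\mathbf z\in\rot(\Phi)$. For $t\ge0$, testing the variational principle with $\mu$ gives
\[
P(\mathbf v+t\mathbf u)\ge P(\mathbf v)+t\,\mathbf u\cdot\mathbf w .
\]
Conversely, for every $\nu$ we have $h_\nu+(\mathbf v+t\mathbf u)\cdot\rv(\nu)\le P(\mathbf v)+t\,\mathbf u\cdot\mathbf w$, which gives the reverse inequality. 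Hence $P$ is affine on the ray $\{\mathbf v+t\mathbf u:t\ge0\}$, contradicting strict convexity.

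\emph{Inclusion $\inter\rot(\Phi)\subset\rv(\ES(\mathbb R^k))$.} Fix $\mathbf w$ with $D(\mathbf w,\delta)\subset\rot(\Phi)$, and set $g(\mathbf v)=P(\mathbf v)-\mathbf v\cdot\mathbf w$, which is convex. For $\mathbf v\neq0$, choose $\nu$ with $\rv(\nu)=\mathbf w+\delta\mathbf v/|\mathbf v|$. Since entropy is nonnegative, $g(\mathbf v)\ge\delta|\mathbf v|$, so $g$ is coercive and attains its minimum at some $\mathbf v_0$. Then $0\in\partial g(\mathbf v_0)$, that is, $\mathbf w\in\partial P(\mathbf v_0)$.

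It then suffices to show the identity
\[
\partial P(\mathbf v_0)=\rv(\ES(\mathbf v_0)).
\]
Both sides are compact convex sets; the right side is the image of a compact convex set under a continuous affine map. So it is enough to compare support functions, which amounts to the directional derivative formula
\[
P'(\mathbf v_0;\mathbf u)=\max\{\mathbf u\cdot\rv(\mu):\mu\in\ES(\mathbf v_0)\}.
\]

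The inequality "$\ge$" is immediate: test the variational principle at $\mathbf v_0+t\mathbf u$ with any $\mu\in\ES(\mathbf v_0)$. For "$\le$", take $\mu_t\in\ES(\mathbf v_0+t\mathbf u)$. Then
\[
P(\mathbf v_0+t\mathbf u)-P(\mathbf v_0)\le t\,\mathbf u\cdot\rv(\mu_t).
\]
Pass to a weak$^*$ limit point $\mu$ of $\mu_t$ as $t\to0^+$. By the usc closure property above, $\mu\in\ES(\mathbf v_0)$, and $\rv(\mu_t)\to\rv(\mu)$.

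This directional-derivative step is the only delicate point, and it is exactly where upper semicontinuity of entropy is needed. Once it holds, $\mathbf w=\rv(\mu)$ for some $\mu\in\ES(\mathbf v_0)$, which completes the second inclusion.
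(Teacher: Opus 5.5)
Your proof is correct. The paper gives no proof of this statement: it is quoted from \cite{Jen01} and used only in the proof of Lemma~\ref{at most one nonlinear}, to produce for each $\mathbf w\in\inter\rot(\Phi)$ a vector $\mathbf v$ with $\rv(\mu_{\mathbf v})=\mathbf w$. Your argument is therefore a self-contained replacement for the citation, built on convex duality for $P(\mathbf v)=P_{\top}(T,\mathbf v\cdot\Phi)$. In one direction, a supporting functional at a boundary rotation vector forces $P$ to be affine on a ray. In the other, coercivity of $P(\mathbf v)-\mathbf v\cdot\mathbf w$ together with the identity $\partial P(\mathbf v_0)=\rv(\ES(\mathbf v_0))$ yields the required equilibrium state; that identity is your directional-derivative formula, where upper semicontinuity enters through existence and closedness of equilibrium states.

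A by-product of your route is that it shows strict convexity is used only for $\rv(\ES(\mathbb R^k))\subset\inter\rot(\Phi)$. The inclusion the paper actually needs, $\inter\rot(\Phi)\subset\rv(\ES(\mathbb R^k))$, holds under upper semicontinuity of entropy alone.

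An equivalent route to that inclusion is Legendre duality. For interior $\mathbf w$ one has $\mathsf h(\mathbf w)=\min_{\mathbf v}\{P(\mathbf v)-\mathbf v\cdot\mathbf w\}$. Any entropy maximizer in $\mathcal M_\Phi(\mathbf w)$, which exists by upper semicontinuity, is then an equilibrium state for the minimizing $\mathbf v$.

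Two minor points to make explicit:
\begin{enumerate}[(i)]
\item Finiteness of $P$ uses $\sup_\mu h_\mu(T)<\infty$. This follows from upper semicontinuity on the compact space $\mathcal M(T,X)$.
\item The point $\mathbf w+\delta\mathbf v/|\mathbf v|$ lies in $\rot(\Phi)$ because $\rot(\Phi)$ is closed; alternatively, replace $\delta$ by $\delta/2$.
\end{enumerate}
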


%Evidently, uniformly hyperbolic systems satisfy the 
%upper semicontinuity of the entropy map in Proposition \ref{Jen01-Corollary 3}.

\subsection{Projections of MMEs}\label{subsect: Projections for measures of maximal entropy}

In this subsection, we use the nonlinear thermodynamic formalism 
to prove that the set of projections of ergodic measures of maximal entropy is finite. 

\begin{theorem}\label{goal theorem}
Let $f\colon X\to X$ be a topologically mixing $C^{1+\beta}$ 
Anosov diffeomorphism on a closed manifold $X$. 
Let $\rho_{\bf t}$ be a locally free $\mathbb{R}^k$-action on a closed manifold $Y$ 
preserving a smooth measure $m_{\MME}$. 
Assume that for each ${\bf t} \in \mathbb{R}^k$, the map $\rho_{\bf t}$ satisfies at least one of the following conditions:
\begin{itemize}
\item $\rho_{\bf t}$ is of class $C^{\infty}$;
\item $Y$ is a hyperbolic set for $\rho_{\bf t}$; 
\item $\rho_{\bf t}$ preserves a $\rho_{\bf t}$-neutralized homogeneous measure, 
\end{itemize} 
and $m_{\mathrm{MME}}$ is an ergodic measure with respect to 
the $\mathbb{R}^k$-action which is a measure of maximal entropy for 
$\rho_{\mathbf{t}}$ for every $\mathbf{t} \in \mathbb{R}^k$. 

Suppose the cocycle $\tau:X\to \mathbb{R}^k$ is $C^{1+\beta}$ and the pair $(\tau,\rho_{\bf t})$ is jointly nontrivial. 
Then, for the skew product $F$, the following set is nonempty and has finite cardinality: 
\[
\mathcal{P}(F)=\left\{ \pi_*\zeta : \zeta \text{ is an ergodic measure of maximal entropy for } F \right\}.
\]
Moreover, each element in $\mathcal{P}(F)$ is the unique equilibrium state 
for some H\"older potential. 
Furthermore, if $f$ is topologically mixing, 
then for every element $\mu$ in $\mathcal{P}(F)$,  
the system $(X,f,\mu)$ is Bernoulli and exponentially mixing of all orders. 
\end{theorem}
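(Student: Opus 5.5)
By Corollary \ref{higher le:fiber-entropy-corollary}, $h_{\top}(F)=\sup_{\mu\in\mathcal M_e(f,X)}\mathcal E(\mu)$ with $\mathcal E(\mu)=h_\mu(f)+H(\mu(\tau))$. The plan is to treat this as a nonlinear thermodynamic problem with potential $\Phi=\tau$ and nonlinearity $J=H$; $H$ is convex, piecewise linear and Lipschitz by \eqref{intro:eq:action-entropy}.

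\emph{Step 1: the maximizing measures are the base projections.} If $\zeta$ is an ergodic MME, then $\mu=\pi_*\zeta$ is ergodic. The relative bound \eqref{intro:eq:relative} gives $h_{\top}(F)=h_\zeta(F)\le\mathcal E(\mu)\le h_{\top}(F)$, so $\mu$ maximizes $\mathcal E$. Conversely, let $\mu$ be an ergodic maximizer. The entropy map of $f$ is upper semicontinuous, and so is that of $F$: the fibers carry $C^\infty$ or hyperbolic dynamics, and $h_{\top}(F)$ is finite. This should give an invariant measure attaining the relative supremum for $\mu$. Its ergodic decomposition contains an ergodic MME projecting to $\mu$. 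So $\mathcal P(F)$ equals the set of ergodic maximizers of $\mathcal E$. Nonemptiness follows because $\mu\mapsto h_\mu(f)+H(\mu(\tau))$ is upper semicontinuous on the compact set $\mathcal M(f,X)$, so a maximizer exists. Its ergodic components are still maximizers, since $h$ is affine and $H$ is convex, which gives $\mathcal E(\mu)\le\int\mathcal E(\mu_\omega)$.

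\emph{Step 2: reduction to finitely many linear problems.} By (C1) and Proposition \ref{KW14-Proposition-6}, $\tau$ satisfies the independence condition, so Lemma \ref{not boundary} places every maximizing drift $\mathbf w=\mu(\tau)$ in $\inter\rot(\tau)$. The Lyapunov hyperplanes $\ker\chi_i$ cut $\R^k$ into finitely many closed polyhedral cones $\mathcal C_1,\dots,\mathcal C_N$. On each $\mathcal C_j$ we have $H(\mathbf t)=\mathbf v_j\cdot\mathbf t$ with $\mathbf v_j=\sum_{\chi_i>0\text{ on }\mathcal C_j}m_i\chi_i$, and globally $H=\max_j(\mathbf v_j\cdot\,)$ because $H$ is a sum of maxima of linear functions. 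Hence
\[
\sup_\mu\mathcal E(\mu)=\max_j\sup_\mu\bigl(h_\mu(f)+\mu(\mathbf v_j\cdot\tau)\bigr)=\max_jP(f,\mathbf v_j\cdot\tau).
\]
Any maximizer $\mu$ with drift in $\mathcal C_j$ satisfies $h_\mu(f)+\mu(\mathbf v_j\cdot\tau)=\max_iP(f,\mathbf v_i\cdot\tau)\ge P(f,\mathbf v_j\cdot\tau)$. It is therefore an equilibrium state of the H\"older potential $\mathbf v_j\cdot\tau$. Since $f$ is a mixing Anosov diffeomorphism, that equilibrium state is unique \cite{Bow75}. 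This gives $\#\mathcal P(F)\le N$, and each element is the unique equilibrium state of a H\"older potential.

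I expect the main subtlety to be ensuring consistency: the unique equilibrium state $\mu_j$ of $\mathbf v_j\cdot\tau$ must actually have drift in the region where $H$ agrees with $\mathbf v_j\cdot$. The inequality $H\ge\mathbf v_j\cdot$ handles this automatically, since only regions attaining the maximum contribute. The other subtlety is excluding $H(\mathbf w)=0$. For that I would use $\dim Z_\rho<k$ together with the interior property. Near $\mathbf w$ there are drifts where $H$ grows linearly in some direction, while $\mathsf h$ is differentiable at interior points by the Legendre property. A first-order perturbation then increases $\mathsf h+H$. So every maximizer has $\mu(\tau)\ne0$.

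\emph{Step 3: statistical properties.} Each $\mu\in\mathcal P(F)$ is a Gibbs equilibrium state of a H\"older potential for a topologically mixing Anosov diffeomorphism. Via a Markov partition it is Bernoulli \cite{Bow75}. Exponential mixing of all orders for H\"older observables follows from the spectral gap of the transfer operator of the symbolic coding, combined with the standard multiple-mixing argument for Gibbs measures on mixing subshifts of finite type.
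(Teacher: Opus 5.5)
\paragraph{The one step that needs repair.} The lifting claim in Step 1 does not hold up as written. You assert that $\zeta\mapsto h_\zeta(F)$ is upper semicontinuous because the fibers carry $C^\infty$ or hyperbolic dynamics, but none of the standard mechanisms is available here:
\begin{itemize}
\item $F$ is only $C^{1+\beta}$, since $f$ and $\tau$ are.
\item $F$ is never expansive: $(x,y)$ and $(x,\rho_{\mathbf s}y)$ with $\mathbf s$ small stay $C\|\mathbf s\|$-close forever.
\item The hypotheses concern individual elements $\rho_{\mathbf t}$, whereas $F$ acts on fibers by the non-autonomous compositions $\rho_{S_n\tau(x)}$.
\item The third alternative, a neutralized homogeneous measure, is not addressed at all.
\end{itemize}
You do not need semicontinuity. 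Since $m_{\MME}$ is invariant under the whole action, $\mu\times m_{\MME}$ is $F$-invariant for every $\mu\in\mathcal M(f,X)$. For ergodic $\mu$, Theorem \ref{higher le:product} gives $h_{\mu\times m_{\MME}}(F)=h_\mu(f)+h_{m_{\MME}}(\rho_{\mu(\tau)})=\mathcal E(\mu)$. So for an ergodic maximizer $\mu$ this product is an MME, and almost every ergodic component of it is an ergodic MME projecting to $\mu$. This is how the paper gets nonemptiness.

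\paragraph{Comparison of Step 2 with the paper.} With that repair your argument is correct, and Step 2 is a genuinely different and more elementary route.

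The paper replaces $\tau$ by a maximal independent subfamily $\Phi=(\phi_1,\dots,\phi_l)$ of the functions $m_i\chi_i\circ\tau$ and writes $H(\mu(\tau))=J(\mu(\Phi))$. It then uses the Legendre structure of $(f,\Phi)$. Strict concavity of $\mathsf h$ on $\inter\rot(\Phi)$, together with Lemma \ref{not boundary}, gives at most one maximizing drift per region where $J$ is affine. Lemma \ref{at most one nonlinear}, through Jenkinson's description of $\inter\rot(\Phi)$, identifies that measure as the equilibrium state of some ${\bf v}\cdot\Phi$.

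Your identity $H(\mathbf t)=\max_j\mathbf v_j\cdot\mathbf t$ instead turns the problem into $\max_jP(f,\mathbf v_j\cdot\tau)$. Any maximizer with drift in $\mathcal C_j$ is then directly an equilibrium state of $\mathbf v_j\cdot\tau$, so only Bowen's uniqueness theorem is needed. This has several consequences:
\begin{itemize}
\item Your appeal to Lemma \ref{not boundary} is never used.
\item Neither (C1) nor (C2) is used: the argument shows $\mathcal P(F)$ is finite for every $C^{1+\beta}$ cocycle once the entropy formula holds. This is consistent with the examples in Section \ref{sect:homogeneous-bounds}, where the failure of finiteness comes entirely from the lifts.
\item It yields the Weyl-chamber bound immediately, which the paper only reaches in Remark \ref{goal theorem-remark}.
\item It shows every maximizer, ergodic or not, is one of the $\mu_j$.
\end{itemize}

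What the paper's approach buys is independence from convexity. It applies to any nonlinearity that is affine on finitely many convex pieces. Yours relies on $H$ being the maximum of its affine pieces, and uses convexity again to pass to ergodic components, where the paper invokes the abundance theorem.

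Your discussion of $H(\mathbf w)\neq 0$ belongs to Corollary \ref{projection not MME}, not to this statement, though the first-order perturbation argument you sketch there is sound.
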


In \cite[Theorem D]{BKL23}, 
Buzzi, Kloeckner, and Leplaideur established the finiteness of nonlinear equilibrium states 
for $(f,H)$, where $f$ is a mixing Axiom A diffeomorphism and 
 $H$ is a real analytic function in the case of a single potential ($k=1$). 
Their proof relies on the fact that 
a proper analytic subvariety of a compact line segment is finite.

In contrast, our primary objective is to establish the finiteness 
of ergodic nonlinear equilibrium states for a specific nonlinear potential.   
Due to the lower regularity of the potential function 
and the higher-dimensional setting ($k \geq 1$), 
the analytic subvariety techniques employed in previous works are no longer applicable. 
Instead, we leverage rotation set theory to 
overcome these challenges and achieve the desired finiteness. 

\begin{lemma}\label{at most one nonlinear}
    Let $f\colon X\to X$ be an Anosov diffeomorphism on a closed manifold $X$, and let
    $J: \mathbb{R}^k \to \mathbb{R}$ be a continuous function.
    Given a H\"older continuous potential $\Phi: X \to \mathbb{R}^k$ satisfying the independence condition,
    for each ${\bf w} \in \inter \rot(\Phi)$, 
    there exists at most one nonlinear equilibrium state 
    $\mu\in\mathcal{M}_{\Phi}({\bf w})$. Moreover, this  
    $\mu$ is the unique measure in $\mathcal{M}_{\Phi}({\bf w})$ 
    that satisfies $\mathsf{h}({\bf w})=h_\mu(f)$. 

    Furthermore, 
    if $f$ is topologically mixing, then $(X,f,\mu)$ is exponentially mixing of all orders 
    and Bernoulli. 
\end{lemma}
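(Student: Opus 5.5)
Fix ${\bf w}\in\inter\rot(\Phi)$ and let $\mu\in\mathcal M_\Phi({\bf w})$ be a nonlinear equilibrium state. The plan is to identify $\mu$ as the equilibrium state of a linear Hölder potential ${\bf v}\cdot\Phi$, and then use uniqueness of such equilibrium states.

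First, by the argument at the beginning of the proof of Lemma \ref{not boundary}, any nonlinear equilibrium state $\mu$ with $\mu(\Phi)={\bf w}$ satisfies $h_\mu(f)=\mathsf h({\bf w})$. Indeed, if some $\nu\in\mathcal M_\Phi({\bf w})$ had larger entropy, then $h_\nu(f)+J({\bf w})>h_\mu(f)+J({\bf w})$, contradicting maximality. So the first assertion reduces to a statement about the linear problem: there is exactly one $\mu\in\mathcal M_\Phi({\bf w})$ with $h_\mu(f)=\mathsf h({\bf w})$. This gives both "at most one" and the "moreover" part, since any two nonlinear equilibrium states in the same rotation class would both be entropy maximizers there.

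Second, I prove that uniqueness. For Hölder $\Phi$ over an Anosov diffeomorphism, ${\bf v}\mapsto P_{\top}(f,{\bf v}\cdot\Phi)$ is real analytic. Under the independence condition it is strictly convex, because its Hessian is the asymptotic covariance of ${\bf v}\cdot\Phi$, and this is positive definite unless some nontrivial combination is cohomologous to a constant. The entropy map is upper semicontinuous (expansiveness), so Proposition \ref{Jen01-Corollary 3} gives ${\bf v}$ with ${\bf w}=\rv(\mu_{\bf v})$, where $\mu_{\bf v}$ is the unique (Bowen) equilibrium state of ${\bf v}\cdot\Phi$. For any $\nu\in\mathcal M_\Phi({\bf w})$ we have $\int{\bf v}\cdot\Phi\,\rd\nu={\bf v}\cdot{\bf w}=\int{\bf v}\cdot\Phi\,\rd\mu_{\bf v}$. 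Hence
\[
h_\nu(f)\le P_{\top}(f,{\bf v}\cdot\Phi)-{\bf v}\cdot{\bf w}=h_{\mu_{\bf v}}(f),
\]
with equality only if $\nu$ is an equilibrium state of ${\bf v}\cdot\Phi$, that is, $\nu=\mu_{\bf v}$. Thus $\mathsf h({\bf w})=h_{\mu_{\bf v}}(f)$ and $\mu_{\bf v}$ is the unique maximizer, so $\mu=\mu_{\bf v}$. The uniqueness of equilibrium states needs transitivity. If $f$ is only Anosov, I would work on the transitive case via spectral decomposition; I expect the paper to apply this under mixing in any case.

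Third, under topological mixing, $\mu=\mu_{\bf v}$ is the Gibbs equilibrium state of a Hölder potential. It is therefore Bernoulli by Bowen's Markov partition coding (the mixing SFT Gibbs measure is Bernoulli and the coding is a.e. injective). It is exponentially mixing of all orders for Hölder observables by the spectral gap of the transfer operator, via the standard multiple-mixing argument for hyperbolic systems (e.g. Dolgopyat's). The main obstacle is the second step, specifically checking strict convexity of the pressure so that Proposition \ref{Jen01-Corollary 3} applies. This rests on the variance characterization: the second derivative in direction ${\bf u}$ is zero iff ${\bf u}\cdot\Phi$ is cohomologous to a constant, which the independence condition (Proposition \ref{KW14-Proposition-6}) excludes for ${\bf u}\neq0$.
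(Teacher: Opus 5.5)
Your proposal is correct and follows essentially the same route as the paper. Jenkinson's characterization of $\inter\rot(\Phi)$ gives ${\bf v}$ with $\rv(\mu_{\bf v})={\bf w}$, and comparison with the linear variational principle forces any nonlinear equilibrium state (and any entropy maximizer) in $\mathcal M_\Phi({\bf w})$ to be the unique Bowen equilibrium state $\mu_{\bf v}$; Bernoulli and multiple mixing then follow from the Gibbs property. The differences are minor: you isolate the $J$-independent fact that $\mu_{\bf v}$ is the unique entropy maximizer in its rotation class, where the paper instead uses Kucherenko's formula for $\Pi^J_{\top}$, and you explicitly check the strict convexity of pressure and the transitivity needed for uniqueness, both of which the paper leaves implicit.
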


\begin{proof}
  Assume there exists a nonlinear equilibrium state $\zeta \in \mathcal{M}_\Phi({\bf w})$. 
    By Propositions \ref{KW14-Proposition-6} and Proposition \ref{Jen01-Corollary 3}, 
    there exists ${\bf v}\in\mathbb{R}^k$ such that the unique equilibrium state 
    $\mu_{\bf v}$ for the potential ${\bf v}\cdot \Phi$ satisfies $\mu_{\bf v}(\Phi)={\bf w}$.
    Combining this with Theorem \ref{Ku24-Theorem 1}, we have 
    \begin{align*}
        h_\zeta(f)+J({\bf w})=\Pi_{\mathrm{top}}^J(\Phi)\geq h_{\mu_{\bf v}}(f)+J({\bf w}),
    \end{align*}
    which implies $h_\zeta(f) \geq h_{\mu_{\bf v}}(f)$. On the other hand,
    \begin{align*}
        h_\zeta(f)+{\bf v}\cdot {\bf w} = h_\zeta(f)+\int {\bf v} \cdot \Phi\,\rd\zeta
        \leq h_{\mu_{{\bf v}}}(f)+\int {\bf v}\cdot \Phi\,\rd\mu_{\bf v} 
        = h_{\mu_{{\bf v}}}(f)+{\bf v}\cdot{\bf w}.
    \end{align*}
    This yields $h_\zeta(f) \leq h_{\mu_{\bf v}}(f)$, 
    hence, $h_\zeta(f) = h_{\mu_{\bf v}}(f)$. 
    Consequently, $\zeta$ is also an equilibrium state for the potential ${\bf v}\cdot \Phi$.
    By the uniqueness of the equilibrium state for ${\bf v}\cdot \Phi$, 
    we conclude that $\zeta=\mu_{{\bf v}}$. 
Thus, $\mu_{\bf v}$ is the unique nonlinear equilibrium state in  
    $\mathcal{M}_{\Phi}({\bf w})$. 

    Now suppose there exists another invariant measure $\nu\in\mathcal{M}_{\Phi}({\bf w})$ such that 
    $\mathsf{h}({\bf w})=h_\nu(f)$. Then 
    \begin{align*}
       \Pi_{\mathrm{top}}^J(\Phi)\geq h_\nu(f)+J({\bf w})=
       \mathsf{h}({\bf w})+J({\bf w})\geq h_{\mu_{\bf v}}(f)+J({\bf w})=
        \Pi_{\mathrm{top}}^J(\Phi).
    \end{align*}
This implies that $\nu$ is also a nonlinear equilibrium state in $\mathcal{M}_{\Phi}({\bf w})$, 
and thus $\nu=\mu_{\bf v}$. 
Therefore, $\mu_{\bf v}$ is the unique measure in $\mathcal{M}_{\Phi}({\bf w})$
satisfying $\mathsf{h}({\bf w})=h_{\mu_{\bf v}}(f)$. 

By \cite[Theorem 1.7 and Theorem 4.1]{Bow75}, if $f$ is topologically mixing on $X$, 
then the system $(X,f,\mu)$ is Bernoulli and exponentially mixing of all orders. 
Thus, the lemma is proved. 
\end{proof}

Before demonstrating how to establish the independence condition in Theorem \ref{goal theorem}, 
we present the following result characterizing the Lyapunov exponents for $\mathbb{R}^k$-actions. 

Let $\{\rho_{\bf t}\}$ be a locally free $C^{1+\beta}$ action of $\mathbb{R}^k$ on a manifold $Y$. 
For ${\bf t}\in \mathbb{R}^k$, 
$\rho({\bf t})$ (or $\rho_{{\bf t}}$) induces a tangent map 
$D_x \rho({\bf t}): T_xY\to T_{\rho({\bf t})x}Y$. 

\begin{proposition}[\cite{KKH11} Proposition 2.1]\label{chart1}
Let $\rho_{\bf t}$ be a locally free $C^{1+\beta}$ action of $\mathbb{R}^k$ 
on a manifold $Y$ preserving an ergodic invariant measure $\mu$. 
There are linear functionals $\chi_i$, $i=1,\cdots, L$, 
on $\mathbb{R}^k$ and an $\rho$-invariant measurable splitting called the Lyapunov decomposition, 
of the tangent bundle of $Y$: 
$$
TM=T\mathcal{O}\oplus\bigoplus^L_{i=1}E_i
$$ 
over a set of full measure $\widetilde{\Gamma}$, where $T\mathcal{O}$ is the distribution tangent to the $\mathbb{R}^k$ orbits, such that for any ${\bf t}\in \mathbb{R}^k$ and any nonzero vector $v\in E_i$ the Lyapunov exponent of $v$ is equal to $\chi_i({\bf t})$, i.e. 
\begin{align*}
\lim_{n\to \pm \infty}\frac{\log{\|D_x(\rho(n{\bf t}))v\|}}{n}=\chi_i({\bf t}),
\end{align*}
where $\|\cdot\|$ is any continuous norm on $TM$. 

Furthermore, for any $\epsilon>0$ there exist positive measurable functions $C_{\epsilon}(x)$ and $K_{\epsilon}(x)$ such that for all $x\in \Gamma$, $v\in E_i(x)$, ${\bf t}\in \mathbb{R}^k$, and $i=1, \cdots, L$, 
\begin{itemize}
\item[(1)] $C_{\epsilon}^{-1}(x)e^{\chi_i({\bf t})-\frac{1}{2}\epsilon \|\rho({\bf t})\|}\|v\|\leq \|D_x(\rho({\bf t}))v\|\leq C_{\epsilon}(x)e^{\chi_i({\bf t})+\frac{1}{2}\epsilon \|\rho({\bf t})\|}\|v\|$;
\item[(2)] For each $i$, $\angle(E_i(x), T\mathcal{O})\geq K_{\epsilon}(x)$; 
\item[(3)] For any $i\neq j$, $\angle(E_i(x), E_j(x))\geq K_{\epsilon}(x)$;
\item[(4)] $C_{\epsilon}(\rho({\bf t})x)\leq C_{\epsilon}(x)e^{\epsilon\|\rho({\bf t})\|}$ and $K_{\epsilon}(\rho({\bf t})x)\geq K_{\epsilon}(x)e^{-\epsilon\|\rho({\bf t})\|}$.
\end{itemize}
\end{proposition}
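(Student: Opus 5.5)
The plan is to reduce to the multiplicative ergodic theorem for commuting cocycles over the $\mathbb Z^k$-subaction and then pass to $\mathbb R^k$. Since $D\rho$ is a cocycle over the commuting maps $\rho_{{\bf t}_1},\dots,\rho_{{\bf t}_k}$ ($\{{\bf t}_i\}$ the standard basis), I would regard ${\bf n}\mapsto D_x\rho({\bf n})$, ${\bf n}\in\mathbb Z^k$, as a $\mathbb Z^k$-cocycle. Local freeness makes $T\mathcal O$ a continuous, $D\rho$-invariant subbundle: the generating vector fields $X_1,\dots,X_k$ of the action commute with every $\rho_{\bf t}$, so $D\rho_{\bf t}X_i=X_i$. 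Thus $D\rho_{\bf t}$ is uniformly bounded on $T\mathcal O$ and all exponents there vanish. I would then work with the induced cocycle on the normal bundle $N=TY/T\mathcal O$. The resulting splitting is lifted back by choosing invariant complements: the Oseledets subspaces for nonzero exponents lift canonically as the unique invariant lifts, by a graph-transform or cohomological argument, since the exponents on $N$ differ from $0$.

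The first key step is to apply Oseledets' theorem to each generator $\rho_{{\bf t}_i}$. Because the generators commute, each map preserves the Oseledets filtrations and splittings of the others. The common refinement, that is, the intersections of Oseledets spaces of the different generators, is then a measurable invariant splitting. On each piece every generator has a single exponent $\chi_i({\bf t}_j)$. The cocycle identity $D\rho({\bf n}+{\bf m})=D\rho({\bf n})\circ D\rho({\bf m})$ shows that on each piece the exponent of ${\bf n}$ is additive in ${\bf n}$. This yields linear functionals $\chi_i$ on $\mathbb Z^k$, hence on $\mathbb R^k$.

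Next I would extend from $\mathbb Z^k$ to $\mathbb R^k$. Any ${\bf t}$ is ${\bf n}+{\bf s}$ with $\|{\bf s}\|\le 1$, and $\|D\rho_{\bf s}^{\pm1}\|\le\Upsilon$, so the limits along $n{\bf t}$ agree with the linear formula. The splitting is $\rho_{\bf s}$-invariant for real ${\bf s}$ because $\rho_{\bf s}$ commutes with the $\mathbb Z^k$-action and therefore maps Oseledets spaces to Oseledets spaces with the same exponents. The measure $\mu$ is assumed ergodic only for the $\mathbb R^k$-action, not necessarily for $\mathbb Z^k$. However, the functionals and dimensions are $\rho_{\bf s}$-invariant functions, so they are constant a.e. In the same way one obtains the constant $L$.

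Finally, the tempered estimates (1)--(4) come from the Lyapunov regularity, i.e.\ the subexponential growth of the angles and constants. I would define
\[
C_\epsilon(x)=\sup_{{\bf t}}\|D_x\rho({\bf t})|_{E_i}\|\,e^{-\chi_i({\bf t})-\frac12\epsilon\|{\bf t}\|}
\]
(together with the inverse bound), and $K_\epsilon(x)$ as an infimum of angles weighted by $e^{\epsilon\|{\bf t}\|}$. Property (4) is then immediate from the definitions. \textbf{The main obstacle} is proving that these suprema are finite a.e. uniformly over all ${\bf t}\in\mathbb R^k$, not just along a single direction. My first attempt would use the higher-rank multiplicative ergodic theorem for $\mathbb Z^k$-cocycles, which gives Lyapunov regularity over a full-measure set. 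I would combine it with a tempering-kernel lemma applied to finitely many generators, and control the fractional parts by $\Upsilon$.
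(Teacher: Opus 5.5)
The paper gives no proof of its own here; it quotes \cite{KKH11}. The argument behind that citation is the standard one you outline: Oseledets for commuting elements, common refinement, linear functionals, then tempering. Two steps of your version do not go through as written.

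\textbf{The lift to a complement of $T\mathcal O$.} For nonzero $\chi_i$ nothing needs lifting. The common Oseledets space of the full cocycle on $TY$ is already invariant and transverse to the zero-exponent space $E_0\supseteq T\mathcal O$. But you say nothing about a zero functional on $TY/T\mathcal O$, and there an invariant complement need not exist. Take the horocycle flow $h_t$ on a compact quotient $\Gamma\backslash\SL(2,\R)$: it is locally free, Haar measure is ergodic, and all exponents vanish. In the global frame identifying tangent spaces with $\mathfrak{sl}(2,\R)$, $Dh_t=\operatorname{Ad}(u_{-t})$ is a constant unipotent map, namely a single $3\times 3$ Jordan block. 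An invariant measurable $2$-plane field transverse to the flow would push Haar measure to an $\operatorname{Ad}(u_{-t})$-invariant probability on the Grassmannian. Unipotent orbits on the Grassmannian converge to fixed points, so this measure is carried by invariant $2$-planes. The only invariant $2$-plane contains the flow direction. Hence $TY=T\mathcal O\oplus\bigoplus_i E_i$ is impossible here, and no argument can supply your lift. You must either assume that no transverse Lyapunov functional vanishes identically, or weaken the conclusion to $TY=E_0\oplus\bigoplus_{\chi_i\neq 0}E_i$ with $T\mathcal O\subseteq E_0$.

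\textbf{Uniformity in the direction of ${\bf t}$.} You flag this as the main obstacle at the end, but it is already used earlier, in two places.
\begin{itemize}
\item In the linearity step, $D_x\rho_{N({\bf n}+{\bf m})}=D_{\rho_{N{\bf m}}x}\rho_{N{\bf n}}\circ D_x\rho_{N{\bf m}}$ involves the moving base point $\rho_{N{\bf m}}x$. Pointwise limits do not control it, so the cocycle identity alone does not give additivity.
\item In the passage to real ${\bf t}$, ${\bf t}={\bf n}+{\bf s}$ gives $N{\bf t}=N{\bf n}+N{\bf s}$ with $\|N{\bf s}\|$ unbounded. The lattice points near $N{\bf t}$ do not lie on one ray.
\end{itemize}
Tempering kernels for the $k$ generators control constants only along each generator's own orbit. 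The higher-rank theorem you would fall back on is the $\mathbb Z^k$ case of the statement, so using it is no more than the paper's citation.

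The elementary fix is a finite net of directions. Take unit vectors ${\bf d}_1,\dots,{\bf d}_p$ whose $\delta$-cones cover $\R^k\setminus\{0\}$, and apply the one-parameter Oseledets theorem to each $\rho_{{\bf d}_l}$ on $E_i$. Write ${\bf t}=q{\bf d}_l+{\bf r}$ with $\|{\bf r}\|\le\delta\|{\bf t}\|+1$. Then $\|D\rho_{\bf r}^{\pm1}\|\le\Upsilon^{\|{\bf r}\|+1}$ and $|\chi_i({\bf r})|$ are absorbed into $e^{\epsilon\|{\bf t}\|}$. This gives two-sided bounds on $D_x\rho_{\bf t}|_{E_i}$, uniform in ${\bf t}\in\R^k$. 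Determinants on $E_i\oplus E_j$ and on $E_i\oplus T\mathcal O$ handle the angles.

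This uses linearity, which can be proved beforehand without uniformity. Set $g_N=\frac1N\log\|D\rho_{N{\bf a}}|_{E_i}\|$. Since $\rho_{N{\bf b}}$ preserves $\mu$, $g_N\circ\rho_{N{\bf b}}$ converges in measure to $\lim g_N$. This makes top exponents on $E_i$ subadditive and bottom exponents superadditive.

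Finally, your $C_\epsilon$ does not give (4) for free: comparing the base points $x$ and $\rho_{\bf s}x$ costs the conorm constant at $x$. Take instead $\sup_{{\bf s},{\bf t}}\|D_{\rho_{\bf s}x}\rho_{\bf t}|_{E_i}\|\,e^{-\chi_i({\bf t})-\frac{\epsilon}{2}\|{\bf t}\|-\epsilon\|{\bf s}\|}$, together with its inverse analogue. It is finite by the two-sided bounds and satisfies (4) by construction.
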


Recall from Proposition \ref{chart1} that 
the Lyapunov functionals $\chi_i$, for $i=1,\cdots, L$,
 are linear functionals on $\mathbb{R}^k$. 
Let $m_i$ denote the multiplicity of the corresponding Lyapunov exponent. 
 For each $i$, there exists a vector ${\bf u}_i=(u_{i,1},\cdots,u_{i,k})\in\mathbb{R}^k$ 
 such that $\chi_i(\cdot)=\langle\cdot,{\bf u}_i\rangle$. 
We define $\phi_i(x):=m_i\langle \tau(x), {\bf u}_i\rangle$. 
Each $\phi_i$ is of class $C^{1+\beta}$, and hence H\"older continuous.

Since $(\tau, \rho_{\bf t})$ is jointly nontrivial, 
 not all $\phi_i$ vanish identically. 
We take a maximal subset of $\{\phi_i : 1\leq i\leq L\}$ that 
satisfies the independence condition, which is nonempty. 
Without loss of generality, we assume this maximal subset is
$\{\phi_i : 1\leq i\leq l\}$ for some $1\leq l\leq L$. 
Then for every $j\in \{l+1,\cdots,L\}$, 
there exist constants $a_{j,i}$, $c_j$ and continuous functions 
$\rho_j:X\to \mathbb{R}$ such that
\[
\phi_j = \sum_{i=1}^l a_{j,i}\phi_i + c_j+\rho_j-\rho_j\circ f.
\]

\iffalse
By introducing a continuous map $\widetilde{\tau}:X\to \mathbb{R}^k$ defined by 
$$
\widetilde{\tau}(x)=(0,\cdots,0,\rho_{l+1}(x),\cdots,\rho_{k}(x)), 
$$
and a change of variables $h(x, y)= \left(x, \rho_{\widetilde{\tau}(x)}y\right)$, 
we obtain 
\begin{align*}
h\circ F\circ h^{-1}(x,y)&=h\circ F\left( x, \rho_{-\widetilde{\tau}(x)}y \right)=
h\left( fx, \rho_{\tau(x)-\widetilde{\tau}(x)}y \right)\\
&=\left( fx, \rho_{\tau(x)-\widetilde{\tau}(x)+\widetilde{\tau}(fx)}y \right).
\end{align*}
Thus $F$ is topologically conjugated to $\bar F$ where
$\bar{F}(x,y)=\left( fx, \rho_{\tau(x)-\widetilde{\tau}(x)+\widetilde{\tau}(fx)}y \right)$. 
This conjugation allows us to eliminate the coboundary terms. 
Therefore, without loss of generality, we may assume that
\[
\phi_j = \sum_{i=1}^l a_{j,i}\phi_i + c_j
\quad
\text{for } l+1\leq j\leq k.
\]
\fi

We are now able to conclude the proof of Theorem \ref{goal theorem}.

\begin{proof}[Proof of Theorem \ref{goal theorem}]
Recalling Proposition \ref{chart1}, 
we use $\chi_i$ and $m_i$ ($i=1,\dots,L$) to denote the Lyapunov functionals 
and corresponding multiplicities of the $\mathbb{R}^k$-action with respect to the 
smooth ergodic measure of maximal entropy $m_{\MME}$. 

We define a Lipschitz continuous function $J:\mathbb{R}^{l}\to \mathbb{R}$ by 
 $$
 J(x_1,\cdots, x_l)=\sum_{j=1}^l \max\{x_j,0\}+\sum_{j=l+1}^L \max \left\{ \sum_{i=1}^l a_{j,i} x_i+c_j,0\right\}. 
 $$
Since $m_{\MME}$ is the measure of maximal entropy for $\rho(\mu(\tau))$, 
by the definition of $J$, for every $\mu\in\mathcal M(f,X)$, we have 
\begin{align*}
h_{\top}\left(\rho_{\mu(\tau)}\right)
&=h_{m_{\MME}}\left(\rho_{\mu(\tau)}\right)
=\sum_{j=1}^L \max\{m_j \chi_j(\mu(\tau)),0\}
=\sum_{j=1}^L \max\{\mu(\phi_j),0\}\\
&=\sum_{j=1}^l \max\{\mu(\phi_j),0\}+\sum_{j=l+1}^L\max\left\{ \sum_{i=1}^l a_{j,i} \mu(\phi_i) +c_j,0\right\}\\
&=J(\mu(\phi_1),\cdots,\mu(\phi_l))=J(\mu(\Phi)).
\end{align*} 
Since Anosov diffeomorphisms have an abundance of ergodic measures,
Theorem \ref{BKL23-Theorem A} together with Corollary
\ref{higher le:fiber-entropy-corollary} gives 
\begin{align*}
h_{\top}(F)
&=\sup \left\{ h_\mu(f)+h_{\top}\left(\rho_{\mu(\tau)}\right): \mu\in\mathcal M_e(f,X)\right\}\\
&=\sup \left\{ h_\mu(f)+J(\mu(\Phi)): \mu\in\mathcal M_e(f,X)\right\}\\
&=\sup \left\{ h_\mu(f)+J(\mu(\Phi)): \mu\in\mathcal M(f,X)\right\}. 
\end{align*}
Thus, if $\zeta$ is an ergodic measure of maximal entropy for $F$
and $\mu=\pi_*\zeta$, then
$$
h_{\top}(F)=h_\zeta(F)
\leq h_\mu(f)+h_{\top}(\rho_{\mu(\tau)})
=h_\mu(f)+J(\mu(\Phi))
\leq h_{\top}(F).
$$
Hence, all the above inequalities are equalities, and $\mu$ is an
ergodic nonlinear equilibrium state for $(f,J)$. 
Hence, to prove Theorem \ref{goal theorem}, it suffices to show that 
$(f,J)$ has at least one and at most finitely many ergodic nonlinear equilibrium states. 

First, we show the existence of a nonlinear equilibrium state. 
Indeed, since $f$ is an Anosov diffeomorphism, 
the entropy map $\mu\mapsto h_\mu(f)$
is upper semicontinuous on the compact space $\mathcal M(f,X)$.
Moreover, since $\Phi$ and $J$ are continuous, the map 
$\mu\mapsto J(\mu(\Phi))$ is continuous. Therefore, the map 
$$
\mu\mapsto h_\mu(f)+J(\mu(\Phi))=h_\mu(f)+h_{\top}\left(\rho_{\mu(\tau)}\right)
$$
is upper semicontinuous on $\mathcal M(f,X)$ and hence attains its maximum. 
Thus, there exists at least one nonlinear equilibrium state. 
Combining this with Lemmas \ref{not boundary} and \ref{at most one nonlinear},
any such nonlinear equilibrium state is ergodic. 
Clearly, $\mu\times m_{\MME}$ is $F$-invariant and, by Theorem
\ref{higher le:product}, it is a measure of maximal entropy for $F$.
Taking an ergodic component if necessary, $F$ therefore admits at
least one ergodic measure of maximal entropy.

Based on the definition of $J$, 
we can cover the domain $\rot(\Phi)$ by at most $2^L$ compact convex regions. 
Each region is of the form 
\begin{align*}
\biggl\{(x_1,\cdots,x_l)\in \rot(\Phi): &
x_i \in \mathbb{R}_{\geq 0} \text{ or } \mathbb{R}_{\leq 0} \ \ \forall\, 1\leq i\leq l,\\
&\sum_{i=1}^l a_{j,i}x_i +c_j \in \mathbb{R}_{\geq 0} \text{ or } \mathbb{R}_{\leq 0} \ \ \forall\, l+1\leq j\leq L \biggr\} .
\end{align*}
On each such region $\mathcal{R}$, $J$ is an affine function of the form  
$$ 
 J(\omega_1,\cdots,\omega_l)=\sum_{j=1}^l B_j \omega_j+C 
$$ 
for some constants $B_j$ and $C$. Note that each region is compact and convex. 

By the discussion in the proof of Lemma \ref{not boundary}
(see also \cite[Corollary 3.12]{BKL23}), the entropy function
$\mathsf h(\cdot)$ is strictly concave on $\inter\rot(\Phi)$,  
implying that $\mathsf{h}(\cdot)+J(\cdot)$ 
is also strictly concave on $\mathcal{R}\cap \inter \rot (\Phi)$. 
According to Lemma \ref{not boundary}, 
every global maximizer of $\mathsf{h}(\cdot)+J(\cdot)$ 
contained in $\mathcal{R}$ belongs to the interior $\inter \rot(\Phi)$. 

Suppose there exist two distinct points ${\bf w}_1,{\bf w}_2\in\mathcal{R}$ 
attaining the global maximum of $\mathsf{h}(\cdot)+J(\cdot)$, 
then ${\bf w}_1,{\bf w}_2\in \mathcal{R}\cap \inter \rot (\Phi)$. 
By the strict concavity of $\mathsf{h}(\cdot)+J(\cdot)$ on 
$\mathcal{R}\cap \inter \rot (\Phi)$, for any $t\in (0,1)$, 
we have 
$t{\bf w}_1+(1-t){\bf w}_2\in\mathcal{R}\cap \inter \rot (\Phi)$ and 
\begin{align*}
\mathsf{h}(t{\bf w}_1+(1-t){\bf w}_2)+J(t{\bf w}_1+(1-t){\bf w}_2)&> 
t\left[\mathsf{h}({\bf w}_1)+J({\bf w}_1)\right]+(1-t)\left[\mathsf{h}({\bf w}_2)+J({\bf w}_2)\right]\\
&=\mathsf{h}({\bf w}_1)+J({\bf w}_1), 
\end{align*}
which contradicts the fact that ${\bf w}_1$ attains the global maximum of $\mathsf{h}(\cdot)+J(\cdot)$. 
Thus, $\mathsf{h}(\cdot)+J(\cdot)$ has at most one global maximizer on each region $\mathcal{R}$. 
Combining this with Lemma \ref{at most one nonlinear}, 
each global maximizer gives rise to at most one ergodic nonlinear equilibrium state, 
and every such state is the unique equilibrium state for a
H\"older continuous potential of the form ${\bf v}\cdot\Phi$.
Since there are at most $2^L$ regions, $(f,J)$ possesses at most
$2^L$ ergodic nonlinear equilibrium states. 
This establishes the first part of the theorem.

Furthermore, if $f$ is topologically mixing, 
Lemma \ref{at most one nonlinear} further ensures that 
the dynamical systems $(X, f, \mu)$ 
associated with every element $\mu\in\mathcal{P}(F)$ are Bernoulli 
and exponentially mixing of all orders, 
thereby completing the proof of Theorem \ref{goal theorem}. 
\end{proof}

\begin{remark}\label{goal theorem-remark}
It is worth mentioning that in the proof of Theorem \ref{goal theorem}, 
the number of projections of these ergodic measures of maximal entropy 
for $F$ ranges from $1$ and $2^{\dim Y-k}$. 
This upper bound is a direct consequence of the estimate $L\leq \dim Y-k$. 
Since the signs defining each nonempty region are precisely the 
signs of the Lyapunov functionals evaluated at the drift vector, 
each such region is naturally associated with a unique Weyl chamber. 
This observation allows us to further improve the above bound to
$$
2\sum_{j=0}^{k-1}\binom{\dim Y-k-1}{j},
$$
as discussed in \S \ref{subsect: On the number of ergodic MMEs}.
Moreover, by comparing the maximum values of $\mathsf{h}+J$ over these regions, 
we conclude that the projection of every ergodic measure of maximal entropy 
is associated with a region on which $\mathsf{h}+J$ attains its global maximum, 
and each such region gives rise to at most one such projection.
Furthermore, if the entropy contribution from the fiber is zero, 
then the projection of any ergodic measure of maximal entropy 
of the skew product $F$ is the unique measure of maximal entropy of $f$.
\end{remark}

\begin{lemma}\label{tau-k1-dimension}
Let $\tau: X \to \mathbb{R}^k$ be a H\"older cocycle which is not
cohomologous to a cocycle taking values in any affine subspace of
$\mathbb{R}^k$ of dimension strictly less than $k_1$.  Let
$$
S^\perp:=
\left\{D \in \mathbb{R}^k :
D \cdot \tau : X \to \mathbb{R}
\text{ is cohomologous to a constant}\right\},
$$
and let $S$ denote the orthogonal complement of $S^\perp$. Then
$$
\operatorname{codim} S^\perp = \dim S \geq k_1.
$$
\end{lemma}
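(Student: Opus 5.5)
We argue by contradiction: from the subspace $S^\perp$ we build an explicit cohomology between $\tau$ and a cocycle with values in an affine subspace of dimension $\dim S$. First we check that $S^\perp$ is a linear subspace of $\mathbb R^k$. If $D_1\cdot\tau=c_1+u_1-u_1\circ f$ and $D_2\cdot\tau=c_2+u_2-u_2\circ f$ with continuous $u_1,u_2$, then for scalars $\alpha,\beta$,
\[
(\alpha D_1+\beta D_2)\cdot\tau=(\alpha c_1+\beta c_2)+(\alpha u_1+\beta u_2)-(\alpha u_1+\beta u_2)\circ f .
\]
Also $0\in S^\perp$. So $S^\perp$ is a subspace, $S=(S^\perp)^\perp$ is well defined, and $\mathbb R^k=S\oplus S^\perp$ orthogonally. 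Therefore $\operatorname{codim}S^\perp=\dim S$, and it remains to prove $\dim S\geq k_1$.

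Set $p=\dim S^\perp$ and choose an orthonormal basis $D_1,\dots,D_p$ of $S^\perp$. For each $i$ write
\[
D_i\cdot\tau=c_i+u_i-u_i\circ f
\]
with $c_i\in\mathbb R$ and $u_i\colon X\to\mathbb R$ continuous. Define the continuous map $u=\sum_{i=1}^p u_iD_i\colon X\to\mathbb R^k$, the vector $\mathbf c=\sum_i c_iD_i$, and the cohomologous cocycle
\[
\tau'=\tau-(u-u\circ f).
\]
Decompose $\tau=P_S\tau+\sum_i (D_i\cdot\tau)D_i$, where $P_S$ is the orthogonal projection onto $S$. The coboundary $u-u\circ f$ exactly cancels the non-constant part of each $S^\perp$-coordinate, so
\[
\tau'=P_S\tau+\mathbf c .
\]
Thus $\tau'$ takes values in the affine subspace $\mathbf c+S$, which has dimension $\dim S$.

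By hypothesis, $\tau$ is not cohomologous to a cocycle with values in any affine subspace of dimension strictly less than $k_1$. Hence $\dim S\geq k_1$, which gives the claim.

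No step here is a serious obstacle. The only point needing care is the consistency of the cohomology notion. The definition in the paper allows continuous transfer functions, and the $u$ built above is continuous, so it fits. If Hölder transfer functions were required, one would invoke Livšic regularity for the Anosov base $f$. Since $\tau$ is Hölder, a continuous transfer function for a Hölder cocycle cohomologous to a constant is automatically Hölder, so either convention works. Measurability issues do not arise, because each $u_i$ is a single fixed function.
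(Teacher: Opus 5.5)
Your proof is correct and follows the paper's argument essentially verbatim: you take an orthonormal basis $D_1,\dots,D_p$ of $S^\perp$, assemble the transfer functions into $u=\sum_i u_iD_i$, and observe that the resulting cohomologous cocycle takes values in $\mathbf c+S$. Your explicit check that $S^\perp$ is a linear subspace is a small addition that the paper leaves implicit.
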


\begin{proof}
Suppose for contradiction that $\dim S < k_1$. 
Let $j := \dim S^\perp = k - \dim S$, 
and let $\{D_1,\ldots,D_j\}$ be an orthonormal basis of $S^\perp$.
By the definition of $S^\perp$, for each $1 \leq i \leq j$, 
there exist a continuous function $P_i : X \to \mathbb{R}$ and $c_i\in \mathbb{R}$ such that
$$
D_i \cdot \tau(x) = P_i(f(x)) - P_i(x)+c_i.
$$
Define
$$
\eta(x):=\tau(x)-\sum_{i=1}^j \bigl(P_i(f(x)) - P_i(x)\bigr) D_i,\quad 
{\bf c}=\sum_{i=1}^j c_iD_i.
$$
Since $\{D_1,\ldots,D_j\}$ is an orthonormal set, 
taking the inner product with $D_l$ for any $1 \leq l \leq j$ gives
$$
D_l \cdot \eta(x)
=D_l \cdot \tau(x) - \bigl(P_l(f(x)) - P_l(x)\bigr) = c_l.
$$
Hence $\eta(x) \in {\bf c}+S$ for every $x \in X$. 
Setting 
$$
P(x) := \sum_{i=1}^j P_i(x) D_i, 
$$
we obtain $P : X \to \mathbb{R}^k$ and
$$
\tau(x) - \eta(x) = P(f(x)) - P(x).
$$
Thus, $\tau$ is cohomologous to the cocycle $\eta$, 
which takes values in ${\bf c}+S$. Since $\dim S < k_1$, 
this contradicts the assumption on $\tau$. This completes the proof. 
\end{proof}

\begin{corollary}\label{projection not MME}
Under the same assumptions as in Theorem \ref{goal theorem}, 
if the pair $(\tau, \rho_{\bf t})$ is jointly nontrivial, 
then the measure $\pi_*\zeta$ 
is not the measure of maximal entropy $\mu_{0}$ of $f$. 
In addition, the vector $\pi_*\zeta(\tau)\neq {\bf 0}$ and $h_{\top}(\rho_{\pi_*\zeta(\tau)})>0$. 
\end{corollary}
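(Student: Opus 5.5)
The plan is to work entirely with the variational characterization established in the proof of Theorem \ref{goal theorem}. There, $\mu=\pi_*\zeta$ was shown to maximize $\mathcal E(\nu)=h_\nu(f)+H(\nu(\tau))$ over $\mathcal M(f,X)$, and the maximum value is $h_{\top}(F)$. All three assertions then follow from one claim:
\[
\sup_{\nu}\mathcal E(\nu)>\mathcal E(\mu_0).
\]
Indeed, this claim gives $\mu\neq\mu_0$ at once. It also gives $h_{\top}(F)>h_{\top}(f)\geq h_\mu(f)$, because $\mathcal E(\mu_0)\geq h_{\top}(f)$ since $H\geq 0$. Consequently $H(\mu(\tau))=h_{\top}(F)-h_\mu(f)>0$. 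Since $H(\mathbf 0)=0$, this forces $\mu(\tau)\ne\mathbf 0$, and $h_{\top}(\rho_{\mu(\tau)})=H(\mu(\tau))>0$.

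To set up the perturbation argument, work on the rotation set of $\Phi=\tau$ itself, i.e. the components of $\tau$. By (C1) and Proposition \ref{KW14-Proposition-6}, $\tau$ satisfies the independence condition, so $\inter\rot(\tau)\neq\varnothing$. By \cite[Corollary 3.12]{BKL23}, the entropy function $\mathsf h$ is real analytic and strictly concave on $\inter\rot(\tau)$, and it is Legendre dual to the pressure:
\[
\mathsf h(\mathbf w)=\inf_{\mathbf v}\bigl(P_{\top}(\mathbf v\cdot\tau)-\mathbf v\cdot\mathbf w\bigr).
\]
The MME $\mu_0$ is the equilibrium state for $\mathbf v=\mathbf 0$. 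By Proposition \ref{Jen01-Corollary 3}, its drift $\mathbf w_0=\mu_0(\tau)$ lies in $\inter\rot(\tau)$. Moreover $\mathbf w_0$ is the global maximizer of $\mathsf h$, with $\mathsf h(\mathbf w_0)=h_{\top}(f)$ and $\nabla\mathsf h(\mathbf w_0)=\mathbf 0$. Hence, for every unit vector $\mathbf u$ and small $t>0$,
\[
\mathsf h(\mathbf w_0+t\mathbf u)\geq h_{\top}(f)-Ct^2 .
\]
Each such $\mathbf w_0+t\mathbf u$ is realized by an equilibrium state $\mu_{\mathbf v}$, again by Proposition \ref{Jen01-Corollary 3}, and $h_{\mu_{\mathbf v}}(f)=\mathsf h(\mathbf w_0+t\mathbf u)$ by Lemma \ref{at most one nonlinear}.

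It remains to find a direction $\mathbf u$ with $H(\mathbf w_0+t\mathbf u)\geq H(\mathbf w_0)+ct$ for some $c>0$; this linear gain beats the quadratic entropy loss and proves the claim. I split into two cases.
\begin{enumerate}[(a)]
\item Suppose $H(\mathbf w_0)=0$, i.e. $\mathbf w_0\in Z_\rho$. By (C2), $\dim Z_\rho<k$, so I pick $\mathbf u\notin Z_\rho$. Since every $\chi_i$ vanishes on $Z_\rho$, formula \eqref{intro:eq:action-entropy} gives $H(\mathbf w_0+t\mathbf u)=tH(\mathbf u)$ with $H(\mathbf u)>0$.
\item Suppose $H(\mathbf w_0)>0$. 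Then $\mathbf w_0$ is not a minimizer of the convex function $H$, whose minimum value is $0$. Hence every subgradient $\mathbf g$ of $H$ at $\mathbf w_0$ is nonzero. Taking $\mathbf u=\mathbf g/|\mathbf g|$, convexity gives $H(\mathbf w_0+t\mathbf u)\geq H(\mathbf w_0)+t|\mathbf g|$.
\end{enumerate}
In either case, for small $t$ we get $\mathcal E(\mu_{\mathbf v})\geq \mathcal E(\mu_0)+ct-Ct^2>\mathcal E(\mu_0)$, which proves the claim.

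The main obstacle I expect is justifying that $\mathsf h$ is $C^2$ near $\mathbf w_0$ with vanishing gradient there, which is what yields the quadratic (rather than merely sublinear) entropy loss. My first attempt would be to use Legendre duality with the strictly convex, analytic pressure $\mathbf v\mapsto P_{\top}(\mathbf v\cdot\tau)$. Its Hessian at $\mathbf v=\mathbf 0$ is the covariance matrix of $\tau$ under $\mu_0$, which is nondegenerate by (C1). Hence the map $\mathbf v\mapsto\nabla P_{\top}(\mathbf v\cdot\tau)$ is a local analytic diffeomorphism near $\mathbf 0$. It then follows that $\mathsf h$ is analytic near $\mathbf w_0$ with $\nabla\mathsf h(\mathbf w_0)=-\mathbf v=\mathbf 0$.
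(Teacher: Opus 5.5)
Your proposal is correct, but its key step differs from the paper's.

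Both arguments reduce everything to showing that $\mu_0$ does not maximize $\mathcal E$. The paper then gets $h_{\top}(\rho_{\pi_*\zeta(\tau)})>0$ by a short contradiction via uniqueness of the MME, which is equivalent to your observation $h_{\top}(F)>h_{\top}(f)$.

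For the key step, the paper does not perturb locally. It takes the affine piece $B\cdot\omega+C$ of $J$ on a full-dimensional linearity region containing $\mu_0(\Phi)$. It uses (C1) and (C2) to show $B\neq0$, since otherwise $H$ would vanish on an open set and force $Z_\rho=\mathbb R^k$. It then compares globally with the equilibrium state $\mu_B$ of $B\cdot\Phi$. Since $J\geq B\cdot\omega+C$ with equality at $\mu_0(\Phi)$, and $\mu_B\neq\mu_0$,
\[
h_{\mu_B}(f)+J(\mu_B(\Phi))\geq P_{\top}(f,B\cdot\Phi)+C>h_{\mu_0}(f)+J(\mu_0(\Phi)).
\]
In your language, $B$ is a nonzero subgradient of $H$ at $\mathbf w_0$. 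One exists in your case (a) as well, since there $H(\mathbf w_0+\mathbf d)=H(\mathbf d)$ and $H\not\equiv0$.

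This global comparison makes your ``main obstacle'' disappear. It needs only uniqueness of the equilibrium state of a single H\"older potential not cohomologous to a constant, it produces an explicit competitor, and it uses no regularity of $\mathsf h$ at $\mathbf w_0$.

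Your route instead uses the analytic Legendre structure to get $\nabla\mathsf h(\mathbf w_0)=\mathbf 0$: nondegenerate covariance under $\mu_0$, and the inverse function theorem for $\mathbf v\mapsto\nabla P_{\top}(f,\mathbf v\cdot\tau)$. You then show that $\mu_0$ violates first-order optimality because $H$ grows linearly in some direction. An $o(t)$ entropy loss would already suffice here, not just $Ct^2$. Working directly on $\rot(\tau)\subset\mathbb R^k$, rather than with the reduced potential $\Phi$, is also a mild simplification.

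One citation is slightly off. The equality $h_{\mu_{\mathbf v}}(f)=\mathsf h(\mathbf w_0+t\mathbf u)$ does not follow from Lemma \ref{at most one nonlinear} as stated, because its hypothesis is the existence of a nonlinear equilibrium state at that rotation vector. It follows instead from $h_\nu(f)+\mathbf v\cdot\mathbf w\leq P_{\top}(f,\mathbf v\cdot\tau)=h_{\mu_{\mathbf v}}(f)+\mathbf v\cdot\mathbf w$ for all $\nu$ with $\nu(\tau)=\mathbf w$.
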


\begin{proof} 
Suppose $\mu_0(\Phi)$ lies in a region $\mathcal R$ 
with nonempty interior on which $J$ is affine, given by  
\[ 
J(\omega_1,\dots,\omega_l)=\sum_{j=1}^l B_j \omega_j+C 
\] 
for some constants $B_j$ and $C$.  
Let $B=(B_1,\dots,B_l)$ and let $\mu_{B}$ denote the equilibrium state of $f$ 
for the potential $B\cdot \Phi$.

We first show that $B\neq (0,\dots,0)$. 
Suppose otherwise that $B=(0,\dots,0)$. 
Then, by Lemma \ref{tau-k1-dimension} and the condition (C1), 
the topological entropy  $h_{\top}(\rho_{\bf t})$ is constant on 
a nonempty open subset  of a $k$-dimensional affine subspace of $\mathbb R^k$. 
Since, for every ${\bf t}\in\mathbb R^k$ and $s>0$, we have 
$$ 
h_{\top}(\rho_{s{\bf t}}) = s\,h_{\top}(\rho_{\bf t}), 
$$ 
this constant must be zero. Hence, we obtain 
$$ 
\dim Z_\rho=\dim\{{\bf t}\in\mathbb R^k: 
h_{\top}(\rho_{\bf t})=0\}= k, 
$$ 
which contradicts $\dim Z_\rho<k$. 
Therefore, $B\neq(0,\dots,0)$.

Due to the independence of $\Phi$ and the fact that $B \neq (0,\dots,0)$,  
it follows that $\mu_B \neq \mu_0$.  

Moreover, by the definition of $J$, the affine function
$\omega \mapsto B\cdot\omega+C$ corresponding to the region $\mathcal R$ satisfies
$$
J(\omega)\geq B\cdot\omega+C
$$
for every $\omega\in\rot(\Phi)$, with equality at 
$\omega=\mu_0(\Phi)$. Therefore,
by the variational principle for the potential $B \cdot \Phi$, we obtain  
\begin{align*} 
h_{\mu_B}(f) + J(\mu_B(\Phi))  
&\geq h_{\mu_B}(f) + \mu_B(B \cdot \Phi) + C \\ 
&> h_{\mu_0}(f) + \mu_0(B \cdot \Phi) + C \\ 
&= h_{\mu_0}(f) + J(\mu_0(\Phi)). 
\end{align*} 
The above inequality shows that $\mu_0$ fails to maximize  
$h_\mu(f) + J(\mu(\Phi))$.  
Consequently, $\mu_0$ cannot be the marginal measure $\pi_*\zeta$  
constructed in Theorem \ref{goal theorem}.

Finally, let $\mu=\pi_*\zeta$. Suppose that $h_{\top}(\rho_{\mu(\tau)})=0$.
Since $\mu$ is a nonlinear equilibrium state, we have
$$
h_{\top}(F) =h_\mu(f)+h_{\top}(\rho_{\mu(\tau)})=h_\mu(f).
$$
On the other hand, we obtain
$$
h_{\top}(F)
\geq
h_{\mu_0}(f)+h_{\top}(\rho_{\mu_0(\tau)})
\geq h_{\mu_0}(f)\geq h_{\mu}(f)=h_{\top}(F).
$$
Thus, we have $h_{\mu_0}(f)= h_{\mu}(f)$. 
Since $\mu_0$ is the unique measure of maximal entropy for $f$, 
it follows that $\mu=\mu_0$, which is a contradiction. Therefore, we conclude 
$$
h_{\top}(\rho_{\pi_*\zeta(\tau)})>0.
$$
Since $h_{\top}(\rho_{\bf 0})=0$, it follows that
$\pi_*\zeta(\tau)\neq {\bf 0}$. 
This completes the proof.
\end{proof}

\section{Relative Ledrappier--Young theory}\label{sect:relative-LY}

In this section, we prove Theorem \ref{relative SRB} concerning leafwise measure rigidity. 

\subsection{Invariant foliations and charts for the skew product}\label{subsect: Invariant foliations for the skew product}

In this subsection, we present some results concerning 
invariant foliations and Lyapunov exponents adapted to the skew product setting. 
Neither smoothness nor neutralized-homogeneity on $\zeta$, 
nor joint nontriviality, is required here. 

For the skew product $F$, 
let $T_{(x,y)}(X\times Y)$ and $T_yY$ denote the tangent spaces to 
$X\times Y$ at $(x,y)$ and to $Y$ at $y$, respectively. 
For a foliation $\mathcal{W}$ and a point $z$ on a leaf, 
$T_z\mathcal{W}$ denotes the tangent space to the leaf through $z$. 
For each $x\in X$, let $W^s(x)$ and $W^u(x)$ denote 
the stable and unstable leaves of $f$ passing through $x$. 
For any $x\in X$ and $y\in W^s(x)$, we define 
$$
\beta^s(x,y)=\sum_{k=0}^\infty \left(\tau(f^kx)-\tau(f^ky)\right). 
$$
The sum $\beta^s$ is well-defined since $\tau(f^kx)-\tau(f^ky)$ decays exponentially. 
We define 
\begin{align}\label{def:widetildeWs}
\widetilde{W}^s(x,z) = \{(y, \rho_{\beta^s(x,y)}z) : y \in W^s(x)\}. 
\end{align}  

Next, we recall several auxiliary results for the skew product studied in \cite{DQ26}. 

\begin{lemma}[\cite{DQ26} Lemma 3.1]\label{Dong23-Lemma 3.1}
For any $(x,z)\in X\times Y$,
$\widetilde{W}^s(x,z)$ is a smooth invariant submanifold of $X\times Y$.
Moreover, it is a stable manifold of $F$ whose projection onto $X$ is $W^s(x)$.
\end{lemma}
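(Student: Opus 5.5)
The plan is to check directly that $\widetilde W^s(x,z)$ is an embedded submanifold, that it is mapped by $F$ onto $\widetilde W^s(F(x,z))$, and that distances between points on it contract exponentially under $F^n$. The submanifold property comes first. Since $\tau$ is $C^{1+\beta}$ and $f$ is Anosov, the series $\beta^s(x,\cdot)=\sum_{k\ge0}\bigl(\tau(f^kx)-\tau(f^k\cdot)\bigr)$ converges on $W^s(x)$, together with its leafwise derivatives. Indeed, $\|D(\tau\circ f^k)|_{E^s}\|\le C\lambda^k$ with $\lambda<1$, so the differentiated series converges uniformly on compact pieces of leaves. Hence $y\mapsto\beta^s(x,y)$ is a $C^1$ (in fact $C^{1+\beta}$ along leaves) function on the immersed manifold $W^s(x)$. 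Therefore $\widetilde W^s(x,z)$ is the graph of the $C^1$ map $y\mapsto\rho_{\beta^s(x,y)}z$ over $W^s(x)$. It is thus an injectively immersed submanifold, and $\pi$ restricts to a diffeomorphism onto $W^s(x)$. This already gives the projection claim.

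The second step is invariance. Take $(y,\rho_{\beta^s(x,y)}z)$ with $y\in W^s(x)$. Then
\[
F\bigl(y,\rho_{\beta^s(x,y)}z\bigr)=\bigl(fy,\rho_{\tau(y)+\beta^s(x,y)}z\bigr).
\]
The telescoping identity $\beta^s(x,y)=\tau(x)-\tau(y)+\beta^s(fx,fy)$ holds by definition. Using commutativity of the $\mathbb R^k$-action, it turns the fiber coordinate into $\rho_{\beta^s(fx,fy)}\rho_{\tau(x)}z$. So the image point lies in $\widetilde W^s(fx,\rho_{\tau(x)}z)=\widetilde W^s(F(x,z))$. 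Since $f(W^s(x))=W^s(fx)$, we get $F(\widetilde W^s(x,z))=\widetilde W^s(F(x,z))$.

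The third step is contraction. Iterating the identity gives
\[
F^n\bigl(y,\rho_{\beta^s(x,y)}z\bigr)=\bigl(f^ny,\rho_{\beta^s(f^nx,f^ny)}\rho_{S_n\tau(x)}z\bigr),
\]
while $F^n(x,z)=(f^nx,\rho_{S_n\tau(x)}z)$. Now $d(f^nx,f^ny)\le C\lambda^n d^s(x,y)$ and $\|\beta^s(f^nx,f^ny)\|\le C'\lambda^n$. Local freeness gives $d(\rho_{\bf u}w,w)\le C_\rho\|\bf u\|$, so the two fiber coordinates are within $C''\lambda^n$. Hence the distance between the iterates decays exponentially, and $\widetilde W^s(x,z)$ lies in the stable set of $(x,z)$.

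I expect the main obstacle to be the claim that $\widetilde W^s(x,z)$ is \emph{the} stable manifold rather than merely a subset of the stable set. For the product skew product the fiber may itself contract, so the full stable set of $F$ is generally larger. I would therefore read the lemma, as in \cite{DQ26}, as saying that $\widetilde W^s$ is the strong stable leaf of $F$ tangent to the lift of $E^s_f$, i.e.\ the leaf of the partially hyperbolic splitting whose fiber directions are central. To make this precise, I would show that its tangent space is the graph of the $DF$-invariant map $E^s_f(y)\ni v\mapsto D\rho(\cdot)\,d\beta^s(x,\cdot)v$. This space is contracted at the rate of $E^s_f$, which dominates the fiber rates because fiber dynamics along orbits are given by the commuting action and grow subexponentially relative to the chosen Anosov rate. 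Uniqueness then follows from the standard characterization of strong stable manifolds by exponential rate.
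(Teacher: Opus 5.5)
Your first three steps are correct and amount to the natural direct verification. The paper gives no argument of its own here and simply cites \cite{DQ26}. The telescoping identity $\beta^s(x,y)=S_n\tau(x)-S_n\tau(y)+\beta^s(f^nx,f^ny)$ gives both $F(\widetilde W^s(x,z))=\widetilde W^s(F(x,z))$ and your formula for $F^n$. You also identify the key point correctly: the two fiber coordinates differ only by the small orbit translation $\rho_{\beta^s(f^nx,f^ny)}$, so no derivative growth of the fiber action enters the contraction estimate. (The bound $d(\rho_{\mathbf u}w,w)\le C_\rho\|\mathbf u\|$ comes from the $C^1$ regularity of the action and compactness of $Y$, not from local freeness.) The lemma only asserts that $\widetilde W^s(x,z)$ is \emph{a} stable manifold projecting onto $W^s(x)$, so this already proves it.

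Your last paragraph should be dropped or rewritten, because its domination claim is false in this setting. Along an $F$-orbit the fiber derivative is $D\rho_{S_n\tau(x)}$. On the transverse Lyapunov spaces $E_i$ of the action it has exponents $\chi_i(\mu(\tau))$, so it grows or decays exponentially, possibly faster than $f$ contracts $E^s_f$. This is exactly the positive fiber entropy the paper exploits, e.g.\ for Weyl chamber flows. Consequently:
\begin{itemize}
\item $F$ is in general not partially hyperbolic with central fibers;
\item the lift of $E^s_f$ need not be the most strongly contracted direction;
\item the rate characterization of strong stable manifolds in $X\times Y$ gives no uniqueness.
\end{itemize}
What is true is that the fiber component of $T\widetilde W^s$ lies in the orbit distribution $T\mathcal O$. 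On $T\mathcal O$ the abelian action is uniformly bounded with bounded inverse, since it preserves the generating vector fields. So domination holds only inside the $DF$-invariant subbundle $TX\oplus T\mathcal O$.

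If you want a genuine uniqueness statement, pass to the cover. Set $\hat F(y,\mathbf u)=(fy,\mathbf u+\tau(y))$ on $X\times\mathbb R^k$ and $\Psi_z(y,\mathbf u)=(y,\rho_{\mathbf u}z)$; then $F\circ\Psi_z=\Psi_z\circ\hat F$. The full stable set of $(x,\mathbf 0)$ under $\hat F$ is exactly $\{(y,\beta^s(x,y)):y\in W^s(x)\}$, because $\mathbf u+S_n\tau(y)-S_n\tau(x)\to\mathbf 0$ forces $\mathbf u=\beta^s(x,y)$. Its image under $\Psi_z$ is $\widetilde W^s(x,z)$, and no rate argument is needed.
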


\begin{lemma}[\cite{DQ26} Lemma 3.2]\label{base LY}
Let $\zeta$ be an ergodic $F$-invariant measure, 
and let $\mu := \pi_* \zeta$ be the ergodic $f$-invariant measure. 
Fix $(x,z) \in X \times Y$. 
Then the Lyapunov exponents of $f$ along $W^s(x)$ with respect to $\mu$ 
coincide with those of $F$ along $\widetilde{W}^s(x,z)$ with respect to $\zeta$. 
\end{lemma}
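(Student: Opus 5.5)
The plan is to compare the two exponent spectra by identifying the tangent directions. The stable manifold $\widetilde W^s(x,z)$ is the graph of the map $y\mapsto \rho_{\beta^s(x,y)}z$ over $W^s(x)$. By Lemma \ref{Dong23-Lemma 3.1}, $F$ maps $\widetilde W^s(x,z)$ onto $\widetilde W^s(F(x,z))$ and covers $f|_{W^s(x)}$. I would use this to write
\[
F\circ\Theta_{(x,z)}=\Theta_{F(x,z)}\circ f \quad\text{on } W^s(x),
\qquad
\Theta_{(x,z)}(y):=\bigl(y,\rho_{\beta^s(x,y)}z\bigr).
\]
Here $\Theta$ is the graph map. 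So $DF|_{T\widetilde W^s}$ is conjugate to $Df|_{E^s}$ via the derivatives $D\Theta_{(x,z)}(x)$. The conjugacy is $v\mapsto \bigl(v,\;D_y\rho_{\beta^s(x,y)}z\,|_{y=x}\,v\bigr)$. Differentiating the series, its second component equals $\bigl(\sum_{k\ge 0}D\tau(f^kx)Df^k v\bigr)$ pushed forward by the infinitesimal generator of the action at $z$.

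The key step is a uniform bound on these conjugating maps. First I would check that $y\mapsto\beta^s(x,y)$ is $C^1$ along $W^s(x)$ with derivative bounded independently of $x$. Since $\tau$ is $C^{1+\beta}$ and $\|Df^k|_{E^s}\|\le C\lambda^k$ with $\lambda<1$, the series of derivatives converges uniformly. This gives $\|D_x\beta^s(x,\cdot)\|\le C\|D\tau\|_\infty/(1-\lambda)$. Local freeness and compactness of $Y$ make the orbit-direction map ${\bf u}\mapsto \frac{d}{ds}\rho_{s{\bf u}}z$ uniformly bounded. Hence there is a constant $C'$ with
\[
\|v\|\le\|D\Theta_{(x,z)}(x)v\|\le C'\|v\| \quad\text{for all } v\in E^s_f(x) \text{ and all } (x,z).
\]
The lower bound is trivial because the first component is $v$ itself.

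Iterating the conjugacy gives $DF^n|_{T\widetilde W^s}\circ D\Theta_{(x,z)}=D\Theta_{F^n(x,z)}\circ Df^n|_{E^s(x)}$. The bounded distortion above then gives
\[
\tfrac1n\log\|DF^n(D\Theta\, v)\|-\tfrac1n\log\|Df^n v\| = O(1/n).
\]
So for $\zeta$-a.e.\ $(x,z)$ with $x$ Oseledets-regular for $\mu$, the pointwise exponents and Oseledets filtrations match, with multiplicities. This holds because $D\Theta$ is a linear isomorphism of $E^s(x)$ onto $T\widetilde W^s(x,z)$. Since $\pi_*\zeta=\mu$, $\mu$-generic base points lift to $\zeta$-generic points, and ergodicity makes both spectra constant. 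This gives the lemma.

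The main obstacle is the differentiability of $\beta^s$ along $W^s$ and the uniformity of the distortion constant. Both need the uniform contraction on the base and the $C^1$ bound on $\tau$; they do not use Pesin theory. I expect this to be routine once the termwise derivative estimate is written down.
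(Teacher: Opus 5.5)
Your argument is correct. The paper does not prove this lemma itself; it cites \cite{DQ26} and then, in Lemma~\ref{base same dimension}, uses the same mechanism you do: the exact conjugacy $F\circ\Theta_{(x,z)}=\Theta_{F(x,z)}\circ f$ on $W^s(x)$, i.e.\ $\pi|_{\widetilde W^s}$ acting as a uniformly bi-Lipschitz conjugacy onto $W^s$. The only slip is a harmless sign, $D_y\beta^s(x,y)v=-\sum_{k\ge0}D\tau(f^ky)\,Df^kv$, which does not affect the norm estimates.
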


\begin{lemma}[\cite{DQ26} Lemma 3.3]\label{all LY}
Suppose $\zeta = \mu \times \nu$ is an ergodic $F$-invariant measure on $X \times Y$. 
Then the Lyapunov exponents of $F$ with respect to $\zeta$ are the 
Lyapunov exponents of $f$ with respect to $\mu$ on $X$ 
and the Lyapunov exponents of $\rho_{\mu(\tau)}$ with respect to $\nu$ on $Y$.
\end{lemma}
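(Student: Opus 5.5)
The plan is to work directly with the derivative of $F$ in block form and read off its Lyapunov spectrum from the diagonal blocks. Writing $F(x,y)=(fx,\rho_{\tau(x)}y)$, the differential at $(x,y)$ has the lower block-triangular form
\[
DF_{(x,y)}=\begin{pmatrix} D_xf & 0\\ \partial_x\bigl(\rho_{\tau(x)}y\bigr) & D_y\rho_{\tau(x)}\end{pmatrix}.
\]
The first column is $D_xf$ on top and a coupling term $C(x,y)=D_{\tau(x)}\rho(y)\circ D_x\tau$ below it. Iterating, $DF^n_{(x,y)}$ keeps this triangular shape. Its diagonal blocks are $D_xf^n$ and $D_y\rho_{S_n\tau(x)}$, by commutativity of the action. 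So the vertical subbundle $0\oplus TY$ is $DF$-invariant, and the quotient cocycle on $T(X\times Y)/(0\oplus TY)\cong TX$ is exactly $Df$.

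By the standard fact that, for a block-triangular measurable cocycle with an invariant subbundle, the Lyapunov spectrum with multiplicity of the whole cocycle is the union of the spectra of the restriction and of the quotient, the spectrum of $F$ with respect to $\zeta$ splits into two parts. One part is the spectrum of $Df$ over $\mu=\pi_*\zeta$. The other is the spectrum of the fiber cocycle $(x,y)\mapsto D_y\rho_{\tau(x)}$ over $(F,\zeta)$. The integrability condition $\log^+\|DF^{\pm1}\|\in L^1$ holds trivially by compactness. Oseledets applied on each piece, together with this union principle, gives the claim for the base part at once. The spectrum of the quotient is the spectrum of $Df$ over $\pi_*\zeta=\mu$, since the quotient cocycle factors through $\pi$.

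It remains to identify the fiber part with the spectrum of $\rho_{\mu(\tau)}$ with respect to $\nu$. We use the product structure $\zeta=\mu\times\nu$. Since $\zeta$ is ergodic, $\nu$ is ergodic for the $\mathbb R^k$-action; if not, an invariant set $\{\rho\text{-invariant } E\}$ would give the $F$-invariant set $X\times E$. Hence Proposition \ref{chart1} applies to $\nu$. It yields the Lyapunov decomposition $T\mathcal O\oplus\bigoplus_iE_i$ and the tempered estimates (1)--(4). For $(x,y)$ with $x$ Birkhoff-generic, so that $S_n\tau(x)/n\to\mu(\tau)$, and $y\in\widetilde\Gamma$, take $v\in E_i(y)$. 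Estimate (1) gives
\[
C_\epsilon(y)^{-1}e^{\chi_i(S_n\tau(x))-\epsilon\|S_n\tau(x)\|}\|v\|\le\|D_y\rho_{S_n\tau(x)}v\|\le C_\epsilon(y)e^{\chi_i(S_n\tau(x))+\epsilon\|S_n\tau(x)\|}\|v\|.
\]
Dividing the logarithm by $n$ and using linearity of $\chi_i$ gives the limit $\chi_i(\mu(\tau))$ up to $\epsilon\|\mu(\tau)\|$; letting $\epsilon\to0$ gives exactly $\chi_i(\mu(\tau))$. The same computation with (1) applied for $T\mathcal O$, which has bounded growth, gives exponent $0$ with multiplicity $k$. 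These are precisely the exponents of $\rho_{\mu(\tau)}$ with respect to $\nu$, with the same multiplicities $\dim E_i$ and $k$. The case $n\to-\infty$ is analogous, using $S_{-n}\tau$.

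The main obstacle I expect is the rigorous justification that the spectrum of the triangular cocycle is the union, with multiplicities, of the diagonal spectra, rather than something distorted by the coupling term $C$. I would handle it by temperedness: $\|C\|$ is uniformly bounded. The graph transform, or the explicit sum $\sum_j D\rho_{S_n\tau-S_{j+1}\tau}\,C\,Df^{j}$ in the off-diagonal entry, has subexponential corrections relative to the diagonal rates. This follows from the standard lemma on invariant subbundles, e.g.\ via the Oseledets filtration for the restriction and the quotient. A simpler route is to note that the exponents along the vertical subbundle and on the quotient both appear in the Oseledets filtration of $DF$. Since the dimensions add up to $\dim X+\dim Y$, no other exponents can occur.
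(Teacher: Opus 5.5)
\textbf{There is a gap in the identification of the vertical exponents.} You invoke Proposition~\ref{chart1} for $\nu$. That proposition requires $\nu$ to be invariant (and ergodic) under the whole $\mathbb R^k$-action. Your $X\times E$ argument only addresses ergodicity, and invariance does not follow from the hypothesis. Disintegrating
\[
F_*(\mu\times\nu)=\int_X\delta_{fx}\times(\rho_{\tau(x)})_*\nu\,\rd\mu(x)
\]
shows that $F$-invariance of $\mu\times\nu$ means exactly $(\rho_{\tau(x)})_*\nu=\nu$ for $\mu$-a.e.\ $x$. In other words, $\nu$ is invariant under the closed subgroup generated by $\tau(\operatorname{supp}\mu)$, and this subgroup can be discrete. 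Joint nontriviality is explicitly not assumed in this part of the paper. For example, if $\tau\equiv\mathbf c$, then $F=f\times\rho_{\mathbf c}$. Take $k=1$, let $\mathbf c$ be the period of a closed orbit of the flow, and let $\nu=\delta_{y_0}$ for a point $y_0$ on that orbit, so $\rho_{\mathbf c}y_0=y_0$. Then $\mu\times\nu$ is an ergodic $F$-invariant product, but $\nu$ is not invariant under the flow. The same phenomenon occurs when $\mu$ is carried by a periodic orbit of $f$. For such $\nu$ there is no Lyapunov decomposition $T\mathcal O\oplus\bigoplus_iE_i$ of the action at $\nu$-a.e.\ point. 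Your two-sided estimate with $C_\epsilon(y)$ and $\chi_i(S_n\tau(x))$ therefore has no basis. When $\nu=m_{\MME}$, which is the case used later in the paper, your argument is fine.

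\textbf{The repair} is to bypass the $\mathbb R^k$-Lyapunov theory of $\nu$ and compare directly with the single map $\rho_{\mu(\tau)}$. This is also how the lemma is used in the proof of Corollary~\ref{all LY-corollary}: there the vertical Oseledets decomposition is identified with that of $\rho_{\mu(\tau)}$ and is independent of $x$. For $x$ forward and backward Birkhoff generic, set $\mathbf e_n(x):=S_n\tau(x)-n\mu(\tau)$; then $\mathbf e_n(x)=o(n)$. Commutativity gives
\[
D_y\rho_{S_n\tau(x)}=D_{\rho_{n\mu(\tau)}y}\rho_{\mathbf e_n(x)}\circ D_y\rho_{n\mu(\tau)}.
\]
Since $\|D\rho_{\mathbf u}\|$ and $\|(D\rho_{\mathbf u})^{-1}\|$ are bounded by $\Upsilon^{\|\mathbf u\|+1}$, you get, for every $v\in T_yY$,
\[
\bigl|\log\|D_y\rho_{S_n\tau(x)}v\|-\log\|D_y\rho_{n\mu(\tau)}v\|\bigr|\le(\|\mathbf e_n(x)\|+1)\log\Upsilon=o(n).
\]
The same holds for negative times using $\sum_{j=1}^n\tau(f^{-j}x)$. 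Hence the forward and backward filtrations of the vertical cocycle at $(x,y)$ are exactly those of $\rho_{\mu(\tau)}$ at $y$. This gives the vertical spectrum with multiplicities using nothing beyond the hypothesis; by Fubini it even shows that the exponents of $\rho_{\mu(\tau)}$ exist at $\nu$-a.e.\ $y$.

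The rest of your plan is sound: the triangular structure, the restriction/quotient union principle, and the fact that the quotient cocycle factors through $\pi$. Only your closing ``simpler route'' is not an argument as written, since a priori one only has $\lambda^+(v)\ge\lambda^+_{\mathrm{quot}}([v])$. In this setting you can avoid the union principle entirely. The bundles $T\widetilde W^s$ and $T\widetilde W^u$ are continuous, $DF$-invariant graphs of uniformly bounded maps over the stable and unstable bundles of $f$. So $DF$ is block diagonal with respect to $T\widetilde W^s\oplus T\widetilde W^u\oplus(0\oplus TY)$, and Lemma~\ref{base LY} gives the horizontal exponents.
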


For a general ergodic measure $\zeta$ with $\pi_*\zeta=\mu$, 
we only know the existence of Lyapunov exponents for the tangent space $\{0\}\times T_yY$. 
Since such a measure does not necessarily admit a product decomposition, 
it is difficult to identify the specific fiber measures 
needed to define the Lyapunov exponents of the $\rho$-action. 
Furthermore, the unstable foliation associated with 
the positive Lyapunov exponents corresponding to the tangent
space $\{0\} \times T_yY$ depends only on the $Y$ coordinate. 

The case for the unstable manifold is analogous. 
We omit the proof here, but present the expression below. 
For any $x,y\in X$, we define 
\begin{align*}
\beta^u(x,y):=-\sum_{k=1}^\infty \left( \tau(f^{-k}x)-\tau(f^{-k}y) \right). 
\end{align*}
The unstable foliation corresponding to $W^u(x)$ is given by 
\begin{align*}
\widetilde{W}^u_f(x,z) = \{(y, \rho_{\beta^u(x,y)}z) : y \in W^u(x)\}. 
\end{align*}

We now turn to the corresponding pointwise dimensions for $f$ and $F$.

\begin{lemma}\label{base same dimension}
Let $\zeta$ be an $F$-invariant measure on $X\times Y$ and let
$\mu=\pi_*\zeta$. Then, for $\zeta$-a.e.\ $(x,z)\in X\times Y$,
the stable pointwise dimension of $\zeta$ along
$\widetilde W^s(x,z)$ coincides with the stable pointwise dimension
of $\mu$ along $W^s(x)$.
A similar result holds for the unstable case.
\end{lemma}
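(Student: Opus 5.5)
The plan is to use the fact that $\pi$ restricted to a leaf $\widetilde W^s(x,z)$ is a bi-Lipschitz diffeomorphism onto $W^s(x)$. Then compare the conditional measures of $\zeta$ on $\widetilde W^s$ with those of $\mu$ on $W^s$.

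\textbf{Step 1: the leaf graph map.} By Lemma \ref{Dong23-Lemma 3.1}, $\widetilde W^s(x,z)$ is the graph of $y\mapsto \rho_{\beta^s(x,y)}z$ over $W^s(x)$. Since $\tau$ is $C^{1+\beta}$ and $d(f^kx,f^ky)$ decays exponentially along stable leaves, the series $\beta^s(x,\cdot)$ is Lipschitz on local stable leaves, with a uniform constant. Together with $d(\rho_{\bf u}z,z)\le C_\rho\|{\bf u}\|$, this shows that the graph map $\Theta_{x,z}\colon W^s_{\rm loc}(x)\to\widetilde W^s_{\rm loc}(x,z)$ and its inverse $\pi|_{\widetilde W^s}$ are bi-Lipschitz, with constants independent of $(x,z)$. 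Consequently, for small $r$,
\[
\Theta_{x,z}\bigl(B^s(x,r/C)\bigr)\subset \widetilde B^s\bigl((x,z),r\bigr)\subset \Theta_{x,z}\bigl(B^s(x,Cr)\bigr),
\]
where the balls are taken in the induced leaf metrics.

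\textbf{Step 2: comparing conditional measures.} This is the main obstacle. I would take a measurable partition $\xi$ of $X$ subordinate to $W^s$, in the sense of Ledrappier--Young, and lift it to $\tilde\xi:=\{\Theta_{x,z}(\xi(x))\}$, intersected with a partition of the fibers. The lifted partition is subordinate to $\widetilde W^s$ and increasing under $F^{-1}$, because $F$ maps $\widetilde W^s$ leaves over $W^s$ leaves equivariantly. What needs to be shown is that the conditional measure $\zeta^{\tilde\xi}_{(x,z)}$ pushed forward by $\pi$ is equivalent to $\mu^\xi_x$, with a density whose logarithm has zero exponential growth at small scales. Alternatively, one can bypass densities by writing $\zeta=\int \zeta_x\,\rd\mu$ and noting that $\pi$ is injective on each $\tilde\xi$-atom. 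In that case, $\pi_*\zeta^{\tilde\xi}_{(x,z)}$ is a conditional measure of $\mu$ with respect to a refinement of $\xi$ by a fiber-dependent measurable partition. I would then use the Ledrappier--Young characterization that stable pointwise dimension equals $\lim \log\mu^\xi_x(B^s(x,r))/\log r$, independently of the subordinate partition chosen, and handle the fiber refinement using a Lebesgue density-type lemma. This is the same mechanism as Lemma \ref{ORH23-Lemma 2.3}: the refining partition changes ball masses only by sub-exponential factors almost everywhere.

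\textbf{Step 3: conclusion.} Insert the inclusions from Step 1 into the definition of the pointwise dimension. The constant $C$ disappears in $\log(Cr)/\log r\to1$, so the lower and upper stable pointwise dimensions of $\zeta$ at $(x,z)$ coincide with those of $\mu$ at $x$ for $\zeta$-a.e.\ point. Existence of the limit for $\mu$ then transfers to $\zeta$; Lemma \ref{base LY} confirms that the exponents also match. The unstable case is identical, using $\beta^u$, $\widetilde W^u_f$ and $F^{-1}$.
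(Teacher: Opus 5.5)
Your Steps 1 and 3 are fine, but Step 2 cannot be carried out, and neither of the two mechanisms you propose works. Trivialize $\pi^{-1}\xi(x)=\xi(x)\times Y$ by $(x',w)\mapsto(x',\rho_{\beta^s(x,x')}w)$. The $\tilde\xi$-atoms over $\xi(x)$ then become the slices $\{w=\mathrm{const}\}$, and each slice projects bijectively onto the \emph{whole} atom $\xi(x)$. So the push-forwards $\lambda^w$ of the conditional measures of $\zeta$ on these slices are measures on all of $\xi(x)$. They are not conditional measures of $\mu$ with respect to any refinement of $\xi$. The only information that $\pi_*\zeta=\mu$ provides is the averaging identity $\mu^\xi_x=\int\lambda^w\,\rd\overline\zeta(w)$, where $\overline\zeta$ is the factor measure on the space of slices. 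An individual $\lambda^w$ can be singular with respect to $\mu^\xi_x$, so there need not be any density of subexponential growth. A density statement like Lemma \ref{ORH23-Lemma 2.3} does not help either: it controls the intersection with a fixed set of positive measure, not a disintegration whose atoms are null.

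In fact the statement fails for general (even ergodic) invariant $\zeta$. Let $f$ be a hyperbolic automorphism of $\mathbb T^2$ with $\mu$ Lebesgue measure, and take $\tau>0$ smooth. Let $Y$ be the quotient of $X\times\mathbb R$ by $(x,s+\tau(x))\sim(fx,s)$, with the suspension flow $\rho_t[x,s]=[x,s+t]$; this is a smooth nonsingular flow on a closed $3$-manifold. Put $\Psi(x)=[x,0]$. Then $\rho_{\tau(x)}\Psi(x)=\Psi(fx)$, so $\zeta=(\mathrm{id},\Psi)_*\mu$ is $F$-invariant and $\pi_*\zeta=\mu$. The local plaque of $\widetilde W^s(x,\Psi(x))$ is $\{(x',[x,\beta^s(x,x')])\}$. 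Since $|\beta^s(x,x')|<\min\tau$ on small plaques, the equality $[x,\beta^s(x,x')]=[x',0]$ forces $x'=x$. Hence each $\tilde\xi$-atom meets the graph of $\Psi$ in a single point, and the conditional measures of $\zeta$ along $\widetilde W^s$ are Dirac masses. The stable pointwise dimension of $\zeta$ along $\widetilde W^s$ is therefore $0$, while that of $\mu$ along $W^s$ is $1$. This does not contradict Ledrappier--Young theory: the graph over $W^s(x)$ still lies in the full $F$-stable leaf, but transversally to $\widetilde W^s$, using the contracting direction of $\rho$ in the fiber.

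The paper's own proof has the same gap. It projects the atoms of a $\widetilde W^s$-subordinate partition to $X$ and matches the conditional measures via $\pi_*\zeta=\mu$, which amounts to the unproved identity $\lambda^w=\mu^\xi_x$. That identity does hold when $\zeta=\mu\times\nu$ with $\nu$ invariant under the whole action, for example $\nu=m_{\MME}$. In that case, in the trivialized coordinates the conditional of $\zeta$ on $\pi^{-1}\xi(x)$ is $\mu^\xi_x\times\nu$, and your Steps 1 and 3 complete the argument. Some hypothesis of this kind is necessary.
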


\begin{proof}
By the definition of $\widetilde W^s(x,z)$, we note that the
coordinate transformation in the $Y$-component depends only on
the base coordinates and is independent of $z$
(see \eqref{def:widetildeWs}). Consequently,
$$
\pi\bigl(\widetilde W^s(x,z)\bigr)=W^s(x).
$$
Moreover, the restriction of $\pi$ to every sufficiently small
local plaque of $\widetilde W^s(x,z)$ is a smooth bijection onto
a local plaque of $W^s(x)$. Hence, the corresponding leafwise
metrics are locally bi-Lipschitz equivalent.

We note that no ergodicity assumption is needed here. Indeed, the
argument may be applied to each ergodic component separately, since
the entire stable leaf through a regular point is contained in the
corresponding ergodic component; see \cite[Section 6.2]{LY85-1}. 

Recall from \cite{LY85-1,LY85-2} that there exists a measurable
partition $\eta^s$ of $X\times Y$ such that, for $\zeta$-a.e.\
$(x,z)\in X\times Y$,
$$
\eta^s(x,z)\subset\widetilde W^s(x,z),
$$
and $\eta^s(x,z)$ contains the intersection of an open
neighborhood of $(x,z)$ with $\widetilde W^s(x,z)$.
Let $\widetilde\zeta^s_{(x,z)}$ be the corresponding conditional
measure.

Projecting the local atoms of $\eta^s$ by $\pi$, 
we obtain a measurable partition $\widehat\eta^s$ of $X$ 
subordinate to the stable foliation $W^s$, 
up to a set of $\mu$-measure zero. 
Let $\mu_x^s$ denote the corresponding conditional measure of
$\mu$.

Since $\pi_*\zeta=\mu$ and $\pi$ sends the local subordinate
plaques of $\eta^s$ onto those of $\widehat\eta^s$, and since
the stable pointwise dimension is independent of the choice of
the subordinate measurable partition \cite{LY85-2}, we conclude
that, for $\zeta$-a.e.\ $(x,z)$, 
\begin{align*}
\lim_{\epsilon\to0}
\frac{
\log\widetilde\zeta^s_{(x,z)}
\bigl(\widetilde W^s((x,z),\epsilon)\bigr)}
{\log\epsilon}
&=
\lim_{\epsilon\to0}
\frac{
\log\mu_x^s\bigl(W^s(x,\epsilon)\bigr)}
{\log\epsilon}.
\end{align*}
Here we use the local bi-Lipschitz equivalence between
$\widetilde W^s(x,z)$ and $W^s(x)$; replacing the radius
$\epsilon$ by a fixed multiple of $\epsilon$ does not affect
the pointwise dimension.

Thus, for $\zeta$-a.e.\ $(x,z)$, the stable pointwise dimension
of $\zeta$ along $\widetilde W^s$ coincides with the stable
pointwise dimension of $\mu$ along $W^s$. 
The unstable case is similar. 
\end{proof}

For convenience, we use $\widetilde{W}^u$ to denote the unstable manifold 
whose projection is the unstable manifold of $f$, 
and $\widehat{W}^u$ to denote the unstable manifold 
corresponding to the tangent space $\{0\}\times T_yY$. 
Similar definitions and notations are adopted for the stable case. 

\begin{corollary}\label{all LY-corollary}
  Let $\zeta = \mu \times \nu$ be an ergodic $F$-invariant measure on $X \times Y$. 
The transversal Hausdorff dimensions of $F$ on $\widehat{W}^s$ with respect to $\zeta$ coincide with 
those of $\rho_{\mu(\tau)}$ with respect to $\nu$. A similar result holds for $\widehat{W}^u$. 
\end{corollary}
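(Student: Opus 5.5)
The plan is to identify the fiber unstable (and stable) manifolds of $F$ through a point $(x,y)$ with the corresponding unstable (stable) manifolds of $\rho_{\mu(\tau)}$ through $y$, placed in the fiber $\{x\}\times Y$. After that, I would compare conditional measures of $\zeta=\mu\times\nu$ along the two families. I treat the unstable case and obtain the stable case by passing to $F^{-1}$, which is again a skew product over $f^{-1}$ with cocycle $-\tau\circ f^{-1}$ and drift $-\mu(\tau)$.

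The first step is to describe $\widehat W^u(x,y)$. The differential of $F$ restricted to $\{0\}\times T_yY$ is $D\rho_{\tau(x)}$, so the fiber cocycle along the $F$-orbit is $D\rho_{S_n\tau(x)}$. By Lemma \ref{all LY}, the fiber Lyapunov exponents and Oseledets spaces of $F$ at $(x,y)$ are those of $\rho_{\mu(\tau)}$ at $y$, since $S_n\tau(x)=n\mu(\tau)+o(n)$ and the exponents $\chi_i$ are linear. I would verify the following through Proposition \ref{chart2}(iii), which gives linear control in $\mathbf t$ with $\epsilon\|\mathbf t\|$ errors: a point $y'$ satisfies $d(\rho_{S_{-n}\tau(x)}y',\rho_{S_{-n}\tau(x)}y)\to0$ exponentially if and only if $d(\rho_{-n\mu(\tau)}y',\rho_{-n\mu(\tau)}y)\to0$ exponentially. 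The sublinear discrepancy $o(n)$ in the parameter only costs a subexponential factor, which is absorbed by the exponential contraction. Hence $\widehat W^u(x,y)=\{x\}\times W^u_{\rho_{\mu(\tau)}}(y)$, modulo the orbit directions, which have exponent zero and are handled by the $T\mathcal O$ estimates in Proposition \ref{chart2}.

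The second step is the comparison of conditional measures. I would take a measurable partition $\xi$ of $Y$ subordinate to $W^u_{\rho_{\mu(\tau)}}$ for $\nu$, as in \cite{LY85-1}. The product partition $\hat\xi(x,y)=\{x\}\times\xi(y)$ is then subordinate to $\widehat W^u$ for $\zeta$. Because $\zeta=\mu\times\nu$, the conditional measure $\zeta^{\hat\xi}_{(x,y)}$ is $\delta_x\times\nu^{\xi}_y$. Restricted to the plaque $\{x\}\times\xi(y)$, the induced leaf metric is exactly the leaf metric on $\xi(y)$. Consequently the pointwise dimensions agree: $\lim_{\epsilon\to0}\log\zeta^{\hat\xi}_{(x,y)}(\widehat W^u((x,y),\epsilon))/\log\epsilon$ coincides with the analogous limit for $\nu^\xi_y$. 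The same conclusion holds for the transversal dimensions of the intermediate Ledrappier--Young hierarchy $\widehat W^{u}_i$, which are defined through the nested fast unstable foliations. Each level of that hierarchy is likewise a product of $\{x\}$ with the corresponding level for $\rho_{\mu(\tau)}$. I would then invoke the independence of pointwise dimension from the choice of subordinate partition \cite{LY85-2}, together with the almost-everywhere existence of these dimensions, and conclude equality $\zeta$-a.e. The argument parallels the proof of Lemma \ref{base same dimension}.

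I expect the main obstacle to be the first step, namely the identification of the fiber unstable leaves. The fiber dynamics is nonautonomous, so the difficulty is to make rigorous that time-changing by the sublinear error $S_n\tau(x)-n\mu(\tau)$ does not alter the leaves. To handle it, I would first pass to regular points lying in a Pesin set $\Gamma_{l,\epsilon}$. I would then use the tempered charts of Proposition \ref{chart2} together with commutativity of the action, writing $\rho_{S_n\tau(x)}=\rho_{S_n\tau(x)-n\mu(\tau)}\circ\rho_{n\mu(\tau)}$. This decomposition lets me apply the usual graph-transform characterization of unstable manifolds, with the tempered factor $e^{\epsilon\|S_n\tau(x)-n\mu(\tau)\|}$ absorbed into $e^{\epsilon n}$ errors.
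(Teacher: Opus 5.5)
Your proposal is correct and follows essentially the same route as the paper. You identify the fiber Oseledets data and the (fast) stable and unstable leaves of $F$ at $(x,y)$ with those of $\rho_{\mu(\tau)}$ at $y$, independently of $x$, and then use the product structure $\zeta=\mu\times\nu$ to identify the leafwise conditional measures and hence the transversal dimensions. The paper's proof is only a brief sketch of exactly this argument, and your version supplies the details. The leaf identification is also easier than you expect: the global Lipschitz bound $\Upsilon^{\|\mathbf e\|+1}$ for $\rho_{\mathbf e}$, with $\|\mathbf e\|=o(n)$, already suffices, so the Pesin charts and graph transform are not needed.
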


\begin{proof}
From the proof of Lemma \ref{all LY}, 
 the Oseledec decomposition of the subspace $\{0\}\times T_yY $ for $F$
coincides with the decomposition of $T_yY$ for $\rho_{\mu(\tau)}$ 
(we identify the set $\{0\}\times A$ with $A$), 
and this decomposition is independent of $x\in X$ for $\zeta$-a.e. $(x,y)$. 
Since the foliation $\widehat{W}^s$ is integrable to 
some coarse Lyapunov foliations and  
 $\zeta$ is the product measure $\mu\times \nu$, 
we conclude the desired result. 
\end{proof}

Recall that $\widehat{\Gamma}_X$ and $\widehat{\Gamma}_Y$ 
are the sets of regular points for $f$ and $\{\rho_{\bf t}\}$, respectively. 
Let $\Gamma'$ denote the set of regular points for the skew product $F$. 
Clearly, $\Gamma'\subset \widehat{\Gamma}_X\times \widehat{\Gamma}_Y$. 
The construction of the following Lyapunov charts on $\Gamma'$ follows 
the same argument as \cite[Proposition 4.1]{Hu93}, 
and we therefore omit the details.

For $x\in \Gamma'$, we denote by $\lambda_i(x), i=1, \ldots, q(x)$ 
the Lyapunov exponents of $F$ and set 
$$\lambda^+(x)=\min\{\lambda_i(x), \lambda_i(x)>0\},\quad  
\lambda^-(x)=\max\{\lambda_i(x), \lambda_i(x)<0\},
$$ 
and
\begin{align*}
    E^s (x) =\underset{\lambda_i(x)<0}{\bigoplus} E_i(x), 
    \quad E^u(x)  =\underset{\lambda_i(x)>0}{\bigoplus} E_i(x), 
    \quad E^c(x) & =E_{i_0}(x) \,\text{ with } \lambda_{i_0}(x)=0.
   \end{align*}
For any $  (x, y, z) \in \mathbb{R}^u \times \mathbb{R}^c \times \mathbb{R}^s$,
we define 
\[
|(x, y, z)|=\max \left\{|x|_u,|y|_c,|z|_s\right\}
\]
where $|\cdot|_u,|\cdot|_c$ and $|\cdot|_s$ are the Euclidean norms on $\mathbb{R}^u, \mathbb{R}^c$ and $\mathbb{R}^s$ respectively.
The closed disk in $\mathbb{R}^u \times \mathbb{R}^c \times \mathbb{R}^s$ of radius $\rho$ centered at $0$ is denoted by $R(\rho)$. 

We assume the measure $\zeta$ is ergodic and denote by 
$\lambda^+$ and $\lambda^-$ the constant values of 
$\lambda^+(x)$ and $\lambda^-(x)$ for $\zeta$-almost every $x$. 
For $0<\epsilon<\min\{\frac{1}{100}\lambda^{+},-\frac{1}{100}\lambda^{-}\}$, 
there exists a measurable function $l: \Gamma^{\prime} \rightarrow[1, \infty)$ 
with $l\left(F^{ \pm} x\right) \leq$ $e^{\epsilon} l(x)$
and an embedding $\Phi_x: R\left(l(x)^{-1}\right) \rightarrow M$ 
such that 
\begin{itemize}
  \item[(i)] $\Phi_x 0=x $, $D \Phi_x(0)$ takes $\mathbb{R}^u, \mathbb{R}^c$ and $\mathbb{R}^s$ to $E^u(x), E^c(x)$ and $E^s(x)$, respectively.

  \item[(ii)] 
We define the lifting map $\tilde{F}_x=\Phi_{F(x)}^{-1} \circ F \circ \Phi_x$
 whenever it is well-defined. 
Similarly, we define the function $\tilde{F}_x^{-1}= \Phi_{F^{-1} x}^{-1} \circ F^{-1} \circ \Phi_x$. 
Then  
\begin{align*}
        e^{\lambda^{+}-\epsilon}|v| & \leq\left|D \tilde{F}_x(0) v\right| ,       &&             \text { for } v \in \mathbb{R}^u; \\
        e^{-\epsilon}|v|           & \leq\left|D \tilde{F}_x(0) v\right| \leq e^{\epsilon}|v|,  &&\text { for } v \in \mathbb{R}^c; \\
      \left|D \tilde{F}_x(0) v\right|& \leq e^{\lambda^{-}+\epsilon}|v|,  && \text { for } v \in \mathbb{R}^s .
\end{align*}

  \item[(iii)] We use $L(g)$ to denote the Lipschitz constant of the function $g$. Then
    \begin{align*}
      L\left(\tilde{F}_x-D \tilde{F}_x(0)\right) \leq \epsilon,
      \quad L\left(\tilde{F}_x^{-1}-D \tilde{F}_x^{-1}(0)\right) \leq \epsilon,
      \quad   L\left(D \tilde{F}_x\right), L\left(D \tilde{F}_x^{-1}\right) \leq l(x) .
    \end{align*}

  \item[(iv)] For any $z, z^{\prime} \in R\left(l(x)^{-1}\right)$, 
  there exists a constant $K>0$ such that
    $$
      K^{-1} d\left(\Phi_x z, \Phi_x z^{\prime}\right) \leq\left|z-z^{\prime}\right| \leq l(x) d\left(\Phi_x z, \Phi_x z^{\prime}\right).
    $$
\end{itemize}

Since $f$ is an Anosov diffeomorphism 
and $\tau$ is $C^{1+\beta}$ on the compact space $X$, 
the existence of such a function $l$ follows immediately from Proposition \ref{chart2}.
For $l>0$ and $\epsilon>0$, 
we define the {\bf Pesin set (or Pesin block)} $\widehat{\Gamma}_{l,\epsilon}$ by 
\begin{align*}
\widehat{\Gamma}_{l,\epsilon}:=\left\{x\in \Gamma': l(x)\leq l \right\}.
\end{align*}
It follows that the set of regular points $\Gamma'$ satisfies
$$
\Gamma' = \bigcup_{l=1}^\infty \widehat{\Gamma}_{l,\epsilon}.
$$
Consequently, we have $\displaystyle \zeta\left(\bigcup_{l=1}^\infty \widehat{\Gamma}_{l,\epsilon}\right) = \zeta(\Gamma') = 1$ for any $F$-invariant measure $\zeta$. 
In particular, for sufficiently large $l$, 
we have $\zeta(\widehat{\Gamma}_{l,\epsilon})>0$.
Note that if $\zeta$ is ergodic, the Lyapunov exponents 
and their corresponding transverse Hausdorff dimensions 
are constant $\zeta$-almost everywhere. 

\subsection{Fiberwise SRB and conditional measure rigidity}\label{subsect: Fiberwise SRB and Conditional Measure Rigidity}

In this subsection, we establish a relative version of the 
Ledrappier--Young theorem for the skew product. 
Specifically, it characterizes the absolute continuity of 
conditional measures along the fiberwise unstable foliation in terms of 
the fiber contribution to metric entropy. 

We first establish the relative Ledrappier-Young theorem in the ergodic case, 
namely Theorem \ref{relative SRB}. 
In fact, when $\zeta$ is an ergodic product measure $\mu\times \nu$ on $X\times Y$, 
Theorem \ref{relative SRB} immediately follows from \cite[Theorem A]{LY85-1}. 

\begin{proof}[Proof of Theorem \ref{relative SRB}]
Combining the Ledrappier-Young entropy formula (\cite[Theorem C]{LY85-2}), 
Lemma \ref{base LY}, and Lemma \ref{base same dimension}, 
the ``if'' part follows. 

We now turn to the ``only if'' part.
Without loss of generality, 
we assume that the unstable subspace 
associated with positive Lyapunov exponents has a positive dimension; 
otherwise, the theorem holds trivially. 

By \cite[Lemma 3.1.1]{LY85-1}, there exists a measurable partition $\eta$ on $X\times Y$ 
satisfying the following properties: 
\begin{itemize} 
\item[(i)] $\eta$ is an increasing partition subordinate to the unstable manifold $\widehat{W}^u$ of $F$;
\item[(ii)] $\displaystyle \bigvee_{n=0}^\infty F^{-n}\eta$ is the partition into points. 
\end{itemize}
%By \cite[Corollary 5.3]{LY85-1}, we have $H_\zeta(\eta|F\eta)=h_\zeta(F,\eta)$. 
Let $\{\zeta_z\}=\{\zeta^\eta_z\}$ be the conditional probability measure of $\zeta$ 
with respect to the partition $\eta$.

For $\zeta$-almost every $z_1 \in M$, there exist constants $l_0, \epsilon_0 > 0$, possibly depending on $z_1$, such that $z_1 \in \widehat{\Gamma}_{l_0, \epsilon_0}$. For any such point $z_1$ and $z_2 \in \eta(z_1) \subset \widehat{W}_{\mathrm{loc}}^u(z_1)$, we define
\begin{align*}
\kappa^u(z_1,z_2)
:=
\prod_{i=1}^\infty
\frac{
\operatorname{Jac}\bigl(DF^{-1} \mid \widehat{E}^u(F^{-i}z_2)\bigr)
}{
\operatorname{Jac}\bigl(DF^{-1} \mid \widehat{E}^u(F^{-i}z_1)\bigr)
},
\end{align*}
where $\operatorname{Jac}$ denotes the Jacobian of the corresponding linear map.

The unstable distribution $\widehat E^u$ varies H\"older
continuously along local unstable manifolds. Combining this with 
the exponential decay of $d(F^{-i}z_1,F^{-i}z_2)$ and 
the H\"older continuity of $DF^{-1}$, we conclude that $\kappa^u(z_1,z_2)$ converges. 
Moreover, for each fixed $z_1$, 
 $\kappa^u(z_1,z_2)$ is H\"older continuous and 
uniformly bounded away from $0$ and $\infty$ on $\widehat{W}^u(z_1)$ 
with respect to  $z_2$. 
The same holds for the function $z_2 \mapsto \log \kappa^u(z_1,z_2)$. 
We set 
\[
\rho(z_1,z_2) = \frac{\kappa^u(z_1,z_2) }{ \int_{\eta(w)} \kappa^u(z_1,\tau) \, \rd m_{z_1}(\tau)}.
\]
We define a new measure $\nu$ on $X\times Y$ such that 
\begin{itemize}
\item[(1)] The conditional measures $\nu^u_{z_1}$ induced by $\nu$ on $\widehat{W}^u(z_1)$ 
are equivalent to the conditional measure $m_{z_1}$ with density function 
$\rho(z_1,z_2)$. That is, 
\[
\rd\nu^u_{z_1} = \rho(z_1,z_2) \, \rd m_{z_1};
\]
\item[(2)] The measure $\nu$ coincides with $\zeta$ 
on the $\sigma$-algebra $\mathcal{B}_\eta$ consisting of measurable sets 
that are unions of elements of $\eta$; 
\item[(3)] $\pi_*\nu=\mu$.
\end{itemize} 
We set 
\[
q(z_1) = \nu^u_{z_1}((F^{-1}\eta)(z_1)), \quad \text{and} \quad 
L(z_1) = \int_{\eta(z_1)} \kappa^u(z_1,\tau) \, \rd m_{z_1}(\tau).
\]
Then, we deduce 
\begin{align*}
q(z_1) = \frac{\int_{(F^{-1}\eta)(z_1)} \kappa^u(z_1,\tau) \, \rd m_{z_1}(\tau)}
{\int_{\eta(z_1)} \kappa^u(z_1,\tau) \, \rd m_{z_1}(\tau)} 
= \frac{L(F(z_1))}{L(z_1)} \cdot \frac{1}{|\operatorname{Jac}(DF|\widehat{E}^u(z_1))|}.
\end{align*}
Since $L$ is a positive finite-valued measurable function and 
$\eta$ is an increasing partition, we have 
\begin{align*}
\frac{L(F(z_1))}{L(z_1)} &=
\frac{\int_{\eta(F z_1)} \kappa^u(F(z_1),\tau) \, \rd m_{F(z_1)}(\tau)}
{\int_{\eta(z_1)} \kappa^u(z_1,\tau) \, \rd m_{z_1}(\tau)}\\
&\leq \frac{\int_{F\eta(z_1)} \kappa^u(F(z_1),\tau) \, \rd m_{F(z_1)}(\tau)}
{\int_{\eta(z_1)} \kappa^u(z_1,\tau) \, \rd m_{z_1}(\tau)}
=|\operatorname{Jac}(DF|\widehat{E}^u(z_1))|. 
\end{align*}
This implies that $\displaystyle \log^{+}\frac{L(F(\cdot))}{L(\cdot)}$ is integrable. 
It follows that 
\begin{align}
  \int_{X\times Y} \log \nu^u_{z_1}((F^{-1}\eta)(z_1))\, \rd\zeta(z_1)
&=  \int_{X\times Y} \log q(z_1) \, \rd\zeta(z_1) \nonumber\\
&= \int_{X\times Y} \log \left( \frac{L(F(z_1))}{L(z_1)}\frac{1}{|\operatorname{Jac}(DF|\widehat{E}^u(w))|} \right) \, \rd\zeta(z_1)\nonumber\\
&=-\int_{X\times Y} \log|\operatorname{Jac}(DF|\widehat{E}^u(z_1))| \, \rd\zeta(z_1). \label{relative SRB-(2)}
\end{align}

Next, we will show that the entropy formula \eqref{relative SRB-(1)} yields 
$\nu = \zeta$ on the whole $\sigma$-algebra of Borel sets in $X\times Y$. 
This will finish the proof of this theorem. 

For $\zeta$-almost every $z\in X\times Y$, 
the partition $(F^{-1}\eta)|\eta(z)$ is countable, 
and we write 
$\displaystyle \eta(z) = \bigcup_{j \in J} A_j$ with $A_j \in F^{-1}\eta$ for each $j$. 
For $w \in \eta(z)$, we define the function 
\[
\left. \frac{\rd\nu}{\rd\zeta} \right|_{F^{-1}\eta}(w) := 
\frac{\nu^u_w((F^{-1}\eta)(w))}{\zeta^u_w((F^{-1}\eta)(w))}.
\] 
This function is well-defined since $\zeta^u_w((F^{-1}\eta)(w)) \neq 0$ 
for $\zeta$-almost every $w \in \eta(z)$. 
For $w \in \eta(z)$, we have  $\nu^u_w = \nu^u_z$ and $\zeta^u_w= \zeta^u_z$. 
Thus, we derive 
\begin{align*}
\int_{\eta(z)} \left. \frac{\rd\nu}{\rd\zeta} \right|_{F^{-1}\eta}(w) \, \rd\zeta_z(w)
&= \int_{\eta(z)} \frac{\nu^u_w((F^{-1}\eta)(w))}{\zeta^u_w((F^{-1}\eta)(w))} \, \rd\zeta^u_z(w) \\
&= \int_{\eta(z)} \frac{\nu^u_z((F^{-1}\eta)(w))}{\zeta^u_z((F^{-1}\eta)(w))} \, \rd\zeta^u_z(w) \\
&= \sum_{j:\zeta^u_z(A_j) \neq 0} \frac{\nu^u_z(A_j)}{\zeta^u_z(A_j)} \zeta^u_z(A_j) \\
&\leq \nu^u(z)(\eta(z)) = 1.
\end{align*}
Combining this with the convexity of the logarithm, we derive 
\begin{align}\label{relative SRB-(3)}
\int_{X\times Y} \log \left. \frac{\rd\nu}{\rd\zeta} \right|_{F^{-1}\eta} \, \rd\zeta 
\leq \log \int_{X\times Y} \left. \frac{\rd\nu}{\rd\zeta} \right|_{F^{-1}\eta} \, \rd\zeta \leq 0. 
\end{align}

On the other hand, by \eqref{relative SRB-(1)}, Lemma \ref{base LY}, Lemma \ref{base same dimension} 
and Ledrappier-Young entropy formula (\cite[Theorem C']{LY85-2}), we derive 
\begin{align*}
h_\zeta(F)-h_\mu(f)=\sum_{i\in\operatorname{LY}_{\operatorname{fiber}}^+}\chi_i \operatorname{dim}E_i=H_\zeta(F^{-1}\eta | \eta), 
\end{align*}
where the last equality follows from \cite[(9.2), (9.3)]{LY85-2}. 
Combining this with \eqref{relative SRB-(2)}, 
we conclude that 
\begin{align*}
-\int_{X\times Y} \log \nu^u_z((F^{-1}\eta)(z)) \, \rd\zeta(z)
&= \int_{X\times Y} \log|\operatorname{Jac}(DF|\widehat{E}^u(z))| \, \rd\zeta(z) \\
&=\sum_{i\in\operatorname{LY}_{\operatorname{fiber}}^+}\chi_i \operatorname{dim}E_i\\
&= H_\zeta(F^{-1}\eta | \eta) \\
&= -\int_{X\times Y} \log \zeta^u_z((F^{-1}\eta)(z)) \, \rd\zeta(z).
\end{align*}
Thus, we have 
$\displaystyle \int_{X\times Y} \log \left. \frac{\rd\nu}{\rd\zeta} \right|_{F^{-1}\eta} \, \rd\zeta = 0$. 
It follows from \eqref{relative SRB-(3)} that for $\zeta$-almost every $w$, 
\begin{align*}
\left. \frac{\rd\nu}{\rd\zeta} \right|_{F^{-1}\eta}(w) = 1.
\end{align*}
 Thus, $\nu = \zeta$ on the $\sigma$-algebra $\mathcal{B}_{F^{-1}\eta}$. 
By induction, $\nu = \zeta$ on $\mathcal{B}_{F^{-n}\eta}$ for each $n \in \mathbb{N}$.  
By Property (ii) of the partition $\eta$, we conclude that $\nu = \zeta$ on the Borel $\sigma$-algebra. 
Combining this with the definition of $\nu$, 
we complete the proof of the theorem. 
\end{proof}

We remark that the explicit density formula in Corollary \ref{cor:positive_density} below 
is implicitly contained in Theorem \ref{relative SRB}. 
Although in practice one often cites Theorem \ref{relative SRB} 
directly for the equivalence of conditional measures, 
we state this result explicitly here for the reader's convenience; 
see \cite[Theorem A and Corollary 6.1.4]{LY85-1} for a similar situation.

\begin{corollary}\label{cor:positive_density}
Let $\zeta$ be an ergodic measure satisfying \eqref{relative SRB-(1)}, 
let $\eta$ be the measurable partition as in the proof of Theorem \ref{relative SRB}, 
and let $\rho$ be the density of the conditional measure $\zeta_{(x,y)}^\eta$ 
with respect to $\mu_{(x,y)}^{\eta}$. 
Then at $\zeta$-a.e. $(x,y)\in X \times Y$, 
$\rho$ is a strictly positive function on $\eta(x,y)$ satisfying
\[
\frac{\rho((x,y_1))}{\rho((x,y_2))} = \kappa^u((x,y_1),(x,y_2))=
\prod_{i=1}^\infty 
\frac{
\operatorname{Jac}\bigl(DF^{-1} \mid \widehat{E}^u(F^{-i}(x,y_2))\bigr)
}{
\operatorname{Jac}\bigl(DF^{-1} \mid \widehat{E}^u(F^{-i}(x,y_1))\bigr)
}
\]
\end{corollary}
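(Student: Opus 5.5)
We extract the density formula from the proof of Theorem \ref{relative SRB}. Since $\zeta$ satisfies \eqref{relative SRB-(1)}, the argument in the ``only if'' direction applies verbatim. We take the increasing partition $\eta$ subordinate to $\widehat W^u$ given by \cite[Lemma 3.1.1]{LY85-1} and build the auxiliary measure $\nu$ whose conditionals on $\eta$ are $\rd\nu^u_{z_1}=\rho(z_1,\cdot)\,\rd m_{z_1}$, with $\rho(z_1,z_2)=\kappa^u(z_1,z_2)/L(z_1)$. That proof shows that the entropy equality forces $\left.\frac{\rd\nu}{\rd\zeta}\right|_{F^{-1}\eta}=1$ $\zeta$-a.e. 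Iterating gives $\nu=\zeta$ on every $\mathcal B_{F^{-n}\eta}$, hence on the Borel $\sigma$-algebra by property (ii) of $\eta$. By uniqueness of disintegration, for $\zeta$-a.e.\ $z$ the conditional measure $\zeta^\eta_z$ equals $\nu^u_z$. So $\zeta^\eta_z$ has density $z_2\mapsto \kappa^u(z,z_2)/L(z)$ with respect to the leaf volume $m_z$ on $\eta(z)$. Here $m_z$ is the reference measure denoted $\mu^\eta_{(x,y)}$ in the statement.

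For the ratio formula, observe that $\widehat W^u$ is tangent to $\{0\}\times T_yY$, so $\eta(x,y)\subset\{x\}\times Y$ and its points have the form $(x,y_i)$. For $z_1=(x,y_1)$ and $z_2=(x,y_2)$ in the same atom, the normalization $L(z)$ cancels in the quotient. The remaining issue is that the formula above expresses everything relative to a base point $z$, while the claimed identity compares two arbitrary points of the atom. We therefore need the cocycle relation
\[
\kappa^u(z,z_1)\,\kappa^u(z_1,z_2)=\kappa^u(z,z_2),
\]
which follows by multiplying the defining infinite products term by term. Each product converges absolutely, so rearrangement is legitimate. This convergence was established in the proof of Theorem \ref{relative SRB} from three facts: $d(F^{-i}z_1,F^{-i}z_2)$ decays exponentially, $\widehat E^u$ is H\"older continuous along local unstable leaves, and $DF^{-1}$ is H\"older. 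The cocycle relation gives $\rho(z,z_1)/\rho(z,z_2)=\kappa^u(z_1,z_2)^{-1}$ or $\kappa^u(z_1,z_2)$, depending on the ordering convention. Checking that convention against the definition of $\kappa^u$ is the one step where care is needed, and it is purely notational.

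Strict positivity follows because $z_2\mapsto\kappa^u(z_1,z_2)$ is bounded away from $0$ and $\infty$ on the atom, as noted in the proof of Theorem \ref{relative SRB}. We also need $0<L(z)<\infty$. Finiteness comes from that same bound together with $m_z(\eta(z))<\infty$. Positivity comes from $\eta(z)$ containing a neighborhood of $z$ in $\widehat W^u(z)$, so $m_z(\eta(z))>0$. All statements hold on the full-measure set of $z$ lying in some Pesin block $\widehat\Gamma_{l,\epsilon}$ where the disintegration identity holds, which gives the $\zeta$-a.e.\ conclusion. The main obstacle is only bookkeeping: making sure the equality $\nu=\zeta$ really identifies the conditional densities pointwise on atoms, not merely their integrals. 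This follows from the essential uniqueness of conditional measures with respect to the measurable partition $\eta$.
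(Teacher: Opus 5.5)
Your route is the one the paper intends. The paper gives no separate proof: it reads the corollary off from the identity $\nu=\zeta$ obtained in the proof of Theorem \ref{relative SRB}. Your use of essential uniqueness of the disintegration along $\eta$, the cocycle identity for $\kappa^u$, and the bounds $0<L(z)<\infty$ is exactly what that requires.

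The gap is the step you dismiss as purely notational; it is not a free convention. Both $\kappa^u$ and the density $\rd\nu^u_z=\kappa^u(z,\cdot)\,\rd m_z/L(z)$ are fixed by the paper. For $z_1=(x,y_1)$ and $z_2=(x,y_2)$ in $\eta(z)$, your own cocycle computation gives
\[
\frac{\rho(z_1)}{\rho(z_2)}=\frac{\kappa^u(z,z_1)}{\kappa^u(z,z_2)}
=\frac{\kappa^u(z,z_1)}{\kappa^u(z,z_1)\,\kappa^u(z_1,z_2)}
=\kappa^u(z_1,z_2)^{-1}=\kappa^u(z_2,z_1),
\]
which is the reciprocal of the displayed identity. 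This orientation is also the one forced by invariance of the conditional measures. It is the Ledrappier--Young relation $\rho(z_2)/\rho(z_1)=\prod_{k\geq1}J^u(F^{-k}z_1)/J^u(F^{-k}z_2)$, where $J^u=|\operatorname{Jac}(DF\mid\widehat E^u)|$. So it is the statement's display that has the arguments of $\kappa^u$ swapped. Carried out, your argument proves $\rho(z_2)/\rho(z_1)=\kappa^u(z_1,z_2)$, not the stated formula, and you should say so rather than assert the display.

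The check against the definition of $\kappa^u$ that you postpone also exposes an index issue, which you inherit by quoting the proof of Theorem \ref{relative SRB}. The conclusion $\nu=\zeta$ rests on the formula $q(z_1)=\frac{L(Fz_1)}{L(z_1)}\,|\operatorname{Jac}(DF\mid\widehat E^u(z_1))|^{-1}$. That formula holds only if the product defining $\kappa^u$ starts at $i=0$. With the product starting at $i=1$, substituting $w=F^{-1}w'$ in $\int_{(F^{-1}\eta)(z_1)}\kappa^u(z_1,w)\,\rd m(w)$ leaves the factor $\operatorname{Jac}(DF^{-1}\mid\widehat E^u(w'))/\operatorname{Jac}(DF^{-1}\mid\widehat E^u(F^{-1}w'))$. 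This factor depends on $w'$, so it does not pull out of the integral. With $\nu$ defined so that the proof goes through, what your method actually establishes is
\[
\frac{\rho(x,y_2)}{\rho(x,y_1)}
=\prod_{i=0}^{\infty}
\frac{\operatorname{Jac}\bigl(DF^{-1}\mid\widehat E^u(F^{-i}(x,y_2))\bigr)}
{\operatorname{Jac}\bigl(DF^{-1}\mid\widehat E^u(F^{-i}(x,y_1))\bigr)},
\]
which is the standard Ledrappier--Young density formula. Neither correction changes your method or the strict positivity conclusion. They do change the identity you are entitled to claim.
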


For completeness, we also record the corresponding extension to general invariant measures.
Although it is not needed in the proofs of the main theorems,
it may have applications in future research. 
The proof relies on \cite[Theorem A]{LY85-1} 
and the preceding results established earlier in this paper. 
Thus, we omit the detailed proof. 

\begin{theorem}\label{relative SRB-general}
Let $\mu$ be an $f$-invariant measure on $X$ and 
let $\zeta$ be an $F$-invariant measure on $X\times Y$ with $\pi_*\zeta=\mu$. 
Then $\zeta$ has absolutely continuous conditional measures on 
$\widehat{W}^u$ if and only if 
\begin{align}\label{relative SRB-general-(1)}
h_\zeta(F)=h_\mu(f)+\int_{X\times Y} \sum_{i\in\operatorname{LY}_{\operatorname{fiber}}^+} \chi_i(x) \operatorname{dim}E_i(x) \,\rd\zeta(x). 
\end{align}
\end{theorem}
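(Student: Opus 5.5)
The plan is to reduce Theorem \ref{relative SRB-general} to the ergodic case, Theorem \ref{relative SRB}, by ergodic decomposition. Write $\zeta=\int \zeta_\alpha\,\rd\hat\zeta(\alpha)$ for the ergodic decomposition of $\zeta$ with respect to $F$. Since $\pi\circ F=f\circ\pi$, each component projects to an $f$-invariant measure $\mu_\alpha=\pi_*\zeta_\alpha$, and $\mu$ is the mixture $\int\mu_\alpha\,\rd\hat\zeta(\alpha)$. Each $\mu_\alpha$ is ergodic, because a factor of an ergodic system is ergodic. The fiber Lyapunov data $\chi_i(z)$, $\dim E_i(z)$ are $F$-invariant functions, hence constant on $\zeta_\alpha$-typical points. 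Therefore the integral on the right of \eqref{relative SRB-general-(1)} equals $\int\Sigma_\alpha\,\rd\hat\zeta(\alpha)$, where $\Sigma_\alpha$ denotes the ergodic fiber sum.

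The key inequality comes from the Ledrappier--Young upper bound applied componentwise. Combining \cite[Theorem C']{LY85-2} with Lemmas \ref{base LY} and \ref{base same dimension}, as in the proof of Theorem \ref{relative SRB}, each component satisfies
\[
h_{\zeta_\alpha}(F)-h_{\mu_\alpha}(f)=H_{\zeta_\alpha}(F^{-1}\eta\mid\eta)\le \Sigma_\alpha .
\]
Here $\eta$ is subordinate to $\widehat W^u$, and the inequality is the Ruelle-type bound on fiberwise entropy. We then integrate in $\alpha$. Metric entropy is affine under ergodic decomposition. The fiber entropy $h_\zeta(F)-h_\mu(f)=h_\zeta(F\mid\pi^{-1}\mathcal B_X)$ is also affine, by the Abramov--Rokhlin formula for relative entropy; this avoids having to split $h_\mu(f)$ along the possibly non-ergodic $\mu$. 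We conclude
\[
h_\zeta(F)-h_\mu(f)=\int\bigl(h_{\zeta_\alpha}(F)-h_{\mu_\alpha}(f)\bigr)\,\rd\hat\zeta(\alpha)\le\int\Sigma_\alpha\,\rd\hat\zeta(\alpha).
\]
Consequently equality in \eqref{relative SRB-general-(1)} holds if and only if \eqref{relative SRB-(1)} holds for $\hat\zeta$-a.e.\ component.

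It remains to transfer absolute continuity between $\zeta$ and its components. Following \cite[Section 6.2]{LY85-1}, as used in Lemma \ref{base same dimension}, the whole unstable leaf through a regular point lies in one ergodic component. Hence a measurable partition subordinate to $\widehat W^u$ for $\zeta$ can be chosen to refine the ergodic decomposition. Then the conditional measures of $\zeta$ on its atoms coincide a.e.\ with those of the component $\zeta_\alpha$. So $\zeta$ has absolutely continuous conditionals on $\widehat W^u$ iff $\hat\zeta$-a.e.\ $\zeta_\alpha$ does, and Theorem \ref{relative SRB} finishes both directions.

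The main obstacle is the affinity of the fiber entropy when $h_\mu(f)$ may be infinite or $\mu$ non-ergodic. Here $f$ is a $C^{1+\beta}$ diffeomorphism of a compact manifold, so all entropies are finite, and I would invoke the relative entropy decomposition (Abramov--Rokhlin together with Jacobs' theorem for conditional entropy). A secondary technical point is measurability of the chosen subordinate partitions across components. This is handled exactly as in \cite[Theorem A]{LY85-1}, which is stated for nonergodic measures.
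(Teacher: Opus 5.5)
Your reduction is correct and is the natural way to carry out what the paper indicates. The paper omits this proof, saying only that it rests on \cite[Theorem A]{LY85-1} and the preceding results, and your argument (ergodic decomposition, fiber unstable leaves lying in single ergodic components as in \cite[Section 6.2]{LY85-1}, affinity of entropy, and Theorem \ref{relative SRB} applied on each component) is exactly that route.

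The one step to state more carefully is the componentwise bound $h_{\zeta_\alpha}(F)-h_{\mu_\alpha}(f)\le\Sigma_\alpha$. Obtain it from two ingredients: the Jensen step \eqref{relative SRB-(2)}--\eqref{relative SRB-(3)}, which gives $H_{\zeta_\alpha}(F^{-1}\eta\mid\eta)\le\Sigma_\alpha$, and the identification of the fiber entropy with $H_{\zeta_\alpha}(F^{-1}\eta\mid\eta)$. Do not rely on Theorem C$'$ and Lemma \ref{base same dimension} for this, since $\widehat W^u$ need not belong to the exponent-ordered unstable hierarchy of $F$ to which Theorem C$'$ applies.
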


\section{Fiberwise and product rigidity}\label{sect: Fiberwise and global measure rigidity}

In this section, we establish the fiberwise and product rigidity 
and then complete the proof of Theorem \ref{main theorem}. 

First, we extend leafwise measure rigidity in 
Theorem \ref{relative SRB} to the entire fiber.  

Let $\zeta$ be an ergodic $F$-invariant probability measure on $X\times Y$. 
Recall that $\Gamma'$ denotes the regular set of $F$ defined in \S
\ref{subsect: Fiberwise SRB and Conditional Measure Rigidity}. 
For every $(x,y)\in \Gamma'$, we denote by $\widehat W_\zeta^u(x,y)$ 
and $\widehat W_\zeta^s(x,y)$ the fiberwise unstable and stable Pesin manifolds. 
They are contained in the fiber $\{x\}\times Y$ 
and are tangent at $(x,y)$ to $\{0\}\times T_yY$. 
When $\zeta$ is fixed and the point $(x,y)$ is clear from context,
 we omit the subscripts.

\iffalse
Since $\mu=\pi_*\zeta$ is ergodic, we have 
$\displaystyle \frac{S_n\tau(x)}{n}\longrightarrow\mu(\tau) $ 
for $\zeta$-almost every $(x,y)$. 
Consequently, the vertical derivative cocycle of $F$ 
is subexponentially equivalent to that generated by $\rho_{\mu(\tau)}$. 
Hence, the fiberwise stable and unstable Lyapunov distributions, 
and consequently the corresponding Pesin foliations, 
coincide with those associated with $\rho_{\mu(\tau)}$.
\fi

By Theorem \ref{relative SRB}, every ergodic MME has absolutely continuous
conditional measures along the fiberwise stable and unstable foliations. 
To extend this leafwise absolute continuity to the fiber, 
we follow the strategy of \cite{Pe77} (see also \cite[Theorem 8.6.8]{BP-nonuniform}), 
based on successive stable and unstable saturations.

\begin{proposition}\label{fiberwise rigidity proposition}
Let $\zeta$ be an ergodic $F$-invariant measure and $\mu:=\pi_*\zeta$. 
 Assume that $\zeta$ has absolutely continuous conditional measures 
along the fiberwise stable and unstable foliations. 
If the diffeomorphism $\rho_{\mu(\tau) }$ is accessible along its stable and unstable foliations, 
then for $\mu$-almost every $x\in X$, the conditional measure $\zeta_x$ is absolutely continuous with respect to the Riemannian volume $m_Y$ on $Y$. 
\end{proposition}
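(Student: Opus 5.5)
The plan is to adapt the Pesin/Brin–Pesin argument (\cite{Pe77}, \cite[Theorem 8.6.8]{BP-nonuniform}): first show that the fiber conditional measures are preserved by stable and unstable holonomies inside the fiber up to equivalence, then use accessibility to saturate from a positive-measure set to the whole fiber.

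Disintegrate $\zeta=\int_X\zeta_x\,\rd\mu(x)$ with $\zeta_x$ supported on $\{x\}\times Y$, which we identify with $Y$. Since $\widehat W^{u}_\zeta$ and $\widehat W^{s}_\zeta$ lie inside fibers, a partition subordinate to $\widehat W^u$ refines the fiber partition. So for $\mu$-a.e.\ $x$, the conditionals of $\zeta_x$ on local fiberwise unstable plaques agree with those of $\zeta$, and these are absolutely continuous by hypothesis; the same holds for stable plaques. By Corollary \ref{cor:positive_density}, the densities are positive on the plaques. Hence if $A\subset Y$ satisfies $\zeta_x(A)=0$, then $A$ meets almost every unstable plaque in a leaf-volume null set, and likewise for stable plaques.

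Next I would identify these Pesin foliations with the stable and unstable foliations of $\rho_{\mu(\tau)}$. Since $S_n\tau(x)/n\to\mu(\tau)$, the vertical derivative cocycle $D\rho_{S_n\tau(x)}$ differs from $D\rho_{n\mu(\tau)}$ by $\rho_{S_n\tau(x)-n\mu(\tau)}$ with sublinear parameter, hence by a subexponential factor. Thus for $\mu$-a.e.\ $x$ the fiberwise unstable leaves of $\zeta$ are contained in the unstable leaves of $\rho_{\mu(\tau)}$, and local freeness together with commutativity lets one account for the orbit direction. The leaves may be translated by $\rho_{\mathbf u}$ for bounded $\mathbf u$, which preserves absolute continuity.

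The core step, which I expect to be the main obstacle, is the saturation argument. Fix a Pesin block of positive $\zeta_x$-measure and define $\mathcal N$ as the set of points at which $\zeta_x$ has a nonzero absolutely continuous component, as measured by local plaque conditionals. Let $S\subset Y$ be the set of density points of the absolutely continuous part. I would show that the $su$-saturation of $S$ remains in the support of the absolutely continuous part, using absolute continuity of the stable and unstable holonomies of $\rho_{\mu(\tau)}$ (available from $C^{1+\beta}$ regularity, \cite{Brown22}, on Pesin blocks) together with positivity of densities on the plaques. Accessibility of $\rho_{\mu(\tau)}$ then says the full $su$-saturation is all of $Y$. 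The delicate point is that the Pesin foliations are defined only on a full-measure set and have nonuniform size, so accessibility via possibly uniform stable and unstable leaves must be matched with Pesin leaves; I would use Hopf-type chains restricted to Pesin blocks of large measure.

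Finally, the singular part of $\zeta_x$, if nonzero, would also be a measure with absolutely continuous plaque conditionals, which forces it to be zero. I would handle this by the invariance $F_*\zeta_x=\zeta_{fx}$ and the ergodicity of $\zeta$: the set $\{x:\zeta_x\ll m_Y\}$ is $f$-invariant and has positive $\mu$-measure, so it has full measure.
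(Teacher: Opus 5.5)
There is a genuine gap at the step you yourself flagged as the core one: nothing in it produces an open set, or any set of positive $m_Y$-measure, on which $\zeta_x$ is absolutely continuous with respect to $m_Y$.

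\textbf{Why the plaque data cannot suffice.} The fiberwise stable and unstable plaques never span the fiber. Since $D\rho_{\mathbf t}$ carries the generating vector fields of the action to themselves, the $k$-dimensional orbit distribution $T\mathcal O$ is neutral for the fiber cocycle. Hence $\dim\widehat E^s+\dim\widehat E^u\le\dim Y-k<\dim Y$, and the kernels of the $\chi_i$ at $\mu(\tau)$ may add further neutral directions. Absolute continuity of the plaque conditionals, positivity of their densities and absolute continuity of holonomies give, through a Pesin local-product/Fubini argument, information only on $su$-boxes of that lower dimension. They do not constrain the transverse factor measure of $\zeta_x$ in the neutral directions. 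Translating leaves by $\rho_{\mathbf u}$ does not help, because $\zeta_x$ is not $\rho_{\mathbf u}$-invariant; you only have $(\rho_{\tau(x)})_*\zeta_x=\zeta_{fx}$. So your \emph{absolutely continuous component as measured by plaque conditionals} is all of $\zeta_x$ by hypothesis and carries no full-dimensional information. Its absolutely continuous part with respect to $m_Y$ has not been shown to be nonzero, so the density set $S$ you want to saturate may be empty.

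\textbf{The missing mechanism (Steps 1--2 of the paper).}
\begin{enumerate}
\item Start from an $su$-box $P_1$ with $\zeta_{P_1}\ll m_{P_1}$.
\item Repeatedly saturate the current box $P_k$ by local $\widehat W^{\sigma}$-leaves, $\sigma\in\{s,u\}$, through points $w$ with $\widehat E^{\sigma}(w)\not\subset T_wP_k$, so that the dimension strictly increases.
\item At each stage prove $\zeta_{P_{k+1}}\ll m_{P_{k+1}}$: disintegrate along the new leaves, show the factor measure on a transversal is absolutely continuous using $\zeta_H\ll m_H$ on the $P_k$-type boxes $H$ together with holonomy, and then apply Fubini.
\item When the current box has no dimension-increasing point, first transport it by $s$/$u$ holonomies, which preserve absolute continuity. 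Accessibility is what guarantees a dimension-increasing position is eventually reached; otherwise everything reachable from $P_k$ by $su$-paths would lie in images of maps of rank at most $\dim P_k$.
\end{enumerate}
The output is a full-dimensional, hence open, box $P_N$ with $\zeta_x(P_N)>0$ and $\zeta_x|_{P_N}\ll m_Y$. Accessibility has to be used in this bracket-generating way to recover the neutral directions, not merely as the statement that $su$-saturations exhaust $Y$.

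\textbf{The conclusion step is also mis-set.} Suppose you did have a nonzero absolutely continuous part. Showing that the $su$-saturation of its density points stays in its support would, via accessibility, only show that this part has full support. That does not exclude a nonzero singular part. Your claim that the singular part, having absolutely continuous plaque conditionals, must vanish is the proposition itself applied to that singular part, so it is circular.

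The paper's Step 3 propagates a different statement: $\zeta_x|_U\ll m_Y|_U$ on an open set $U$, which concerns all of $\zeta_x$ on $U$, not just its absolutely continuous part. It starts from $U_0=P_N$ and follows a finite $su$-path to an open neighborhood $U(y')$ of every $y'$ in the full-measure regular set $\mathcal R_x$. On each leg, the factor measure is pushed through the holonomy $H_j$. Positivity of the plaque densities shows that the factor measures on the two transversals are equivalent, and Fubini reassembles the measure. A countable subcover of $\mathcal R_x$ then gives $\zeta_x\ll m_Y$.

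Your ergodicity reduction through the $f$-invariant set $\{x:\zeta_x\ll m_Y\}$ is valid. It only becomes useful once this local statement is in hand.
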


\begin{proof}
Let $\xi$ be the measurable partition of $X\times Y$ into 
the fibers $\{x\}\times Y$. 
By the Rokhlin disintegration theorem, 
there exists a measurable family $\{\zeta_x\}_{x\in X}$ such that
$$
\zeta=\int_X\zeta_x\,\rd\mu(x), 
$$
where $\zeta_x$ is the conditional measure of $\zeta$ on $\{x\}\times Y$. 
By the disintegration theorem, 
there exists a measurable set $X_0\subset X$ with $\mu(X_0)=1$ such that, 
for every $x\in X_0$, the measure $\zeta_x$ is well defined 
and its conditional measures along the fiberwise stable and unstable foliations 
are absolutely continuous with respect to the corresponding leaf volumes. 

Since $\displaystyle \zeta\left(\bigcup_{l=1}^\infty \widehat{\Gamma}_{l,\epsilon}\right) = 1$ 
for a fixed small $\epsilon > 0$, the disintegration formula implies that 
\[
\zeta_x\left(\bigcup_{l=1}^\infty \widehat{\Gamma}_{l,\epsilon}\right) = 1 \quad \text{for } \mu\text{-almost every } x \in X.
\]
By replacing $X_0$ with a subset of full $\mu$-measure if necessary, 
we may assume this holds for every $x \in X_0$. 
For each $x \in X_0$, we define the set of regular points in the fiber by
\[
\mathcal R_x := \left\{ y \in Y : (x,y) \in \operatorname{supp}\zeta_x \cap \left(\bigcup_{l=1}^\infty \widehat{\Gamma}_{l,\epsilon}\right) \right\}.
\]
By construction, $\zeta_x(\mathcal R_x) = 1$. Furthermore, for every $y \in \mathcal R_x$, 
there exists an integer $l = l(x,y) \ge 1$ such that 
$(x,y) \in \widehat{\Gamma}_{l,\epsilon}$.

Fix $x \in X_0$ and choose $y\in \mathcal R_x$. 
It follows that $(x,y) \in \widehat{\Gamma}_{l_0,\epsilon}$ for some $l_0 \ge 1$.

To complete the proof, we divide the argument into three steps. 

\medskip
\noindent\textbf{Step 1: Construction and absolute continuity on the local box $P_1$.}
\medskip

Fix a sufficiently small $r>0$.
Let $\widehat{W}^u_{\operatorname{loc}}(x,y)$ and $\widehat{W}^s_{\operatorname{loc}}(x,y)$ 
denote the corresponding local leaves. 
We define the local smooth submanifold 
\begin{align}\label{def-P1}
P_1=P_1(x,y):=\bigcup\left\{\widehat{W}_{\operatorname{loc}}^s(x,y'):(x,y')\in\widehat{W}^u_{\operatorname{loc}}(x,y)\right\}.
\end{align}
Let $B_{P_1}((x,y),r):=P_1\cap B((x,y),r)$, 
where $B((x,y),r)$ denotes the ball in $X\times Y$ centered at $(x,y)$ 
with radius $r$ with respect to the Riemannian metric.

After shrinking the neighborhood if necessary, 
local submanifolds of the form \eqref{def-P1} form a measurable partition 
of a neighborhood of $(x,y)$ in $\{x\}\times Y$. 
We denote by $\zeta_{P_1}$ the conditional measure of $\zeta_x$ 
associated with the element $P_1$ of this partition.

There exists a family of local leaves $\{T(w)\}_{w\in B_{P_1}((x,y),r)}$ of $\widehat{W}^u$ satisfying
\begin{itemize}
\item[(1)] $T(w)$ is a local leaf of $\widehat{W}^u$ and depends smoothly on $w\in B_{P_1}((x,y),r)$;
\item[(2)] $T(w_1)\cap T(w_2)=\varnothing$ if $w_2\notin T(w_1)$, and $T(w_1)=T(w_2)$ if $w_2\in T(w_1)$;
\item[(3)] $\displaystyle \bigcup_{w\in B_{P_1}((x,y),r)}T(w)\supset B_{P_1}((x,y),r)$.
\end{itemize}
This family generates a measurable partition $\eta$ of the ball $B_{P_1}((x,y),r)$. 
For $w\in B_{P_1}((x,y),r)$, let $\{\widetilde{\zeta}_w\}$ be the system of conditional measures 
and let $\widehat{\zeta}$ be the factor measure induced by $\eta$. Define
$$
Q_1(x,y):=\bigcup\left\{\widehat{W}^s(w):w\in{\widehat{\Gamma}}_{l_0,\epsilon}\cap B_{P_1}((x,y),r)\right\}.
$$

Consider the measurable partition $\xi^s$ of $Q_1(x,y)$ into local leaves of $\widehat{W}^s$. 
For every $w\in B_{P_1}((x,y),r)$, the factor space $Q_1(x,y)/\xi^s$ can be identified with the subset
\begin{align*}
A(w)=\left\{z\in T(w):\text{there exists }z'\in{\widehat{\Gamma}}_{l_0,\epsilon}\cap B_{P_1}((x,y),r)\text{ such that }z=T(w)\cap\widehat{W}^s(z')\right\}.
\end{align*}
Let $m_{\widehat{W}^s(w)}$ be the leaf volume on $\widehat{W}^s$ at $w$, 
let $m_{T(w)}$ be the leaf volume on $T(w)$, 
and let $\zeta_{\widehat{W}^s(w)}$ be the conditional measure on $\widehat{W}^s(w)$ 
induced by the partition $\xi^s$ and the conditional measure $\zeta_{P_1}$.

For every $w_0\in{\widehat{\Gamma}}_{l_0,\epsilon}\cap B_{P_1}((x,y),r)$ and $z\in A(w_0)$, 
the factor space $B_{P_1}((x,y),r)/\eta$ can be identified with the local leaf $\widehat{W}^s(z)$.
For $\widehat{\zeta}$-almost every $w$, 
the conditional measure $\widetilde{\zeta}_w$ is absolutely continuous with respect to $m_{T(w)}$.
Combining this with \cite[Theorem 8.6.8]{BP-nonuniform}, 
there exists a strictly positive smooth function $g(w,z)$ such that
$$
\rd\widetilde{\zeta}_w(z)=g(w,z)\,\rd m_{T(w)}(z)
$$
for $w\in{\widehat{\Gamma}}_{l_0,\epsilon}\cap B_{P_1}((x,y),r)$ and $z\in T(w)$.

For every $w_0\in{\widehat{\Gamma}}_{l_0,\epsilon}\cap B_{P_1}((x,y),r)$ and $z\in A(w_0)$, 
applying Fubini's theorem and using the smoothness of the partition $\{T(w)\}$, 
we obtain a strictly positive smooth function $h'(z,w)$ such that
$$
\rd\widehat{\zeta}(w)=h'(z,w)\,\rd\zeta_{\widehat{W}^s(z)}(w)
$$
for $w\in\widehat{W}^s(z)$. 
Combining this with \cite[Theorem 8.6.8]{BP-nonuniform}, 
there exists a strictly positive smooth function $h(z,w)$ such that
$$
\rd\widehat{\zeta}(w)=h(z,w)\,\rd m_{\widehat{W}^s(z)}(w)
$$
for $w\in\widehat{W}^s(z)$.

Let $B \subset P_1$ be a Borel subset satisfying $\zeta_{P_1}(B) > 0$, 
and let $\chi_B(w,z)$ denote the characteristic function of $B$ at $z \in A(w)$. 
We denote by $\pi_{w_0w}^s$ the holonomy map between the transversals $T(w_0)$ and $T(w)$, 
such that $z = \pi_{w_0w}^s(v)$, and $\operatorname{Jac}(\pi_{w_0w}^s)(v)$ 
is the Jacobian of the map $\pi_{w_0w}^s$ at $v$. 
Note that the holonomy $\pi_{w_0w}^s$ is absolutely continuous and 
$\operatorname{Jac}(\pi_{w_0w}^s)$ is bounded from above and bounded away from $0$; 
see \cite[Theorem 8.6.1]{BP-nonuniform}. 
 Observing that $\pi_{w_0w}^s(A(w_0)) = A(w)$, we have
\begin{align*}
\zeta_{P_1}(B)&=\int_{B_{P_1}((x,y),r)/\eta}\int_{T(w)}\chi_B(w,z)\,\rd\widetilde{\zeta}_w(z)\,\rd\widehat{\zeta}(w)\\
&=\int_{B_{P_1}((x,y),r)/\eta}\int_{A(w)}\chi_B(w,z)g(w,z)\,\rd m_{T(w)}(z)\,\rd\widehat{\zeta}(w)\\
&=\int_{B_{P_1}((x,y),r)/\eta}\int_{A(w_0)}\chi_B(w,z)g(w,z)\operatorname{Jac}(\pi_{w_0w}^s)(v)\,\rd m_{T(w_0)}(v)\,\rd\widehat{\zeta}(w).
\end{align*}
Applying Fubini's theorem, we deduce
\begin{align*}
\zeta_{P_1}(B)&=\int_{A(w_0)}\int_{B_{P_1}((x,y),r)/\eta}\chi_B(w,z)g(w,z)\operatorname{Jac}(\pi_{w_0w}^s)(v)\,\rd\widehat{\zeta}(w)\,\rd m_{T(w_0)}(v)\\
&=\int_{A(w_0)}\int_{\widehat{W}^s(z)}\chi_B(w,z)g(w,z)\operatorname{Jac}(\pi_{w_0w}^s)(v)h(v,w)\,\rd m_{\widehat{W}^s(v)}(w)\,\rd m_{T(w_0)}(v).
\end{align*}
This shows that the conditional measure $\zeta_{P_1}$ on the box $P_1$ 
is absolutely continuous with respect to the volume on $P_1$.

If $\dim P_1=\dim Y$, then, after shrinking $P_1$ if necessary,
$P_1$ is an open subset of the fiber $\{x\}\times Y$. 
In this case, the construction of a full-dimensional box is already complete.
Thus, we assume that
$$
\dim P_1<\dim Y.
$$
Suppose first that there exists a point $w_1\in P_1$ such that 
$\widehat E^u(w_1)\not\subset T_{w_1}P_1$. 

\medskip
\noindent\textbf{Step 2: Absolute continuity on the full-dimensional box $P_N$.}
\medskip

Before constructing a full-dimensional local box 
with absolutely continuous conditional measure, 
we recall key facts regarding Pesin blocks and Rokhlin disintegrations, 
which justify our subsequent choices of points and local boxes. 

The preceding construction of $P_1$ was carried out starting from a
point $y\in\mathcal R_x$. In the remainder of the proof, whenever a
new measurable partition is introduced, 
we restrict attention to the family of elements of full factor measure 
on which the corresponding conditional measures are defined 
and assign full measure to $\mathcal R_x$. More precisely, 
let $\mathcal P$ be one of the measurable partitions appearing below, 
let $\overline\zeta_{\mathcal P}$ be the corresponding factor measure, 
and let $\{\zeta_P\}_{P\in Y/\mathcal P}$ be the associated conditional measures.  
Since $\zeta_x(\mathcal R_x)=1$, we have 
$$
1= \int_{Y/\mathcal P} \zeta_P(\mathcal R_x\cap P) 
\,\rd\overline\zeta_{\mathcal P}(P).
$$
Consequently, $\zeta_P(\mathcal R_x\cap P)=1$ for 
$\overline\zeta_{\mathcal P}$-almost every $P\in Y/\mathcal P$.

For almost every element retained in the construction, 
almost every point belongs to some Pesin block $\widehat{\Gamma}_{l,\epsilon}$. 
The local stable and unstable leaves and the associated holonomy maps 
used below are always chosen inside sufficiently small boxes. 
Therefore, the required absolute continuity properties of 
the stable and unstable holonomies hold on the local boxes under consideration. 
Moreover, the relevant conditional measures along the local stable and unstable leaves 
are absolutely continuous with respect to the associated leaf volumes.

At every stage, the points and local boxes used in the construction
are chosen from these full-measure families. 
We impose one additional requirement concerning the transport steps.

Fix an increasing countable family of Pesin blocks
$$
\widehat{\Gamma}_{l_1,\epsilon}\subset \widehat{\Gamma}_{l_2,\epsilon}\subset \cdots
\subset \widehat{\Gamma}_{l_n,\epsilon}\subset\cdots ,\quad n\in \mathbb{N} ,
$$
whose union has full $\zeta$-measure in the Pesin regular set. 
For $\gamma\in\{s,u\}$, by disintegrating along the local
$\widehat W^\gamma$-plaques and using $\zeta_x(\mathcal R_x)=1$, 
we obtain a full-measure family of local plaques $L$ such that
$$
\zeta_L(\mathcal R_x\cap L)=1.
$$
By applying \cite[Theorem 8.6.8]{BP-nonuniform} 
to the leafwise conditional measures, 
$\zeta_L$ is equivalent to the corresponding leaf volume.
Thus, $m_L(L\setminus\mathcal R_x)=0$ 
for every plaque in this full-measure family. In particular, 
every nonempty relatively open subset of 
such a plaque contains points of $\mathcal R_x$.

Whenever a box is transported along local stable or unstable leaves,
we choose the transport on one of these plaques. Thus the initial
point $w^-$ and the terminal point $w^+$ of the transport can both
be chosen in $\mathcal R_x$. Since the Pesin blocks above form an
increasing exhaustion, there exists an integer $N$ such that
$$
w^-,w^+\in\widehat{\Gamma}_{l_N,\epsilon}.
$$
After subdividing the transport and shrinking the local boxes if
necessary, the corresponding holonomy is therefore taken inside the
local Pesin boxes associated with $\widehat{\Gamma}_{l_N,\epsilon}$.

If the purpose of the transport is to reach a position at which a
stable or unstable saturation increases the dimension, the terminal
point can still be chosen in $\mathcal R_x$, since the
dimension-increasing condition
$\widehat E^\sigma(w)\not\subset T_wP_k$ is open. 
Consequently, every nonempty relatively open set 
of admissible terminal points contains a point in $\mathcal R_x$. 

Since each finite stage involves only finitely many transports, partitions, 
and disintegrations, these full-measure restrictions are imposed 
simultaneously and implicitly henceforth.  

We now address the remaining case in the construction of the full-dimensional local box. 
Suppose, on the other hand, that
$$
\widehat E^u(w)\subset T_wP_1
$$
for every $w$ in the present local box $P_1$. 
We transport $P_1$ along finitely many local stable or unstable leaves 
until a position admitting a dimension-increasing saturation is reached, 
yielding a finite sequence of smooth cube-like local submanifolds
$$
Q_0=P_1,Q_1,\ldots,Q_r
$$
such that $\dim Q_j=\dim P_1$ for every $0\leq j\leq r$.

For each $0\leq j<r$, let $\gamma_j\in\{s,u\}$ denote the type of
the leaves used in the corresponding transport. By the preceding
selection convention, we choose regular points
$$
w_j^-\in Q_j\cap\mathcal R_x,
\quad
w_j^+\in\mathcal R_x,
$$
lying on the same local $\widehat W^{\gamma_j}$-plaque. A
sufficiently small cube-like neighborhood of $w_j^-$ in $Q_j$ is
then transported along the local $\widehat W^{\gamma_j}$-leaves to
a cube-like box $Q_{j+1}$ containing $w_j^+$. Both endpoints are
chosen in a common Pesin block, as explained above.

We apply stable or unstable holonomy depending on whether 
the movement is along local stable or unstable leaves. 
Recall that all local boxes below are chosen from 
the families of full factor measure
on which the corresponding conditional measures are defined.

We claim that
$$
\zeta_{Q_j}\ll m_{Q_j}
$$
for every $0\leq j\leq r$, 
where $m_{Q_j}$ denotes the induced Riemannian volume on $Q_j$. 
The assertion holds for $Q_0=P_1$ by the preceding argument.

Suppose that $\zeta_{Q_j}\ll m_{Q_j}$ for some $0\leq j<r$. 
Let $\gamma_j\in\{s,u\}$ denote the type of the local leaves 
used to move $Q_j$ to $Q_{j+1}$.

Take a sufficiently small local $\widehat W^{\gamma_j}$-foliation
box containing the corresponding transport. 
By the preceding selection convention, we can choose regular points
$$
w_j^-\in Q_j\cap\mathcal R_x, \quad
w_j^+\in Q_{j+1}\cap\mathcal R_x,
$$
lying on the same local $\widehat W^{\gamma_j}$-leaf. 
Next, select smooth local transversals
$$
w_j^-\in S_j^-\subset Q_j, \quad
w_j^+\in S_j^+\subset Q_{j+1},
$$
both transverse to the local $\widehat W^{\gamma_j}$-leaves, 
such that every relevant local leaf meets each transversal in exactly one point. 
The corresponding holonomy map is given by
$$
H_j:S_j^-\to S_j^+, \quad H_j(z)
= \widehat W_{\operatorname{loc}}^{\gamma_j}(z)\cap S_j^+.
$$
In particular, we have $H_j(w_j^-)=w_j^+$.  
After shrinking the foliation box if necessary, 
$H_j$ is well defined and absolutely continuous, 
and its Jacobian is bounded from above and bounded away from zero.

Let $\widehat\zeta_j^-$ and $\widehat\zeta_j^+$ be the factor
measures on $S_j^-$ and $S_j^+$ obtained by disintegrating
$\zeta_{Q_j}$ and $\zeta_{Q_{j+1}}$, respectively, 
along the local $\widehat W^{\gamma_j}$-leaves. 
Since $\zeta_{Q_j}\ll m_{Q_j}$, 
the local product coordinates in the foliation box and Fubini's theorem imply that
$\widehat\zeta_j^-\ll m_{S_j^-}$. 

By the uniqueness and transitivity of Rokhlin disintegration, 
the factor measure $\widehat\zeta_j^+$ is, up to normalization, 
the pushforward of $\widehat\zeta_j^-$ under the holonomy $H_j$. 
Since $H_j$ is absolutely continuous, 
we conclude that $\widehat\zeta_j^+\ll m_{S_j^+}$. 

By assumption, the conditional measures of 
$\zeta_{Q_{j+1}}$ along the local $\widehat W^{\gamma_j}$-leaves
 are absolutely continuous with respect to the corresponding leaf volumes. 
Together with $\widehat\zeta_j^+\ll m_{S_j^+}$ 
and Fubini's theorem, this implies 
$\zeta_{Q_{j+1}}\ll m_{Q_{j+1}}$, 
thereby proving the claim by induction. 

Next, we replace $P_1$ by $Q_r$ and retain the notation $P_1$. 
For notational simplicity, we assume that 
the dimension-increasing direction is unstable 
(the stable case is analogous). 

Choose a sufficiently small cube-like neighborhood
$P_1'\subset P_1$ of $w_1$ such that
$$
d_2:=\dim\bigl(T_wP_1+\widehat E^u(w)\bigr)
$$
is constant on $P_1'$ and satisfies $d_2>\dim P_1$.
We define
\begin{align*}
P_2:=\bigcup\left\{\widehat W^u_{\operatorname{loc}}(w):w\in P_1'\right\},
\end{align*}
where the local unstable leaves are chosen sufficiently small.

After shrinking $P_1'$ and the local unstable leaves if necessary,
the constant-rank theorem implies that $P_2$ is a smooth cube-like
local submanifold of dimension $d_2$. 
Moreover, the local sets constructed in this way form a 
measurable family of boxes in a neighborhood of $P_2$. 
We denote by $\zeta_{P_2}$ the corresponding conditional measure of $\zeta_x$ on $P_2$.

Let $\eta_2$ be the measurable partition of $P_2$ into the connected
components of the local unstable leaves used in the construction of $P_2$. 
After shrinking the local box once more, 
we may choose a smooth local transversal $S_2\subset P_2$ 
meeting every element of $\eta_2$ in exactly one point. 
Let $p_2:P_2\to S_2$ be the projection along the elements of $\eta_2$, 
and let $\widehat\zeta_2:=(p_2)_*\zeta_{P_2}$ be the corresponding factor measure.

We first prove that
$$
\widehat\zeta_2\ll m_{S_2},
$$
where $m_{S_2}$ denotes the induced Riemannian volume on $S_2$.

Choose the local partitions defining the $P_1$- and $P_2$-boxes so that 
the partition into local $P_1$-type boxes refines the partition into local $P_2$-boxes. 
Thus, inside $P_2$ there is a measurable partition $\mathcal Q_2$ 
into local $P_1$-type boxes $H$ transverse to the elements of $\eta_2$.

By the transitivity of Rokhlin disintegration, 
for the factor measure of $\mathcal Q_2$-almost every $H$, 
the conditional measure $\zeta_H$ gives full measure to $\mathcal R_x\cap H$ 
and is one of the conditional measures arising in the preceding $P_1$-construction. 
The set $\mathcal R_x\cap H$ is covered, up to a $\zeta_H$-null set, 
by countably many local pieces contained in fixed Pesin blocks. 
The preceding $P_1$-argument applies on each such piece, 
with constants depending on the corresponding block. 
Hence, by countable additivity, 
we have $\zeta_H\ll m_H$ for almost every $H\in\mathcal Q_2$, 
where $m_H$ denotes the induced Riemannian volume on $H$. 

Let $A\subset S_2$ be a Borel set satisfying $m_{S_2}(A)=0$. 
For every local $P_1$-type box $H\in\mathcal Q_2$, 
let $\pi_H:S_2\to H$ be the holonomy map along the elements of $\eta_2$. Then
$$
H\cap p_2^{-1}(A)=\pi_H(A).
$$
By the absolute continuity of the corresponding unstable holonomy, 
we have 
$m_H\bigl(\pi_H(A)\bigr)=0$. 
Since $\zeta_H\ll m_H$, it follows that for almost every $H\in\mathcal Q_2$, we have 
$$
\zeta_H\bigl(H\cap p_2^{-1}(A)\bigr)=\zeta_H\bigl(\pi_H(A)\bigr)=0. 
$$
Integrating the conditional measures over the factor space of $\mathcal Q_2$, 
we obtain
$$
\widehat\zeta_2(A)=\zeta_{P_2}\bigl(p_2^{-1}(A)\bigr)=0.
$$
Hence, we have $\widehat\zeta_2\ll m_{S_2}$. 
We disintegrate $\zeta_{P_2}$ with respect to $\eta_2$:
$$
\zeta_{P_2}=\int_{S_2}\widetilde\zeta^{\,2}_v\,\rd\widehat\zeta_2(v),
$$
where $\widetilde\zeta^{\,2}_v$ is supported on the local unstable leaf through $v$. 
By assumption and \cite[Theorem 8.6.8]{BP-nonuniform}, for $\widehat\zeta_2$-almost every $v\in S_2$, 
the conditional measure $\widetilde\zeta^{\,2}_v$ is absolutely continuous 
with respect to the corresponding unstable leaf volume.
Combining this with $\widehat\zeta_2\ll m_{S_2}$,
and applying the same change-of-variables and Fubini argument as in
the proof for $P_1$, we obtain 
$$
\zeta_{P_2}\ll m_{P_2},
$$
where $m_{P_2}$ denotes the induced Riemannian volume on $P_2$.

We repeat the preceding construction inductively. Suppose that,
for some $k\geq2$, a smooth cube-like local submanifold $P_k$ has
been constructed and satisfies $\zeta_{P_k}\ll m_{P_k}$.
If $\dim P_k=\dim Y$, the construction terminates. 
Thus, we assume that
$$
\dim P_k<\dim Y.
$$
Suppose first that there exist a point $w_k\in P_k$ and a direction
$\sigma_k\in\{s,u\}$ such that
$$
\widehat E^{\sigma_k}(w_k)\not\subset T_{w_k}P_k.
$$
In this case, no preliminary transport of $P_k$ is needed. 
Otherwise, repeating the finite holonomy-transport argument used for $P_1$, 
we obtain a smooth cube-like box $\widetilde P_k$ such that 
$$
\dim \widetilde P_k=\dim P_k,
\quad
\zeta_{\widetilde P_k}\ll m_{\widetilde P_k},
$$
and there exist a point $w_k\in\widetilde P_k$ and a direction
$\sigma_k\in\{s,u\}$ satisfying
$$
\widehat E^{\sigma_k}(w_k) \not\subset T_{w_k}\widetilde P_k.
$$
Indeed, if no finite sequence of local stable and unstable transports
reached a dimension-increasing direction, then every smooth endpoint
map associated with a finite local $su$-word starting from $P_k$
would have rank at most $\dim P_k$. Hence, the points reachable from
$P_k$ by finite local $su$-paths could not fill the fiber $Y$, 
contradicting accessibility.
For convenience, we replace $P_k$ with $\widetilde P_k$ and retain the notation $P_k$. 

Choose a sufficiently small cube-like neighborhood $P_k'\subset P_k$ of $w_k$ such that
$$
d_{k+1}
:=
\dim\bigl(
T_wP_k+\widehat E^{\sigma_k}(w)
\bigr)
$$
is constant on $P_k'$ and satisfies $d_{k+1}>\dim P_k$. 
Define
$$
P_{k+1}
:=
\bigcup_{w\in P_k'}
\widehat W^{\sigma_k}_{\operatorname{loc}}(w),
$$
where the local leaves are chosen sufficiently small.

After shrinking $P_k'$ and the local leaves if necessary, 
the constant-rank theorem implies that $P_{k+1}$ is a smooth cube-like
local submanifold of dimension $d_{k+1}$. 
Repeating the argument used for the passage from $P_1$ to $P_2$, 
with the stable and unstable directions interchanged when necessary, 
we obtain $\zeta_{P_{k+1}}\ll m_{P_{k+1}}$.

Since the dimension strictly increases at every such step, 
the construction terminates after finitely many iterations. 
We therefore obtain smooth cube-like local submanifolds
$$
P_1,P_2,\ldots,P_N\subset\{x\}\times Y
$$
such that $\dim P_1<\dim P_2<\cdots<\dim P_N=\dim Y$ 
and $\zeta_{P_j}\ll m_{P_j}$ for every $1\leq j\leq N$.

Since $P_N$ is a full-dimensional local submanifold of the fiber,
after shrinking it if necessary, $P_N$ is an open subset of $\{x\}\times Y$. 
By the full-measure choices made above, we may take 
$P_N$ so that its Rokhlin conditional measure is defined and $\zeta_x(P_N)>0$. 
In the full-dimensional case, $\zeta_{P_N}$ coincides, up to normalization, 
with the restriction of $\zeta_x$ to $P_N$. It follows that 
$$
\zeta_x|_{P_N}\ll m_Y|_{P_N}.
$$

\medskip
\noindent\textbf{Step 3: Extension of absolute continuity across the entire fiber via accessibility.}
\medskip

Since $\zeta_x(P_N)>0$ and $\zeta_x(\mathcal R_x)=1$,
we choose $a\in P_N\cap\mathcal R_x$. 
Fix $y'\in\mathcal R_x$. By accessibility, there exists a finite
$su$-path
$$
a=z_0,z_1,\ldots,z_q=y',
$$
such that for each $0\le j<q$, 
the points $z_j$ and $z_{j+1}$ lie on the same local stable or unstable leaf. 
By the preceding selection convention, each point $z_j$ can be chosen in $\mathcal R_x$. 
By subdividing the path segments if necessary, 
we may assume that each segment is contained in a sufficiently small local foliation box.

We prove inductively that, for every $0\leq j\leq q$, 
there exists an open neighborhood $U_j$ of $z_j$ such that
$$
\zeta_x|_{U_j}\ll m_Y|_{U_j}.
$$
For $j=0$, we take $U_0:=P_N$.

Suppose given an open neighborhood $U_j$ of $z_j$ satisfying 
$\zeta_x|_{U_j}\ll m_Y|_{U_j}$. 
Let $\gamma_j\in\{s,u\}$ denote the type of the leaf containing
$z_j$ and $z_{j+1}$. Choose a sufficiently small local
$\widehat W^{\gamma_j}$-foliation box $\mathcal B_j$ containing the
corresponding leaf segment. 

Inside $\mathcal B_j$, 
we choose smooth local transversals $S_j^-\subset U_j$ and $S_j^+$, 
both transverse to the local $\widehat W^{\gamma_j}$-leaves,
and basepoints $z_j\in S_j^-$, $z_{j+1}\in S_j^+$ 
such that every relevant local leaf meets each transversal in exactly one point. 
Let $H_j:S_j^-\to S_j^+ $ be the corresponding local holonomy map. 
In particular, we have $H_j(z_j)=z_{j+1}$.

Choose small open local product boxes $V_j^-$ and $V_j^+$ over
$S_j^-$ and $S_j^+$, respectively, such that
$$
V_j^-\subset U_j, \quad
z_j\in V_j^-, \quad
z_{j+1}\in V_j^+.
$$
The boxes $V_j^-$ and $V_j^+$ are chosen over the same family of local plaques. 
Thus, every plaque under consideration meets both $V_j^-$ and $V_j^+$ 
in a nonempty relatively open subset. Set
$$
\nu_j^-:=\zeta_x|_{V_j^-}, \quad \nu_j^+:=\zeta_x|_{V_j^+},
$$
and denote by $\widehat\nu_j^-$ and $\widehat\nu_j^+$ the factor measures 
on $S_j^-$ and $S_j^+$ obtained by disintegrating 
$\nu_j^-$ and $\nu_j^+$ along the local $\widehat W^{\gamma_j}$-plaques.

% We first show that $\widehat\nu_j^-\ll m_{S_j^-}$. 
Let $p_j^-:V_j^-\to S_j^-$ be the projection along the local
$\widehat W^{\gamma_j}$-plaques. By definition, we have 
$\widehat\nu_j^-=(p_j^-)_*\nu_j^-$. 
Let $\widehat m_j^-:=(p_j^-)_*\bigl(m_Y|_{V_j^-}\bigr)$ 
be the factor measure induced by the Riemannian volume. 
The absolute continuity of the local $\widehat W^{\gamma_j}$-foliation 
implies that $\widehat m_j^-\ll m_{S_j^-}$. 
Since $\nu_j^-\ll m_Y\vert{}_{V_j^-}$, 
taking pushforwards under $p_j^-$ yields  
$$
\widehat\nu_j^-=(p_j^-)_*\nu_j^- \ll (p_j^-)_*\bigl(m_Y|_{V_j^-}\bigr)
=\widehat m_j^- \ll m_{S_j^-}.
$$

We next compare the factor measures on the two transversals. 
Disintegrate the restriction of $\zeta_x$ to the common foliation
box $\mathcal B_j$ along its local
$\widehat W^{\gamma_j}$-plaques:
$$
\zeta_x|_{\mathcal B_j} 
=\int_{S_j^-}\zeta_{j,t}^{\gamma_j} \,\rd\overline\zeta_j(t),
$$
where $\zeta_{j,t}^{\gamma_j}$ is supported on the local plaque
$L_t$ through $t\in S_j^-$. Let
$$
I_t^-:=L_t\cap V_j^-, \quad I_t^+:=L_t\cap V_j^+.
$$
Under the identification of the quotient of $\mathcal B_j$ with
$S_j^-$, we define
$$
c_j^-(t):=\zeta_{j,t}^{\gamma_j}(I_t^-), \quad
c_j^+(t):=\zeta_{j,t}^{\gamma_j}(I_t^+).
$$
The transitivity of Rokhlin disintegration gives
\begin{align}\label{fiberwise rigidity lemma-(1)}
\rd\widehat\nu_j^-(t) =c_j^-(t)\,\rd\overline\zeta_j(t),
\quad \text{and} \quad
\rd\bigl((H_j^{-1})_*\widehat\nu_j^+\bigr)(t)=c_j^+(t)\,\rd\overline\zeta_j(t).
\end{align}

By the countable Pesin-block exhaustion and \cite[Theorem 8.6.8]{BP-nonuniform}, 
there exists a measurable set $D_j\subset S_j^-$ of full $\overline\zeta_j$-measure 
such that $D_j$ is the union of countably many measurable sets $D_{j,n}$, 
where for every $t\in D_j$, the conditional measure $\zeta_{j,t}^{\gamma_j}$ 
is equivalent to the leaf volume on the corresponding local plaque $L_t$; 
more precisely,
$$
\rd\zeta_{j,t}^{\gamma_j}
=
\rho_{j,t}^{\gamma_j}\,\rd m_{L_t},
\quad
\rho_{j,t}^{\gamma_j}>0
\quad
m_{L_t}\text{-almost everywhere}.
$$ 
Moreover, for every $n$, the set $D_{j,n}$ is contained in the
transverse domain of a fixed local Pesin box. On the corresponding
transversal pieces, the holonomy $H_j$ and its inverse are absolutely
continuous, and the holonomy Jacobian is bounded from above and
bounded away from zero. Consequently, for every Borel set
$E\subset S_j^+$ satisfying
$m_{S_j^+}(E)=0$, 
one has
$$
m_{S_j^-}\bigl(D_{j,n}\cap H_j^{-1}(E)\bigr)=0.
$$
Since $I_t^-$ and $I_t^+$ are nonempty relatively open subsets of
$L_t$, they have positive leaf volume. 
Hence, for $\overline\zeta_j$-almost every $t$, we have  
$c_j^-(t)>0 $ and $c_j^+(t)>0$. 
It follows from \eqref{fiberwise rigidity lemma-(1)} that
$(H_j^{-1})_*\widehat\nu_j^+\sim \widehat\nu_j^-$,
or equivalently, $\widehat\nu_j^+ \sim (H_j)_*\widehat\nu_j^-$.

Let $E\subset S_j^+$ be a Borel set satisfying $m_{S_j^+}(E)=0$. 
For every $n$, the absolute continuity of the restriction of $H_j$
to $D_{j,n}$ gives
$$
m_{S_j^-}\bigl(D_{j,n}\cap H_j^{-1}(E)\bigr)=0.
$$
Since $\widehat\nu_j^-\ll m_{S_j^-}$, for every $n$, 
we have $\widehat\nu_j^-\bigl(D_{j,n}\cap H_j^{-1}(E)\bigr)=0$. 
Combining this with the fact that $D_j$ has full $\widehat\nu_j^-$-measure, 
we obtain 
\begin{align*}
(H_j)_*\widehat\nu_j^-(E)
=\widehat\nu_j^-\bigl(H_j^{-1}(E)\bigr)
=\widehat\nu_j^-\bigl(D_j\cap H_j^{-1}(E)\bigr)
\leq \sum_{n\geq1} \widehat\nu_j^- \bigl(D_{j,n}\cap H_j^{-1}(E)\bigr)
=0.
\end{align*}
This proves $(H_j)_*\widehat\nu_j^-\ll m_{S_j^+}$. 
Since $\widehat\nu_j^+ \sim (H_j)_*\widehat\nu_j^-$,
we conclude that $\widehat\nu_j^+\ll m_{S_j^+}$. 

On the other hand, 
the conditional measures of $\nu_j^+$ along the local
$\widehat W^{\gamma_j}$-plaques are restrictions and normalizations
of the conditional measures $\zeta_{j,t}^{\gamma_j}$. 
Hence, by \cite[Theorem 8.6.8]{BP-nonuniform}, 
they are absolutely continuous with respect to 
the corresponding leaf volumes.

Disintegrating the Riemannian volume $m_Y|_{V_j^+}$ along the same local plaques, 
the absolute continuity of the local foliation implies that 
its factor measure is equivalent to $m_{S_j^+}$ and that 
its conditional measures are equivalent to the corresponding leaf volumes. 
Since $\widehat\nu_j^+\ll m_{S_j^+}$
and the conditional measures of $\nu_j^+$ along the plaques are
absolutely continuous with respect to the corresponding leaf volumes, 
the transitivity of Rokhlin disintegration and Fubini's theorem imply that
$$
\nu_j^+\ll m_Y|_{V_j^+}.
$$
Equivalently, $\zeta_x|_{V_j^+}\ll m_Y|_{V_j^+}$.
Thus, we can take $U_{j+1}:=V_j^+$.

Repeating this argument along the finitely many legs of the $su$-path, 
for each $y'\in\mathcal R_x$, 
we obtain an open neighborhood $U(y')$ of $y'$ such that
$\zeta_x|_{U(y')}\ll m_Y|_{U(y')}$. 
Since $Y$ is second-countable, 
there exists a countable collection 
$\{y_n\}_{n\in\mathbb N} \subset \mathcal R_x$ such that 
$$
\mathcal R_x \subset \bigcup_{n\in\mathbb N}U(y_n).
$$
Let $A\subset Y$ be a Borel set satisfying $m_Y(A)=0$.
For every $n\in\mathbb N$, we have $m_Y\bigl(A\cap U(y_n)\bigr)=0$.
Since $\zeta_x|_{U(y_n)} \ll m_Y|_{U(y_n)}$,
it follows that $\zeta_x\bigl(A\cap U(y_n)\bigr)=0$ for all $n\in\mathbb N$. 
Together with the fact that $\zeta_x(\mathcal R_x)=1$, this yields 
\begin{align*}
\zeta_x(A)
=\zeta_x(A\cap\mathcal R_x)
\leq\sum_{n\in\mathbb N}\zeta_x\bigl(A\cap U(y_n)\bigr)=0.
\end{align*}
Therefore, $\zeta_x$ is absolutely continuous with respect to $m_Y$.

Since $x\in X_0$ was arbitrary, the conclusion holds for
$\mu$-almost every $x\in X$.
\end{proof}

Next, we extend leafwise measure rigidity to the entire fiber. 
Combined with the finiteness of the projections of MMEs, 
this yields the product rigidity in Theorem \ref{main theorem}. 

\begin{proposition}\label{unique measure}
Each ergodic measure of maximal entropy $\zeta$ for $F$ is of the form
$$
\zeta=\mu\times m_{\MME},
\quad
\mu=\pi_*\zeta.
$$
\end{proposition}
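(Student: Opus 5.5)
The plan is to show that any ergodic MME $\zeta$ has fiber conditionals $\zeta_x$ absolutely continuous with respect to $m_Y$, and then conclude $\zeta=\mu\times m_{\MME}$ by an ergodicity argument.

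\textbf{Setup.} Fix an ergodic MME $\zeta$ and put $\mu=\pi_*\zeta$, $\mathbf a=\mu(\tau)$. By Theorem \ref{goal theorem}, $\mu$ is an ergodic nonlinear equilibrium state. The proof of that theorem shows every inequality in the chain $h_{\top}(F)=h_\zeta(F)\le h_\mu(f)+H(\mathbf a)\le h_{\top}(F)$ is an equality, so
\[
h_\zeta(F)-h_\mu(f)=H(\mathbf a)=\sum_i m_i\max\{\chi_i(\mathbf a),0\}.
\]
By Corollary \ref{projection not MME}, $H(\mathbf a)>0$. Hence (C2) makes $\rho_{\mathbf a}$ accessible, which is the accessibility hypothesis needed in Proposition \ref{fiberwise rigidity proposition}.

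\textbf{Step 1: the entropy equality of Theorem \ref{relative SRB}.} To use Theorem \ref{relative SRB} I must show that $H(\mathbf a)$ equals the sum of the positive fiber exponents of $\zeta$ with multiplicity. The vertical derivative cocycle of $F$ at $(x,y)$ is $D_y\rho_{S_n\tau(x)}$, and $S_n\tau(x)=n\mathbf a+o(n)$ for $\mu$-a.e.\ $x$. The leafwise Ledrappier--Young formula (\cite[Theorem C']{LY85-2}), together with Lemma \ref{base LY} and Lemma \ref{base same dimension}, gives
\[
h_\zeta(F)-h_\mu(f)=\sum_i \lambda_i^{\rm fib}\,\gamma_i,\qquad 0\le \gamma_i\le \dim E_i ,
\]
where the $\lambda_i^{\rm fib}$ are the positive fiber exponents and the $\gamma_i$ are transverse dimensions. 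It therefore suffices to prove
\[
\sum_{\lambda_i^{\rm fib}>0}\lambda_i^{\rm fib}\dim E_i\le H(\mathbf a).
\]
\begin{itemize}
\item \emph{Identifying the exponents.} I would approximate $D\rho_{S_n\tau(x)}$ by $D\rho_{n\mathbf a}$ up to a subexponential factor. This uses the Lyapunov charts of Proposition \ref{chart2}, whose tempered estimates with parameter $\epsilon\|\mathbf t\|$ absorb the $o(n)$ discrepancy. It should show that the fiber Oseledets data of $\zeta$ are those of the action evaluated at $\mathbf a$, with exponents $\chi_i(\mathbf a)$ and multiplicities $m_i$, plus zero exponents along the orbit.
\item \emph{Main obstacle.} The Lyapunov functionals here are a priori computed with respect to the $Y$-marginal of $\zeta$, not $m_{\MME}$. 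I would handle this with volume growth. Since $\rho$ preserves the smooth measure $m_{\MME}$, the fiber Jacobian is a coboundary. Applying the Margulis--Ruelle inequality to $\rho_{\mathbf a}$ and to its inverse bounds the positive part by the volume growth $h_{\top}(\rho_{\mathbf a})=H(\mathbf a)$.
\end{itemize}
Once this is done, Theorem \ref{relative SRB} gives absolute continuity of $\zeta$ along $\widehat W^u$. The same argument applied to $F^{-1}$, whose maximal fiber entropy is also $H(\mathbf a)$ because $H(-\mathbf a)=H(\mathbf a)$, gives absolute continuity along $\widehat W^s$.

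\textbf{Step 2: absolute continuity of the fibers.} Proposition \ref{fiberwise rigidity proposition}, with the accessibility from the setup, now gives $\zeta_x\ll m_Y$ for $\mu$-a.e.\ $x$. Since $m_{\MME}$ is smooth and, I assume, has positive density, this means $\zeta\ll\mu\times m_{\MME}$.

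\textbf{Step 3: ergodicity of $\mu\times m_{\MME}$.} This product is $F$-invariant, since $(\rho_{\tau(x)})_*m_{\MME}=m_{\MME}$. It is an MME by Theorem \ref{higher le:product}. I will show it is ergodic by a Hopf argument.
\begin{enumerate}
\item Take an $F$-invariant $\psi\in L^2(\mu\times m_{\MME})$ and approximate it by continuous functions.
\item Ergodic averages of continuous functions are constant along fiberwise stable leaves for forward averages and along unstable leaves for backward averages; stable and unstable averages agree a.e.
\item Absolute continuity of the fiberwise foliations, the measure-theoretic version of the saturation argument already used in Step~3 of Proposition \ref{fiberwise rigidity proposition}, and accessibility of $\rho_{\mathbf a}$ together show that $\psi(x,\cdot)$ is $m_{\MME}$-a.e.\ constant for $\mu$-a.e.\ $x$.
\item Then $\psi=\psi_0\circ\pi$ with $\psi_0$ $f$-invariant, and ergodicity of $\mu$ forces $\psi$ to be constant.
\end{enumerate}
Finally, $\zeta$ and $\mu\times m_{\MME}$ are both ergodic $F$-invariant measures with $\zeta\ll\mu\times m_{\MME}$. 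Two such measures must coincide, so $\zeta=\mu\times m_{\MME}$.
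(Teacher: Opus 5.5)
Your skeleton is the paper's: accessibility from Corollary \ref{projection not MME} and (C2), then Proposition \ref{fiberwise rigidity proposition}, then $\zeta\ll\mu\times m_{\MME}$, then ergodicity of the product. But the two places where you add your own arguments do not hold up.

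\textbf{Step 1.} The inequality you reduce to, $\sum_{\lambda_i^{\mathrm{fib}}>0}\lambda_i^{\mathrm{fib}}\dim E_i\le H(\mathbf a)$, does not follow from what you cite.
\begin{itemize}
\item The Margulis--Ruelle inequality bounds entropy \emph{by} the positive-exponent sum. It only reproduces $H(\mathbf a)\le\sum\lambda_i^{\mathrm{fib}}\dim E_i$, which you already had from Ledrappier--Young.
\item That the fiber Jacobian is a coboundary (for a density of $m_{\MME}$ bounded away from $0$ and $\infty$) only says the positive and negative fiber exponent sums of $\zeta$ agree.
\end{itemize}
Neither bounds the unstable exponent sum of a measure other than $m_{\MME}$ by $h_{\top}$, and no such bound holds in general. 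For example, the time-one map of an area-preserving flow on $\mathbb T^2$ with a hyperbolic saddle has $h_{\top}=0$, area is an invariant MME, yet the saddle carries a positive exponent.

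The real difficulty is this. The comparison $D\rho_{S_n\tau(x)}\approx D\rho_{n\mathbf a}$ up to $e^{o(n)}$ only tells you that the fiber exponents at $(x,y)$ are the forward exponents of $\rho_{\mathbf a}$ at $y$, where $y$ is distributed by $\zeta_x$. That measure is neither $\rho$-invariant nor known to be $m_{\MME}$-typical. The $Y$-marginal of $\zeta$ is not $\rho$-invariant in general either, so Proposition \ref{chart1} attaches no Lyapunov functionals to it. The identification with $(\chi_i(\mathbf a),m_i)$ therefore remains unproved.

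The paper's proof does not carry out this verification. It takes from Theorem \ref{relative SRB} that every ergodic MME has absolutely continuous fiberwise stable and unstable conditionals (see the sentence preceding Proposition \ref{fiberwise rigidity proposition}). To establish that hypothesis yourself, you need an input controlling exponents at $\zeta_x$-typical points. One such input is a derivative cocycle that is constant in a suitable framing, as in the homogeneous examples; your sketch supplies none.

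\textbf{Step 3.} Here you depart from the paper, which gets ergodicity of $\mu\times m_{\MME}$ from (C1) via the mixing local limit theorem \cite[Theorem 3]{GH88} and \cite[Theorem 2.5 (1)]{DDKN22-1}. Your Hopf argument is not a routine variant of Step 3 of Proposition \ref{fiberwise rigidity proposition}.
\begin{itemize}
\item That step transports absolute continuity, a null-set property which absolutely continuous holonomies preserve.
\item A Hopf argument must instead show that a set saturated mod $0$ by both the Pesin stable and Pesin unstable leaves of $\rho_{\mathbf a}$ is null or conull.
\end{itemize}
Since $\rho_{\mathbf a}$ has at least the $k$-dimensional orbit direction as zero exponents, local $su$-product sets have positive codimension in $Y$. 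Crossing the center using accessibility is then the Pugh--Shub problem, which needs julienne density points and center bunching, and this machinery is not available for nonuniform Pesin leaves.

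A symptom of the problem is that your Step 3 never uses (C1). Take $\tau\equiv\mathbf c$ with $H(\mathbf c)>0$. Your argument would then show that accessibility of $\rho_{\mathbf c}$ alone makes $m_{\MME}$ ergodic for the single element $\rho_{\mathbf c}$, because $X\times A$ is $F$-invariant for every $\rho_{\mathbf c}$-invariant set $A$.
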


\begin{proof}
Fix an ergodic measure of maximal entropy $\zeta$ and set $\mu = \pi_*\zeta$. 
By Corollary \ref{projection not MME} and condition (C2), 
the diffeomorphism $\rho_{\mu(\tau)}$ is accessible along its stable and unstable foliations. 
Combining this with Proposition \ref{fiberwise rigidity proposition}, 
we conclude that for $\mu$-almost every $x \in X$, 
the conditional measure $\zeta_x$ is absolutely continuous with respect to the volume measure on $Y$. 
Since $m_{\mathrm{MME}}$ is a smooth measure, we obtain 
$\zeta_x \ll m_{\mathrm{MME}}$ for $\mu$-almost every $x \in X$. 
Consequently, $
\zeta \ll \mu \times m_{\mathrm{MME}}.
$
By \cite[Theorem 3]{GH88} and condition (C1), $\tau$ satisfies the mixing LLT. 
Combining this with \cite[Theorem 2.5 (1)]{DDKN22-1}, 
we deduce that the measure $\mu \times m_{\mathrm{MME}}$ is ergodic and $F$-invariant. 
Since $\zeta$ is also $F$-invariant and absolutely continuous with respect to $\mu \times m_{\mathrm{MME}}$, the ergodicity implies that
$
\zeta = \mu \times m_{\mathrm{MME}}.
$
\end{proof}

We can now complete the proof of Theorem \ref{main theorem}.

\begin{proof}[Proof of Theorem \ref{main theorem}]
It is a direct consequence of 
Theorem \ref{goal theorem} and Proposition \ref{unique measure}. 
\end{proof}

\section{Homogeneous actions, bounds and examples}\label{sect:homogeneous-bounds}

Finally, we examine the homogeneous setting and 
investigate the number of ergodic MMEs 
and the role of the joint nontriviality assumptions. 

\subsection{Algebraic actions and root foliations}\label{subsect: Algebraic actions and root foliations}

In this subsection, we show how the preceding rigidity argument applies
when the $\mathbb R^k$-action on $Y=G/\Gamma$ is induced by 
a maximal Cartan subgroup of a simple, split, connected Lie group $G$. 
This illustrates the algebraic mechanism underlying
Theorem \ref{main theorem-algebra}, which follows from
Theorem \ref{main theorem} and Corollary \ref{main theorem-mixing}. 

We recall the background on foliations for split Lie groups, 
following the presentation in \cite{EK03}. 
For a comprehensive treatment of Lie groups, 
we refer the reader to \cite{Knapp02}. 

Let $G$ be a simple, split, connected Lie group of rank greater than $1$,
with Lie algebra $\mathfrak g$. 
Let $\mathfrak h \subset \mathfrak g$ be a Cartan subalgebra, 
and let $\Lambda \subset \mathfrak h^*$ be its root system, 
so that $\mathfrak g$ admits the root space decomposition
$$
\mathfrak g
=
\mathfrak h
\oplus
\bigoplus_{\lambda\in\Lambda}\mathfrak g_\lambda,
$$
where $\mathfrak h$ is a Cartan subalgebra and each
$\mathfrak g_\lambda$ is one-dimensional. 
The bracket relations
$$
[\mathfrak g_\lambda,\mathfrak g_{\lambda'}]
\subset
\mathfrak g_{\lambda+\lambda'}
$$
show that, whenever $\lambda+\lambda'\in\Lambda$, the root directions
$\mathfrak g_\lambda$ and $\mathfrak g_{\lambda'}$ generate the new
direction $\mathfrak g_{\lambda+\lambda'}$. Thus, successive Lie brackets
provide precisely the mechanism for generating the additional fiber
directions required in condition (C2). In particular, when the iterated
brackets generate the whole fiber tangent algebra, they also provide the
corresponding accessibility mechanism.

In this algebraic setting, the same box-construction argument of 
Proposition \ref{fiberwise rigidity proposition} applies directly
to individual root subfoliations. 
Absolute continuity thus propagates to the foliation they generate. 
Since the bracket-generated direction is already available at the current box, 
this obviates the need for preliminary holonomy transport. 

In the following two examples, let $Y = G/\Gamma$ be a homogeneous space, 
and let $\mu$ be a probability measure on $Y$ that is ergodic 
and invariant with respect to the action of a fixed nontrivial Cartan element. 
We consider two distinct cases: 
one involving a stable and an unstable foliation, 
and the other involving two stable foliations.

\begin{example}[$\operatorname{SL}(2,\mathbb R)$]
Let $Y=\operatorname{SL}(2,\mathbb R)/\Gamma$. 
For a fixed nontrivial Cartan element, 
the root directions $E_{12}$ and $E_{21}$ have opposite Lyapunov exponents, 
so one of the corresponding root foliations $\widehat W_{1,2}$ and $\widehat W_{2,1}$ 
is stable and the other is unstable. Since
$$ 
 [E_{12},E_{21}] = E_{11}-E_{22}, \quad 
 \operatorname{Lie}\langle E_{12},E_{21}\rangle = \mathfrak{sl}(2,\mathbb R),  
 $$
these two root foliations generate the whole space $Y$. 
Therefore, the absolute continuity of the conditional measures of $\mu$ 
along $\widehat W_{1,2}$ and $\widehat W_{2,1}$ implies that 
$\mu$ is absolutely continuous with respect to the Haar measure $m_Y$. 
\end{example}

\begin{example}[The Heisenberg group $\mathbb H$]
Let $Y = \operatorname{SL}(3,\mathbb R)/\Gamma$. 
Consider the $3$-dimensional Heisenberg subgroup $\mathbb{H} \subset \operatorname{SL}(3,\mathbb R)$ 
defined by the upper-triangular unipotent matrices,
$$
\mathbb H=
\left\{
\begin{pmatrix}
1&x&z\\
0&1&y\\
0&0&1
\end{pmatrix}
:x,y,z\in\mathbb R \right\}, 
$$
whose Lie algebra is $\mathfrak{h} = \operatorname{span}\{E_{12}, E_{23}, E_{13}\}$. 
For a fixed Cartan element $A = \operatorname{diag}(e^{t_1},e^{t_2},e^{t_3})\in \operatorname{SL}(3,\mathbb R)$ 
with $t_1 < t_2 < t_3$, 
the root directions $E_{12}$ and $E_{23}$ both have negative Lyapunov exponents, 
so the corresponding root foliations $\widehat W_{1,2}$ and $\widehat W_{2,3}$ are both stable. Since
$$
[E_{12},E_{23}] = E_{13}, \quad \operatorname{Lie}\langle E_{12},E_{23}\rangle = \mathfrak{h},
$$
these two root foliations generate the $3$-dimensional subgroup foliation $\widehat W_{\mathbb{H}}$, 
whose leaves are the ${\mathbb{H}}$-orbits ${\mathbb{H}} \cdot x$. 
Therefore, by a box-construction argument, 
the absolute continuity of the conditional measures of $\mu$ along 
$\widehat W_{1,2}$ and $\widehat W_{2,3}$ implies that 
$\mu_x^H{\mathbb{H}}\ll m_x^{\mathbb{H}}$ for $\mu$-almost every $x \in Y$, 
where $m_x^{\mathbb{H}}$ denotes the Haar measure on the leaf ${\mathbb{H}} \cdot x$. 
\end{example}

\subsection{On the number of ergodic MMEs}
\label{subsect: On the number of ergodic MMEs}

This subsection studies the number of ergodic MMEs
and the role of the joint nontriviality condition.
We first consider the case $k=1$, where the upper
bound of two is attained. We then explain how the
general upper bound is determined by the number of
Weyl chambers, and finally discuss possible behaviors
when condition (C1) or (C2) fails. 

For $k=1$, by analogy with Theorem \ref{higher le:fiber-entropy}, 
the corresponding results for the $\mathbb{R}^1$-action $\{\rho_{\bf t}\}$ 
can be derived similarly; we thus omit the detailed proofs.

\begin{lemma}\label{skew product variational principle}
Given an ergodic $f$-invariant measure $\mu$, we have 
$$
\sup\left\{ h_\zeta(F):\zeta\in \mathcal{M}(F,X\times Y) \text{ and } \pi_*\zeta
=\mu\right\}=\cH(\mu):=h_\mu(f)+|\mu(\tau)|h_\mathrm{top}(\rho_1). 
$$
Moreover, we have 
\begin{align*}
    h_{\top}(F)=\sup \left\{ h_\mu(f)+|\mu(\tau)|h_\mathrm{top}(\phi_1):\mu\in\mathcal{M}_e(f,X) \right\}. 
\end{align*}
\end{lemma}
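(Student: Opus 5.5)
The plan is to run the proof of Theorem \ref{higher le:fiber-entropy} verbatim with $k=1$; the only real input is the cone property for the flow. For $k=1$ one has $H(t)=h_{\top}(\rho_t)=|t|\,h_{\top}(\rho_1)$. This follows from Abramov's formula for positive times, $h_{\top}(\rho_t)=t\,h_{\top}(\rho_1)$ for $t>0$, together with $h_{\top}(\rho_{-t})=h_{\top}(\rho_t^{-1})=h_{\top}(\rho_t)$. Hence the right-hand side of \eqref{intro:eq:relative} becomes $h_\mu(f)+|\mu(\tau)|h_{\top}(\rho_1)=\cH(\mu)$. The function written $\phi_1$ in the second display is the time-one map $\rho_1$; I will treat it as such.

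\textbf{Step 1: fiber topological entropy for $\mu$-a.e.\ $x$.} Fix $x$ Birkhoff-generic for $\tau$, and set $A=\{S_j\tau(x)\}_j$ and $a=\mu(\tau)\in\mathbb R$. The increments $\tau(f^jx)$ are bounded by $\|\tau\|_\infty$, so $K_0<\infty$. There are two cases.
\begin{itemize}
\item If $a=0$, the volume/Besicovitch covering argument from the proof of Theorem \ref{higher le:fiber-entropy}, built on \eqref{higher le:fiber-entropy-(1)}, gives $h_{\top}(F,\pi^{-1}(x))=h^A_{\top}=0$. This argument uses no hypothesis on $\rho$.
\item If $a\ne0$, I invoke the cone property, via Proposition \ref{homogeneous proposition} under the standing hypotheses of the $k=1$ setting (an ergodic MME for the flow and Conjecture \ref{weak conjecture} for each $\rho_t$, e.g.\ $C^\infty$ or hyperbolic, by Examples \ref{weak conjecture-hyperbolic}, \ref{weak conjecture-infty}). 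This yields $h^A_{\top}(\{\rho_t\})=h_{\top}(\rho_a)=|a|h_{\top}(\rho_1)$.
\end{itemize}
In dimension one the cone $\mathcal C_a(\delta)$ is just the ray through $a$. So the upper bound \eqref{lower cone entropy-2} reduces to comparing $B(x,a,\delta,n,\epsilon)$ with Bowen balls of $\rho_a$ up to an $e^{-nr}$ scale, and this is where the weak variational principle enters. The lower bound comes from Theorem \ref{SNE is SE} and Lemma \ref{topological sequence entropy bounds sequence entropy}, applied to the MME of the flow.

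\textbf{Step 2: identify sequence entropy with relative fiber entropy, then integrate.} The fiber $\pi^{-1}(x)=\{x\}\times Y$ carries the dynamics $y\mapsto\rho_{S_n\tau(x)}y$, so $h_{\top}(F,\pi^{-1}(x))=h^A_{\top}(\{\rho_t\})$. Applying the Ledrappier--Walters formula (Theorem \ref{LW77-Theorem 2.1}) then gives
$$\sup\{h_\zeta(F):\pi_*\zeta=\mu\}=h_\mu(f)+\int_X |\mu(\tau)|h_{\top}(\rho_1)\,\rd\mu=\cH(\mu).$$
For the second assertion, take the variational principle for $F$, $h_{\top}(F)=\sup_\zeta h_\zeta(F)$, and split the supremum according to $\pi_*\zeta$. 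It is enough to consider ergodic $\zeta$, whose projections are ergodic, so the supremum is over $\mu\in\mathcal M_e(f,X)$.

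\textbf{Main obstacle.} The delicate point is the upper bound $h^A_{\top}\le |a|h_{\top}(\rho_1)$ in the case $a\neq0$. It needs the weak variational principle for $\rho_a$, so it requires making explicit that the $k=1$ hypotheses include smoothness or hyperbolicity of the flow. Once that is granted, the remaining steps are direct specializations of the $\mathbb R^k$ arguments.
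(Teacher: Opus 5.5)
Your proposal is correct and follows the paper's intended argument, which is Theorem~\ref{higher le:fiber-entropy} specialized to $k=1$: Birkhoff genericity, identification of the fiber entropy with $h^A_{\top}$, $h_{\top}(\rho_t)=|t|\,h_{\top}(\rho_1)$, the Ledrappier--Walters formula, and the variational principle. One correction to your ``main obstacle'': in rank one no exponentially small scale arises, because the cone is a ray and the increments of $S_j\tau(x)$ are bounded by $K_0$, so by uniform continuity alone the sequence Bowen balls are sandwiched between continuous-time Bowen balls of the flow over time intervals of lengths $|S_{n-1}\tau(x)|$ and $n(|a|+\delta)+O(1)$ at fixed comparable scales, which gives $h^A_{\top}=|a|\,h_{\top}(\rho_1)$ without the weak variational principle, Theorem~\ref{SNE is SE}, or an MME.
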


\iffalse
\begin{proof}
By Birkhoff ergodic theorem, for $\mu$-a.e. $x\in X$, 
\begin{align*}
\lim_{n\to\infty} \frac{1}{n}\sum_{i=0}^{n-1} \tau(f^ix)=\int_X\tau(x)\,\rd\mu(x) =\mu(\tau). 
\end{align*}
Combining this with Lemma \ref{sequence entropy}, 
we conclude that for $\mu$-a.e. $x\in X$, 
$$ 
h_{\top}(F, \pi^{-1}(x))=| \mu(\tau)| {\bf a}. 
$$
Note that the continuous surjective map $\pi: X\times Y\to X$ satisfies $\pi\circ F=f\circ \pi$, 
and by Theorem \ref{LW77-Theorem 2.1}, we have 
\begin{align*}
\sup\left\{ h_\zeta(F) : \zeta\in \mathcal{M}(F,X\times Y) \text{ and } \pi_*\zeta=\mu\right\}
&= h_\mu(f) + \int_X h_{\top}(F, \pi^{-1}(x))  \,\rd\mu(x)\\
&=h_\mu(f) + \int_X |\mu(\tau)|{\bf a} \,\rd \mu(x)\\
&= h_\mu(f) + |\mu(\tau)|{\bf a}. 
\end{align*}
This completes the proof of the lemma. 
\end{proof}

Combining Lemma \ref{le:fiber-entropy} and the variational principle, 
we derive the following corollary. 

\begin{corollary}{skew product variational principle}
\begin{align*}
    h_{\top}(F)=\sup \{ h_\mu(f)+|\mu(\tau)|{\bf a}:\mu\in\mathcal{M}_e(f,X) \}. 
\end{align*}
\end{corollary}
\fi

Let $\mu^\pm$ be the unique equilibrium measure for the potentials $\pm\tau(x) h_{\top}(\rho_1)$. 
By Lemma \ref{skew product variational principle}, 
any ergodic measures of maximal entropy $\zeta$ of $F$ satisfies 
$\pi_*(\zeta)=\mu^{+}$ or $\pi_*(\zeta)=\mu^{-}$.
Thus, the $F$ has at most two ergodic measures of maximal entropy. 
In fact, $F$ may admit two such measures, 
as illustrated in the following example. 

\begin{example}\label{nonunique example}

Let $f$ be a topologically mixing Anosov diffeomorphism on a closed manifold $X$, 
and let $\{\rho_{\bf t}\}_{{\bf t}\in \mathbb{R}}$ be a flow on 
a compact homogeneous space $Y$ with $h_{\top}(\rho_1)=1$ preserving 
an ergodic smooth measure of maximal entropy 
and satisfying the accessibility condition. 
Given a $C^{1+\beta}$ function $\tau: X \to\R$ that is not cohomologous to a constant, 
we set 
$$
c=\frac12\bigl(P_{\top}(f,-\tau)-P_{\top}(f,\tau)\bigr),\quad 
\text{ and } \quad \varphi(x):=\tau(x)+c.
$$
Then $\varphi$ is also not cohomologous to a constant.
Consider the skew product
\[
F(x,y)=\bigl(f(x),\rho_{\varphi(x)}y\bigr). 
\]
Clearly, the joint nontriviality condition in Theorem \ref{main theorem} is satisfied. Then, we have 
\begin{align*}
h_{\mu^+}(f)+\int\varphi\,\rd\mu^+
&=P_{\top}(f,\varphi)=P_{\top}(f,\tau)+c
=P_{\top}(f,-\tau)-c\\
&=P_{\top}(f,-\varphi)
=h_{\mu^-}(f)-\int\varphi\,\rd\mu^-.
\end{align*}
By Lemma \ref{skew product variational principle},
this common value equals $h_{\top}(F)$.
Because $\varphi$ is not cohomologous to a constant, 
\cite[Proposition 20.3.10]{KH95} ensures that $\mu^+\neq\mu^-$. 
Thus, by Theorem \ref{main theorem}, 
 $F$ admits exactly two distinct ergodic measures of maximal entropy, namely 
 $\mu^+\times m_{\operatorname{Haar}}$ and $\mu^-\times m_{\operatorname{Haar}}$. 
\end{example}

While the above example might suggest that the upper bound is independent of 
the dimension of the manifold $Y$, we will show that they are, in fact, 
closely related. Specifically, the upper bound is determined by 
an interplay between the rank $k$ and $\dim Y$; however, 
this dependence is obscured in the degenerate case where $k=1$. 
To clarify this, we next introduce the Weyl chambers of the group action, 
which not only determine this upper bound but also 
provide its underlying intuition, 
as previously noted in Remark \ref{main theorem-remark 1}.

For the higher dimensional case, 
we first introduce the Lyapunov hyperplanes and Weyl chambers associated with the action. 
By applying Proposition \ref{chart1} to $\{\rho_{\bf t}\}$ with respect to the ergodic measure $m_{\MME}$, 
each nonzero Lyapunov exponent of the $\mathbb{R}^k$-action can be identified 
with a nonzero linear functional $\chi_i$ ($1\leq i\leq L$). 
The kernels $\ker \chi_i$ are called {\bf Lyapunov hyperplanes}, 
and the connected components of 
$\displaystyle \mathbb{R}^k \setminus \bigcup_{\chi_i\neq 0} \ker \chi_i$
are defined as the {\bf Weyl chambers} of the action $\rho_{\bf t}$. 

  {\bf Explanation of the upper bound:}
By combining the definition of Weyl chambers with 
the argument in the proof of Theorem \ref{goal theorem}, 
each region $\mathcal{R}$ is uniquely determined 
by the signs of the Lyapunov functionals, thereby directly corresponding to 
a Weyl chamber. Consequently, there is a one-to-one correspondence 
between these regions and the Weyl chambers. 
Furthermore, as illustrated in Example \ref{nonunique example}, 
the corresponding H\"older potentials vary across different regions. 
Since each region admits at most one equilibrium state 
(which is the projection of the ergodic MME of the skew product), 
combining this with Proposition \ref{unique measure} ensures that 
each Weyl chamber corresponds to at most one ergodic MME. 
As investigated in Example \ref{nonunique example}, 
distinct potentials give rise to distinct ergodic MMEs for the skew product, 
thereby providing a natural explanation for 
why the number of ergodic MMEs is bounded above by 
the number of Weyl chambers of the group action.

It is worth mentioning that 
the number of Weyl chambers depends not only on the rank $k$ of the action 
but also on the number of Lyapunov functionals, 
and thus ultimately depends on the dimension of the manifold.

\begin{lemma}\label{lem:hyperplane-regions}
Let $H_1,\ldots,H_L$ be hyperplanes in $\mathbb R^k$ passing through the origin,
where $L\geq 1$ and $k\geq 1$. Then the number of connected components of
$\displaystyle \mathbb R^k\setminus \bigcup_{i=1}^L H_i$ is at most 
\[
2\sum_{j=0}^{k-1}\binom{L-1}{j}.
\]
Moreover, this upper bound is attained when the hyperplanes are in general
position.
\end{lemma}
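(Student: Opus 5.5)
We prove the bound by induction on the pair $(k,L)$, using the classical deletion--restriction recursion for central hyperplane arrangements. Write $r(k,L)$ for the maximal number of connected components of $\mathbb R^k\setminus\bigcup_{i=1}^L H_i$ over all arrangements of $L$ linear hyperplanes, and set $B(k,L):=2\sum_{j=0}^{k-1}\binom{L-1}{j}$. The base cases are immediate. For $k=1$, every hyperplane through the origin equals $\{0\}$, so there are exactly $2=B(1,L)$ components. For $L=1$, one hyperplane splits $\mathbb R^k$ into $2=B(k,1)$ half-spaces. Repeated hyperplanes can be discarded, since they do not change the complement and $B(k,L)$ is nondecreasing in $L$; so we may assume the $H_i$ are distinct.

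For the inductive step, take $k\geq2$ and $L\geq2$. Remove $H_L$ and let $\mathcal C'$ be the set of components of the complement of $H_1,\dots,H_{L-1}$. Adding $H_L$ back splits some components $C\in\mathcal C'$ into exactly two pieces, namely those with $C\cap H_L\neq\varnothing$. Each such $C$ is convex and open, so $C\setminus H_L$ consists of two convex pieces, and $C\cap H_L$ is a nonempty open convex subset of $H_L$. These intersections are precisely the components of $H_L\setminus\bigcup_{i<L}(H_i\cap H_L)$, regarded as a central arrangement inside $H_L\cong\mathbb R^{k-1}$ with at most $L-1$ distinct hyperplanes $H_i\cap H_L$. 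Here we use that $H_i\neq H_L$, so each $H_i\cap H_L$ has codimension one in $H_L$; we interpret the count as $2$ when $k-1\ge1$ and all traces coincide. Hence the number of components is at most $r(k,L-1)+r(k-1,L-1)$. By induction and Pascal's rule $\binom{L-2}{j}+\binom{L-2}{j-1}=\binom{L-1}{j}$,
\[
B(k,L-1)+B(k-1,L-1)=2\sum_{j=0}^{k-1}\binom{L-2}{j}+2\sum_{j=0}^{k-2}\binom{L-2}{j}=2\sum_{j=0}^{k-1}\binom{L-1}{j},
\]
since the second sum equals $\sum_{j=1}^{k-1}\binom{L-2}{j-1}$ after shifting the index, and the $j=0$ terms combine as $\binom{L-2}{0}=\binom{L-1}{0}$.

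For attainment, take the hyperplanes in general position, meaning any $m\le k$ of their normals are linearly independent. Then the restricted arrangement on $H_L$ consists of $L-1$ distinct hyperplanes that are again in general position in $H_L$, whenever $k-1\geq1$. Every inequality in the induction therefore becomes an equality, and the count equals $B(k,L)$.

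The main obstacle is the splitting claim in the inductive step: showing that each old region meets $H_L$ in at most one component of the restricted arrangement, and is cut into exactly two pieces when it does. Both facts follow from convexity. A region meeting $H_L$ is divided into its parts on the two open sides of $H_L$, each nonempty because the region is open. Conversely, each component of the restricted complement lies in a single old region, since it avoids all $H_i$ with $i<L$ and is connected.
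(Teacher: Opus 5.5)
Your proof is correct and follows the same route as the paper: the deletion--restriction recurrence $r(k,L)\le r(k,L-1)+r(k-1,L-1)$, induction via Pascal's rule, and equality for hyperplanes in general position, with your convexity argument making explicit the splitting claim that the paper only asserts. One harmless slip: for $k=2$ the traces $H_i\cap H_L$ all coincide with the origin of $H_L\cong\mathbb R$, so they are not $L-1$ distinct hyperplanes, but the restricted count is still $2=B(1,L-1)$ and equality persists.
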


\begin{proof}
Let $R(L,k)$ denote the maximum number of connected components into which
$L$ hyperplanes through the origin can divide $\mathbb R^k$.

We first derive an upper bound recurrence. Suppose that
$H_1,\ldots,H_{L-1}$ have already been fixed, and then the last
hyperplane $H_L$ is added. Any connected component of
$\displaystyle \mathbb R^k\setminus \bigcup_{i=1}^{L-1}H_i$ 
is either untouched by $H_L$ or divided into two subcomponents by $H_L$.
Hence, the number of new components created by adding $H_L$ is at most the
number of connected components into which the hyperplanes
$H_i\cap H_L$, $i=1,\ldots,L-1$,divide $H_L$.

Since $H_L$ is naturally isomorphic to $\mathbb R^{k-1}$, and since each
$H_i\cap H_L$ is either a hyperplane in $H_L$ or equal to $H_L$ itself, the induced
arrangement on $H_L$ consists of at most $L-1$ central hyperplanes in
$\mathbb R^{k-1}$. Therefore,
\begin{align}\label{lem:hyperplane-regions-(1)}
R(L,k)\leq R(L-1,k)+R(L-1,k-1).
\end{align}

We now prove by induction that
\[
R(L,k)\leq B(L,k),
\quad \text{where }
B(L,k):=
2\sum_{j=0}^{k-1}\binom{L-1}{j}.
\]
Clearly, for all $L\geq 1$ and $k\geq 1$, we have $R(1,k)=2$ and $R(L,1)=2$. 
The base cases are immediate since
\[
B(1,k)=2\sum_{j=0}^{k-1}\binom{0}{j}=2, \quad 
\text{ and }\quad 
B(L,1)=2\binom{L-1}{0}=2.
\]
Assume that the estimate holds for $(L-1,k)$ and $(L-1,k-1)$. 
Then, by \eqref{lem:hyperplane-regions-(1)}, we deduce 
\[
\begin{aligned}
R(L,k)&\leq R(L-1,k)+R(L-1,k-1)\leq B(L-1,k)+B(L-1,k-1)\\
&=2\sum_{j=0}^{k-1}\binom{L-2}{j}+2\sum_{j=0}^{k-2}\binom{L-2}{j}
=2\sum_{j=0}^{k-1}\left(\binom{L-2}{j}+\binom{L-2}{j-1}\right)\\
&=2\sum_{j=0}^{k-1}\binom{L-1}{j}
=B(L,k).
\end{aligned}
\]

Finally, if the hyperplanes $H_1,\ldots,H_L$ are in general position, then the induced hyperplanes
$H_i\cap H_L$ for $i=1,\ldots,L-1$ are also in general position inside
$H_L\cong\mathbb R^{k-1}$. Hence, at each step, the number of new regions created
by adding $H_L$ is precisely the maximum number of regions cut out by $L-1$
central hyperplanes in $\mathbb R^{k-1}$. Thus, equality is achieved in the recurrence,
and the stated upper bound is sharp.
\end{proof}

Let $\rho_{\bf t}$ be a locally free $C^{1+\beta}$ action of $\mathbb{R}^k$ 
on a manifold $Y$ preserving a smooth ergodic invariant measure $\mu$. 
Since the action is locally free, the orbit distribution has the dimension $k$.
Hence, the number of distinct nonzero Lyapunov functionals satisfies
$L\le \dim Y-k$. 
Therefore, the number of Weyl chambers is bounded above by 
\[
2\sum_{j=0}^{k-1}\binom{\dim Y-k-1}{j}\leq 2^{\dim Y-k}
\]
with the convention that $\binom{a}{j}=0$ for $j>a$.

Next, we apply the upper bound to the following two examples.

For a $C^{1+\beta}$ flow (a locally free $\mathbb{R}$-action) on a closed manifold, 
regardless of the dimension of the manifold, 
the number of Weyl chambers is at most two. 
Thus, the corresponding skew product has at most two ergodic MMEs, 
and this bound is sharp, as shown in  Example \ref{nonunique example}.

  As a second example, 
consider a maximal rank $\mathbb{R}^k$-action on a $(2k+1)$-dimensional manifold. 
If the corresponding Lyapunov hyperplanes are in general position, 
the number of Weyl chambers is bounded by 
\[
2\sum_{j=0}^{k-1}\binom{k}{j}=2^{k+1}-2.
\]
Note that in this case, all sign combinations except for the 
all-positive and all-negative ones appear in certain Weyl chambers. 
However, the existence of a Weyl chamber where all signs are negative 
would imply that the measure is atomic \cite{Kat96}, 
contradicting our assumption that $\mu$ is smooth and ergodic.

It is worth mentioning that for a locally free $C^{1+\beta}$ 
$\mathbb{R}^k$-action on a $(2k+1)$-dimensional manifold, if the Lyapunov exponents of 
an ergodic measure are in general position 
and at least one element in $\mathbb{R}^k$ has positive entropy with respect to $\mu$, 
then $\mu$ is absolutely continuous (\cite[Main Theorem]{KKH11}). 
This implies that the smoothness assumption on the measure is 
not overly restrictive in Theorem \ref{main theorem}.

Next, we investigate the role of the joint nontriviality
condition and discuss cases in which (C1) or (C2) fails.
Throughout the following discussion, we assume that
$f$ is a topologically mixing Anosov diffeomorphism on a closed manifold $X$, 
and denote its unique measure of maximal entropy by $\mu_{\operatorname{MME}}$.

We first discuss condition (C1), 
which is a natural cohomological nondegeneracy assumption on the cocycle.
For a fixed topologically mixing Anosov base $f$,
the cocycles satisfying (C1) form an open and dense subset of $C^{1+\beta}(X,\mathbb R^k)$.
Indeed, by Liv\v{s}ic's theorem (\cite[Theorem 19.2.1]{KH95}), 
(C1) is equivalent to the existence of $k+1$ distinct periodic orbits
whose $\tau$-averages are affinely independent.
This independence persists under small perturbations
and can be achieved by arbitrarily small smooth perturbations of 
the cocycle near these orbits.
Thus, (C1) is a robust nondegeneracy condition.
It is relevant here because ergodicity of $m$ under 
the entire $\mathbb R^k$-action alone does not guarantee
ergodicity of the corresponding skew product.

We now analyze the scenarios in which joint nontriviality fails.

\textbf{Case 1:} Failure of condition (C1).

If (C1) fails, there exist a proper linear subspace
$V\subsetneq\mathbb R^k$, a vector $\mathbf c\in\mathbb R^k$,
and continuous maps $\psi:X\to\mathbb R^k$ and $\eta:X\to V$
such that
$$
\tau(x)=\psi(x)-\psi(f(x))+\mathbf c+\eta(x).
$$
Under the change of variables $g(x,y)=(x,\rho_{\psi(x)}y)$, we compute
\begin{align*}
g\circ F\circ g^{-1}(x,y)
&=g\circ F\bigl(x,\rho_{-\psi(x)}y\bigr)
=g\bigl(f(x),\rho_{\tau(x)-\psi(x)}y\bigr)
=g\bigl(f(x),\rho_{-\psi(f(x))+{\bf c}+\eta(x)}y\bigr)\\
&=\bigl(f(x),\rho_{\psi(f(x))-\psi(f(x))+{\bf c}+\eta(x)}y\bigr)
=\bigl(f(x),\rho_{{\bf c}+\eta(x)} y\bigr).
\end{align*}
Thus, the skew product $F$ is topologically conjugate to the map
$$
\bar F(x,y)=(f(x),\rho_{{\bf c}+\eta(x)}y)=\bigl(f(x),\rho_{\mathbf c}\circ\rho_{\eta(x)}y\bigr).
$$
Thus, the fiber dynamics are generated by the restricted action $\rho|_V$ and $\rho_{\bf c}$.

Suppose that $\mathbf c=\mathbf 0$ and
$Y=\mathbb T^{k-1}\times\mathbb T^{d-k+1}$, where $d>k-1$.
Assume that, for some continuous map
$a:X\to\mathbb T^{k-1}$,
$$
\rho_{\eta(x)}(u,z)=(u+a(x),z)
$$
for every $x\in X$ and $(u,z)\in\mathbb T^{k-1}\times\mathbb T^{d-k+1}$.
For each $z\in\mathbb T^{d-k+1}$, let $m_z$ denote 
the normalized Haar measure on $\mathbb T^{k-1}\times\{z\}$.
Then $\mu_{\operatorname{MME}}\times m_z$ is $\bar F$-invariant, 
and this product measure is a measure of maximal entropy for $\bar F$. 
Since $\mathbb T^{d-k+1}$ is uncountable and these
invariant sets are pairwise disjoint, 
choosing one such ergodic component for each $z$ yields uncountably
many pairwise singular ergodic measures of maximal entropy for $\bar F$. 
Their pushforwards under $g^{-1}$ give uncountably many ergodic measures
of maximal entropy for $F$. 

Similarly, suppose that $\mathbf c\neq\mathbf 0$.
Under the same assumption that $\rho|_V$ acts by
torus translations, if a probability measure $\nu$
is invariant under the entire $\mathbb R^k$-action
$\rho$ and is a measure of maximal entropy for
$\rho_{\mathbf c}$, then
$\mu_{\operatorname{MME}}\times\nu$ is an
$F$-invariant measure of maximal entropy.

If $\rho_{\mathbf c}$ admits uncountably many
distinct ergodic measures of maximal entropy
that are also invariant under the entire action
$\rho$, the ergodic decompositions of the
corresponding product measures yield uncountably
many ergodic measures of maximal entropy for $F$. 

\textbf{Case 2:} Failure of condition (C2) with $\dim Z_\rho =k$. 

With a suitable choice of the fiber action, 
$F$ admits uncountably many ergodic measures of maximal entropy. 
Specifically, for any $\rho_{\bf t}$-invariant measure $\nu$, 
the measure $\mu_{\operatorname{MME}} \times\nu$ is an $F$-invariant measure of maximal entropy.

\textbf{Case 3:} Failure of condition (C2) with $\dim Z_\rho <k$. 

Let $k\ge2$. Let
$Q=\Lambda\backslash\operatorname{PSL}(2,\mathbb R)$
be compact, and let $h_t$ denote the horocycle flow on $Q$, 
preserving the normalized Haar measure $m_Q$.
Let $a_{\mathbf s}$ denote the Weyl chamber action
on $N=\operatorname{SL}(k,\mathbb R)/\Gamma$, where
$\Gamma$ is a cocompact lattice and $\mathbf s\in\mathbb R^{k-1}$.
Write $m_N$ for the normalized Haar measure on $N$.
Set $Y=Q\times Q\times N$ and define
$$
\rho_{(t,\mathbf s)}(q_1,q_2,z)
=
\bigl(h_tq_1,h_tq_2,a_{\mathbf s}z\bigr).
$$
This is a locally free $C^\infty$ $\mathbb R^k$-action.
The probability measure $m=m_Q\times m_Q\times m_N$ 
is smooth and invariant under the entire action.
Since the horocycle flow is mixing, its diagonal
action on $Q\times Q$ is ergodic. Together with
ergodicity of the Weyl chamber action, this implies
that $m$ is ergodic for the entire $\mathbb R^k$-action.

Every horocycle element has zero topological entropy,
and its iterates and inverse iterates have at most
polynomial derivative growth. 
Thus, the stable and unstable leaves of every positive-entropy element
are contained in sets of the form
$\{q_1\}\times\{q_2\}\times N$.
Thus, the accessibility requirement in (C2) fails.

For each $r\in\mathbb R$, we define
$$
G_r=\{(q,h_rq):q\in Q\},
\qquad
\nu_r=(q\mapsto(q,h_rq))_*m_Q.
$$
Each $G_r$ is invariant under the diagonal horocycle
flow, and $\nu_r$ is an invariant probability measure
supported on $G_r$. Since the horocycle flow on a
compact quotient has no periodic orbits, the sets
$G_r$ are pairwise disjoint.
The measure $\nu_r\times m_N$ is invariant under the
entire action $\rho$ and is a measure of maximal
entropy for every action element. 
This implies that $F$ has a MME supported on $E_r:=X\times G_r\times N$.
Since the sets $E_r$ are pairwise disjoint, 
Selecting one ergodic component for each $r$ yields 
uncountably many mutually singular ergodic measures of maximal entropy for $F$. 

This construction retains the other hypotheses, 
including the existence of a smooth common measure of maximal entropy that is ergodic
for the entire action, while violating only the accessibility requirement in (C2).

The preceding discussion illustrates the role of conditions (C1) and (C2) 
in our finiteness and fiberwise measure rigidity results. 
When either condition fails, similar conclusions may still hold,
but may require more refined assumptions on the
cocycle and the fiber dynamics. 
In particular, further analysis of the cohomological structure
of the cocycle and the invariant structures of the induced fiber dynamics may be needed.

\text{\quad}\\

\textsc{Chern Institute of Mathematics and LPMC, Nankai University, Tianjin 300071 China}

{Email: dongchg@nankai.edu.cn}

\textsc{Chern Institute of Mathematics and LPMC, Nankai University, Tianjin 300071 China}

{Email: qiujieqiao@mail.nankai.edu.cn}

\end{document}